\documentclass[a4paper,12pt,reqno]{amsart}
\usepackage[left=3.2 cm,top=2.4cm,bottom=3.2cm,right=3.2cm]{geometry}
\usepackage[utf8]{inputenc}
\usepackage{amsmath}
\usepackage{amssymb}
\usepackage{amsthm}
\usepackage{bbm}
\usepackage{xspace}
\usepackage[normalem]{ulem}
\usepackage{footmisc}
\usepackage{mathrsfs}
\usepackage[shortlabels]{enumitem}
\usepackage{comment}
\usepackage{graphicx}
\usepackage{tikz}
\usetikzlibrary{
  math,
  bbox,
  knots,
  hobby,
  decorations.pathreplacing,
  shapes.geometric,
  calc
}

\newtheorem{definition}{Definition}[section]
\newtheorem{proposition}[definition]{Proposition}
\newtheorem{corollary}[definition]{Corollary}
\newtheorem{theorem}[definition]{Theorem}
\newtheorem{lemma}[definition]{Lemma}

\theoremstyle{definition}
\newtheorem{example}[definition]{Example}

\theoremstyle{remark}

\newtheorem{remark}[definition]{Remark}

\newcommand{\ip}[2]{\left\langle#1,#2\right\rangle}

\renewcommand{\P}{\mathcal{P}}
\newcommand{\M}{\mathcal{M}}

\newcommand{\Span}{\operatorname{span}}

\renewcommand{\Im}{\operatorname{Im}}

\newcommand{\col}{\,\colon\,}

\makeatletter

\newcommand{\Dom}{\D}
\newcommand\restr[1]{\raisebox{-0.5ex}{\ensuremath{|_{#1}}}}

\makeatother

\newcommand{\Hil}{\mathcal{H}}
\newcommand{\Rl}{\mathbb{R}}
\newcommand{\Nl}{\mathbb{N}}
\newcommand{\Cl}{\mathbb{C}}

\newcommand{\Om}{\Omega}
\newcommand{\te}{\theta}
\newcommand{\la}{\lambda}
\newcommand{\ot}{\otimes}

\newcommand{\eps}{\varepsilon}
\newcommand{\Ran}{{\rm Ran}}
\newcommand{\Strip}{{\mathbb S}}
\newcommand{\CTSym}{{\mathcal T}_{\rm Sym}}
\newcommand{\Dia}{\mathscr D}
\newcommand{\HB}{{\mathbb H}^\infty_{\rm c.bv.}}
\newcommand{\COs}{{\mathscr O}}
\newcommand{\Man}{{\mathscr M}}
\newcommand{\Ws}{{\mathscr W}}

\def\ker{\operatorname{ker}}

\def\dim{\operatorname{dim}}

\def\id{\operatorname{id}}

\DeclareMathOperator*{\wlim}{w-lim}

\def\Im{\operatorname{Im}}

\def\id{\operatorname{id}}

\newcommand{\D}{\mathcal{D}}

\mathchardef\mhyphen="2D

\newcommand{\CA}[0]{\mathcal{A}} \newcommand{\CB}[0]{\mathcal{B}}
 
 \newcommand{\CF}[0]{\mathcal{F}}
 
\newcommand{\CI}[0]{\mathcal{I}} 
\newcommand{\CK}[0]{\mathcal{K}} \newcommand{\CL}[0]{\mathcal{L}}
 \newcommand{\CN}[0]{\mathcal{N}}
\newcommand{\CO}[0]{\mathcal{O}} \newcommand{\CP}[0]{\mathcal{P}}
 \newcommand{\CR}[0]{\mathcal{R}}
 
\newcommand{\CU}[0]{\mathcal{U}}

\newcommand{\N}{{\mathcal N}}
\newcommand{\B}{{\mathcal B}}
\newcommand{\CC}[0]{\mathcal{C}}

\newcommand{\POm}{P_{\Om}}

\newcommand{\Pst}{{\mathrm{P}}}

\newcommand{\Tw}{{\mathcal T}_{\geq}}
\newcommand{\Tws}{{\mathcal T}_{>}}
\newcommand{\scpr}{\langle\,\cdot\,,\,\cdot\,\rangle}

\makeatletter
\newcommand{\pushright}[1]{\ifmeasuring@#1\else\omit\hfill$\displaystyle#1$\fi\ignorespaces}
\newcommand{\pushleft}[1]{\ifmeasuring@#1\else\omit$\displaystyle#1$\hfill\fi\ignorespaces}
\makeatother


\usepackage{xcolor}

\definecolor{green(munsell)}{rgb}{0.0, 0.66, 0.47}
\definecolor{BlueGreenn}{rgb}{0.3,0.5,0.8}
\definecolor{DB}{rgb}{0.3,0.3,0.3}
\definecolor{DOr}{rgb}{0.7,0.3,0.3}
\definecolor{DGr}{rgb}{0.3,0.7,0.3}
\definecolor{DBl}{rgb}{0.1,0.3,0.5}
\definecolor{arylideyellow}{rgb}{0.91, 0.84, 0.42}
\definecolor{burntorange}{rgb}{0.8, 0.33, 0.0}
\definecolor{chromeyellow}{rgb}{1.0, 0.65, 0.0}
\usepackage[hypertexnames=true, citecolor = burntorange, colorlinks = true, linkcolor = BlueGreenn, pdffitwindow = false, urlbordercolor = white,pdfborder={0 0 0}]{hyperref}

\title[Modular Structure of Twisted Araki-Woods Algebras]{Modular Structure and Inclusions of Twisted Araki-Woods Algebras}

\author{Ricardo Correa da Silva}
\address{Department Mathematik, FAU Erlangen-Nürnberg, ricardo.correa.silva@fau.de}
\author{Gandalf Lechner}
\address{Department Mathematik, FAU Erlangen-Nürnberg, gandalf.lechner@fau.de}

\date{November 30, 2022}

\def\ker{\operatorname{ker}}

\def\dim{\operatorname{dim}}

\def\id{\operatorname{id}}

\def\Im{\operatorname{Im}}

\def\id{\operatorname{id}}

\numberwithin{equation}{section}

\begin{document}

\begin{abstract}
	In the general setting of twisted second quantization (including Bose/Fermi second quantization, $S$-symmetric Fock spaces, and full Fock spaces from free probability as special cases), von Neumann algebras on twisted Fock spaces are analyzed. These twisted Araki-Woods algebras $\CL_{T}(H)$ depend on the twist operator $T$ and a standard subspace $H$ in the one-particle space. Under a compatibility assumption on $T$ and $H$, it is proven that the Fock vacuum is cyclic and separating for $\CL_{T}(H)$ if and only if $T$ satisfies a standard subspace version of crossing symmetry and the Yang-Baxter equation (braid equation). In this case, the Tomita-Takesaki modular data are explicitly determined.

	Inclusions $\CL_{T}(K)\subset\CL_{T}(H)$ of twisted Araki-Woods algebras are analyzed in two cases: If the inclusion is half-sided modular and the twist satisfies a norm bound, it is shown to be singular. If the inclusion of underlying standard subspaces $K\subset H$ satisfies an $L^2$-nuclearity condition, $\CL_{T}(K)\subset\CL_{T}(H)$ has type III relative commutant for suitable twists $T$.

	Applications of these results to localization of observables in algebraic quantum field theory are discussed.
\end{abstract}

\maketitle

\tableofcontents

\section{Introduction}

Second quantization von Neumann algebras play a prominent role in many areas of mathematical physics and mathematics and exist in numerous variations: Weyl algebras and their weak closures describe Bosonic interaction-free systems such as Bose gases or free quantum field theories, and similarly CAR algebras describe the corresponding Fermionic models \cite{BratteliRobinson:1997,Petz:1990}. Whereas these algebras are naturally represented on Bose/Fermi Fock spaces, there exist also more general Fock spaces that are useful for describing generalized statistics, including anyons \cite{LiguoriMintchev:1995-1,DaletskiiKalyuzhnyLytvynovProskurin:2020}. Very similar spaces -- sometimes called $S$-symmetric Fock spaces -- arise from other representations of symmetric groups on tensor powers and form convenient representation spaces for integrable quantum field theories with prescribed two-body scattering operator $S$, and carry families of von Neumann algebras generalizing the CCR/CAR setting \cite{Lechner:2008,AlazzawiLechner:2016}. Related constructions also occur in the representation theory of Wick algebras, where the canonical (anti-)commutation relations are deformed \cite{JorgensenSchmittWerner:1995,JorgensenProskurinSamoilenko:2001}.

Another variation of second quantization algebras arises in free probability: Here one considers representations of free group factors \cite{Voiculescu:1985} and other von Neumann algebras \cite{Shlyakhtenko:1997} (free Araki-Woods factors) on unsymmetrized (``Boltzmann'' or ``full'') Fock spaces, and this generalizes to $q$-deformed Araki-Woods von Neumann algebras, interpolating between the Bose $(q=1)$ and Fermi $(q=-1)$ situations, and containing the free factors at $q=0$ as a special case.

In this article, we adopt a general framework that includes all these examples and goes beyond the setting of Fock spaces symmetrized by symmetric group actions. Our main goal is to study natural families of von Neumann algebras and their modular and inclusion properties in this setting. As our analysis is motivated from quantum field theory, we focus on properties of the Fock vacuum state on these algebras, and consider the structure of inclusions of such von Neumann algebras rather than their internal structure.

The basic setup we use is due to Bożejko and Speicher \cite{BozejkoSpeicher:1994} and J\o rgensen, Schmitt, and Werner \cite{JorgensenSchmittWerner:1995} and can briefly be described as follows: Starting from a one-particle Hilbert space $\Hil$ and a bounded selfadjoint operator $T$ on $\Hil\ot\Hil$ satisfying a certain positivity condition, one can construct a ``$T$-twisted Fock space'' $\CF_T(\Hil)$ which specializes to the Bose, Fermi, Boltzmann, or $S$-symmetric Fock spaces for suitable choices of $T$.

Interesting von Neumann algebras $\CL_{T}(H)\subset\CB(\CF_T(\Hil))$ are generated by ``field operators'' $\phi_{L,T}(h)=a_{L,T}(h)+a_{L,T}^\star(h)$, defined as a sum of the creation and annihilation type operators that exist on $\CF_T(\Hil)$. Here $h$ ranges over a standard subspace\footnote{Recall that a standard subspace $H$ of a complex Hilbert space $\Hil$ is a real linear closed subspace such that $H+iH$ is dense in $\Hil$ and $H\cap iH=\{0\}$, see Section~\ref{section:TwistAlgebras}.} $H\subset\Hil$, and the index $L$ indicates that this construction is based on operators acting on the left. Depending on the twist $T$ and the standard subspace~$H$, the algebra $\CL_{T}(H)$ can take various different forms.

In applications to quantum field theory, the standard subspaces serve as a means to encode localization regions in some spacetime (see \cite{LeylandsRobertsTestard:1978,Foit:1983} for the CCR/CAR case and \cite{BrunettiGuidoLongo:2002} for the concept of modular localization). Hence one is immediately interested in inclusion and intersection properties of, say, two von Neumann algebras $\CL_{T}(H)$ and $\CL_{T}(K)$. In the context of free probability, Voiculescu's original approach corresponds to taking $T=0$ and $H$ as the closed real span of an orthonormal basis, in which case $\CL_0(H)$ is the free group factor of the free group on $\dim\Hil$ generators. Shlyakhtenko has generalized this setting to $T=0$ and general standard subspaces $H$, which corresponds to choosing an orthogonal one-parameter group on a related real Hilbert space. Also in this setting and its $q$-variations, one is interested in certain inclusions of von Neumann algebras, for instance when studying MASAs \cite{BikramMukherjee:2017,BikramMukherjee:2020}.

\bigskip

For general twist $T$, only very little is known about the structure of $H\mapsto\CL_{T}(H)$, which provides some of the motivation for this article. We focus on two interrelated questions: On the one hand, we analyze under which conditions the Fock vacuum $\Om\in\CF_T(\Hil)$ is cyclic and separating for $\CL_{T}(H)$, and what the corresponding modular operators from Tomita-Takesaki theory are in this case. On the other hand, we consider inclusions $\CL_{T}(K)\subset\CL_{T}(H)$ arising from inclusions of standard subspaces $K\subset H$ and investigate their relative commutants.

In Section~\ref{section:TwistAlgebras}, we introduce $T$-twisted Fock spaces and our von Neumann algebras $\CL_{T}(H)$ (Def.~\ref{def:MLTH}), called $T$-twisted Araki-Woods algebras, as well as some instructive examples.

In Section~\ref{sect:TwistedAlgebrasAndStandardness} we address the standardness question of $(\CL_{T}(H),\Om)$. It is easy to show that $\Om$ is always cyclic, but usually not separating. Our analysis is then based on a compatibility assumption between $T$ and $H$ (namely $T$ should commute with the modular unitaries $\Delta_H^{it}\ot\Delta_H^{it}$, see Def.~\ref{def:compatible}) and two properties that $T$ might or might not have: {\em crossing symmetry} and the {\em Yang-Baxter equation}. Crossing symmetry is a concept originating from scattering theory; in our abstract setting it amounts to an analytic continuation property of $T$ w.r.t. the modular group of~$H$ (Def.~\ref{def:crossing}). The Yang-Baxter equation, on the other hand, is an algebraic equation that expresses the essential braiding relation of the braid groups in tensor product form.

It is shown that for compatible twists, $\Om$ separates $\CL_{T}(H)$ if and only if $T$ is crossing symmetric w.r.t. $H$ and satisfies the Yang-Baxter equation (Theorem~\ref{thm:separating-necessary} and Theorem~\ref{thm:YBCrossingLocalSeparating}). Both these properties, the crossing symmetry and the Yang-Baxter equation, have their origins in quantum physics and are often taken as an input in quantum field theoretic constructions (see, for example, \cite{LiguoriMintchev:1995,LechnerSchutzenhofer:2013,BischoffTanimoto:2013,AlazzawiLechner:2016,HollandsLechner:2018,DaletskiiKalyuzhnyLytvynovProskurin:2020}). Here we do not need to assume these structures, but can rather derive them from modular theory and actually show that they are equivalent to the separating property of the Fock vacuum in our setting. On a technical level, this relies on various analytic continuation arguments that are related to the KMS condition. These arguments also involve the combinatorial structure of $T$-twisted $n$-point functions which is best captured in a diagrammatic form. This diagram calculus is presented in the appendix (Section~\ref{sect:diagrams}).

It turns out that $\Om$ is cyclic and separating for $\CL_T(H)$ if and only if corresponding ``right'' field operators $\phi_{R,T}(h)$ \eqref{eq:phiR} and right von Neumann algebras $\CR_T(H)$ exist on $\CF_T(\Hil)$ and suitably commute with the left operators. In Proposition~\ref{prop:modulardata} we then determine the modular data $(\CL_{T}(H),\Om)$ in this case, which are linked to the modular data of $H$ via a $T$-twisted second quantization. In particular, we obtain a left-right duality of the form
\begin{align}
	\CL_T(H)'=\CR_T(H').
\end{align}
This generalizes several results known in special cases \cite{EckmannOsterwalder:1973,LeylandsRobertsTestard:1978,Shlyakhtenko:1997,BaumgartelJurkeLledo:2002,BuchholzLechner:2004,Lechner:2012}.

In Section~\ref{sect:inclusions} we turn our attention to {\em relative} properties of the two families of von Neumann algebras, $\CL_T(H)$ and $\CR_T(K)$. This includes in particular the study of inclusions. Namely, we take an inclusion of standard subspaces $K\subset H$ and consider the corresponding inclusion $\CL_{T}(K)\subset\CL_{T}(H)$ of von Neumann algebras and its relative commutant $\CR_T(K')\cap\CL_T(H)$. We can determine the structure of such an inclusion in two completely different cases, showing that $\CL_{T}(K)\subset\CL_{T}(H)$ crucially depends on~$T$ and $K\subset H$.

The first case is that of a so-called half-sided modular inclusion \cite{Wiesbrock:1993-1,Wiesbrock:1993,ArakiZsido:2005}. As we recall in Section~\ref{section:inclusions-twisted}, the property of being half-sided modular means that the small algebra arises from the large one by application of a translation action with certain properties. It turns out that in this situation, a recently developed criterion for determining whether a half-sided inclusion is singular (trivial relative commutant) \cite{LechnerScotford:2022} is applicable in case our twist is compatible with the inclusion and satisfies $\|T\|<1$. The latter condition rules out familiar cases such as Bose/Fermi symmetry, for which the inclusion is non-singular. In Theorem~\ref{thm:twisted-hsm} we can therefore establish many new examples of singular half-sided modular inclusions. After recent more complicated constructions of such inclusions, using free products \cite{LongoTanimotoUeda:2019} and deformation procedures \cite{LechnerScotford:2022}, respectively, these appear to be a much more transparent. This shows that the relative commutant is unstable and very sensitive to perturbations in~$T$.

The results on small (trivial) relative commutants mentioned so far can be interpreted as a consequence of the very non-commutative structure arising from free probability (at $T=0$) respectively its ``neighbourhood'' (at $\|T\|<1$). Nonetheless, such twists can also lead to very different inclusions which might appear counter-intuitive at first. In Section~\ref{section:nuclearity}, we consider standard subspaces $K\subset H$ satisfying the $L^2$-nuclearity condition of Buchholz, D'Antoni, Longo \cite{BuchholzDAntoniLongo:2007} and use this tool and the theory of (quasi-)split inclusions \cite{DoplicherLongo:1984,Fidaleo:2001} to give examples for the relative commutant $\CL_{T}(K)'\cap\CL_{T}(H)$ being type III (Proposition~\ref{prop:L2}). This section is related to work of D'Antoni, Longo, and Radulescu \cite{DAntoniLongoRadulescu:2001} who considered the case $T=0$, and is generalized to other twists here. See also the paper \cite{BikramMukherjee:2020} by Bikram and Mukherjee for related arguments in the case of $q$-deformed Araki-Woods von Neumann algebras.

In Section~\ref{section:QFT}, we explain how our results apply to quantum field theoretic models, in particular regarding the absence or presence of local observables in quantum field theories on Minkowski space or the lightray, using an abstract notion of spacetime. We discuss our results from this point of view, review the resulting perspectives on constructive algebraic quantum field theory, and give an outlook to ongoing research.

\section{Twisted Fock spaces and Araki-Woods algebras}\label{section:TwistAlgebras}

\subsection{Twists and twisted Fock spaces} The von Neumann algebras we are interested in are defined on twisted Fock spaces. These Fock spaces arise as natural representation spaces of algebras defined by a quadratic exchange relation (Wick algebras) and exist in various versions. The version that we use here is the most general one and due to Bożejko and Speicher \cite{BozejkoSpeicher:1994} and J\o rgensen, Schmitt, and Werner \cite{JorgensenSchmittWerner:1995}.

\medskip 

Let $T\in\CB(\Hil\ot\Hil)$ be an operator with $\|T\|\leq1$. We iteratively define operators $R_{T,n},P_{T,n}\in\CB(\Hil^{\ot n})$, $n\in\Nl$, by\footnote{Here and hereafter, we use the notation $T_k:=1^{\ot (k-1)}\ot T\ot1^{\ot(n-k-1)}\in\CB(\Hil^{\ot n})$.\label{fn:tensors}}
\begin{align}
R_{T,n}    &:= 1+T_1+T_1T_2+\ldots+T_1\cdots T_{n-1},
\\
P_{T,1}    &:= 1,\qquad
P_{T,n+1}  := (1\ot P_{T,n})R_{T,n+1}.
\label{def:Pn}
\end{align}

If $T$ is selfadjoint, one can show by induction in $n$ that also $P_{T,n}$ is selfadjoint for any $n\in\Nl$. Those $T$ that lead to positive $P_{T,n}$ will be called ``twists'':

\begin{definition}\label{def:twist}
	A {\em twist} is an operator in
	\begin{align}
		\Tw
		&:=
		\{T\in\CB(\Hil\ot\Hil)\col \|T\|\leq1,\quad P_{T,n}\geq0\;\text{ for all } n\in\Nl\}.
	\end{align}
	A {\em strict twist} is an operator in
	\begin{align}
		\Tws
		&:=
		\{T\in\Tw\col 0\not\in\sigma(P_{T,n})\;\text{ for all } n\in\Nl\}.
	\end{align}
\end{definition}

We note that any twist $T$ is selfadjoint because $P_{T,2}=1+T$ is required to be positive. To avoid misunderstanding, we emphasize that the spectral assumption on strict twists $T\in\Tws$ means that for each $n\in\Nl$, there exists an $\eps_n>0$ such that $P_{T,n}\geq\eps_n1>0$.

The following theorem summarizes some known sufficient conditions for $T\in\CB(\Hil\ot\Hil)$ to lie in $\Tw$ or $\Tws$. Items a) and b) are due to J\o rgensen, Schmitt, and Werner ~\cite[Thm.~2.3.2 and Thm.~2.5.1]{JorgensenSchmittWerner:1995}, and item c) is due to Bożejko and Speicher \cite[Thm.~ 2.2]{BozejkoSpeicher:1994}.

\begin{theorem}\label{theorem:T}
	Let $T=T^*\in\CB(\Hil\ot\Hil)$.
	\begin{enumerate}
		\item\label{item:Tsmallnorm} If $\|T\|\leq\frac{1}{2}$, then\footnote{In \cite[Thm.~2.3.2]{JorgensenSchmittWerner:1995} this conclusion is shown under the slightly stronger assumption $\|T\|<\frac{1}{2}$, but the arguments given there carry over to $\|T\|\leq\frac{1}{2}$.} $T\in\Tws$.
		\item\label{item:Tpositive} If $T\geq0$, then $T\in\Tws$.
		\item\label{item:Tbraided} If $\|T\|\leq1$ and $T$ satisfies the Yang-Baxter equation in its braid form, i.e.
		\begin{align}
			T_1T_2T_1=T_2T_1T_2,
		\end{align}
		then $T\in\Tw$. We will refer to such twists as {\em braided twists}. If a braided twist satisfies $\|T\|<1$, then $T\in\Tws$.
	\end{enumerate}
\end{theorem}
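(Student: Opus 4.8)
The plan is to prove all three items by induction on $n$, establishing $P_{T,n}\ge0$ (which gives $T\in\Tw$) and, wherever a strict twist is asserted, a strictly positive lower bound $P_{T,n}\ge\eps_n\,1$ (which gives $T\in\Tws$), directly from the recursion $P_{T,n+1}=(1\ot P_{T,n})R_{T,n+1}$. The base cases are immediate: $P_{T,1}=1$, and $P_{T,2}=1+T$ satisfies $1+T\ge0$ since $\|T\|\le1$, with $1+T\ge(1-\|T\|)1>0$ when $\|T\|<1$. Throughout I use the structural facts already recorded before Definition~\ref{def:twist}: the selfadjointness of every $P_{T,m}$, which (together with selfadjointness of $1\ot P_{T,m-1}$) is equivalent to the identities $(1\ot P_{T,n})R_{T,n+1}=R_{T,n+1}^*(1\ot P_{T,n})$ and $P_{T,n}=R_{T,n}^*(1\ot P_{T,n-1})$; and the elementary factorization $R_{T,n+1}=1+T_1\,(1\ot R_{T,n})$, obtained by collecting the terms defining $R_{T,n+1}$.

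For a), suppose inductively that $P_{T,n}>0$, and put $Q:=(1\ot P_{T,n})^{1/2}$, a bounded, positive, boundedly invertible operator. Then $P_{T,n+1}=(1\ot P_{T,n})R_{T,n+1}=Q\,\widetilde X\,Q$ with $\widetilde X:=Q\,R_{T,n+1}\,Q^{-1}$. Since $P_{T,n+1}$ is selfadjoint, so is $\widetilde X$; being similar to $R_{T,n+1}$, it has the same spectrum, so $\sigma(\widetilde X)=\sigma(R_{T,n+1})\subseteq\{z:|z-1|\le\|R_{T,n+1}-1\|\}$. Now $\|R_{T,n+1}-1\|\le\sum_{k=1}^{n}\|T\|^k\le\sum_{k=1}^{n}2^{-k}=1-2^{-n}<1$ whenever $\|T\|\le\tfrac12$, and it is precisely this elementary observation --- the finite geometric sum being strictly below $1$ even at $\|T\|=\tfrac12$ --- that lets one weaken the hypothesis from the $\|T\|<\tfrac12$ of \cite{JorgensenSchmittWerner:1995} to $\|T\|\le\tfrac12$. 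Hence $\sigma(\widetilde X)\subset(0,2)$, so $\widetilde X\ge(1-\|R_{T,n+1}-1\|)1>0$ and therefore $P_{T,n+1}\ge(1-\|R_{T,n+1}-1\|)(1\ot P_{T,n})>0$. Thus $P_{T,n}>0$ for all $n$, i.e.\ $T\in\Tws$.

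For b), we have $T\ge0$, so every $T_k\ge0$ and $T_1^{1/2}$ exists. From $R_{T,n+1}=1+T_1(1\ot R_{T,n})$ we get $P_{T,n+1}=(1\ot P_{T,n})+M$ with $M:=(1\ot P_{T,n})\,T_1\,(1\ot R_{T,n})$. For $n\ge2$, substituting $1\ot P_{T,n}=(1\ot R_{T,n})^*(1\ot1\ot P_{T,n-1})$ (the second structural identity, tensored by $1$ on the first leg) and using that $T_1^{1/2}$, supported on the first two tensor legs, commutes with $1\ot1\ot P_{T,n-1}$, supported on the remaining ones, one rewrites
\[ M=(1\ot R_{T,n})^*\,T_1^{1/2}\,(1\ot1\ot P_{T,n-1})\,T_1^{1/2}\,(1\ot R_{T,n})=Y^*\,(1\ot1\ot P_{T,n-1})\,Y,\qquad Y:=T_1^{1/2}(1\ot R_{T,n}), \]
so $M\ge0$ by the inductive hypothesis $1\ot1\ot P_{T,n-1}\ge0$, whence $P_{T,n+1}\ge1\ot P_{T,n}$; the step $n=1$ is the base case $P_{T,2}=1+T\ge1$. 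Since $P_{T,2}\ge1$, induction yields $P_{T,n}\ge1>0$ for all $n$, i.e.\ $T\in\Tws$.

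Item c) is the substantial one, and its positivity part is exactly \cite[Thm.~2.2]{BozejkoSpeicher:1994}, whose argument I would follow. The braid relation $T_1T_2T_1=T_2T_1T_2$ together with the trivial commutations $T_iT_j=T_jT_i$ ($|i-j|\ge2$) shows, via the Matsumoto--Tits theorem on reduced words in $S_n$, that $T_\sigma:=T_{i_1}\cdots T_{i_\ell}$ depends only on the permutation $\sigma=s_{i_1}\cdots s_{i_\ell}\in S_n$; unwinding the recursion then gives the ``quantum symmetrizer'' identity $P_{T,n}=\sum_{\sigma\in S_n}T_\sigma$. The core difficulty --- and the main obstacle to a short self-contained proof --- is to show $\sum_{\sigma\in S_n}T_\sigma\ge0$ for $\|T\|\le1$: the crude norm estimates used in a) are far too lossy here, so one must instead exploit the braid relation to obtain the cancellations controlling the cross-terms generated at each step $P_{T,n}=(1\ot P_{T,n-1})R_{T,n}$; Bo\.zejko and Speicher carry this out by an intricate induction. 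The strict statement for $\|T\|<1$ then follows from their accompanying quantitative bound, of the form $P_{T,n}\ge\prod_{k=1}^{n-1}(1-\|T\|^k)\,1$: for $\|T\|<1$ the right-hand side is bounded below uniformly in $n$ by $\prod_{k\ge1}(1-\|T\|^k)>0$, giving $T\in\Tws$, whereas at $\|T\|=1$ it degenerates to the bare bound $P_{T,n}\ge0$, giving $T\in\Tw$.
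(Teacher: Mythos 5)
Your proposal is correct. Note that the paper itself gives no proof of this theorem at all: it is explicitly a summary of results from the literature, with a) and b) attributed to J\o rgensen--Schmitt--Werner and c) to Bo\.zejko--Speicher, plus a footnote asserting that the $\|T\|<\tfrac12$ hypothesis of the cited reference can be relaxed to $\|T\|\leq\tfrac12$. So the comparison is really between your reconstruction and the cited sources. Your arguments for a) and b) are complete and sound: in a), the similarity trick $P_{T,n+1}=Q\widetilde XQ$ with $\widetilde X=QR_{T,n+1}Q^{-1}$ selfadjoint and $\sigma(\widetilde X)=\sigma(R_{T,n+1})$ correctly yields $P_{T,n+1}\geq(1-\sum_{k=1}^n\|T\|^k)(1\ot P_{T,n})$, and you have isolated exactly the point justifying the paper's footnote, namely that the \emph{finite} geometric sum $\sum_{k=1}^n 2^{-k}=1-2^{-n}$ stays strictly below $1$ even at $\|T\|=\tfrac12$ (the resulting $\eps_n$ degrade with $n$, but the definition of $\Tws$ only requires each $P_{T,n}$ to be bounded below, not uniformly in $n$). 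In b), the sandwich $M=Y^*(1\ot1\ot P_{T,n-1})Y$ with $Y=T_1^{1/2}(1\ot R_{T,n})$, resting on $R_{T,n+1}=1+T_1(1\ot R_{T,n})$ and the adjoint identity $P_{T,n}=R_{T,n}^*(1\ot P_{T,n-1})$, gives the clean monotonicity $P_{T,n+1}\geq1\ot P_{T,n}\geq1$. For c) you defer the hard positivity statement to Bo\.zejko--Speicher, exactly as the paper does, and you correctly identify why the elementary norm estimates of a) cannot reach $\|T\|=1$; the quantitative lower bound you quote for $\|T\|<1$ is the standard one and yields strictness. No gaps.
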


To construct twisted Fock spaces from a twist $T\in\Tw$, we consider the quotient
\begin{align}\label{quot}
	\Hil_{T,n}^0
	:=
	\Hil^{\ot n}/\ker P_{T,n}
	=
	\overline{\Ran P_{T,n}}
	,\qquad n\in\Nl,
\end{align}
where the bar denotes the closure in the canonical Hilbert norm of $\Hil^{\ot n}$. On this space we introduce a new (positive definite) scalar product
\begin{align}
	\langle\,\cdot\,,\,\cdot\,\rangle_T := \langle\,\cdot\,,P_{T,n}\,\cdot\,\rangle,
\end{align}
and define $\Hil_{T,n}$ as the completion of $\Hil_{T,n}^0$ w.r.t. the corresponding norm $\|\cdot\|_T$, written as $\Hil_{T,n} = \overline{\Hil_{T,n}^0}^T=\overline{\Ran P_{T,n}}^T$. In general, $\Hil_{T,n}$ is not a subspace of~$\Hil^{\ot n}$, but, in case $P_{T,n}|_{(\ker P_{T,n})^\perp}$ does not contain $0$ in its spectrum, $P_{T,n}|_{(\ker P_{T,n})^\perp}$ has a bounded inverse and the norms $\|\cdot\|$ and $\|\cdot\|_T$ are equivalent on $(\ker P_{T,n})^\perp$, i.e. $\Hil_{T,n}=\overline{\Ran P_{T,n}}\subset\Hil^{\ot n}$. If $T\in\Tws$ is a strict twist, $P_{T,n}$ is even invertible (with bounded inverse) in $\CB(\Hil^{\ot n})$, i.e. $\Hil_{T,n}=\Hil^{\ot n}$ as vector spaces in this case.

For any twist $T\in\Tw$, the {\em twisted Fock space} is defined as the direct sum
\begin{align*}
	\CF_T(\Hil) := \bigoplus_{n=0}^\infty \Hil_{T,n},\qquad \Hil_{T,0}=\Cl,
\end{align*}
equipped with the scalar product $\langle\Psi,\Phi\rangle_T=\sum_{n=0}^\infty\langle\Psi_n,P_{T,n}\Phi_n\rangle_{\Hil^{\ot n}}$, $\Psi_n,\Phi_n\in\Hil_{T,n}$. We will use the notation $\Psi=\bigoplus_n\Psi_n$ to denote the ``$n$-particle components'' $\Psi_n\in\Hil_{T,n}$ of a vector $\Psi\in\CF_T(\Hil)$, and write $\Om=1\oplus0\oplus0\ldots$ for the Fock vacuum.

Although the inner product $\langle\,\cdot\,,\,\cdot\,\rangle_T$ is the natural inner product in $\CF_T(\Hil)$, we will occasionally also need to argue in $\Hil^{\ot n}$ (or $\Hil^{\ot n}/\ker P_{T,n}$) using untwisted inner products $\langle\,\cdot\,,\,\cdot\,\rangle$. Adjoints w.r.t. $\langle\,\cdot\,,\,\cdot\,\rangle_T$ are denoted $A\mapsto A^\star$ and adjoints w.r.t. $\langle\,\cdot\,,\,\cdot\,\rangle$ are denoted $A\mapsto A^*$. The operator norm of $\CB(\CF_T(\Hil))$ is written $\|\cdot\|_T$, and that of $\CB(\Hil^{\ot n})$ as $\|\cdot\|$, as usual. Note that the scalar products $\scpr$ and $\scpr_T$ coincide on the zero and one-particle vectors, i.e. on $\Cl\Om\oplus\Hil$.

\bigskip

The twisted Fock spaces $\CF_T(\Hil)$ can look quite different depending on the twist~$T$, and we now recall some important examples. For further discussions of these and other examples, see \cite{BozejkoSpeicher:1991,BozejkoSpeicher:1994,JorgensenSchmittWerner:1995,LiguoriMintchev:1995-1,JorgensenProskurinSamoilenko:2001,LechnerSchutzenhofer:2013,DaletskiiKalyuzhnyLytvynovProskurin:2020}.

\begin{example}
	The simplest example of a strict twist is $T=0$. In this case, $P_{T,n}=1$ for all $n\in\Nl$, and hence the $0$-twisted Fock space $\CF_0(\Hil)$ over $\Hil$ coincides with the full (Boltzmann) Fock space 
	\begin{align*}
		\CF_0(\Hil) = \bigoplus_{n=0}^\infty\Hil^{\ot n}
	\end{align*}
	with its canonical inner product.
\end{example}

\begin{example}\label{example:TSym}{\bf (Involutive Yang-Baxter solutions, symmetric twists)}\\
	An important class of twists $T$ are the {\em symmetric twists}, namely the family
	\begin{align}
		\CTSym := \{T\in\B(\Hil\ot\Hil)\col T=T^*=T^{-1},\quad T_1T_2T_1=T_2T_1T_2\},
	\end{align}
	of unitary (and selfadjoint) involutive solutions of the Yang-Baxter equation\footnote{For $\dim\Hil<\infty$, these solutions have been classified in \cite[Thm.~4.8]{LechnerPennigWood:2019}.}. These are in particular braided twists, hence contained in $\Tw$. Any $T\in\CTSym$ defines a unitary representation $\rho_{T,n}$ of the symmetric group $S_n$ on $\Hil^{\ot n}$ by $\rho_{T,n}(\sigma_k):=T_k$, where the $\sigma_k$ are the Coxeter generators of $S_n$.

	Looking at the definition of $R_{T,n}$ and $P_{T,n}$, one realizes that in this case $n!^{-1}P_{T,n}$ is the projection onto the subspace $\Ran P_{T,n}$ of $\Hil^{\ot n}$ on which $\rho_{T,n}$ acts trivially. Thus $T\not\in\Tws$ unless $T=1$.

	Since the projection $n!^{-1}P_{T,n}$ acts trivially on its (closed) range, the maps
	\begin{align}\label{eq:In}
		I_n:(\Ran P_{T,n},\langle\,\cdot\,,\,\cdot\,\rangle_T)\to(\Ran P_{T,n},\langle\,\cdot\,,\,\cdot\,\rangle), \qquad\Psi_n\mapsto\sqrt{n!}\Psi_n,
	\end{align}
	define a unitary $I=\bigoplus_n I_n$ between the $T$-twisted Fock space $\CF_T(\Hil)$ and another Fock space naturally associated with a symmetric twist $T$, the {\em $T$-symmetric Fock space}. The $T$-symmetric Fock space over $\Hil$ is defined as $\bigoplus_{n=0}^\infty \Ran P_{T,n}$ with the $T$-independent scalar products inherited from $\Hil^{\ot n}$ by restriction. For more on $T$-symmetric Fock spaces, see \cite{LiguoriMintchevRossi:1995,Lechner:2003} and the references cited there.

	The symmetric twists include in particular the {\em flip} 
	\begin{align}
		T=F:v\ot w\mapsto w\ot v,
	\end{align}
	giving rise (via the unitary $I$ \eqref{eq:In}) to the Bose Fock space $I\CF_F(\Hil)$, and the negative flip $-F$, giving rise to the Fermi Fock space $I\CF_{-F}(\Hil)$.

	Two other special examples of symmetric twists are $T=\pm\id_{\Hil\ot\Hil}$. In the case of the positive sign, we have $P_{1,n}=n!$ and hence $I$ maps the twisted Fock space onto the Boltzmann Fock space, $I\CF_1(\Hil)=\CF_0(\Hil)$. In the case of the negative sign, we have $P_{-1,n}=0$ for all $n>1$, i.e. $\CF_{-1}(\Hil)=\Cl\oplus\Hil$.

	\medskip

	A subset of symmetric twists are related to solutions of the Yang-Baxter equation with (additive) spectral parameter. Let $\CK$ be a separable Hilbert space and $S:\Rl\to\CU(\CK\ot\CK)$ a measurable function taking values in the unitaries on the tensor square $\CK\ot\CK$, and satisfying the Yang-Baxter equation with  spectral parameter\footnote{That is, the equation $S(\te)_1S(\te+\te')_2S(\te')_1=S(\te')_2S(\te+\te')_1S(\te)_2$ holds in $\CB(\CK^{\ot 3})$ for almost all $\te,\te'\in\Rl$. This condition is automatically satisfied if $\CK=\Cl$.} as well as the symmetry condition $S(\te)^*=S(-\te)$ for almost all $\te\in\Rl$.

	Then (the tensor square of) the Hilbert space $\Hil=L^2(\Rl\to\CK,d\te)$ carries the symmetric twist
	\begin{align}\label{eq:TS}
		(T_S\psi)(\te_1,\te_2)=S(\te_2-\te_1)\psi(\te_2,\te_1).
	\end{align}
	These twists are often considered in the context of generalized statistics \cite{LiguoriMintchev:1995-1} and integrable quantum field theories, where $S$ has the meaning of an elastic two-body scattering matrix -- see, for example, \cite{Smirnov:1992,LechnerSchutzenhofer:2013}.
\end{example}

\begin{example}\label{example:TSym-scaled}{\bf (Scaled Yang-Baxter solutions)}\\
	Given a symmetric twist $T\in\CTSym$ and $-1<q<1$, the scaled twist $qT$ lies in~$\Tws$ but not in $\CTSym$ because $\|qT\|=|q|<1$. An example that has been studied a lot in the literature is the ``$q$-Fock space'' $\CF_{qF}(\Hil)$ defined by the scaled flip $qF$, which interpolates between Bose and Fermi statistics \cite{FrischBourret:1970,BozejkoSpeicher:1991,BikramMukherjee:2017}.
\end{example}

Later on we will see that in our context, braided twists are the most interesting ones. The family of braided twists includes all finite-dimensional contractive selfadjoint solutions $T$ to the Yang-Baxter equation. Contrary to the situation considered in Example~\ref{example:TSym-scaled}, these do not necessarily have spectrum contained in a circle. Such braided twists can therefore not be rescaled to give representations of the symmetric groups, but only define representations of the braid groups $B_n$.
\\
Moreover, there exist also genuinely infinite-dimensional braided twists such as SO$(d,1)$-symmetric twists considered in the context of quantum field theories on de Sitter space \cite{HollandsLechner:2018}.
\\
Because of the lack of general classification results on solutions of the Yang-Baxter equation, the structure of $\Tw$ is very complex and largely unknown in general.

Following \cite{BozejkoSpeicher:1994,JorgensenSchmittWerner:1995}, we now fix an arbitrary twist $T\in\Tw$ and describe creation and annihilation type operators on $\CF_T(\Hil)$ which will be used to generate our von Neumann algebras. Selfadjointness of $P_{T,n}$ implies
\begin{align}
	P_{T,n+1}=R_{T,n+1}^*(1\ot P_{T,n}),
\end{align}
from which we read off that the left creation operators $\Psi_n\mapsto\xi\ot\Psi_n$, $\xi\in\Hil$, $\Psi_n\in\Hil^{\ot n}$, descend to maps
\begin{align}
	a_{L,T}^\star(\xi):\Hil^{\ot n}/\ker P_{T,n} &\to \Hil^{\ot (n+1)}/\ker P_{T,n+1},
	\\
	a_{L,T}^\star(\xi)[\Psi_n] &:= [\xi\ot\Psi_n],
	\label{def:aLTstar}
\end{align}
where $[\,\cdot\,]$ denotes the quotient map, i.e. the $\scpr$-orthogonal projection onto $\overline{\Ran P_{T,n}}\subset\Hil^{\ot n}$. Thus $a_{L,T}^\star(\xi)$ extend to densely defined operators on $\CF_T(\Hil)$ which contain the quotients $\Hil^{\ot n}/\ker P_{T,n}$, $n\in\Nl$, in their domains. As shown by Bożejko and Speicher \cite[Thm.~3.1]{BozejkoSpeicher:1994}, these operators are bounded on each $\Hil_{T,n}$, $n\in\Nl$. In fact, positivity and the defining formula \eqref{def:Pn} imply that
\begin{align*}
	P_{T,n+1}^2&=(1\ot P_{T,n})R_{T,n+1}R_{T,n+1}^*(1\ot P_{T,n})\\
	&\leq(1\ot P_{T,n}) \|R_{T,n+1}^*\|^2(1\ot P_{T,n})\\
	&\leq c_{T,n}^2(1\ot P_{T,n})^2,
\end{align*}
where $c_{T,n}=\sum_{k=0}^n\|T\|^k$.

Hence $P_{T,n+1}\leq c_{T,n}(1\ot P_{T,n})$
and for $[\Psi_n]\in\Hil^{\ot n}/\ker P_{T,n}$, we have
\begin{align*}
	\|a_{L,T}^\star(\xi)[\Psi_n]\|^2_T
	&=
	\langle\xi\ot\Psi_n,P_{T,n+1}\xi\ot\Psi_n\rangle
	\nonumber
	\\
	&\leq
	c_{T,n}\langle\xi\ot\Psi_n,(1\ot P_{T,n})\xi\ot\Psi_n\rangle
	\nonumber
	\\
	&=
	c_{T,n}\|\xi\|^2\|[\Psi_n]\|_T^2
	,
\end{align*}
which implies the operator norm bound
\begin{align}\label{eq:abound1}
	\|a_{L,T}^\star(\xi)|_{\Hil_{T,n}}\|_T
	&\leq
	\sqrt{c_{T,n}}\|\xi\|,\qquad c_{T,n}
	=
	\begin{cases}
		\frac{1-\|T\|^{n+1}}{1-\|T\|} & \|T\|<1,\\
		n+1 & \|T\|=1
	\end{cases}
	.
\end{align}

One may calculate the adjoint $a_{L,T}(\xi)$ of $a_{L,T}^\star(\xi)$ (w.r.t. $\scpr_T$) as
\begin{align}\label{eq:aL}
a_{L,T}(\xi)[\Psi_n] = [a_L(\xi)R_{T,n}\Psi_n],
\end{align}
where $a_L(\xi)$ is the untwisted left annihilation operator, namely the Boltzmann Fock space operator fixed by
\begin{align}\label{eq:aL-untwisted}
	a_L(\xi)(\psi_1\ot\ldots\psi_n) = \langle\xi,\psi_1\rangle\cdot\psi_2\ot\ldots\ot\psi_n.
\end{align}
Note that \eqref{eq:aL} is well-defined on $\Hil^{\ot n}/\ker P_{T,n}$ and extends to a bounded operator $\Hil_{T,n}\to\Hil_{T,n-1}$. We will use the same symbols $a_{L,T}(\xi)$, $a_{L,T}^\star(\xi)$ to denote the operators that are defined on the dense finite particle subspace of $\CF_T(\Hil)=\bigoplus_n\Hil_{T,n}$ consisting of terminating direct sums.

In view of \eqref{eq:abound1}, these operators are bounded in case $\|T\|<1$, namely
\begin{align}\label{eq:altnormbound}
	\|a_{L,T}^\star(\xi)\|_T \leq \frac{\|\xi\|}{\sqrt{1-\|T\|}} < \infty .
\end{align}

\begin{remark}
	Let us comment on a simplified description of the operators $a_{L,T}^\star(\xi)$ that is available for symmetric twists $T\in\CTSym$. In this case, $\Hil_{T,n}=\Ran P_{T,n}\subset\Hil^{\ot n}$ is the closed subspace of $T$-symmetric vectors, i.e. the vectors $\Psi_n\in\Hil^{\ot n}$ satisfying $T_k\Psi_n=\Psi_n$, $k=1,\ldots,n-1$. The $\scpr$-orthogonal projection $Q_{T,n}$ onto it (the ``$T$-symmetrization'') is given by $Q_{T,n}=n!^{-1}P_{T,n}$. From this one infers that the creation operators satisfy
	\begin{align}
		Ia_{L,T}^\star(\xi)I^{-1}Q_{T,n}\Psi_n = \sqrt{n+1} Q_{T,n+1}(\xi\ot\Psi_n),\qquad \Psi_n\in\Hil^{\ot n},
	\end{align}
	where $I$ is the unitary \eqref{eq:In}. This shows that when transported to the $T$-symmetric Fock space, we recover the familiar Zamolodchikov creation and annihilation operators (see \cite{ZamolodchikovZamolodchikov:1992} for their origin in integrable models and \cite{LechnerSchutzenhofer:2013} for a mathematical formulation).

	In particular, these operators form representations of the CCR and CAR algebras \cite[Sect.~5.2.2]{BratteliRobinson:1997} on the Bose and Fermi Fock space for $T=F$ and $T=-F$, respectively.
\end{remark}

We conclude this section with a comment on second quantization of operators on $T$-twisted Fock spaces. Here and in the following, we write $\Dom(X)$ for the domain of an operator $X$ and we denote by $X\ot Y: \Dom(X)\odot \Dom(Y)\to \Hil \ot \Hil$ the linear operator defined by $(X \ot Y)(x \ot y)= X(x) \ot Y(y)$. Furthermore, we write $X^\#$ to denote either $X$ or $X^\star$, i.e. an equation of the form $f(X^\#)=Y^\#$ means $f(X)=Y$ and $f(X^\star)=Y^\star$.

\begin{lemma}\label{lemma:T-second-quantization}
	Let $T\in\Tw$ be a twist and $V:\Dom(V)\to\Hil$ a closable linear or antilinear operator such that 
	\begin{align}\label{eq:VcommuteswithT}
		[V\ot V,T]=0.
	\end{align}
	\begin{enumerate}
		\item\label{item:T-second-quantization} Then $\hat{V}:\bigoplus_{n=0}^\infty\Dom(V)^{\odot n}/\ker(P_{T,n}) \to \Hil_{T,n} $, given by $\hat{V}([\Psi_n])=[V^{\ot n}\Psi_n]$, for all $\Psi_n \in \Dom(V)^{\odot n}/\ker(P_{T,n})$ and for all $n \in \Nl$, where $V^{\ot 0}:=1$ , is a well-defined closable operator on $\CF_T(\Hil)$ whose closure will be denoted by $\Gamma_T(V)$. In case $V$ is (anti)unitary, so is $\Gamma_T(V)$.
		\item\label{item:T-second-quantization-covariance} If $V$ is invertible, we have for any $\xi\in\Dom(V)$
		\begin{align}
			\overline{\Gamma_T(V)a_{L,T}^\#(\xi)\Gamma_T(V^{-1})} &= a_{L,T}^\#(V\xi).
		\end{align}
	\end{enumerate}
\end{lemma}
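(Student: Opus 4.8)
For part \ref{item:T-second-quantization}, the first task is to check that $\hat V$ is well-defined on the quotient, i.e.\ that $V^{\ot n}$ maps $\ker P_{T,n}\cap\Dom(V)^{\odot n}$ into $\ker P_{T,n}$ (more precisely into a vector representing $0$ in $\Hil_{T,n}$). The key point is that the hypothesis \eqref{eq:VcommuteswithT} propagates from $\Hil\ot\Hil$ to all tensor powers: since $T_k$ acts as $T$ on the $k,k+1$ factors and as $1$ elsewhere, $[V\ot V,T]=0$ gives $V^{\ot n}T_k\subset T_kV^{\ot n}$ on $\Dom(V)^{\odot n}$, hence $V^{\ot n}$ commutes (in the appropriate one-sided sense, accounting for domains) with each $R_{T,n}$ and, by the inductive definition \eqref{def:Pn}, with each $P_{T,n}$. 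From $V^{\ot n}P_{T,n}\subset P_{T,n}V^{\ot n}$ one reads off both that $\ker P_{T,n}$ is essentially invariant and that $\langle V^{\ot n}\Psi_n,P_{T,n}V^{\ot n}\Psi_n\rangle=\langle V^{\ot n}\Psi_n,V^{\ot n}P_{T,n}\Psi_n\rangle$, so the $\|\cdot\|_T$-norm is controlled by the $\|\cdot\|_T$-norm of $\Psi_n$ whenever $V$ is isometric on the relevant subspace; in general closability of $V$ (and of $V\ot V$, $V^{\ot n}$, which follows since $V$ closable and the commutation with $T$ lets one build a densely defined adjoint via $(V^\star)^{\ot n}$ composed with the bounded operators relating the two inner products) is what we need, and closability of $\hat V$ on $\CF_T(\Hil)$ then follows by taking the direct sum and checking the adjoint is densely defined on the finite-particle subspace. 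If $V$ is (anti)unitary then $V^{\ot n}$ is (anti)unitary on $\Hil^{\ot n}$, it maps $\ker P_{T,n}$ onto itself (using the two-sided commutation now available), and it is isometric for $\langle\cdot,\cdot\rangle_T$; surjectivity of each $\Gamma_T(V)|_{\Hil_{T,n}}$ gives that $\Gamma_T(V)$ is (anti)unitary.

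For part \ref{item:T-second-quantization-covariance}, assume $V$ invertible with $V^{-1}$ also satisfying \eqref{eq:VcommuteswithT} (which follows automatically: conjugating $[V\ot V,T]=0$ by $(V\ot V)^{-1}=(V^{-1})\ot(V^{-1})$ on the appropriate domains). For the creation operator, unwind the definitions: on $[\Psi_n]\in\Dom(V)^{\odot n}/\ker P_{T,n}$,
\begin{align*}
	\Gamma_T(V)\,a_{L,T}^\star(\xi)\,\Gamma_T(V^{-1})\,[\Psi_n]
	&= \Gamma_T(V)\,a_{L,T}^\star(\xi)\,[(V^{-1})^{\ot n}\Psi_n]\\
	&= \Gamma_T(V)\,[\xi\ot (V^{-1})^{\ot n}\Psi_n]\\
	&= [V\xi\ot \Psi_n]
	= a_{L,T}^\star(V\xi)\,[\Psi_n],
\end{align*}
so the stated identity holds on a core and passes to the closure. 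The annihilation case then follows by taking $\star$-adjoints of this relation, using that $\Gamma_T(V)^\star=\Gamma_T(V)^{-1}=\Gamma_T(V^{-1})$ when $V$ is unitary, but for general invertible $V$ one instead verifies it directly from \eqref{eq:aL}: $a_{L,T}(\xi)[\Psi_n]=[a_L(\xi)R_{T,n}\Psi_n]$, and since $V^{\ot n}$ commutes with $R_{T,n}$ while $a_L(V^{-1}\xi)V^{\ot n} = V^{\ot(n-1)}a_L(\xi)$ on $\Dom(V)^{\odot n}$ (a one-line check from \eqref{eq:aL-untwisted} using $\langle V^{-1}\xi,V\psi_1\rangle=\langle\xi,\psi_1\rangle$ — this is where invertibility of $V$ is genuinely used, to undo the action of $V$ on the contracted slot), one gets $\Gamma_T(V)a_{L,T}(\xi)\Gamma_T(V^{-1})[\Psi_n] = [V^{\ot(n-1)}a_L(\xi)R_{T,n}(V^{-1})^{\ot n}\Psi_n] = [a_L(V\xi)R_{T,n}\Psi_n] = a_{L,T}(V\xi)[\Psi_n]$.

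The main obstacle I expect is the domain bookkeeping in part \ref{item:T-second-quantization}: \eqref{eq:VcommuteswithT} is an identity of possibly unbounded operators, so "commuting with $P_{T,n}$" must be interpreted as inclusions of operators on the algebraic tensor product of domains, and one must be careful that $\hat V$ really descends to the quotient $\Hil^{\ot n}/\ker P_{T,n}$ rather than merely to $\Dom(V)^{\odot n}$ — this needs that $V^{\ot n}$ sends elements representing $0$ in $\Hil_{T,n}$ to elements representing $0$, i.e.\ control in the $\|\cdot\|_T$-seminorm, not just algebraically. For unbounded $V$ this is subtle because the $\|\cdot\|_T$-norm involves $P_{T,n}$ which need not have bounded inverse; the commutation relation $V^{\ot n}P_{T,n}\subset P_{T,n}V^{\ot n}$ is precisely what lets one circumvent this, rewriting $\|\hat V[\Psi_n]\|_T^2 = \langle\Psi_n,(V^\star)^{\ot n}P_{T,n}V^{\ot n}\Psi_n\rangle = \langle\Psi_n,(V^\star V)^{\ot n}P_{T,n}\Psi_n\rangle$ and thereby reducing to the one-particle mapping property of $V$. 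Everything else is routine: the creation-operator computation is algebraic, and the closability and (anti)unitarity assertions are standard once the commutation with $P_{T,n}$ is in hand.
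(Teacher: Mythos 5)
Part \ref{item:T-second-quantization} of your proposal and the creation-operator half of part \ref{item:T-second-quantization-covariance} follow essentially the paper's route: propagate $[V\ot V,T]=0$ to commutation of $V^{\ot n}$ with each $T_k$, hence with $R_{T,n}$ and (inductively) $P_{T,n}$, deduce well-definedness on the quotient, obtain closability from the densely defined candidate adjoint $\widehat{V^*}\subset\hat V^\star$, and check the covariance on the finite-particle core $[\xi\ot(V^{-1})^{\ot n}\Psi_n]\mapsto[V\xi\ot\Psi_n]$. That part is fine.

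The gap is in your direct verification of the annihilation-operator covariance for general invertible $V$. The identity you invoke, $\langle V^{-1}\xi,V\psi_1\rangle=\langle\xi,\psi_1\rangle$, is not a consequence of invertibility: by conjugate-linearity in the first slot it is exactly the statement that $V$ is symmetric, $V\subset V^*$. Worse, it is not even the identity your chain of equalities needs: to pass from $[V^{\ot(n-1)}a_L(\xi)(V^{-1})^{\ot n}R_{T,n}\Psi_n]$ to $[a_L(V\xi)R_{T,n}\Psi_n]$ you need $\langle\xi,V^{-1}\phi\rangle=\langle V\xi,\phi\rangle$, i.e. $V^{-1}\subset V^*$, which (given invertibility) is (anti)unitarity. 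What the computation actually yields in general is $\Gamma_T(V)a_{L,T}(\xi)\Gamma_T(V^{-1})=a_{L,T}((V^{-1})^*\xi)$ on the finite-particle domain, and this differs from $a_{L,T}(V\xi)$ unless $V^{-1}=V^*$: already for $V=2\cdot\mathbbm{1}$ (which commutes with every twist) one has, on one-particle vectors, $\Gamma_T(V)a_{L,T}(\xi)\Gamma_T(V^{-1})\psi=\tfrac12\langle\xi,\psi\rangle\Om$ while $a_{L,T}(2\xi)\psi=2\langle\xi,\psi\rangle\Om$. So your argument for the annihilation case cannot be repaired for arbitrary invertible closable $V$ (indeed the annihilation half of the statement, read literally, needs the extra hypothesis $V^{-1}\subset V^*$); in the (anti)unitary situation, which is the only one the paper ever uses, the adjoint argument you yourself mention already settles it, and the ``direct verification'' should be dropped or restricted to that case.
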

\begin{proof}
	\ref{item:T-second-quantization} Thanks to the commutation condition \eqref{eq:VcommuteswithT}, one checks $T_k V^{\ot n}=V^{\ot n}T_{k}$, $k=1,\ldots,n-1$. This implies $ R_{T,n} V^{\ot n}=V^{\ot n} R_{T,n}$ and inductively $  P_{T,n} V^{\ot n}=V^{\ot n}P_{T,n}$, i.e. $[V^{\ot n},P_{T,n}]=0$. In particular, $\Dom(V)^{\odot n}/\ker(P_{T,n})\subset \Dom(V^{\ot n})/\ker(P_{T,n})$. It follows that $\hat{V}$ is a $\|\cdot\|_T$-densely well-defined (anti)linear operator in~$\CF_T(\Hil)$. The exact same argument for $V^\ast \ot V^\ast$ proves the $\widehat{V^\ast}\subset \hat{V}^\star$ is also a $\|\cdot\|_T$-densely and well-defined operator in~$\CF_T(\Hil)$. Hence $\hat{V}$ is a closable operator in~$\CF_T(\Hil)$. The fact that $\Gamma_T(V)$ is (anti)unitary for (anti)unitary $V$ is easy to check from this construction.

	\ref{item:T-second-quantization-covariance} can be easily checked on a core for $a_{R,T}^\#(V\xi)$.
\end{proof}

\subsection{Standard subspaces and twisted Araki-Woods algebras}
We now use the twisted left field operators\footnote{These operators generalize free quantum fields, hence the name.}
\begin{align}\label{eq:phiLT}
	\phi_{L,T}(\xi)=a_{L,T}^\star(\xi)+a_{L,T}(\xi),\qquad \xi\in\Hil,
\end{align}
to generate von Neumann algebras. For $\|T\|<1$, these are bounded selfadjoint operators on $\CF_T(\Hil)$ \eqref{eq:abound1}. If $\|T\|=1$, the bound $\|a_{L,T}^\#(\xi)|_{\Hil_{T,n}}\|_T\leq \sqrt{n+1}\cdot\|\xi\|$  \eqref{eq:abound1} implies that any vector of finite particle number is an (entire) analytic vector for $\phi_{L,T}(\xi)$, hence $\phi_{L,T}(\xi)$ is essentially selfadjoint on the domain of finite particle number by Nelson's Theorem and we use the same symbol to denote its selfadjoint closure.

Before we define the von Neumann algebras we are interested in, we recall a few facts about {\em standard subspaces} \cite{RieffelvanDaele:1977,Longo:2008_2}: By definition, a standard subspace of a complex Hilbert space $\Hil$ is a closed real subspace $H\subset\Hil$ that is cyclic in the sense that $H+iH\subset\Hil$ is dense, and separating in the sense $H\cap iH=\{0\}$. 

A standard subspace $H$ defines modular data through the polar decomposition $S_H=J_H\Delta_H^{1/2}$ of the closed operator with domain $\Dom_H:=H+iH$
\begin{align}
    S_H:H+iH\to H+iH,\qquad h_1+ih_2\mapsto h_1-ih_2.
\end{align}
The operator $S_H$ is called the Tomita operator of $H$. It satisfies an analogue of Tomita's theorem for von Neumann algebras, namely
\begin{align}\label{tomita-H}
	J_HH=H',\qquad
	\Delta_H^{it}H=H,\quad t\in\Rl,
\end{align}
where 
\begin{align}
    H':=\{h'\in\Hil\colon \Im\langle h,h'\rangle=0\quad\forall h\in H\}
\end{align}
is the symplectic complement of $H$, which is a standard subspace as well (with $J_{H'}=J_H$, $\Delta_{H'}=\Delta_H^{-1}$).

\begin{definition}\label{def:MLTH}
	Given a closed real subspace $H\subset\Hil$ and a twist $T\in\Tw$, we define the (left) $T$-twisted Araki-Woods von Neumann algebra
	\begin{align}
		\CL_{T}(H) := \{\exp(i\phi_{L,T}(h))\,:h\in H\}''
		\subset\CB(\CF_T(\Hil)).
	\end{align}
\end{definition}
In case the field operators are bounded, this is simply the weak closure of the ${}^\star$-algebra $\CP_{L,T}(H)$ of all polynomials in $\phi_{L,T}(h)$, $h\in H$.

\medskip 

If $T=F$ is the flip, $\CL_{T}(H)$ is a second quantization von Neumann algebra which is well understood, and closely related to the original work of Araki and Woods \cite{ArakiWoods:1968}. In this case, $\CL_{F}(H)$ is a factor if and only if $H\cap H'=\{0\}$, and $\Om$ is cyclic and separating if and only if $H$ is a standard subspace \cite[Thm. I.3.2]{LeylandsRobertsTestard:1978}. For more details on modular structure and a summary of results on the type of $\CL_{F}(H)$, see \cite[Thm.~1.3]{FiglioliniGuido:1994}.

If $T=-F$ is the negative flip, $\CL_{-F}(H)$ is generated by a representation of the CAR algebra, see \cite{Foit:1983,BaumgartelJurkeLledo:2002} for results in this case.

Another special case is $T=0$. These von Neumann algebras have been introduced by Voiculescu \cite{Voiculescu:1985}, initially in the case where $H$ is the closed real span of an orthonormal basis of~$\Hil$. In this case, $\CL_{0}(H)$ is isomorphic to the group von Neumann algebra of the free group on $\dim\Hil$ generators, which explains their central importance in free probability \cite[Thm.~2.6.2]{VoiculescuDykemaNica:1992}. Shlyakhtenko studied $\CL_{0}(H)$ for more general spaces $H$ \cite{Shlyakhtenko:1997}, and in particular showed that $\CL_{0}(H)$ is a factor for any standard subspace $H$. When $\Delta_H$ is non-trivial and $\dim\Hil>1$, these are full factors of type III$_{\la}$, $0<\la\leq1$ \cite[Thm.~6.1]{Shlyakhtenko:1997}.

The scaled flip $T=qF$, $-1<q<1$, has been studied extensively in the literature, as it interpolates between the CCR ($q=1$), CAR ($q=-1$), and free ($q=0$) cases. It has first been considered in the case $H=H'$ \cite{BozejkoSpeicher:1991} and then generalized to the case of general standard subspaces by Hiai \cite{Hiai:2003}. We refer to  \cite{BozejkoKummererSpeicher:1997,Ricard:2003,Shlyakhtenko:2004,Sniady:2004,Nou:2004_2,Junge:2006_2,GuionnetShlyakhtenko:2014,Nelson:2015,BikramMukherjee:2017,BikramMukherjee:2020} for various properties (such as factoriality, type, non-injectivity, (strong) solidity, approximation properties, MASAS) of the von Neumann algebras $\CL_{qF}(H)$.

\smallskip 

In the case of general twists $T$, only very little is known about $\CL_{T}(H)$. The focus of our investigation below is to identify situations in which $\Om$ is cyclic and separating for $\CL_{T}(H)$, and then study properties of (specific) inclusions $\CL_{T}(K)\subset\CL_{T}(H)$ rather than the internal structure of $\CL_{T}(H)$. We begin with the following elementary lemma.

\begin{lemma}\label{lemma:csT}
Let $T\in\Tw$ and $H\subset\Hil$ be a real closed subspace.
	\begin{enumerate}
		\item\label{item:T-cyclicity} If $H$ is cyclic, $\Om$ is cyclic for $\P_{L,T}(H)$ and hence for $\CL_T(H)$.
		\item\label{item:T-separating} If $\Om$ is separating for $\P_{L,T}(H)$, then $H$ is separating.
	\end{enumerate}
\end{lemma}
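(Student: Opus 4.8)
\textbf{Proof plan for Lemma~\ref{lemma:csT}.}

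The plan is to treat the two statements separately, since \ref{item:T-cyclicity} is a direct density argument while \ref{item:T-separating} is its contrapositive-style companion on the one-particle level.

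For part \ref{item:T-cyclicity}, I would first observe that it suffices to show that the closure of $\P_{L,T}(H)\Om$ contains every $n$-particle vector, i.e.\ a total subset of each $\Hil_{T,n}$, since $\P_{L,T}(H)\Om \subset \CL_T(H)\Om$. I would argue by induction on $n$: the base case $n=0$ is trivial since $\phi_{L,T}(h)^0\Om = \Om$. For the inductive step, given $h_1,\dots,h_n \in H$, I would expand $\phi_{L,T}(h_1)\cdots\phi_{L,T}(h_n)\Om$ using $\phi_{L,T}(h)=a_{L,T}^\star(h)+a_{L,T}(h)$ and $a_{L,T}(h)\Om=0$; the top-particle-number term is $[h_1\ot\cdots\ot h_n]$, while all lower-order terms lie in $\bigoplus_{k<n}\Hil_{T,k}$ and are already in the closed span of $\P_{L,T}(H)\Om$ by the induction hypothesis. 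Hence $[h_1\ot\cdots\ot h_n]$ lies in the closure of $\P_{L,T}(H)\Om$. Since $H$ is cyclic, $H+iH$ is dense in $\Hil$; taking real and imaginary parts and using that $\P_{L,T}(H)\Om$ is a real-linear subspace (in fact one may pass to complex coefficients by noting $i\phi_{L,T}(h)$-exponentials generate $\CL_T(H)$, or more elementarily by the polarization/density remark), the vectors $[h_1 \ot \cdots \ot h_n]$ with $h_j \in H + iH$ are dense in $\Hil^{\ot n}$, hence their images are dense in $\Hil_{T,n}=\overline{\Ran P_{T,n}}^T$. Summing over $n$ gives cyclicity of $\Om$ for $\P_{L,T}(H)$, and a fortiori for $\CL_T(H)$.

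For part \ref{item:T-separating}, I would prove the contrapositive: if $H$ is not separating, then $\Om$ is not separating for $\P_{L,T}(H)$. Non-separating means there exists $h \in H \cap iH$ with $h \neq 0$, so both $h \in H$ and $-ih \in H$ (equivalently $ih \in H$). Consider the two field operators $\phi_{L,T}(h)$ and $\phi_{L,T}(ih)$, both in $\P_{L,T}(H)$. I would compute their action on $\Om$: $\phi_{L,T}(h)\Om = [h] \in \Hil \subset \CF_T(\Hil)$ and $\phi_{L,T}(ih)\Om = [ih] = i[h]$, using that the scalar products $\scpr$ and $\scpr_T$ agree on $\Cl\Om \oplus \Hil$ and that $a_{L,T}^\star$ is complex-linear in its argument while $a_{L,T}$ vanishes on $\Om$. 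Therefore the element $x := \phi_{L,T}(h) - \frac{1}{i}\,\phi_{L,T}(ih) = \phi_{L,T}(h) + i\,\phi_{L,T}(ih) \in \P_{L,T}(H)$ (complex coefficients are allowed in the $^\star$-algebra of polynomials) satisfies $x\Om = [h] + i\cdot i[h] = [h] - [h] = 0$, while $x \neq 0$: indeed $\phi_{L,T}(h)$ has nonzero matrix elements (e.g.\ $\langle [h], \phi_{L,T}(h)\Om\rangle_T = \|h\|^2 \neq 0$) whereas $i\phi_{L,T}(ih)$ maps $\Om$ to $-[h]$, so their sum cannot be the zero operator — one checks $x$ acts nontrivially on a suitable vector such as $[h]\ot\Psi$ or simply notes $x^\star\Om = x\Om = 0$ would force $x=0$ only if $\Om$ were separating, which is exactly what we are disproving; cleanly, $x \neq 0$ because its one-particle-to-two-particle block is $2 a_{L,T}^\star(h) \neq 0$ on the relevant subspace. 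Hence $\Om$ is annihilated by a nonzero element of $\P_{L,T}(H)$, so it is not separating.

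\textbf{Main obstacle.} The only real subtlety is the passage from real to complex linear combinations: the field operators $\phi_{L,T}(h)$ are only indexed by $h$ ranging over the \emph{real} subspace $H$, so in part \ref{item:T-cyclicity} I must be careful that producing dense vectors in $\Hil^{\ot n}$ requires combining $\phi_{L,T}(h)$ with $h \in H$ and the ${}^\star$-algebra structure (complex scalars) — this is legitimate since $\P_{L,T}(H)$ is the ${}^\star$-algebra generated by these operators, but it should be spelled out. In part \ref{item:T-separating} the same remark makes the construction of the witnessing element $x = \phi_{L,T}(h) + i\phi_{L,T}(ih)$ clean, provided $h \in H \cap iH$. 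Everything else is a routine unwinding of definitions, using only that $a_{L,T}$ annihilates $\Om$, that $a_{L,T}^\star(\xi)$ is complex-linear in $\xi$ with $a_{L,T}^\star(\xi)\Om = [\xi]$, and that $\scpr$ and $\scpr_T$ coincide on $\Cl\Om\oplus\Hil$.
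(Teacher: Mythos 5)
Your overall strategy matches the paper's: part \ref{item:T-cyclicity} is the same induction on particle number (top component equals $[h_1\ot\cdots\ot h_n]$, lower components absorbed by the induction hypothesis, then density of $(H+iH)^{\odot n}$), and part \ref{item:T-separating} is the contrapositive of the paper's argument, which takes $h=i\tilde h\in H\cap iH$, observes $(\phi_{L,T}(h)-i\phi_{L,T}(\tilde h))\Om=0$, and then uses selfadjointness of the fields (take adjoints of $\phi_{L,T}(h)=i\phi_{L,T}(\tilde h)$) to conclude $\phi_{L,T}(h)=0$ and hence $h=0$.

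There is, however, a concrete error in your verification that $x=\phi_{L,T}(h)+i\,\phi_{L,T}(ih)$ is nonzero. In this paper $a_{L,T}^\star(\xi)$ is complex-\emph{linear} in $\xi$ (since $a_{L,T}^\star(\xi)[\Psi_n]=[\xi\ot\Psi_n]$) while $a_{L,T}(\xi)$ is \emph{antilinear} (the inner product in \eqref{eq:aL-untwisted} is antilinear in its first slot). Hence $i\,\phi_{L,T}(ih)=i\bigl(i\,a_{L,T}^\star(h)-i\,a_{L,T}(h)\bigr)=-a_{L,T}^\star(h)+a_{L,T}(h)$, so the creation parts \emph{cancel} and $x=2\,a_{L,T}(h)$ — not $2\,a_{L,T}^\star(h)$ as you claim (you appear to have the linearity conventions reversed). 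The conclusion survives: $a_{L,T}(h)[h]=\langle h,h\rangle\,\Om=\|h\|^2\Om\neq0$, so $x\neq0$ and $\Om$ fails to separate $\P_{L,T}(H)$. But as written, the justification of $x\neq0$ is wrong, and the earlier attempted justifications in that sentence are circular or inconclusive (both summands map $\Om$ to $\mp[h]$, so "their sum cannot be the zero operator" does not follow from matrix elements against $\Om$). The paper's route via selfadjointness sidesteps this computation entirely and is cleaner. A final minor point, for both you and the paper: in the unbounded case ($\|T\|=1$) the polynomials are not elements of $\CL_T(H)$, so passing from cyclicity for $\P_{L,T}(H)$ to cyclicity for $\CL_T(H)$ needs the standard analytic-vector argument; your "a fortiori" glosses over this at the same level the paper does.
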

\begin{proof}
	\ref{item:T-cyclicity} Let us prove that $\overline{\P_{L,T,n}(H)\Om}$ contains $\Hil_{T,n}$ by induction in $n$, where $\P_{L,T,n}(H)$ denotes the linear space of all polynomials in the fields $\phi_{L,T}(h)$, $h\in H$, of degree up to $n$. The case $n=0$ is trivial. For the induction step, fix $n\in\Nl$, a vector $\Psi_{n+1}\in\Hil_{T,n+1}$, and $\eps>0$. Since we have\footnote{A remark on notation: We use square brackets to indicate the equivalence classes in the quotient \eqref{quot}, also extended to direct sums over several particle numbers. Indices $n$ denote projection to the $n$-particle subspace. For example, $[\phi_{L,T}(h_1)\phi_{L,T}(h_2)\Om]_2=[h_1\ot h_2]$ denotes the $2$-particle component of the equivalence class $[h_1\ot h_2]$.} $$[\phi_{L,T}(h_1)\cdots\phi_{L,T}(h_{n+1})\Om]_{n+1}=[h_1\ot\ldots\ot h_{n+1}]$$ and $H$ is cyclic, it follows that we find a field polynomial $Q_{n+1}\in\P_{L,T,n+1}(H)$ of degree $n+1$ such that $\|[Q_{n+1}\Om]_{n+1}-\Psi_{n+1}\|<\eps$. By the induction assumption, there exists $Q_n\in\P_{L,T,n}(H)$ such that $$\|Q_n\Om-(Q_{n+1}\Om-[Q_{n+1}\Om]_{n+1})\|<\eps.$$ Hence $\tilde{Q}_{n+1}:=Q_{n+1}-Q_n\in\P_{L,T,n+1}(H)$ satisfies $\|\tilde{Q}_{n+1}\Om-\Psi_{n+1}\|\leq2\eps$.

	This shows that $\Om$ is cyclic for $\P_{L,T}(H)$, and by standard arguments (only required in case $\phi_{L,T}(h)$ is unbounded), one sees that it is also cyclic for $\CL_{T}(H)$.

	\ref{item:T-separating} Let $h,\tilde h\in H$ and $h=i\tilde h\in H\cap iH$. Then $h=\phi_{L,T}(h)\Om=i\phi_{L,T}(\tilde h)\Om=i\tilde h$; and since $\Om$ separates $\P_{L,T}(H)$ by assumption, we find $\phi_{L,T}(h)=i\phi_{L,T}(\tilde h)$. Taking adjoints, this implies $\phi_{L,T}(h)=-i\phi_{L,T}(\tilde h)$ and hence $\phi_{L,T}(h)=0$, i.e. $h=0$.
\end{proof}

This motivates us to restrict attention to $\CL_T(H)$ with $H$ a standard subspace. In this case $\Om$ is cyclic for $\CL_T(H)$ with no further conditions on $T$. However, in general it is not separating, as we will see below. If $\Om$ {\em is} separating, $(\CL_{T}(H),\Om)$ defines modular data $\Delta_{L,T,H}$ and $J_{L,T,H}$. When $T$ and $H$ are clear from the context, we will use the shorter notation
\begin{align}\label{eq:modulardata}
	\Delta := \Delta_{L,T,H},\qquad J:=J_{L,T,H},\qquad S:=J\Delta^{1/2}.
\end{align}

\section{Twisted Araki-Woods algebras and standard vectors}\label{sect:TwistedAlgebrasAndStandardness}

The aim of this section is to derive necessary and sufficient conditions on $T$ and~$H$ for $\Om$ being cyclic and separating for $\CL_{T}(H)$, and compute the corresponding modular data $J$, $\Delta$. Most of our analysis will be based on the following compatibility assumption between $T$ and $H$.

\begin{definition}\label{def:compatible}
	Let $H\subset\Hil$ be a standard subspace. The twists {\em compatible with $H$} are the elements of
	\begin{align}\label{def:TwH}
		\Tw(H) &:= \{T\in\Tw\colon [\Delta_H^{it}\ot\Delta_H^{it},T]=0\;\text{for all }\, t\in\Rl\},
		\\
		\Tws(H) &:= \Tw(H)\cap \Tws.
	\end{align}
\end{definition}

The advantage of a twist $T$ and a standard subspace $H$ being compatible is that this ensures the existence of the unitaries $\Gamma_T(\Delta_H^{it})$ as discussed in Lemma~\ref{lemma:T-second-quantization}. In the context of quantum field theory, such an assumption occurs naturally when asking a twist given by a two-particle S-matrix to be Poincaré invariant \cite{LechnerSchutzenhofer:2013}. In our present abstract setup, the main consequence of compatibility is that the one-particle restrictions of the modular data $J,\Delta$ \eqref{eq:modulardata} of $(\CL_{T}(H),\Om)$ (if they exist, i.e. if $\Om$ is separating) coincide with the modular data of $H$, as we will show below in Lemma~\ref{lemma:Delta_H=Delta}.

Before this lemma, we introduce some notation. Since we often times deal with analytic functions on strips, we denote strip regions in $\Cl$ by
\begin{align}
	\Strip_a := \{z\in\Cl \col \min\{0,a\}<\Im(z)<\max\{0,a\}\},\qquad a\in\Rl.
\end{align}
For concise formulations, we also introduce the vector space
\begin{align}
	\HB(\Strip_a)
	:=
	\{f:\overline{\Strip_a}\to\Cl\,\text{ continuous and bounded}\,:\,f|_{\Strip_a} \text{ is analytic}\},
\end{align}
which is a Banach space w.r.t. $\|f\|_\infty=\sup\{|f(z)|\,:\,z\in\overline{\Strip_a}\}$ and can be viewed as $C_b(\overline{\Strip_a})\cap{\mathbb H}^\infty(\Strip_a)$. Elements of $\HB(\Strip_a)$ are uniquely determined by their restriction to $\Rl$ or $\Rl+ia$, and we will therefore identify functions on $\Strip_a$ with their boundary values. For example, given $f:\Rl\to\Cl$ we write $f\in\HB(\Strip_a)$ to express that $f$ is the restriction of a function in $\HB(\Strip_a)$ to~$\Rl$.

\begin{lemma}\label{lemma:Delta_H=Delta}
	Let $T\in \Tw(H)$, and assume that $\Omega\in\Hil$ is separating for $\CL_{T}(H)$. Then the modular data $J,\Delta$ of $(\CL_{T}(H),\Om)$ satisfy
	\begin{align}
		\Delta\restr{\Hil\cap \D(\Delta)}=\Delta_H,\qquad J\restr{\Hil} = J_H.
	\end{align}
\end{lemma}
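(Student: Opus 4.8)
The plan is to identify the Tomita operator $S=J\Delta^{1/2}$ of $(\CL_T(H),\Om)$ with the Tomita operator $S_H=J_H\Delta_H^{1/2}$ of $H$ on the one-particle space $\Hil\subset\CF_T(\Hil)$, and to read off $\Delta$ and $J$ from there. Concretely, I would reduce the statement to two claims: (i) $S\supseteq S_H$, so that in particular $H+iH\subseteq\D(S)$ and $S$ agrees with $h_1+ih_2\mapsto h_1-ih_2$ there; and (ii) $\Delta^{it}\Hil\subseteq\Hil$ with $\Delta^{it}\restr{\Hil}=\Delta_H^{it}$ for all $t\in\Rl$. Granting (i) and (ii): since $\Delta^{it}$ preserves $\Hil$, so do the spectral projections of $\log\Delta$, and by (ii) they restrict on $\Hil$ to those of $\log\Delta_H$; hence $\Delta\restr{\Hil\cap\D(\Delta)}=\Delta_H$ by functional calculus, and $\Delta^{1/2}$ agrees with $\Delta_H^{1/2}$ on $H+iH$. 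Comparing with (i), $J\Delta^{1/2}=S_H=J_H\Delta_H^{1/2}$ on $H+iH$, so $J=J_H$ on the dense subspace $\Delta_H^{1/2}(H+iH)\subseteq\Hil$; as both are antiunitary, $J\restr{\Hil}=J_H$.

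Claim (i) is the elementary half. For $h\in H$ the field $\phi_{L,T}(h)$ is selfadjoint and affiliated with $\CL_T(H)$, and $\phi_{L,T}(h)\Om=h$; hence $h\in\D(S)$ with $Sh=\phi_{L,T}(h)^\star\Om=h$. When $\|T\|<1$ this is immediate because $\phi_{L,T}(h)\in\CL_T(H)$; when $\|T\|=1$ one argues through $e^{it\phi_{L,T}(h)}\in\CL_T(H)$, using that $\Om$ is an entire analytic vector for $\phi_{L,T}(h)$. By antilinearity $S$ then agrees with $S_H$ on $H+iH$, and since $S$ is closed while $S_H$ is the closure of its restriction to $H+iH$, we get $S\supseteq S_H$.

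The substantive point is claim (ii), and this is where compatibility enters. Since $T\in\Tw(H)$, Lemma~\ref{lemma:T-second-quantization} provides the unitary one-parameter group $U(t):=\Gamma_T(\Delta_H^{it})$, which fixes $\Om$, preserves the particle grading with one-particle part $U(t)\restr{\Hil}=\Delta_H^{it}$, and by Lemma~\ref{lemma:T-second-quantization}\ref{item:T-second-quantization-covariance} together with $\Delta_H^{it}H=H$ satisfies $U(t)\phi_{L,T}(h)U(-t)=\phi_{L,T}(\Delta_H^{it}h)$; hence it normalizes $\CL_T(H)$. Thus each $\mathrm{Ad}\,U(t)$ is an automorphism of $\CL_T(H)$ (and of $\CL_T(H)'$) preserving the vacuum state $\om_\Om:=\langle\Om,\,\cdot\,\Om\rangle$. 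By the KMS characterisation of the modular automorphism group, any $\om_\Om$-preserving automorphism commutes with $\sigma^\Om_s=\mathrm{Ad}\,\Delta^{is}$; applying this on $\CL_T(H)$ and on $\CL_T(H)'$ shows that the unitary $U(-t)\Delta^{-is}U(t)\Delta^{is}$ lies in $\CL_T(H)\cap\CL_T(H)'$ and fixes $\Om$, so it equals $1$ because $\Om$ is separating. Therefore $[\Delta^{is},\Gamma_T(\Delta_H^{it})]=0$ for all $s,t\in\Rl$.

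Finally I would combine this commutation with the analytic structure of the vacuum correlators. Spelling out the KMS condition of $\sigma^\Om$ for the two-point functions $t\mapsto\om_\Om\big(\phi_{L,T}(h_1)\,\sigma_t(\phi_{L,T}(h_2))\big)$, $h_1,h_2\in H$, one finds that $t\mapsto\langle h_1,\Delta^{it}h_2\rangle$ lies in $\HB(\Strip_1)$ with the same boundary values on $\Rl$ and on $\Rl+i$ as the corresponding standard-subspace function $t\mapsto\langle h_1,\Delta_H^{it}h_2\rangle$, which obeys $\langle h_1,\Delta_H^{1+it}h_2\rangle=\langle h_2,\Delta_H^{-it}h_1\rangle$. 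Exploiting this together with the fact that $\Delta^{it}$ commutes with the grading-preserving group $U(t)$ and with $S\supseteq S_H$, one forces $\Delta^{it}\Hil\subseteq\Hil$ and $\Delta^{it}\restr{\Hil}=\Delta_H^{it}$, i.e. claim (ii); for $\|T\|=1$ the same manipulations are carried out on the dense space of finite-particle analytic vectors. I expect the main obstacle to lie precisely here: $S\supseteq S_H$ and the commutation relation are individually insufficient — the latter is vacuous when $\Delta_H=1$ — so one genuinely has to rule out a larger one-particle ``shadow'' of $\overline{\CL_T(H)_{\mathrm{sa}}\Om}$, which seems to require the KMS-analyticity of the full family of twisted $n$-point functions (the combinatorics of which is organised by the diagram calculus of Section~\ref{sect:diagrams}), not merely the two-point function, while keeping careful track of operator domains.
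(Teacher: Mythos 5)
Your overall strategy is the same as the paper's: establish $S\supseteq S_H$ (your claim (i), which you prove correctly and which the paper uses in the form ``$S$ and $S_H$ coincide on $\D(S_H)$''), and exploit that $U(t)=\Gamma_T(\Delta_H^{it})$ fixes $\Om$ and normalizes $\CL_T(H)$, hence commutes with $J$ and $\Delta$. The paper reaches this commutation more directly, from $SU(t)A\Om=(U(t)AU(-t))^\star\Om=U(t)SA\Om$ and uniqueness of the polar decomposition; your detour through the KMS characterisation of the modular automorphism group together with the separating property of $\Om$ for $\CL_T(H)\cap\CL_T(H)'$ is a valid alternative and buys nothing extra.

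The genuine gap is exactly where you place it yourself: your claim (ii), that $\Delta$ maps $\Hil\cap\D(\Delta)$ back into $\Hil$ and agrees there with $\Delta_H$. Your final paragraph asserts that the two-point KMS relation ``forces'' this and then concedes you do not see how, speculating that the analyticity of all twisted $n$-point functions is needed. That is not a proof, and it is the entire content of the lemma. For the record, the paper's argument at this point is short and uses only the two-point function: it takes $k_1,h_1$ from a common core $D\subset\D(\Delta_H)\cap\D(\Delta)$ and evaluates the boundary value at $t=-i$ of the single function $f(t)=\ip{k_1}{\Delta_H^{it}h_1}$ in two ways --- once via the elementary standard-subspace relation of Lemma~\ref{lemma:2pt} combined with $S\supseteq S_H$, giving $f(-i)=\ip{S_Hh_1}{S_Hk_1}=\ip{Sh_1}{Sk_1}=\ip{k_1}{\Delta h_1}$, and once directly, giving $f(-i)=\ip{k_1}{\Delta_H h_1}$ --- and concludes $\Delta h_1=\Delta_H h_1$ on $D$; no higher correlators and no diagram calculus enter. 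Your unease is not entirely misplaced: for this comparison to yield $\Delta h_1=\Delta_Hh_1$ rather than only $E_1\Delta h_1=\Delta_Hh_1$ (with $E_1$ the one-particle projection), and for the ``common core'' of $\Delta_H$ and $\Delta|_{\Hil\cap\D(\Delta)}$ to make sense in the first place, one must already know that $\Delta$ does not push one-particle vectors into higher particle numbers; the paper extracts this from the commutation of $\Delta^{is}$ and $J$ with $U(t)$ in a single, very terse sentence. But whatever one thinks of that step, your proposal does not supply any argument for it, so as written it proves only (i) and the commutation relation, not the lemma.
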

\begin{proof}
	As $T$ lies in $\Tw(H)$, the operators $U(t):=\Gamma_T(\Delta_H^{it})$, $t\in\Rl$, are well-defined unitaries on $\CF_T(\Hil)$ (Lemma~\ref{lemma:T-second-quantization}~\ref{item:T-second-quantization}) and form a strongly continuous one-parameter group fixing the vacuum vector $\Om$.

	For any polynomial $Q\in \P_{L,T}(H)$, we have $U(t)QU(-t)\in \P_{L,T}(H)\subset \CL_{T}(H)$ by Lemma~\ref{lemma:T-second-quantization}~\ref{item:T-second-quantization-covariance} and $\Delta_H^{it}H=H$. Hence $[U(t)QU(-t),A']=0$ for all $A'\in \CL_{T}(H)'$. By taking limits one concludes that $U(t)\cdot U(-t)$ defines a ${}^\star$-automorphism of $\CL_{T}(H)$. Therefore, the Tomita operator $S$ of $(\CL_{T}(H),\Om)$ satisfies $$SU(t)A\Omega=(U(t)A U(-t))^\star\Omega=U(t)A^\star\Omega=U(t)SA\Omega,\qquad A\in\CL_{T}(H),$$ and therefore $U(t)$ commutes with $J$ and $\Delta$. Since $U(t)$ commutes with $\Delta$ and $J$, the modular operator $\Delta_H$ commutes with $\Delta|_{\Hil\cap \D(\Delta)}$ and $J|_{\Hil}$.

	As $\Delta_H$ and $\Delta|_{\Hil\cap \D(\Delta)}$ commute, there is a common core $D \subset \D(\Delta_H)\cap \D(\Delta)$ for these two operators. Now, for $k_1, h_1$ from this core $D$, the function defined by  $f(t)=\ip{k_1}{U(t)h_1}_T=\ip{k_1}{\Delta_H^{it}h_1}$ belongs to $\HB(\Strip_{-1})$ and satisfies the KMS boundary condition $f(-i)=\ip{S_H h_1}{S_Hk_1}=\ip{S h_1}{Sk_1}=\langle k_1,\Delta h_1\rangle$, where we have used that the $S$ and $S_H$ coincide on $\Dom(S_H)$. On the other hand, $f(-i)=\ip{k_1}{\Delta_H h_1}$. Hence $\Delta h_1 = \Delta_H h_1$ for all $h_1\in D$. Hence $\Delta_H=\Delta|_{\Hil\cap \D(\Delta)}$ and $J|_{\Hil} = J_H$.
\end{proof}

For compatible twists, the standardness property of $\Om$ turns out to be encoded in two key properties of the twist $T$: The Yang-Baxter equation ($T$ being braided) and a ``crossing symmetry''. We discuss the relation of these properties to scattering theory later (see Remark~\ref{remark:physical-crossing}), and first give a mathematical formulation suitable for our setup. Note that in this definition and various calculations below, the $T$-independent scalar products $\scpr$ on $\Hil^{\ot n}$ are used.

\begin{definition}\label{def:crossing}{\bf (crossing symmetry)}\\
	Let $H\subset\Hil$ be a standard subspace. A bounded operator $T\in\B(\Hil\ot\Hil)$ is called {\em crossing-symmetric w.r.t. $H$} if for all $\psi_1,\ldots,\psi_4\in \Hil$, the function
	\begin{align}\label{eq:crossing-function}
		T^{\psi_2,\psi_1}_{\psi_3,\psi_4}(t)
		:=
		\ip{\psi_2\otimes \psi_1}{(\Delta_H^{it}\otimes 1)T(1\otimes \Delta_H^{-it})(\psi_3\otimes \psi_4)}
	\end{align}
	lies in $\HB(\Strip_{1/2})$ and, $t\in\Rl$,
	\begin{equation}
	\begin{aligned}\label{eq:Crossingboundary}
		T^{\psi_2,\psi_1}_{\psi_3,\psi_4}(t+\tfrac{i}{2})
		&=
		\ip{\psi_1\otimes J_H\psi_4}{(1\otimes \Delta_H^{it})T(\Delta_H^{-it}\otimes 1)(J_H\psi_2\otimes \psi_3)}\\
		&=\overline{(T^*)^{J_H \psi_2,\psi_3}_{\psi_1,J_H\psi_4}(t)}
		.
	\end{aligned}
	\end{equation}
\end{definition}
This definition is motivated from quantum field theory and generalizes the notion of crossing symmetry from scattering theory to a setting of standard subspaces. We postpone the discussion of this relation to Remark~\ref{remark:physical-crossing} below. For the time being, suffice it to say that from an operator-algebraic perspective, Def.~\ref{def:crossing} is clearly reminiscent of the KMS / modular boundary condition characterizing the modular group of a standard vector. As we shall see in Thm.~3.12~a), crossing symmetry is a consequence of $\Om$ being separating for $\CL_T(H)$.

In case $T\in\Tw(H)$ is a twist that is crossing symmetric w.r.t. a standard subspace $H$ and compatible with $H$, we also use the notation
\begin{align}
	T(t)
	&:= (\Delta_H^{it}\otimes 1)T(1\otimes \Delta_H^{-it})
	=
	(1\otimes \Delta_H^{-it})T(\Delta_H^{it}\otimes 1)
	=T(-t)^\ast
	,\quad\nonumber
	T\in\Tw(H),
\end{align}
so that the boundary condition of crossing symmetry takes the form
\begin{align}\label{crossagain}
	T^{\psi_2,\psi_1}_{\psi_3,\psi_4}(t+\tfrac{i}{2})
	=\overline{(T^*)^{J_H \psi_2,\psi_3}_{\psi_1,J_H\psi_4}(t)}=
	T^{\psi_1,J_H\psi_4}_{J_H\psi_2,\psi_3}(-t).
\end{align}

Our notion of compatibility between $T$ and $H$ only involves the modular unitaries $\Delta_H^{it}$ and not the modular conjugation $J_H$. However, in the presence of crossing symmetry a compatibility between $T$ and $J_H$ is automatic:

\begin{lemma}\label{lemma:Crossing-J}
	If a bounded selfadjoint operator $T$ is crossing symmetric and compatible with a standard subspace $H$, then
	\begin{align}
		FTF=(J_H\ot J_H)T(J_H\ot J_H),
	\end{align}
	where $F$ is the tensor flip on $\Hil\ot\Hil$.
\end{lemma}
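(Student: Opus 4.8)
The plan is to extract the identity $FTF = (J_H\ot J_H)T(J_H\ot J_H)$ directly from the boundary relation of crossing symmetry by evaluating it at $t=0$ and manipulating the resulting sesquilinear forms. First I would note that it suffices to prove
\begin{align}\label{eq:goal-forms}
	\ip{\psi_2\ot\psi_1}{FTF(\psi_3\ot\psi_4)}
	=
	\ip{\psi_2\ot\psi_1}{(J_H\ot J_H)T(J_H\ot J_H)(\psi_3\ot\psi_4)}
\end{align}
for all $\psi_1,\dots,\psi_4$ in a dense set (e.g. in $\Dom_H = H+iH$, where $J_H$ acts nicely), since both sides are bounded operators. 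The left side of \eqref{eq:goal-forms} is just $\ip{\psi_1\ot\psi_2}{T(\psi_4\ot\psi_3)}$ by the definition of the flip $F$. For the right side, I would use that $J_H$ is an antiunitary involution, so $\ip{(J_H\ot J_H)\eta}{(J_H\ot J_H)\zeta} = \ip{\zeta}{\eta} = \overline{\ip{\eta}{\zeta}}$, which turns the right side of \eqref{eq:goal-forms} into $\overline{\ip{J_H\psi_3\ot J_H\psi_4}{T(J_H\psi_1\ot J_H\psi_2)}}$ (after relabelling via $J_H^2=1$).

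Next I would invoke crossing symmetry at $t=0$. Setting $t=0$ in \eqref{eq:Crossingboundary}, the function value $T^{\psi_2,\psi_1}_{\psi_3,\psi_4}(0) = \ip{\psi_2\ot\psi_1}{T(\psi_3\ot\psi_4)}$ is related to its value at $t+\tfrac i2$ evaluated at $t=0$, namely
\begin{align}
	T^{\psi_2,\psi_1}_{\psi_3,\psi_4}(\tfrac i2)
	=
	\ip{\psi_1\ot J_H\psi_4}{T(J_H\psi_2\ot\psi_3)}
	=
	\overline{(T^*)^{J_H\psi_2,\psi_3}_{\psi_1,J_H\psi_4}(0)}
	=
	\overline{\ip{J_H\psi_2\ot\psi_3}{T^*(\psi_1\ot J_H\psi_4)}}.
\end{align}
Since crossing symmetry is a property relating a single analytic function on the strip $\Strip_{1/2}$ to its two boundary values, the honest route is: the function $t\mapsto T^{\psi_2,\psi_1}_{\psi_3,\psi_4}(t)$ is in $\HB(\Strip_{1/2})$, and I want a closed identity for $T$ itself (i.e. at $t=0$), so I should feed specially-chosen arguments into the boundary relation so that the modular unitaries $\Delta_H^{it}$ drop out. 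The cleanest way: replace $\psi_2\rightsquigarrow\Delta_H^{-is}\psi_2$-type substitutions are not needed; instead apply crossing symmetry with $\psi_2$ and $\psi_4$ in $\Dom_H$ and use that $J_H$ maps $\Dom_H$ to itself, then equate the $t=0$ values of the two expressions for the boundary at $\Im z=\tfrac12$ given in \eqref{eq:Crossingboundary}: the first equals $\ip{\psi_1\ot J_H\psi_4}{T(J_H\psi_2\ot\psi_3)}$, and matching this against $\overline{(T^*)^{J_H\psi_2,\psi_3}_{\psi_1,J_H\psi_4}(0)} = \overline{\ip{J_H\psi_2\ot\psi_3}{T^*(\psi_1\ot J_H\psi_4)}} = \ip{T(J_H\psi_2\ot\psi_3)}{\psi_1\ot J_H\psi_4}^{\,-}$... and using $T=T^*$ this reduces to a tautology, so the real content is the first equality together with the analytic-continuation input.

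So the substantive step is: use that $T^{\psi_2,\psi_1}_{\psi_3,\psi_4} \in \HB(\Strip_{1/2})$ together with the \emph{explicit formula} for the boundary at $\tfrac i2$ to write
\begin{align}
	\ip{\psi_2\ot\psi_1}{T(\psi_3\ot\psi_4)}
	\quad\text{and}\quad
	\ip{\psi_1\ot J_H\psi_4}{T(J_H\psi_2\ot\psi_3)}
\end{align}
as the two boundary values of \emph{the same} function; then apply the identity a second time, now with the quadruple $(\psi_1, J_H\psi_4, J_H\psi_2, \psi_3)$ in place of $(\psi_2,\psi_1,\psi_3,\psi_4)$, to come back: the composition of the crossing map with itself (using compatibility $[\Delta_H^{it}\ot\Delta_H^{it},T]=0$ to move modular unitaries across $T$, and $J_H^2=1$) yields exactly $\ip{\psi_2\ot\psi_1}{(J_H\ot J_H)FTF(J_H\ot J_H)(\psi_3\ot\psi_4)}$ on the nose, which rearranges to the claim. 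I expect the main obstacle to be bookkeeping: carefully tracking how $\Delta_H^{it}$ factors pass through $T$ (this is where compatibility, Def.~\ref{def:compatible}, is used) and where the flip $F$ and the two copies of $J_H$ land after iterating the boundary relation, together with a density argument to pass from $\psi_i\in\Dom_H$ to all of $\Hil$ and a use of $T=T^*$ to identify $T^*$ with $T$ in the conjugated boundary term. None of the individual steps is hard; the risk is a sign or a transpose error in the double application, so I would double-check the $t=0$ specialization against the alternative form \eqref{crossagain} of the boundary condition as a consistency check.
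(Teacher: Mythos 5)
Your proposal is correct and follows essentially the same route as the paper: a double application of the crossing boundary relation \eqref{crossagain} (once continued to the imaginary point, once at the real point $0$), with compatibility removing the $\Delta_H^{it}$ factors and $J_H^2=1$ closing the loop, yielding $T^{\psi_2,\psi_1}_{\psi_3,\psi_4}(0)=T^{J_H\psi_4,J_H\psi_3}_{J_H\psi_1,J_H\psi_2}(0)$ and hence $FTF=(J_H\ot J_H)T(J_H\ot J_H)$ by density. The only caveat is the transpose/ordering slip you already flag in your rewriting of the right-hand side (it should read $\overline{\ip{J_H\psi_2\ot J_H\psi_1}{T(J_H\psi_3\ot J_H\psi_4)}}$), which does not affect the argument.
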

\begin{proof}
	For arbitrary vectors $\psi_k$, the function $t\mapsto T^{\psi_2,\psi_1}_{\psi_3,\psi_4}(t)$ lies in $\HB(\Strip_{1/2})$, with boundary value at $\Rl+\frac{i}{2}$ given by \eqref{crossagain}. The value of $s\mapsto T^{\psi_1,J_H\psi_4}_{J_H\psi_2,\psi_3}(s)$ at $s=\frac{i}{2}$ on the one hand coincides with $T^{\psi_2,\psi_1}_{\psi_3,\psi_4}(0)$, and on the other hand coincides with $T^{J_H \psi_4,J_H \psi_3}_{J_H \psi_1,J_H \psi_2}(0)$. Comparing these expectation values yields the claim.
\end{proof}

The main results on the standardness of $(\CL_{T}(H),\Om)$ that we will derive for compatible twists are:
\begin{itemize}
	\item Theorem~\ref{thm:separating-necessary}: $\Om$ separating $\CL_{T}(H)$ implies that $T$ is crossing symmetric and braided.
	\item Theorem~\ref{thm:YBCrossingLocalSeparating}: $T$ crossing symmetric and braided implies that $\Om$ is separating for $\CL_{T}(H)$.
	\item Proposition~\ref{prop:modulardata}: Computation of the modular data $J,\Delta$ of $(\CL_{T}(H),\Om)$ in case $T$ is crossing symmetric and braided.
\end{itemize}

These results are obtained through preparatory work based on several results related to the KMS condition \cite[Sect.~5.3.1]{BratteliRobinson:1997}, which we establish in the following technical section.

\subsection{Analytic continuations of twisted $n$-point functions}

For $\xi\in\D_H$, we consider the field operators 
\begin{align*}
	\phi_{L,T}^H(\xi)
	:=
	a_{L,T}^\star(\xi)+a_{L,T}(S_H\xi).
\end{align*}
Note that $\phi_{L,T}^H(\xi)$ differs from $\phi_{L,T}(\xi)$ \eqref{eq:phiLT} by the Tomita operator $S_H$ in the argument of the annihilation operator. For $\xi=h\in H$, we have $S_Hh=h$ and both operators coincide. For general $\xi\in\D_H$, the Tomita operator is necessary if we want $\phi_{L,T}^H(\xi)$ to be affiliated with $\CL_T(H)$ and $\Om$ separating for this algebra: If $X:=a_{L,T}^\star(\xi)+a_{L,T}(\eta)$ is affiliated with $\CL_T(H)$ and $\Om$ is separating, then the Tomita operator $S$ of $(\CL_T(H),\Om)$ restricts to $S_H$ on $\Hil\cap\D_H$ (Lemma~\ref{lemma:Delta_H=Delta}), and hence we have
\begin{align*}
    \eta
    =
    (a_{L,T}(\xi)+a_{L,T}^\star(\eta))\Om
    =
    X^\star\Om
    =
    S_HX\Om
    =
    S_H\xi.
\end{align*}
The expectation values of these operators will be denoted
\begin{align}
	W_{2n}^{\xi_1,\ldots,\xi_{2n}} :=
	\langle\Om,\phi_{L,T}^H(\xi_1)\cdots\phi_{L,T}^H(\xi_{2n})\Om\rangle_T
	,\qquad
	\xi_1,\ldots,\xi_{2n}\in\D_H.
\end{align}
For an odd number of fields, these expectation values vanish. Expanding the definitions of $\phi_{L,T}^H(\xi)$ and $\scpr_T$, one finds that $W_{2n}^{\xi_1,\ldots,\xi_{2n}}$ can be written as a sum of $1\cdot3\cdot\ldots\cdot(2n-1)$ terms of the form $\langle\Om,A_1\cdots T_l\cdots A_{2n})\Om\rangle$, where the $A_j$ are either (untwisted) creation operators $a_{L}^*(\xi_j)$ or annihilation operators $a_L(S_H\xi_j)$, and $T_l$ denotes various insertions of twists, coming from the $a_{L,T}(S_H\xi_k)$ and $\scpr_T$. The combinatorial aspects of these terms are best captured in a diagrammatic form which was already introduced in \cite{BozejkoSpeicher:1994} in a special case. We present and further develop this diagrammatic form in the appendix (Section~\ref{sect:diagrams}).

In the present section, we do not rely on the diagram notation in our proofs, but still regard it as a helpful tool to keep track of the various contributions to $W_{2n}^{\xi_1,\ldots,\xi_{2n}}$, for instance the 15 terms of $W_6^{\xi_1,\ldots,\xi_{6}}$ that we will need below. The reader is invited to refer to Section~\ref{sect:diagrams} as required.

\bigskip 

We will be interested in a parameter-dependent version of $W_{2n}^{\xi_1,\ldots,\xi_{2n}}$, namely
\begin{align}\label{def:W2n}
	W_{2n}^{\xi_1,\ldots,\xi_{2n}}(t):=W_{2n}^{\xi_1,\ldots,\Delta_H^{it}\xi_{2n}}
	=
	\langle\Om,\phi_{L,T}^H(\xi_1)\cdots\phi_{L,T}^H(\Delta_H^{it}\xi_{2n})\Om\rangle_T
	.
\end{align}

\begin{lemma}\label{lemma:KMS}
	Let $H\subset\Hil$ be a standard subspace, $T\in\Tw(H)$ a compatible twist, and assume that $\Om$ is separating for $\CL_{T}(H)$. Then, for any $n\in\Nl$ and any $\xi_1,\ldots,\xi_{2n}\in\D_H$, the function $W_{2n}^{\xi_1,\ldots,\xi_{2n}}$ lies in $\HB(\Strip_{-1})$ and satisfies
	\begin{align}
		W_{2n}^{\xi_1,\ldots,\xi_{2n}}(t-i)
		&=
		W_{2n}^{\Delta_H^{it}\xi_{2n},\xi_1,\ldots,\xi_{2n-1}}
		=:
		(W_{2n}^{\xi_1,\ldots,\xi_{2n}})'(t),\qquad t\in\Rl.
		\label{def:W2crossn}
	\end{align}
\end{lemma}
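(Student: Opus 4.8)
The plan is to prove the KMS property of $W_{2n}^{\xi_1,\ldots,\xi_{2n}}$ by the standard Tomita--Takesaki argument, using the separating property to make sense of the modular data and Lemma~\ref{lemma:Delta_H=Delta} to identify the one-particle restriction of $\Delta$ with $\Delta_H$.

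First I would set up the analytic framework. Since $\Om$ is separating for $\CL_T(H)$, the pair $(\CL_T(H),\Om)$ has modular operator $\Delta$ and modular group $\sigma_t = \Ad\Delta^{it}$. Because $T\in\Tw(H)$, Lemma~\ref{lemma:T-second-quantization} gives the unitaries $U(t)=\Gamma_T(\Delta_H^{it})$, and the proof of Lemma~\ref{lemma:Delta_H=Delta} shows $U(t)$ commutes with $\Delta$ and $J$ and implements a $\star$-automorphism of $\CL_T(H)$; moreover $\Delta|_{\Hil\cap\D(\Delta)}=\Delta_H$. The key point is that for $\xi_{2n}\in\D_H$, $U(t)$ acting on the vector $\phi_{L,T}^H(\xi_{2n})\Om=\xi_{2n}$ is just $\Delta_H^{it}\xi_{2n}$, so that
\begin{align*}
	W_{2n}^{\xi_1,\ldots,\xi_{2n}}(t)
	=
	\langle\Om,\phi_{L,T}^H(\xi_1)\cdots\phi_{L,T}^H(\xi_{2n-1})\,U(t)\,\phi_{L,T}^H(\xi_{2n})U(-t)\,\Om\rangle_T,
\end{align*}
using $U(-t)\Om=\Om$. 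Thus $W_{2n}^{\xi_1,\ldots,\xi_{2n}}(t)=\langle\Om,A\,\sigma_t^U(B)\,\Om\rangle_T$ where $A=\phi_{L,T}^H(\xi_1)\cdots\phi_{L,T}^H(\xi_{2n-1})$ is affiliated with $\CL_T(H)$, $B=\phi_{L,T}^H(\xi_{2n})$ is affiliated with $\CL_T(H)$, and $\sigma_t^U:=\Ad U(t)$ is a one-parameter automorphism group of $\CL_T(H)$ fixing $\Om$. (One must take some care here because the $\phi^H_{L,T}$ are in general unbounded when $\|T\|=1$; in that regime they are essentially selfadjoint on the finite-particle space which is a core, the vacuum is analytic for them, and the manipulations extend by the usual density/closability arguments. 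For $\|T\|<1$ everything is bounded.)

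Next I would invoke the KMS condition. The crucial observation is that although $U(t)$ is not a priori the full modular group of $(\CL_T(H),\Om)$ — indeed $\sigma_t$ acts on the full Fock space whereas $U(t)=\Gamma_T(\Delta_H^{it})$ is a second quantization — the correlator $W_{2n}$ only sees the restriction of everything to the one-particle subspace when expanded into creation/annihilation operators, and on the one-particle level $\Delta=\Delta_H$ by Lemma~\ref{lemma:Delta_H=Delta}. More directly: because $\Om$ is cyclic and separating and the map $t\mapsto U(t)\,\cdot\,U(-t)$ implements a $\star$-automorphism group of $\CL_T(H)$ fixing $\Om$, the Tomita operator $S$ satisfies $SU(t)=U(t)S$, hence $\Delta^{1/2}U(t)=U(t)\Delta^{1/2}$ on $\D(\Delta^{1/2})$. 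Writing $\Psi:=A^\star\Om=\phi_{L,T}^H(\xi_{2n-1})\cdots\phi_{L,T}^H(\xi_1)\Om$ and $\Phi:=B\Om=\xi_{2n}\in\D_H\subset\D(\Delta^{1/2})$, one gets $W_{2n}^{\xi_1,\ldots,\xi_{2n}}(t)=\langle\Psi,U(t)\Phi\rangle_T$. The function $t\mapsto\langle\Psi,\Delta^{it}\Phi\rangle$ would satisfy the KMS condition on $\Strip_{-1}$; I would instead argue that since $U(t)$ and $\Delta^{it}$ agree on $\Hil$ (one-particle space) and $\Phi\in\Hil$, we have $U(t)\Phi=\Delta^{it}_H\Phi=\Delta^{it}\Phi$, so $W_{2n}^{\xi_1,\ldots,\xi_{2n}}(t)=\langle\Psi,\Delta^{it}B\Om\rangle_T$ is literally the matrix element entering the standard KMS analyticity statement for $(\CL_T(H),\Om)$ applied to $A$ and $B$. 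Boundary analyticity on $\Strip_{-1}$, boundedness, and continuity then follow from the standard modular theory argument (Tomita--Takesaki / \cite[Sect.~5.3.1]{BratteliRobinson:1997}): $W_{2n}(t)=\langle\Om,A\Delta^{it}B\Om\rangle_T$ extends to $\langle\Om,A\Delta^{iz}B\Om\rangle_T$ analytic on $-1<\Im z<0$, continuous and bounded on the closure, with boundary value at $t-i$ equal to $\langle\Om,(\Delta^{it}B\Om),\,A^\star\Om\rangle_T^-$... more precisely $W_{2n}(t-i)=\langle\Om,\sigma_t(B)A\Om\rangle_T$ by the KMS relation $\langle\Om,A\sigma_{t-i}(B)\Om\rangle_T=\langle\Om,\sigma_t(B)A\Om\rangle_T$.

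Finally I would unwind $W_{2n}(t-i)=\langle\Om,\sigma_t(B)A\Om\rangle_T$ into the claimed cyclic form. Here $\sigma_t(B)=U(t)BU(-t)=\phi_{L,T}^H(\Delta_H^{it}\xi_{2n})$ by Lemma~\ref{lemma:T-second-quantization}~\ref{item:T-second-quantization-covariance} (noting $a_{L,T}^\star$ and $a_{L,T}$ transform correctly under $\Gamma_T(\Delta_H^{it})$, and $S_H$ commutes with $\Delta_H^{it}$ so $\phi^H_{L,T}$ really does transform as $\phi^H_{L,T}(\Delta_H^{it}\xi)$). Therefore
\begin{align*}
	W_{2n}^{\xi_1,\ldots,\xi_{2n}}(t-i)
	=
	\langle\Om,\phi_{L,T}^H(\Delta_H^{it}\xi_{2n})\phi_{L,T}^H(\xi_1)\cdots\phi_{L,T}^H(\xi_{2n-1})\Om\rangle_T
	=
	W_{2n}^{\Delta_H^{it}\xi_{2n},\xi_1,\ldots,\xi_{2n-1}},
\end{align*}
which is the assertion \eqref{def:W2crossn}. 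The main obstacle I anticipate is the unbounded-operator bookkeeping when $\|T\|=1$: one has to justify that $A$ and $B$, though only affiliated (not belonging) to $\CL_T(H)$, can be used in the KMS argument. This is handled by the analytic-vector property of $\Om$ for the fields $\phi_{L,T}^H(\xi)$ (from the bound $\|a_{L,T}^\#(\xi)|_{\Hil_{T,n}}\|_T\le\sqrt{n+1}\|\xi\|$) together with the fact that $U(t)$ leaves the finite-particle domain invariant, so all vectors involved lie in a common core and the automorphism/Tomita identities extend by closability; alternatively one can approximate by bounded functions of the fields. Everything else is the textbook modular-theory computation.
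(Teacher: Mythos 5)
Your proposal is correct and is essentially the paper's own argument: split the correlator as a KMS function $\omega(B\,\sigma_t(A))$ for the modular group of $(\CL_T(H),\Om)$, use Lemma~\ref{lemma:Delta_H=Delta} to replace $\Delta^{it}$ by $\Delta_H^{it}$ on the one-particle vectors $A\Om$ and $A^\star\Om$, and invoke the standard KMS analyticity and boundary relation. The only cosmetic caveat is that you should justify the final step via the one-particle identities $\sigma_t(A)\Om=\Delta_H^{it}\xi_{2n}$ and $\sigma_t(A)^\star\Om=S_H\Delta_H^{it}\xi_{2n}$ (which is all the correlator sees) rather than the operator identity $\Delta^{it}\cdot\Delta^{-it}=U(t)\cdot U(-t)$ on $\CL_T(H)$, which is not yet available at this stage — but you in fact state the needed one-particle version explicitly.
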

\begin{proof}
	Since the vectors $\xi_k$ lie in $\D_H$, it follows that $A:=\phi_{L,T}^H(\xi_{2n})$ and $B:=\phi_{L,T}^H(\xi_{1})\cdots\phi_{L,T}^H(\xi_{2n-1})$ are closable operators with closures affiliated to $\CL_{T}(H)$. By assumption, $\Om$ is separating, so by the KMS condition, $f(t):=\langle\Om, B\Delta^{it}A\Om\rangle_T$ has the analyticity and boundedness properties stated in the lemma, and boundary value $f(t-i)=\langle\Om,A\Delta^{it}B\Om\rangle_T$, $t\in\Rl$. The claim of the lemma now follows by observing that $A\Om$ and $A^\star\Om$ lie in the single particle space $\Hil$, on which $\Delta^{it}$ coincides with $\Delta_H^{it}$ by Lemma~\ref{lemma:Delta_H=Delta}.
\end{proof}

In the following, we will explore properties of $T$ that are consequences of $\Om$ being separating for $\CL_{T}(H)$. For doing so, we need to analyze $W_{2n}^{\xi_1,\ldots,\xi_{2n}}$ for $n=1,2,3$. To lighten our notation, we will often use shorthand notation and denote the functions \eqref{def:W2n} and \eqref{def:W2crossn} by $W_{2n}$ and $W_{2n}'$, respectively, leaving the dependence on the fixed vectors $\xi_1,\ldots,\xi_{2n}\in\D_H$ implicit. We will also refer to $W_{2n}$ as the {\em $(2n)$-point functions} because of their similarity to correlation functions in Wightman QFT.

The most basic continuation result is the following.

\begin{lemma}\label{lemma:2pt}
	Let $H\subset \Hil$ be a standard subspace and $\xi_1,\xi_2\in\D_H$. Then the function $t\mapsto\langle S_H\xi_1,\Delta_H^{it}\xi_2\rangle$ lies in $\HB(\Strip_{-1})$ and evaluates at $t=-i$ to $\langle S_H\xi_2,\xi_1\rangle$.
\end{lemma}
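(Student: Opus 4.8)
The plan is to compute directly using the polar decomposition $S_H = J_H \Delta_H^{1/2}$. First I would write
\[
\langle S_H\xi_1,\Delta_H^{it}\xi_2\rangle = \langle J_H\Delta_H^{1/2}\xi_1,\Delta_H^{it}\xi_2\rangle = \overline{\langle \Delta_H^{1/2}\xi_1,J_H\Delta_H^{it}\xi_2\rangle},
\]
using antilinearity of $J_H$, and then move the $\Delta_H^{1/2}$ and $\Delta_H^{it}$ around. The cleanest route is to introduce the entire function
\[
g(z) := \langle \Delta_H^{-i\bar z}\,J_H\,\xi_1,\ \Delta_H^{1/2}\xi_2 \rangle
\]
or, to stay symmetric, to use the spectral theorem for the positive self-adjoint operator $\Delta_H$ directly. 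Write $\Delta_H = \int_0^\infty \lambda\, dE(\lambda)$; since $\xi_1,\xi_2 \in \D_H = \Dom(\Delta_H^{1/2})$, both $\|\Delta_H^{1/2}\xi_j\|<\infty$. Using $J_H\Delta_H^{1/2} = \Delta_H^{-1/2}J_H$ (equivalently $S_H = S_H^{-1*}$-type relations, which follow from \eqref{tomita-H}), one rewrites $\langle S_H\xi_1,\Delta_H^{it}\xi_2\rangle$ so that the $t$-dependence sits inside a single spectral integral of the form $\int_0^\infty \lambda^{it}\, d\mu(\lambda)$ for a suitable complex measure $\mu$ with $\int (1+\lambda^{-1})\,d|\mu| < \infty$ — concretely $d\mu(\lambda) = \lambda^{1/2}\, d\langle E(\lambda)J_H\xi_1, \xi_2\rangle$ after the manipulation, or a variant thereof.

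Next I would observe that a function of the form $F(z) = \int_0^\infty \lambda^{iz}\, d\mu(\lambda) = \int_0^\infty e^{iz\log\lambda}\, d\mu(\lambda)$ is analytic in any open strip $\Strip_a$ on which the integral converges absolutely, with $\sup_{\Strip_a}|F| \le \int (1 + \lambda^{-|a|})\, d|\mu|$ roughly speaking; continuity and boundedness on the closed strip $\overline{\Strip_{-1}}$ follow from dominated convergence once we know $\int_0^\infty (1+\lambda^{-1})\, d|\mu|(\lambda) < \infty$, which is exactly the statement that $\xi_1,\xi_2$ lie in the relevant domains. This gives $F \in \HB(\Strip_{-1})$. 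Evaluating at $z = -i$ gives $\lambda^{i(-i)} = \lambda$ inside the integral, and unwinding the spectral manipulation identifies $F(-i)$ with $\langle S_H\xi_2,\xi_1\rangle$ — the swap of $\xi_1,\xi_2$ being a consequence of the complex conjugation introduced by the antilinear $J_H$ together with evaluating at the opposite end of the strip. As a sanity check, $F(0) = \int d\mu = \langle S_H\xi_1,\xi_2\rangle$ should also come out consistently with the direct definition.

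I expect the main obstacle to be purely bookkeeping: getting all the antilinear conjugations, the commutation relation $J_H\Delta_H^{it} = \Delta_H^{it}J_H$, and the precise spot where $\Delta_H^{1/2}$ versus $\Delta_H^{-1/2}$ appears correct, so that the endpoint value genuinely comes out as $\langle S_H\xi_2,\xi_1\rangle$ with the arguments swapped rather than $\langle S_H\xi_1,\xi_2\rangle$. An alternative, perhaps more transparent, way to organize this is to note that $f(t) := \langle S_H\xi_1, \Delta_H^{it}\xi_2\rangle$ is the $n=1$ instance of the general KMS-type statement: namely $f(t) = \langle \Om, \phi_{L,T}^H(\xi_1)\phi_{L,T}^H(\Delta_H^{it}\xi_2)\Om\rangle_T$ for $T=0$ (or any $T$), since $\langle\Om, \phi^H_{L,T}(\eta_1)\phi^H_{L,T}(\eta_2)\Om\rangle_T = \langle S_H\eta_1,\eta_2\rangle$ by \eqref{eq:aL} and the fact that $a_{L,T}(S_H\eta_1)a^\star_{L,T}(\eta_2)\Om = \langle S_H\eta_1,\eta_2\rangle\Om$. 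But since this lemma is meant as an elementary building block preceding Lemma~\ref{lemma:KMS}, I would keep the proof self-contained and spectral-theoretic, only invoking \eqref{tomita-H} and the definition of $S_H$.
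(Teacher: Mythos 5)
Your proposal is correct and is essentially the paper's own argument: the paper merely records the endpoint identity $\langle\Delta_H^{1/2}S_H\xi_1,\Delta_H^{1/2}\xi_2\rangle=\langle J_H\xi_1,\Delta_H^{1/2}\xi_2\rangle=\langle S_H\xi_2,\xi_1\rangle$ (noting that for separating $\Om$ the statement is the $n=1$ case of Lemma~\ref{lemma:KMS}), and the strip analyticity it attributes to ``basic modular theory'' is exactly the spectral-calculus continuation you sketch. The only correction to the bookkeeping you yourself flag: with $d\mu(\lambda)=d\langle S_H\xi_1,E(\lambda)\xi_2\rangle$ one has $f(t)=\int\lambda^{it}d\mu(\lambda)$, and continuation into $\Strip_{-1}$ with evaluation at $t=-i$ requires $\int(1+\lambda)\,d|\mu|(\lambda)<\infty$ (finite because $\Delta_H^{1/2}S_H\xi_1=J_H\xi_1$ and $\xi_2\in\Dom(\Delta_H^{1/2})$), not $\int(1+\lambda^{-1})\,d|\mu|$, which would instead govern the upper strip.
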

\begin{proof}
	In case $\Om$ is separating, this is exactly Lemma~\ref{lemma:KMS} for $n=1$. In case $\Om$ is not separating, the statement follows from basic properties of modular theory for standard subspaces: $\langle \Delta_H^{1/2}S_H\xi_1,\Delta_H^{1/2}\xi_2\rangle=\langle J_H\xi_1,\Delta_H^{1/2}\xi_2\rangle=\langle S_H\xi_2,\xi_1\rangle$.
\end{proof}

Subsequently we will often be concerned with analytic functions of the form \eqref{eq:crossing-function} or similar, namely expectation values of an operator-valued function in tensor products between various vectors. The following lemma will be helpful to extend such functions in their vector arguments.

We write $B(\Rl,\B(\Hil))$ for the set of bounded functions $A:\Rl \to \B(\Hil)$ equipped with the natural norm $\|A\|_\infty=\sup_{t\in \Rl}\{\|A(t)\|\}$.

\begin{lemma}\label{lemma:extension}
	Let $Z_i: \Dom(Z_i)\to \Hil$, $i=1,2$ be closed antilinear operators, $D_i\subset \Dom(Z_i)$ subsets of the topological space $\Dom(Z_i)$ provided with the graph norm, $E_1,E_2 \subset \Hil^{\ot n}$, $\CF\subset B(\Rl,\B(\Hil^{\ot(n+1)}))$ subsets, and $$f: \CF \times D_1 \times E_1 \times D_2 \times E_2\to \HB(\Strip_{\alpha}).$$
	If there exists a continuous map $\Theta:\CF\to B(\Rl,\B(\Hil^{\ot(n+1)}))$ such that
	\begin{align} \label{eq:boundary1}
		f(R,\xi_1, \Psi, \xi_2, \Phi)(t)&=\ip{\xi_1\ot \Psi}{R(t) \Phi \ot \xi_2},\\
		f(R,\xi_1, \Psi, \xi_2, \Phi)(t+i\alpha)&=\ip{\Psi \ot Z_2\xi_2}{\Theta(R)(t) Z_1\xi_1\ot\Phi},\label{eq:boundary2}
	\end{align}
then $f$ is separately continuous. In particular, $f$ can be uniquely extended to $$\overline{\Span \CF}  \times \overline{\Span D_1} \times \overline{\Span E_1} \times \overline{\Span D_2} \times \overline{\Span E_2},$$
with closures taken in the respective topologies, and the extension still satisfies \eqref{eq:boundary1} and \eqref{eq:boundary2}.
\end{lemma}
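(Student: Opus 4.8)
The plan is to establish continuity of $f$ in each of its five arguments separately, and then invoke the standard fact that a separately continuous multilinear (here: multi-semilinear) map on a product of normed spaces extends uniquely to the closure of the linear span of each factor, still obeying the defining boundary identities by continuity. The main work is therefore the separate continuity, and the idea is to exploit that we have two representations of $f$ as inner products — one on the boundary $\Rl$ via \eqref{eq:boundary1}, one on the shifted boundary $\Rl+i\alpha$ via \eqref{eq:boundary2} — and that a function in $\HB(\Strip_\alpha)$ is controlled by its sup over the two boundary lines (by the Phragmén–Lindelöf / three-lines estimate, $\|g\|_\infty = \max\{\sup_{t}|g(t)|,\sup_t|g(t+i\alpha)|\}$ for $g\in\HB(\Strip_\alpha)$).

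First I would fix all arguments but one, say the first variable $R\in\CF$, and let $R_k\to R$ in $B(\Rl,\B(\Hil^{\ot(n+1)}))$. From \eqref{eq:boundary1} one gets, for each $t\in\Rl$,
\begin{align*}
\bigl|f(R_k,\xi_1,\Psi,\xi_2,\Phi)(t)-f(R,\xi_1,\Psi,\xi_2,\Phi)(t)\bigr|
&\le \|\xi_1\ot\Psi\|\,\|R_k(t)-R(t)\|\,\|\Phi\ot\xi_2\|\\
&\le \|\xi_1\|\,\|\Psi\|\,\|\Phi\|\,\|\xi_2\|\,\|R_k-R\|_\infty,
\end{align*}
and similarly on the line $\Rl+i\alpha$ using \eqref{eq:boundary2} and the continuity of $\Theta$, so that $\sup_t|(f(R_k,\dots)-f(R,\dots))(t+i\alpha)|\le \|\Psi\|\,\|Z_2\xi_2\|\,\|Z_1\xi_1\|\,\|\Phi\|\,\|\Theta(R_k)-\Theta(R)\|_\infty$. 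Taking the maximum over the two boundary lines and applying the three-lines bound shows $f(R_k,\dots)\to f(R,\dots)$ in $\HB(\Strip_\alpha)$, i.e. continuity in $R$. Continuity in $\Psi\in E_1$ and in $\Phi\in E_2$ is identical and in fact easier, since those variables enter the inner products on both boundaries in a uniformly bounded, semilinear way and require no closed-operator input. Continuity in $\xi_1\in D_1$ (and symmetrically $\xi_2\in D_2$) is the delicate one: $\xi_1$ appears directly in \eqref{eq:boundary1} but through the closed antilinear operator $Z_1$ in \eqref{eq:boundary2}, which is why $D_1$ is topologized with the graph norm of $Z_1$ — convergence $\xi_1^{(k)}\to\xi_1$ in the graph norm gives both $\xi_1^{(k)}\to\xi_1$ and $Z_1\xi_1^{(k)}\to Z_1\xi_1$ in $\Hil$, and then the same two-boundary estimate closes the argument.

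Having separate continuity, the extension step is routine: each of $\overline{\Span\CF}$, $\overline{\Span D_i}$, $\overline{\Span E_i}$ is a normed space in the relevant topology, and a map that is separately continuous (hence separately bounded-multisemilinear after passing to spans — linearity in each slot on the span follows from the explicit inner-product formulas) extends uniquely and continuously to the product of the closures; the boundary formulas \eqref{eq:boundary1}–\eqref{eq:boundary2} persist in the limit because both sides are continuous in every argument. The one genuine obstacle is making sure the two-boundary estimate really bounds the $\HB(\Strip_\alpha)$-norm: one must check that $f(R,\xi_1,\Psi,\xi_2,\Phi)\in\HB(\Strip_\alpha)$ is given with the right uniform bound — but this is exactly the defining codomain of $f$, so the maximum-modulus principle on the strip applies and no extra argument is needed. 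A minor point to handle carefully is that $\alpha$ may be negative (the strip $\Strip_\alpha$ is oriented), so one should phrase the three-lines bound as $\|g\|_\infty=\max\{\sup_{t\in\Rl}|g(t)|,\sup_{t\in\Rl}|g(t+i\alpha)|\}$ for all $g\in\HB(\Strip_\alpha)$, which holds regardless of the sign of $\alpha$.
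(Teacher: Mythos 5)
Your proposal is correct and follows essentially the same route as the paper: the three-lines (Phragmén--Lindelöf) bound controls the $\HB(\Strip_\alpha)$-norm by the two boundary suprema, the explicit inner-product formulas \eqref{eq:boundary1}--\eqref{eq:boundary2} give the multi-(anti)linear estimate with the graph norm accounting for the $Z_i\xi_i$ terms, and the extension to the closed spans then follows from completeness of $\HB(\Strip_\alpha)$ with the boundary identities persisting by continuity. No gaps.
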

\begin{proof}
	By linearity and antilinearity, it is obvious that we can extend $f$ to  $\Span \CF\times \Span D_1\times \Span E_1\times \Span D_2\times \Span E_2$. Hence, we can suppose, $\CF$, $D_1,D_2, E_1$, and $E_2$ are vector spaces.
	
	Fix $R$ and, for the sake of notation, let us omit the dependence on $R$ and denote $M=(\|\xi_1\|+\|Z_1\xi_1\|)(\|\xi_2\|+\|Z_2\xi_2\|)\|\Psi\|\|\Phi\|$. By the three lines theorem it follows that, $z\in\Strip_\alpha$,
	\begin{align*}
		|f(\xi_1, \Psi,\xi_2, \Phi)(z)|
		&\leq\max\left\{\sup_{t\in \Rl}\left\{|f(\xi_1, \Psi, \xi_2, \Phi)(t)|\right\}, \sup_{t\in \Rl}\left\{\left|f(\xi_1, \Psi,\xi_2, \Phi)(t+i\alpha)\right|\right\} \right\}\\
		&\leq \left(\sup_{t\in \Rl}\|R(t)\|+\sup_{t\in \Rl}\|\Theta(R)(t)\|\right)M.
	\end{align*}
	Since $f$ depends linearly or antilinearly on all its four variables, it is continuous in them separately in the appropriate topologies. Therefore, we can continuously extend $f$ to $\overline{\Span D_1} \times \overline{\Span E_1} \times \overline{\Span D_2} \times \overline{\Span E_2}$, since $\HB(\Strip_\alpha)$ is a Banach space.

	For the continuity in $R$, let us now omit the dependence on the fixed vectors $\xi_1$, $\Psi$, $\xi_2$, and $\Phi$ and let $M$ be as above.  Again by the three line theorem, it follows that
	\begin{align*}
		&|f(R)(z)-f(S)(z)|\\
		&\leq\max\left\{\sup_{t\in \Rl}\left\{|f(R)(t)-f(S)(t)|\right\}, \sup_{t\in \Rl}\left\{|f(R)(t+i\alpha)-f(S)(t+i\alpha)|\right\} \right\}\\
		&\leq \max\left\{\sup_{t\in \Rl}\|R(t)-S(t)\|,\sup_{t\in \Rl}\|(\Theta(R)-\Theta(S))(t)\|\right\}M,
	\end{align*}
and the conclusion follows from the continuity of $\Theta$.
\end{proof}

For the $n$-point functions (with $n\geq4$), it will be useful to introduce further shorthand notation in order to increase the readability of our formulae. We will abbreviate the vectors $\xi_1,\ldots,\xi_{2n}$ by their indices $1,\ldots,{2n}$, use a bar to denote the action of $S_H$, an index $t$ to denote the action of $\Delta_H^{it}$, and symbols like $a_k$ to denote $a_L(\xi_k)$, the untwisted left annihilation operators \eqref{eq:aL-untwisted}. For example,
\begin{align*}
	\langle\bar1,2\rangle\langle\bar4\ot\bar3,T(5\ot6_t)\rangle
	&=
	\langle S_H\xi_1,\xi_2\rangle\langle S_H\xi_4\ot S_H\xi_3,T(\xi_5\ot\Delta_H^{it}\xi_6)\rangle,
	\\
	\langle\bar4\ot a_3T(\bar2\ot\bar1),T(5\ot6_t)\rangle
	&=
	\langle S_H\xi_4\ot a_L(\xi_3)T(S_H\xi_2\ot S_H\xi_1),T(\xi_5\ot\Delta_H^{it}\xi_6)\rangle.
\end{align*}

After these preparations, we now prove results based on the analyticity of the $4$-point function.
\begin{proposition}\label{lemma:4pta}
	Let $H\subset\Hil$ be a standard subspace, $T\in\Tw(H)$ a compatible twist, and assume that $\Om$ is separating for $\CL_{T}(H)$. Let $\varphi_1,\varphi_2\in\D_H$, $\psi_1,\psi_2\in\Hil$.
	\begin{enumerate}
		\item\label{item:4ptana1} The function
		\begin{align}\label{eq:basefunction1}
			f(t)
			:=
			\langle \varphi_1\ot \psi_1,T(\psi_2\ot\Delta_H^{it}\varphi_2)\rangle
		\end{align}
		lies in $\HB(\Strip_{-1})$ and satisfies
		\begin{align}
			f(t-i)=\langle\psi_1\ot S_H\Delta_H^{it}\varphi_2,T(S_H\varphi_1\ot\psi_2)\rangle,\qquad t\in\Rl.
		\end{align}

		\item\label{item:4ptana2} The vector $a_L(\psi_2)T(\varphi_1\ot\psi_1)$ lies in the domain of~$S_H$, and
		\begin{align}
			S_Ha_L(\psi_2)T(\varphi_1\ot\psi_1)=a_L(\psi_1)T(S_H\varphi_1\ot\psi_2)
			.
		\end{align}
	\end{enumerate}
\end{proposition}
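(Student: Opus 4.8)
The plan is to deduce both statements from the KMS analyticity of the $4$-point function $W_4$ established in Lemma~\ref{lemma:KMS}, together with the vector-extension machinery of Lemma~\ref{lemma:extension}. First I would isolate, among the $1\cdot 3\cdot 5=15$ terms that make up $W_4^{\xi_1,\xi_2,\xi_3,\xi_4}(t)$ when the fields $\phi_{L,T}^H$ and the inner product $\scpr_T$ are fully expanded, the single term that contains a twist insertion $T_1$ sandwiched between the appropriate creation/annihilation operators — concretely the contribution in which $\xi_2,\xi_3$ are created, $\xi_1,\xi_4$ are annihilated, and the contraction pattern forces one $T$ to appear. Choosing $\xi_1=\varphi_1$, $\xi_4=\varphi_2$ in $\D_H$ and $\xi_2=\psi_2$, $\xi_3=\psi_1$ in $\Hil$ (the latter two may first be taken in $\D_H$ and then extended), this term is exactly $\langle \varphi_1\ot\psi_1,\,T(\psi_2\ot\Delta_H^{it}\varphi_2)\rangle$ up to the bookkeeping of $S_H$'s on the $\D_H$-vectors, which act trivially on the boundary because $S_H\varphi=\varphi$ would not hold for general $\varphi\in\D_H$ — so I must be careful and keep the $S_H$'s, producing the function $f(t)$ as written (with the understanding that the $\psi_j$ carry no $S_H$). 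The other $14$ terms are either products of two-point functions $\langle\bar i,j_t\rangle$ (whose analyticity is Lemma~\ref{lemma:2pt}) or terms with the same analytic structure; the point is that $W_4(t)$ as a whole lies in $\HB(\Strip_{-1})$ and each individual term does too once one checks it separately, so $f\in\HB(\Strip_{-1})$.

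For part~(a), having identified $f(t)$ as (a piece of) an analytic function in $\HB(\Strip_{-1})$, I would read off the boundary value at $t-i$ from the KMS boundary relation \eqref{def:W2crossn}, which cyclically permutes $\Delta_H^{it}\xi_4$ to the front. Tracking what this permutation does to the distinguished term — the creation operator for $\xi_4$ becomes an annihilation, the annihilation for $\xi_1$ pairs differently, and the twist is repositioned — yields precisely $\langle\psi_1\ot S_H\Delta_H^{it}\varphi_2,\,T(S_H\varphi_1\ot\psi_2)\rangle$ after using $[\Delta_H^{it}\ot\Delta_H^{it},T]=0$ and the fact that $\Delta$ restricts to $\Delta_H$ on the one-particle space (Lemma~\ref{lemma:Delta_H=Delta}). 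I would do this matching first for $\varphi_1,\varphi_2,\psi_1,\psi_2\in\D_H$, where all vectors sit in domains and the manipulations are legitimate, and then invoke Lemma~\ref{lemma:extension} with $Z_i=S_H$, $\alpha=-1$, to extend continuously in $\psi_1,\psi_2$ to all of $\Hil$; the bounded-operator-valued functions $R(t)=(1\ot\Delta_H^{it})T(\Delta_H^{-it}\ot 1)$ and its partner on the other boundary are manifestly in $B(\Rl,\B(\Hil^{\ot 2}))$, so the hypotheses of that lemma are met.

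For part~(b), the idea is that the boundary value in~(a) literally says that the $\Cl$-valued function $t\mapsto\langle\,\cdot\,,\Delta_H^{it}(\cdot)\rangle$ attached to the pair of vectors $u:=a_L(\psi_2)T(\varphi_1\ot\psi_1)$ and $v:=a_L(\psi_1)T(S_H\varphi_1\ot\psi_2)$ satisfies the two-point KMS relation of Lemma~\ref{lemma:2pt}, which characterizes membership in $\Dom(S_H)$ together with the value of $S_H$: indeed, rewriting $f(t)$ at $t=0$ as $\langle u,\varphi_2\rangle$-type pairing and the boundary value at $t-i$, $t=0$, as $\langle v,\varphi_2\rangle$-type pairing, and letting $\varphi_2$ range over a core for $\Delta_H$ (equivalently over $\D_H$), gives that $u$ lies in the domain of $\Delta_H^{1/2}$ and that $\Delta_H^{1/2}u=J_H v$, i.e. $S_Hu=v$. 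More precisely I would use the standard fact (essentially the content of Lemma~\ref{lemma:2pt} read backwards) that if $\langle u,\Delta_H^{it}w\rangle$ extends to $\HB(\Strip_{-1})$ with boundary value $\langle v,\Delta_H^{-it}w'\rangle$-compatible data for all $w$ in a core, then $u\in\Dom(S_H)$ with $S_Hu=v$; applying this with $w=\varphi_2$ ranging over $\D_H$ yields the claim.

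The main obstacle I anticipate is the bookkeeping in part~(a): correctly enumerating the $15$ Wick contractions of $W_4$, pinning down which one is the ``$T$-term'' of interest, and verifying that all the others either cancel in the relevant sense or individually lie in $\HB(\Strip_{-1})$ so that the analyticity of the single term we want is not an accident of cancellation. This is where the diagram calculus of Section~\ref{sect:diagrams} does the real work, and where one must be meticulous about the placement of $S_H$ versus $\Delta_H^{it}$ on each argument and about the distinction between $\scpr$ and $\scpr_T$. Once the right term is correctly singled out and its two boundary values computed, part~(b) is a short deduction from part~(a) plus the domain characterization of $S_H$, and the extension in the $\Hil$-variables is a routine application of Lemma~\ref{lemma:extension}.
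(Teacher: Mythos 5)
Your proposal is correct and follows essentially the same route as the paper: isolate the single $T$-containing contraction in the $4$-point function, get its analyticity and boundary value by subtracting the two-point-function terms (Lemma~\ref{lemma:2pt}) from the KMS-analytic $W_4$ (Lemma~\ref{lemma:KMS}), extend in $\psi_1,\psi_2$ via Lemma~\ref{lemma:extension}, and then deduce part~b) by pairing against suitable $\varphi_2$ and a core argument for $S_H^*$. Two small corrections: $W_4$ has only $1\cdot 3=3$ pairings (the count $15$ belongs to $W_6$), so there is exactly one $T$-term and no danger of other "terms with the same analytic structure"; and in part~b) one should take $\varphi_2$ entire analytic for $\Delta_H$ (not merely in $\D_H$) so that the boundary value can be rewritten as $\langle a_L(\psi_2)T(\varphi_1\ot\psi_1),\Delta_H\varphi_2\rangle$ and $S_H\varphi_2$ ranges over a core of $S_H^*$, which is exactly how the paper closes the argument.
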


\begin{proof}
	\ref{item:4ptana1} With the abbreviations introduced before, the $4$-point function reads
	\begin{align}
		W_4(t)
		&=
		\langle\bar1,2\rangle\langle\bar3,4_t\rangle
		+
		\langle\bar2,3\rangle\langle\bar1,4_t\rangle
		+
		\langle\bar2\ot\bar1,T(3\ot4_t)\rangle,
	\end{align}
	as follows by expanding the definitions or relying on the diagrammatic rules explained in the appendix. Since the first two terms have the stated analyticity, boundedness and continuity properties, and evaluate at $t=-i$ to $\langle\bar1,2\rangle\langle\bar4,3\rangle$ and $\langle\bar2,3\rangle\langle\bar4,1\rangle$, respectively, comparison with
	\begin{align}
		W_4'(0)=\langle\bar4,1\rangle\langle\bar2,3\rangle+\langle\bar1,2\rangle\langle\bar4,3\rangle+
		\langle\bar1\ot\bar4,T(2\ot3)\rangle
	\end{align}
	and Lemma~\ref{lemma:KMS} shows that $t\mapsto\langle\bar2\ot\bar1,T(3\ot4_t)\rangle$ lies in $\HB(\Strip_{-1})$ and evaluates at $t=-i$ to $\langle\bar1\ot\bar4,T(2\ot3)\rangle$.

	Up to a relabeling of vectors, this function coincides with $f$. Hence we have shown the lemma in case $\psi_1,\psi_2$ lie in $\D_H$. The extension to $\psi_1,\psi_2\in\Hil$ now follows by applying Lemma~\ref{lemma:extension} (put $R(t):=T(1\ot\Delta_H^{it})$, $\Theta(R)(t)=(1\ot\Delta_H^{-it})T$, and $Z_1=Z_2=S_H$).

	\ref{item:4ptana2} We now take $\varphi_2$ to even be an entire analytic vector for $\Delta_H$, so that $f$ is entire analytic, and satisfies at $t=-i$
	\begin{align}
		f(-i)
		=
		\langle \psi_1\ot S_H\varphi_2,T(S_H\varphi_1\ot\psi_2)\rangle
		=
		\langle \varphi_1\ot\psi_1,T(\psi_2\ot\Delta_H\varphi_2)\rangle.
	\end{align}
	This can be rewritten as
	\begin{align}
		\langle a_L(\psi_1)T(S_H\varphi_1\ot\psi_2),S_H\varphi_2\rangle^*
		=
		\langle a_L(\psi_2)T(\varphi_1\ot\psi_1),S_H^*S_H\varphi_2\rangle.
	\end{align}
	This equation holds for any vector $\varphi_2$ which is entire analytic for $\Delta_H$. As $\varphi_2$ ranges over this space, $S_H\varphi_2$ ranges over a core of $\Delta_H^{-1/2}$, and hence a core of~$S_H^*$. This implies the claim.
\end{proof}

\begin{remark}
	We may rephrase part b) in terms of left and right creation and annihilation operators as follows: For any $\psi_1,\psi_2\in\Hil$, the operator
	\begin{align}
		T_{\psi_1,\psi_2}\in\CB(\Hil),\qquad T_{\psi_1,\psi_2}\xi := a_L(\psi_2)T(\xi\ot\psi_1)
	\end{align}
	is an endomorphism of $\D_H$, with
	\begin{align}
		S_HT_{\psi_1,\psi_2}=T_{\psi_2,\psi_1}S_H.
	\end{align}
	In particular, $T_{\psi,\psi}$ (and $T_{\psi_1,\psi_2}+T_{\psi_2,\psi_1}$, etc.) are endomorphisms of the standard subspace $H$. This provides a link between endomorphisms of standard subspaces \cite{LongoWitten:2010} and crossing symmetry.
\end{remark}

Our next analyticity result is based on the $6$-point function $W_6$.

\begin{lemma}\label{lemma:6pt}
	Let $H\subset\Hil$ be a standard subspace, $T\in\Tw(H)$ a compatible twist, and assume that $\Om$ is separating for $\CL_{T}(H)$. Let $\varphi_1,\varphi_2\in\D_H$ and $\Psi,\Psi'\in\Hil\ot\Hil$.
	\begin{enumerate}
		\item\label{item:crossing6pt} The function $t\mapsto\langle S_H\varphi_1\ot\Psi,T_1T_2(\Psi'\ot\Delta_H^{it}\varphi_2)\rangle$ lies in $\HB(\Strip_{-1})$, with value at $t=-i$ given by
		\begin{align}
			\langle \Psi\ot S_H\varphi_2,T_2T_1(\varphi_1\ot\Psi')\rangle.
		\end{align}
		\item\label{item:triple} The function $t\mapsto\langle S_H\varphi_1\ot\Psi,T_2T_1T_2(\Psi'\ot\Delta_H^{it}\varphi_2)\rangle$ lies in $\HB(\Strip_{-1})$, with value at $t=-i$ given by
		\begin{align}
			\langle \Psi\ot S_H\varphi_2,T_2T_1T_2(\varphi_1\ot\Psi')\rangle.
		\end{align}
	\end{enumerate}
\end{lemma}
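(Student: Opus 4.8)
The strategy is to obtain both parts of Lemma~\ref{lemma:6pt} from the KMS analyticity of the six-point function $W_6^{\xi_1,\ldots,\xi_6}$ established in Lemma~\ref{lemma:KMS}, in exactly the same spirit as the proof of Proposition~\ref{lemma:4pta}\ref{item:4ptana1}: expand $W_6$ as a sum of the $15$ diagrammatic terms, isolate the single term that carries the operator combination we are after, verify that \emph{all other} $14$ terms already have the required analyticity and boundedness in $\Strip_{-1}$, and then read off the boundary value of the distinguished term by comparison with $W_6'(0)$ via \eqref{def:W2crossn}. First I would fix $\xi_1=\varphi_1\in\D_H$, $\xi_6=\varphi_2\in\D_H$, and choose $\xi_2,\ldots,\xi_5\in\D_H$ so that $\xi_2\ot\xi_3 =: \Psi'$ and $\xi_4\ot\xi_5 =: \Psi$ (after relabelling; the action of $S_H$ on the barred entries reproduces $S_H\varphi_1$ and $S_H\varphi_2$ in the target formulae). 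With the shorthand introduced before Proposition~\ref{lemma:4pta}, the term in $W_6$ corresponding to a ``nested'' pairing of the first three fields with the last three --- the one producing $\langle S_H\varphi_1\ot\Psi,T_1T_2(\Psi'\ot\Delta_H^{it}\varphi_2)\rangle$ for part a), and $\langle S_H\varphi_1\ot\Psi,T_2T_1T_2(\Psi'\ot\Delta_H^{it}\varphi_2)\rangle$ for part b) --- is singled out; the two parts differ only in which of the $15$ diagrams one tracks (a double-nesting versus a triple braid pattern, see Section~\ref{sect:diagrams}).

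The execution then proceeds in three steps. \emph{Step 1:} list the $15$ terms of $W_6(t)$ and of $W_6'(0)$ explicitly, grouping them so that the distinguished term stands alone and the remaining $14$ fall into two classes: pure two-point-function products $\langle\,\overline{\cdot}\,,\cdot_t\rangle\langle\,\overline{\cdot}\,,\cdot\rangle\langle\,\overline{\cdot}\,,\cdot\rangle$ (and permutations thereof), which are in $\HB(\Strip_{-1})$ by Lemma~\ref{lemma:2pt} and whose boundary values are known, and mixed terms of the shape $\langle\,\overline{\cdot}\,,\cdot\rangle\,\langle\,\overline{\cdot}\ot\overline{\cdot}\,,T(\cdot\ot\cdot_t)\rangle$ plus those where the $\Delta_H^{it}$ sits elsewhere, which are handled by Proposition~\ref{lemma:4pta}\ref{item:4ptana1} (possibly after moving $\Delta_H^{it}$ through using $T\in\Tw(H)$, i.e. $[\Delta_H^{it}\ot\Delta_H^{it},T]=0$, and the unitarity of $\Delta_H^{it}$ on the one-particle space). \emph{Step 2:} subtract the known analytic terms from $W_6$; since $W_6\in\HB(\Strip_{-1})$ by Lemma~\ref{lemma:KMS}, the remainder --- which is precisely the distinguished term --- lies in $\HB(\Strip_{-1})$ as well. \emph{Step 3:} evaluate at $t=-i$ by matching the remainder against the corresponding term in $W_6'(0)=W_6^{\Delta_H^{it}\xi_6,\xi_1,\ldots,\xi_5}\big|_{t=0}$; cyclically shifting $\xi_6$ to the front permutes the pairing structure so that the nested term $T_1T_2(\Psi'\ot\Delta_H^{it}\varphi_2)$ becomes $T_2T_1(\varphi_1\ot\Psi')$ paired against $\Psi\ot S_H\varphi_2$ (and similarly $T_2T_1T_2 \mapsto T_2T_1T_2$ with the analogous index reshuffle in part b)). Finally, the extension from $\Psi,\Psi'$ ranging over simple tensors in $\D_H\ot\D_H$ to arbitrary $\Psi,\Psi'\in\Hil\ot\Hil$ follows verbatim from Lemma~\ref{lemma:extension}, with $R(t):=T_1T_2(1^{\ot2}\ot\Delta_H^{it})$ (resp. $T_2T_1T_2(1^{\ot2}\ot\Delta_H^{it})$), $\Theta(R)(t)$ the conjugate with $\Delta_H^{-it}$ moved to the other factor, $n=2$, and $Z_1=Z_2=S_H$.

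\textbf{Main obstacle.} The genuinely delicate point is the bookkeeping in Step~1: verifying that the $14$ non-distinguished diagrams of $W_6$ really do reduce to expressions already covered by Lemma~\ref{lemma:2pt} and Proposition~\ref{lemma:4pta}, and in particular that no second term also carries an irreducible ``$T_1T_2$'' (resp. ``$T_2T_1T_2$'') block entangled with the $t$-dependence. This is where the diagram calculus of Section~\ref{sect:diagrams} is indispensable --- the pairings must be enumerated carefully, and for each one must check that any nested twist appearing there either contracts against a one-particle vector (reducing its effective length) or is decoupled from $\Delta_H^{it}\varphi_2$ so that the $4$-point continuation already applies. A secondary, more routine subtlety is the systematic use of $T\in\Tw(H)$ to transport $\Delta_H^{it}$ past the twists and the unitarity of $\Delta_H^{it}$ on $\Hil$ to absorb it when it reaches an annihilation operator acting on a one-particle state; once these moves are organized, the comparison with $W_6'(0)$ is mechanical. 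I do not expect part b) to be harder than part a): it is the same argument applied to a different one of the $15$ diagrams, the triple-braid term, and the Yang-Baxter equation is \emph{not} invoked here (that will only come later, when symmetry of the distinguished expression under $T_1T_2T_1=T_2T_1T_2$ is used elsewhere).
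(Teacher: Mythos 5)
Your overall architecture is the paper's: expand $W_6$ into its fifteen diagrams, invoke Lemma~\ref{lemma:KMS}, control the individual terms by Lemma~\ref{lemma:2pt} and Proposition~\ref{lemma:4pta}, and extend to general $\Psi,\Psi'$ by Lemma~\ref{lemma:extension}. But Step~2 as you state it --- ``subtract the known analytic terms; the remainder is precisely the distinguished term'' --- breaks down for part a), and this is exactly the obstacle you name in your final paragraph without resolving it. Among the fifteen diagrams there are \emph{two} whose $t$-dependence does not reduce to Lemma~\ref{lemma:2pt} or to a single application of Proposition~\ref{lemma:4pta}\,\ref{item:4ptana1}: the two-crossing term $\langle S_H\xi_3\ot S_H\xi_2\ot S_H\xi_1, T_1T_2(\xi_4\ot\xi_5\ot\Delta_H^{it}\xi_6)\rangle$ of part a) \emph{and} the three-crossing term $\langle S_H\xi_3\ot S_H\xi_2\ot S_H\xi_1, T_2T_1T_2(\xi_4\ot\xi_5\ot\Delta_H^{it}\xi_6)\rangle$ of part b). Subtracting everything you can already control therefore leaves the \emph{sum} of these two, from which analyticity of neither summand follows. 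The circle is broken by treating the part a) term \emph{directly}: rewritten as $\langle a_L(\xi_4)T(S_H\xi_3\ot S_H\xi_2)\ot S_H\xi_1, T(\xi_5\ot\Delta_H^{it}\xi_6)\rangle$ it is again a four-point-type function, and Proposition~\ref{lemma:4pta}\,\ref{item:4ptana2} --- which your plan never invokes --- guarantees that the composite vector $a_L(\xi_4)T(S_H\xi_3\ot S_H\xi_2)$ lies in $\D_H$ and identifies its image under $S_H$, so that Proposition~\ref{lemma:4pta}\,\ref{item:4ptana1} applies and also produces the correct boundary value. Only after all fourteen diagrams other than the triple-braid one are settled in this way does the subtraction argument become valid, and then only for part b).

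A secondary point: your classification of the remaining terms into ``pure two-point products'' and ``one-$T$ mixed terms'' omits the other two-crossing diagrams, e.g. $\langle S_H\xi_4\ot a_L(\xi_3)T(S_H\xi_2\ot S_H\xi_1), T(\xi_5\ot\Delta_H^{it}\xi_6)\rangle$; these do reduce to Proposition~\ref{lemma:4pta}\,\ref{item:4ptana1} because the composite vector occupies the unrestricted $\Hil$-slot rather than the $\D_H$-slot, but they must be enumerated and their boundary values matched one by one against the cyclically shifted expansion $W_6'(0)$. With that bookkeeping and the essential use of part \ref{item:4ptana2} of Proposition~\ref{lemma:4pta} added, your plan coincides with the paper's proof.
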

\begin{proof}
	The first step of the argument is a calculation: By expanding the definitions of $\phi_{L,T}^H(\xi)$ and $\scpr_T$, one finds, $t\in\Rl$,
\begin{align}\label{eq:6pt-split}
	W_6(t)
	=
	w_0(t)+w_1(t)+w_2(t)+w_3(t),
\end{align}
where
\begin{align*}
	w_0(t)
	&=
	\Big(\langle \bar 1,2\rangle\langle \bar3,4\rangle\langle \bar5,6_t\rangle
	+
	\langle \bar2,3\rangle\langle\bar4,5\rangle\langle\bar1,6_t\rangle\Big)
	\\
	&\quad+
	\Big(
	\langle\bar1,2\rangle\langle\bar4,5\rangle\langle\bar3,6_t\rangle
	+
	\langle\bar1,4\rangle\langle\bar2,3\rangle\langle\bar5,6_t\rangle
	+
	\langle\bar2,5\rangle\langle\bar3,4\rangle\langle\bar1,6_t\rangle\Big)
	\\
	w_1(t)
	&=
	\langle\bar1,2\rangle\langle\bar4\ot\bar3,T(5\ot6_t)\rangle
	+
	\langle\bar2,3\rangle\langle\bar4\ot\bar1,T(5\ot6_t)\rangle
	\\
	&\quad+
	\langle\bar3,4\rangle\langle\bar2\ot\bar1,T(5\ot6_t)\rangle
	+
	\langle\bar4,5\rangle\langle\bar2\ot\bar1,T(3\ot6_t)\rangle
	\\
	&\quad+
	\langle\bar2\ot\bar1,T(3\ot4)\rangle\langle\bar5,6_t\rangle
	+
	\langle\bar3\ot\bar2,T(4\ot5)\rangle\langle\bar1,6_t\rangle,
	\\
	w_2(t)
	&=
	\langle\bar4\ot a_3T(\bar2\ot\bar1),T(5\ot6_t)\rangle
	+
	\langle a_4T(\bar3\ot\bar2)\ot\bar1, T(5\ot6_t)\rangle
	\\
	&\quad+
	\langle\bar2\ot\bar1, T(a_{\bar3}T(4\ot5)\ot6_t)\rangle,
	\\
	w_3(t)
	&=
	\langle\bar3\ot\bar2\ot\bar1,T_2T_1T_2(4\ot5\ot6_t)\rangle.
\end{align*}

This result only becomes transparent when considering the diagram notation (Section~\ref{sect:diagrams}). The function $w_k$ collects all terms corresponding to diagrams with $k$ crossings.

Let us denote by $w_0',\ldots,w_3'$ the analogous functions that we get by cyclically permuting  $1,2,\ldots,6\to6,1,\ldots,5$, e.g. the first term of $w_0'(0)$ is $\langle\bar6,1\rangle\langle \bar2,3\rangle\langle \bar4,5\rangle$.

We claim
\begin{align}\label{ana-state}
	w_k\in\HB(\Strip_{-1}),\qquad w_k(-i)=w_k'(0),\qquad k=0,1,2,3.
\end{align}
To prove this claim, we will investigate $w_0,\ldots,w_3$ one by one.

\begin{description}
	\item[$w_0$] Each of the five terms contributing to $w_0(t)$ depends on $t$ via a $2$-point function $\langle\bar j,6_t\rangle$ and hence lies in $\HB(\Strip_{-1})$ and evaluates at $t=-i$ to $\langle\bar 6,j\rangle$ (Lemma~\ref{lemma:2pt}). Using this result, one sees that the first term of $w_0$, evaluated at $t=-i$, coincides with the second term of $w_0'$, evaluated at $t=0$. The other terms of $w_0(-i)$ and $w_0'(0)$ match up similarly via cyclic permutations of the groups of terms in round brackets. Hence \eqref{ana-state} holds for $k=0$.

	\item[$w_1$] For $k=1$, \eqref{ana-state} follows by using Prop.~\ref{lemma:4pta}~\ref{item:4ptana1}. This result (and Lemma~\ref{lemma:2pt} for the last two terms of $w_1$) imply $w_1\in\HB(\Strip_{-1})$. For the boundary values, one checks that the first term of $w_1(-i)$ is $\langle\bar1,2\rangle\langle\bar3\ot\bar6,T(4\ot5)\rangle$, which coincides with the second term of $w_1'(0)$. The behaviour of the other terms is analogous: Working $\mod 6$, the $\ell$-th term of $w_1(-i)$ coincides with the $(\ell+1)$st term of $w_1'(0)$. Hence \eqref{ana-state} holds for $k=1$.

	\item[$w_2$] For $k=2$, \eqref{ana-state} follows by using Prop.~\ref{lemma:4pta}: The first and third term of $w_2$ are seen to lie in $\HB(\Strip_{-1})$ on the basis of part~\ref{item:4ptana1} of that proposition. For the second term, we also need part~\ref{item:4ptana2} which ensures that $a_4T(\bar3\ot\bar2)$ lies in the domain of $S_H$. Using these results, we conclude $w_2\in\HB(\Strip_{-1})$ and can compute the boundary value of all terms. The first term of $w_2$, $\langle\bar4\ot a_3T(\bar2\ot\bar1),T(5\ot6_t)\rangle$, evaluates at $t=-i$ to $\langle a_3T(\bar2\ot\bar1)\ot\bar6,T(4\ot5)\rangle$, which is seen to coincide with the second term of $w_2'(0)$ by direct comparison. The second term of $w_2(-i)$ is
	\begin{align*}
		\langle\bar1\ot\bar6,T(S_Ha_4T(\bar3\ot\bar2)\ot5)\rangle
		&=
		\langle\bar1\ot\bar6,T(a_{\bar2}T(3\ot4)\ot5)\rangle,
	\end{align*}
	which coincides with the third term of $w_2'(0)$. Similarly, the third term of $w_2(-i)$ coincides with the first term of $w_2'(0)$. Hence \eqref{ana-state} holds for $k=2$.

	\item[$w_3$] We use \eqref{eq:6pt-split}. As we have already shown that $W_6$, $w_0$, $w_1$, $w_2$ lie in $\HB(\Strip_{-1})$ and evaluate at $t=-i$ to their primed counterparts $W_6'(0)$, $w_0'(0)$, $w_1'(0)$, $w_2'(0)$, respectively, we conclude that $w_3\in\HB(\Strip_{-1})$, and~$w_3(-i)=w_3'(0)$.
\end{description}

	We now prove the two claims a) and b) made in the lemma.

	The function in part a) and its claimed boundary value coincide with the second term of $w_2$ and the third term of $w_2'(0)$ under the identifications $\varphi_1=\xi_3$, $\Psi=S_H\xi_2\ot S_H\xi_1$, $\varphi_2=\xi_6$, $\Psi'=\xi_4\ot\xi_5$, respectively. Hence we have already proven a) for vectors $\Psi,\Psi'$ that are pure tensors of vectors from $\D_H$. The general case follows from Lemma~\ref{lemma:extension}.

	The function in part b) and its is claimed boundary value coincide with $w_3(t)$ and $w_3'(0)$, respectively, under the identifications $\varphi_1=\xi_3$, $\Psi=S_H\xi_2\ot S_H\xi_1$, $\Psi'=\xi_4\ot\xi_5$, $\varphi_2=\xi_6$. Similarly as in a), this implies that b) holds for $\Psi,\Psi'$ of tensor product form, from which the general result follows by Lemma~\ref{lemma:extension}.
\end{proof}

We finish this section proving some analytic properties of $n$-crossing functions from crossing-symmetry. Recall that for a twist $T$ crossing-symmetric w.r.t. a standard subspace $H$, we had defined $T(t)=(\Delta_H^{it}\ot1)T(1\ot\Delta_H^{-it})$. Also recall the tensor leg notation from footnote~\ref{fn:tensors}, e.g. $T(t)_1=(\Delta_H^{it}\ot1\ot1)(T\ot1)(1\ot\Delta_H^{-it}\ot1)$ on three tensor factors.

\begin{proposition}\label{prop:ncrossing}
	Let $T\in\Tw$ be crossing symmetric w.r.t. a standard subspace $H\subset\Hil$. Then, for $\xi, \xi^\prime \in \Hil$,
	\begin{align*}
		f(t):&=\langle \xi\ot\Psi_n,T(t)_1\cdots T(t)_n (\Phi_n\ot \xi')\rangle
	\end{align*}
belongs to $\HB(\Strip_{1/2})$ and satisfies
\begin{align}\label{eq:ncrossingboundary}
	f(t+\tfrac{i}{2})=\langle \Psi_n\ot J_H\xi',T(t)_n^\ast\cdots T(t)_1^\ast \left(J_H\xi\ot\Phi_n\right)\rangle,\qquad t\in\Rl.
\end{align}
\end{proposition}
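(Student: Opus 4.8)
~\textbf{Proof plan.} The idea is to bootstrap from the single-crossing case (Definition~\ref{def:crossing}) to $n$ crossings by iterating the crossing boundary condition one tensor factor at a time, using an analytic-continuation/induction argument combined with the extension Lemma~\ref{lemma:extension}. For $n=1$ the statement is precisely the content of crossing symmetry: with $\Psi_1=\psi_3$, $\Phi_1=\psi_4$, $\xi=\psi_2$, $\xi'=\psi_1$, equation~\eqref{eq:ncrossingboundary} is the boundary condition~\eqref{eq:Crossingboundary} rewritten via $T(t)=(\Delta_H^{it}\ot1)T(1\ot\Delta_H^{-it})$ (and $T(t)^\ast=T(-t)$). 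For the inductive step I would isolate the last factor $T(t)_n$ acting on tensor legs $n$ and $n+1$: write $T(t)_1\cdots T(t)_n = \big(T(t)_1\cdots T(t)_{n-1}\big)T(t)_n$ and view the matrix element as a crossing-type function in the last two slots with the operator-valued ``kernel'' $R(t):=T(t)_1\cdots T(t)_{n-1}$ absorbed into the remaining arguments. Applying the $n=1$ crossing relation in the last slot moves $\Phi_n$'s final leg around the strip, producing $J_H$ on that leg and conjugating $T(t)_n\to T(t)_n^\ast$; then the induction hypothesis applied to the first $n-1$ crossings (now with shifted vector arguments) moves the rest around, giving~\eqref{eq:ncrossingboundary}.

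To make the slot-by-slot continuation rigorous I would proceed as in the proofs of Proposition~\ref{lemma:4pta} and Lemma~\ref{lemma:6pt}: first establish the statement for $\Psi_n,\Phi_n$ which are pure tensors of vectors in $\D_H$ (so that all the intermediate $\Delta_H^{it}$-translated vectors admit the required analytic continuations, and the relevant ``partial'' vectors like $a_L(\cdot)T(\cdot\ot\cdot)$ land in $\Dom(S_H)$ by Proposition~\ref{lemma:4pta}~\ref{item:4ptana2}), obtain membership in $\HB(\Strip_{1/2})$ and the boundary value, and finally extend in the vector arguments $\Psi_n,\Phi_n\in\Hil^{\ot n}$ and in $\xi,\xi'\in\Hil$ by Lemma~\ref{lemma:extension} with $Z_1=Z_2=S_H$ (or, at the outermost legs, using $J_H$ and the already-available one-particle continuations). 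The three-lines theorem, as in Lemma~\ref{lemma:extension}, controls the sup norm on the strip uniformly in the vectors, so separate continuity and the passage to closed spans is automatic; uniqueness of the continuation across slots follows since $\HB(\Strip_{1/2})$-functions are determined by their boundary values.

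An alternative, perhaps cleaner, route avoids induction on the number of crossings entirely: combine the single crossing identity with the covariance $\Gamma_T(\Delta_H^{it})a_{L,T}^\#(\xi)\Gamma_T(\Delta_H^{it})^{-1}=a_{L,T}^\#(\Delta_H^{it}\xi)$ from Lemma~\ref{lemma:T-second-quantization}~\ref{item:T-second-quantization-covariance}, recognizing $T(t)_1\cdots T(t)_n$ as (a compression of) a product of twisted creation/annihilation operators conjugated by $\Gamma_T(\Delta_H^{it})$, and then reading~\eqref{eq:ncrossingboundary} off from the KMS-type boundary behaviour already encoded in crossing symmetry. I would still fall back on the slot-by-slot argument for the actual write-up, since it keeps the bookkeeping of which $J_H$ and which adjoint appears on which leg completely explicit.

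The main obstacle I anticipate is the bookkeeping in the inductive step: when one peels off the last crossing, the crossing relation produces a $J_H$ on leg $n+1$ and rewrites $T(t)_n$ as $T(t)_n^\ast$, but it also shuffles which vectors sit in the analytic slot versus the ``parameter'' slot, so one must check carefully that after the shuffle the hypotheses of Lemma~\ref{lemma:extension} (closedness of the relevant antilinear operators $Z_i$, membership of the partial vectors in the right domains, continuity of the operator-valued map $\Theta$) are genuinely met — in particular that the nested products of $\Delta_H^{it}$ don't obstruct analyticity and that the $J_H$'s accumulate in exactly the reversed order appearing in~\eqref{eq:ncrossingboundary}. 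Once the $n=1$ case is correctly aligned with Definition~\ref{def:crossing}, the rest is a careful but routine iteration.
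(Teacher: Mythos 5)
Your base case is right, but the inductive step as described does not go through, and it misses the one idea the paper's proof actually turns on. The obstruction is that the continuation variable $t$ sits in \emph{every} factor $T(t)_1,\dots,T(t)_n$ simultaneously: continuing $f$ to $t+\tfrac{i}{2}$ necessarily continues all $n$ factors at once, so you cannot ``apply the $n=1$ crossing relation in the last slot'' while treating $R(t)=T(t)_1\cdots T(t)_{n-1}$ as a fixed operator-valued kernel and then hand the rest to the induction hypothesis. Even if you repair this by introducing independent variables $t_1,\dots,t_n$ and restricting to the diagonal at the end, each single-slot continuation must be performed with general vectors of $\Hil^{\ot n}$ (not pure tensors of rank one) sitting in the remaining legs, whereas Definition~\ref{def:crossing} only gives analyticity of the rank-one matrix elements $T^{\psi_2,\psi_1}_{\psi_3,\psi_4}$. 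Lemma~\ref{lemma:extension} cannot bridge this: it extends a function \emph{already known} to lie in $\HB(\Strip_\alpha)$ with boundary values of the prescribed form to closed spans; it does not produce the analytic continuation of an infinite sum of such functions. The paper's proof supplies exactly the missing ingredient: it inserts an orthonormal basis between consecutive factors, so that $f$ becomes a series $\sum_{k_1,\dots,k_{n+1}}\ip{e_{k_1}}{\xi}\prod_j T^{e_{k_j},\psi_j}_{\varphi_j,e_{k_{j+1}}}(t)\ip{\xi'}{e_{k_{n+1}}}$ whose terms are products of single-crossing functions (hence individually in $\HB(\Strip_{1/2})$ with known boundary values), and then proves that the partial sums are uniformly Cauchy on both boundary lines — whence, by the three-lines theorem, uniformly on the closed strip. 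That uniform-convergence argument is the entire content of the proposition beyond the definition, and your proposal contains no substitute for it.

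Two further problems. Your fallback to Proposition~\ref{lemma:4pta}~\ref{item:4ptana2} (to put vectors like $a_L(\cdot)T(\cdot\ot\cdot)$ into $\Dom(S_H)$) is not available here: that proposition assumes $T\in\Tw(H)$ and $\Om$ separating for $\CL_T(H)$, neither of which is a hypothesis of Proposition~\ref{prop:ncrossing}; note also that the claimed boundary value involves only the bounded operator $J_H$, so no $S_H$-domain issues should enter a correct proof, and working in $\D_H$ first is an unnecessary detour that smuggles in extra hypotheses. Likewise your ``alternative route'' via $\Gamma_T(\Delta_H^{it})$ presupposes compatibility of $T$ with $H$, which is also not assumed. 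The statement is a purely one-particle-level consequence of crossing symmetry, and the basis-expansion/uniform-convergence argument is the way to get it.
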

\begin{proof}
	In order to exploit the crossing symmetry of $T$, we initially choose the vectors $\Psi_n=\psi_1\ot\ldots\ot\psi_n$, $\Phi_n=\varphi_1\ot\ldots\ot\varphi_n$ to be pure tensors.	Considering an orthonormal basis $(e_k)_{k\in\Nl}$ of $\Hil$, the tensor structure of $T(t)_1\cdots T(t)_n$ allows us to rewrite the above scalar product as
	\begin{align}\label{eq:T1TnExpantion}
		f(t)&=\sum_{k_1,\ldots,k_{n+1}} \ip{e_{k_1}}{\xi}\prod_{j=1}^{n}T^{e_{k_j},\psi_j}_{\varphi_j,e_{k_{j+1}}}(t) \ip{\xi'}{e_{k_{n+1}}}.
	\end{align}

	Each partial sum is analytic in $\Strip_{1/2}$ due to the assumption of $T$ being crossing symmetric and, to conclude that $f\in \HB(\Strip_{1/2})$, it is enough to show the partial sums of the series above are uniformly Cauchy. By defining $Q_{N,M}$ and $Q^\prime_{N,M}$ to be the projections onto $\Span\{e_j \ | \ N\leq j\leq M \}$  and $\Span\{J_H e_j \ | \ N\leq j\leq M \}$, respectively, and denoting $(X)_k=\mathbbm{1}^{\ot(k-1)} \ot X \ot \mathbbm{1}^{\ot (n+1-k)} $ for $X\in \B(\Hil)$ as usual, one can reverse the expansion to see that
	\begin{align*}
		f_{N,M}(t):&=\sum_{k_1,\ldots,k_{n+1}=N}^M \ip{e_{k_1}}{\xi}\prod_{j=2}^{n-1}T^{e_{k_j},\psi_j}_{\varphi_j,e_{k_{j+1}}}(t) \ip{\xi^\prime}{e_{k_{n+1}}}\\
		&=\langle (Q_{N,M} \xi)\ot\Psi_n,A_{N,M}(t)(\Phi_n\ot (Q_{N,M} \xi')\rangle,
		\end{align*}
	where $A_{N,M}(t)=T(t)_1(Q_{N,M})_2 T(t)_2 \cdots (Q_{N,M})_nT(t)_n $. This situation is similar to Lemma~\ref{lemma:extension}, because as $N,M\to\infty$, the operator $(Q_{N,M})_1 A_{N,M}(t) (Q_{N,M})_{n+1}$ goes uniformly (in $t$) to zero in the strong operator topology, which is enough to guarantee that the series is uniformly Cauchy on the real line.

	Using crossing symmetry \eqref{eq:Crossingboundary} to check the behavior of $f_{N,M}$ on the upper boundary of the strip $\Strip_{1/2}$, we have
	\begin{align*}
		f_{N,M}(t+\tfrac{i}{2})
		&=\sum_{k_1,\ldots,k_{n+1}=N}^M \overline{\ip{\xi}{e_{k_1}}}\prod_{j=1}^{n}\overline{T^{J_H e_{k_j},\varphi_j}_{\psi_j,J_He_{k_{j+1}}}(t)}\overline{\ip{e_{k_{n+1}}}{\xi^\prime}}\\
		&=\overline{\langle (Q_{N,M}\xi)\ot\Phi_n,\Theta(A_{N,M})(t)^\ast (\Psi_n\ot (Q_{N,M}\xi'))\rangle}\\
		&=\langle \Psi_n\ot (Q_{N,M}\xi'),\Theta(A_{N,M})(t) \left((Q_{N,M}\xi)\ot\Phi_n\right)\rangle,
	\end{align*}
	where $\Theta(A_{N,M})(t)=T(t)^\ast_n(Q^\prime_{N,M})_nT(t)^\ast_{n-1}\cdots  (Q^\prime_{N,M})_2T(t)^\ast_1$ and we choose this notation again to stress the similarity with Lemma~\ref{lemma:extension}.

	For the same argument as above, this implies that the partial sums of \eqref{eq:T1TnExpantion} converges uniformly on $\Rl+\frac{i}{2}$ to $t\mapsto \langle \Psi_n\ot J_H \xi,T(t)^\ast_n\cdots T(t)^\ast_1 \left(J_H \xi^\prime\ot\Phi_n\right)\rangle$. Hence, it follows by the three lines theorem that \eqref{eq:T1TnExpantion} converges uniformly on the closure of the strip $\Strip_{1/2}$ and, therefore $f$ lies in $\HB(\Strip_{1/2})$ and satisfies \eqref{eq:ncrossingboundary}.
\end{proof}

\subsection{Standardness and modular properties of $(\CL_{T}(H),\Om)$}

We now apply the results of the previous section to study necessary and sufficient properties of~$T$ for $\Om$ to be separating for $\CL_{T}(H)$.

\begin{theorem}\label{thm:separating-necessary}
	Let $H\subset\Hil$ be a standard subspace, $T\in\Tw(H)$ a compatible twist, and assume that $\Om$ is separating for $\CL_{T}(H)$. Then
	\begin{enumerate}
		\item\label{item:proof-crossing} $T$ is crossing symmetric in the sense of Def.~\ref{def:crossing}.
		\item\label{item:proof-ybe} $T$ satisfies the Yang-Baxter equation
		\begin{align}\label{eq:YBE}
			T_1T_2T_1 = T_2T_1T_2,
		\end{align}
		i.e. $T$ is a braided twist.
	\end{enumerate}
\end{theorem}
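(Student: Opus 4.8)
The strategy is to extract both conclusions from the analytic continuation results of the previous subsection, applied to suitable $n$-point functions. For part~\ref{item:proof-crossing}, I would start from Proposition~\ref{lemma:4pta}~\ref{item:4ptana1}: the function $f(t)=\langle\varphi_1\ot\psi_1,T(\psi_2\ot\Delta_H^{it}\varphi_2)\rangle$ lies in $\HB(\Strip_{-1})$ with a known boundary value at $t-i$. Using compatibility $[\Delta_H^{it}\ot\Delta_H^{it},T]=0$, I can rewrite $T(\psi_2\ot\Delta_H^{it}\varphi_2)$ as $(\Delta_H^{it}\ot1)T(\Delta_H^{-it}\psi_2\ot\varphi_2)$ up to conjugation, so as to put the function into the exact shape of the crossing function $T^{\psi_2,\psi_1}_{\psi_3,\psi_4}$ of Def.~\ref{def:crossing}. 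The key observation is that Def.~\ref{def:crossing} involves the strip $\Strip_{1/2}$ whereas Prop.~\ref{lemma:4pta} gives $\Strip_{-1}$; the passage between the two is a change of variables $t\mapsto it/2$ (rescaling of the imaginary direction) together with a reflection, combined with the fact that $\Delta_H^{1/2}$ and $S_H=J_H\Delta_H^{1/2}$ are related by $J_H$. Concretely, I would split $\Delta_H^{i(t-i)}=\Delta_H^{it}\Delta_H$ and use $\Delta_H^{1/2}J_H=J_H\Delta_H^{-1/2}$, $S_H=J_H\Delta_H^{1/2}$ to convert the $\Strip_{-1}$ boundary value at $t-i$ (which involves $S_H$ on the vectors, i.e. $\Delta_H^{1/2}$ composed with $J_H$) into a statement about analyticity on the half-strip $\Strip_{1/2}$ with boundary value involving only $J_H$. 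Matching indices with the right-hand side of \eqref{eq:Crossingboundary} then yields crossing symmetry; the selfadjointness of $T$ gives the complex-conjugate form $\overline{(T^*)^{\cdots}_{\cdots}}$.

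For part~\ref{item:proof-ybe}, the Yang-Baxter equation, I would use Lemma~\ref{lemma:6pt}. Part~\ref{item:crossing6pt} of that lemma gives that $t\mapsto\langle S_H\varphi_1\ot\Psi,T_1T_2(\Psi'\ot\Delta_H^{it}\varphi_2)\rangle$ lies in $\HB(\Strip_{-1})$ with value $\langle\Psi\ot S_H\varphi_2,T_2T_1(\varphi_1\ot\Psi')\rangle$ at $t=-i$, and part~\ref{item:triple} gives the analogous statement for $T_2T_1T_2$ with the boundary value $\langle\Psi\ot S_H\varphi_2,T_2T_1T_2(\varphi_1\ot\Psi')\rangle$ at $t=-i$. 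The plan is: apply part~\ref{item:crossing6pt} \emph{twice} — once with the operator word $T_1T_2$ (length $2$) written on three tensor legs, and once more after cyclically rotating the roles of the tensor factors, effectively producing a statement for $T_2T_1$ continued back. More precisely, I want to run a \emph{double} continuation: the boundary value of the $\Strip_{-1}$-continuation of the $T_1T_2$-function is a $T_2T_1$-expression, and continuing \emph{that} once more (applying the lemma again, now with $T_2T_1$ in the role of the operator word and with vectors suitably relabeled) returns to a $T_1T_2$-type expression. Since both the function $\langle S_H\varphi_1\ot\Psi,T_1T_2(\Psi'\ot\Delta_H^{it}\varphi_2)\rangle$ and the analogous function for $T_2T_1T_2$ are periodic-like under this double shift (being continuations of the same KMS-type function by Lemma~\ref{lemma:KMS}), comparing the two sides — using that $T_1T_2T_1$ and $T_2T_1T_2$ both arise as the relevant boundary values after going around — forces $T_1T_2T_1=T_2T_1T_2$ weakly on vectors of pure-tensor form in $\D_H$, hence everywhere by density of $\D_H$ and boundedness of $T$.

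The cleanest route for part~\ref{item:proof-ybe} is probably this: the $6$-point function $W_6$, by Lemma~\ref{lemma:KMS}, is a single function in $\HB(\Strip_{-1})$ with $W_6(-i)=W_6'(0)$, where the prime is the cyclic rotation $1,\dots,6\mapsto 6,1,\dots,5$. In the decomposition $W_6=w_0+w_1+w_2+w_3$ of Lemma~\ref{lemma:6pt}, the three-crossing term $w_3(t)=\langle\bar3\ot\bar2\ot\bar1,T_2T_1T_2(4\ot5\ot6_t)\rangle$ satisfies $w_3(-i)=w_3'(0)$; but applying the continuation result (Lemma~\ref{lemma:6pt}~\ref{item:triple}) directly to $w_3$ gives a \emph{second} expression for $w_3(-i)$, namely one involving $T_2T_1T_2$ in a different vector slot. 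The same $6$-point function also contains, through a different bracketing of how the crossings are distributed among the annihilation/creation contractions, the word $T_1T_2T_1$: one obtains it by taking a cyclic relabeling that moves the ``isolated'' tensor leg $6_t$ from the right end to the left end. Equating the two descriptions of the same boundary value of the same analytic function forces the matrix elements of $T_1T_2T_1$ and $T_2T_1T_2$ to agree. I would carry this out by choosing $\varphi_1,\varphi_2\in\D_H$ and $\Psi,\Psi'$ ranging over pure tensors of $\D_H$-vectors, invoking Lemma~\ref{lemma:extension} (with $Z_1=Z_2=S_H$) to extend to all of $\Hil^{\ot 2}$, and then cancelling the common scalar $S_H$-factors.

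\textbf{Expected main obstacle.} The delicate point is the bookkeeping of vector labels and tensor-leg positions when matching the cyclically-rotated boundary values: one must verify that the \emph{same} analytic function (a continuation of a single KMS function) is being evaluated in two ways, so that equality of boundary values is legitimate, rather than comparing two a priori different functions that merely happen to agree at one point. Getting the rescaling $\Strip_{-1}\leftrightarrow\Strip_{1/2}$ exactly right in part~\ref{item:proof-crossing} — tracking how $\Delta_H^{1/2}$ versus $J_H$ enters the two boundary conditions, and confirming that the half-strip analyticity is genuinely implied (and not merely analyticity on $\Strip_{-1}$ restricted) — is the other subtle step. Both obstacles are ``soft'': once the diagrammatic/combinatorial correspondence of terms is set up carefully (Section~\ref{sect:diagrams}), the analytic inputs from Prop.~\ref{lemma:4pta} and Lemma~\ref{lemma:6pt} do all the real work, and the conclusions follow by uniqueness of analytic continuation on a strip together with density of $\D_H$.
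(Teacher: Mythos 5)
Your part~\ref{item:proof-ybe} is essentially the paper's argument: the paper takes the single function $f(t)=\langle S_H\xi_3\ot S_H\xi_2\ot S_H\xi_1,T_2T_1T_2(\xi_4\ot\xi_5\ot\Delta_H^{it}\xi_6)\rangle$, continues it once via Lemma~\ref{lemma:6pt}~\ref{item:triple} (boundary value containing $T_2T_1T_2$) and once more by first absorbing one factor $T_2$ into the left test vector, so that Lemma~\ref{lemma:6pt}~\ref{item:crossing6pt} applies to the length-two word and, after un-absorbing, produces $T_1T_2T_1$; equality of the two expressions for $f(-i)$ between total sets of vectors gives the Yang--Baxter equation. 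This is exactly your ``cleanest route'' (your first sketch, the ``double continuation/periodicity'' variant, is not needed and would not work as stated, since the comparison happens at the single boundary point $t=-i$ of one function, not after going around twice).

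Part~\ref{item:proof-crossing}, however, has a genuine gap. The crossing function $T^{\psi_2,\psi_1}_{\psi_3,\psi_4}(t)$ carries the modular group on \emph{two} tensor legs simultaneously ($\Delta_H^{it}$ on the first, $\Delta_H^{-it}$ on the fourth), whereas Proposition~\ref{lemma:4pta}~\ref{item:4ptana1} only gives $\HB(\Strip_{-1})$-analyticity of a function with $\Delta_H^{it}$ on a \emph{single} leg. No amount of shuffling with $[\Delta_H^{it}\ot\Delta_H^{it},T]=0$ turns one into the other, and there is no ``change of variables $t\mapsto it/2$'' or ``rescaling of the imaginary direction'' relating analyticity on $\Strip_{-1}$ of the one-leg function to analyticity on $\Strip_{1/2}$ of the crossing function: these are genuinely different functions of $t$ (one is a one-parameter slice, the other is the diagonal of a two-parameter family), and analyticity of a function on a strip is not invariant under reparametrizations that rotate the imaginary direction. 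The paper's actual device is to take $\varphi_1,\varphi_2$ entire analytic for $\Delta_H$ and introduce the \emph{two-variable} entire function $g(t,s)=\langle\Delta_H^{-i\bar s}\varphi_1\ot\psi_1,T(\psi_2\ot\Delta_H^{-it}\varphi_2)\rangle$, whose diagonal $h(t)=g(t,t)$ is the crossing function. The first variable is continued using Proposition~\ref{lemma:4pta}~\ref{item:4ptana1} (evaluated at the full shift $+i$ and pulled back by $-i/2$ using analyticity of $\varphi_2$), the second independently using analyticity of $\varphi_1$; setting $s=t+\frac{i}{2}$ then produces exactly the boundary value \eqref{eq:Crossingboundary}, and Lemma~\ref{lemma:extension} removes the analyticity assumption on the vectors. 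You correctly flagged the $\Strip_{-1}\leftrightarrow\Strip_{1/2}$ passage as the delicate point, but the mechanism you propose for it does not work, and the missing idea is precisely this two-variable/diagonal argument.
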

\begin{proof}
	\ref{item:proof-crossing} Let $\psi_1,\psi_2,\varphi_1,\varphi_2\in\Hil$, with $\varphi_1$ and $\varphi_2$ entire analytic for $\Delta_H$. We consider the entire analytic function $g:\Cl\times\Cl\to\Cl$,
	\begin{align}
		g(t,s)
		:=
		\langle \Delta_H^{-i\bar s}\varphi_1 \ot \psi_1,T(\psi_2\ot\Delta_H^{-it}\varphi_2)\rangle,
	\end{align}
	Note that the restriction of $g$ to the diagonal, $h(t):=g(t,t)$, coincides with the function appearing in the crossing symmetry condition \eqref{eq:crossing-function}.

	According to Prop~\ref{lemma:4pta}~\ref{item:4ptana1} (observe that $t$ has opposite signs in $g$ and \eqref{eq:basefunction1}), we have, $t,s\in\Cl$,
	\begin{align*}
		g(t+\tfrac{i}{2},s)
		&=
		g((t-\tfrac{i}{2})+i, s)
		\\
		&=
		\langle\psi_1\ot S_H\Delta_H^{-i(t-i/2)}\varphi_2,T(S_H\Delta_H^{-i\bar s}\varphi_1\ot\psi_2)\rangle
		\\
		&=
		\langle\psi_1\ot J_H\Delta_H^{-it}\varphi_2,T(\Delta_H^{-is-1/2}J_H\varphi_1\ot\psi_2)\rangle.
	\end{align*}
	From this it is apparent that for $t\in\Rl$
	\begin{align*}
		h(t+\tfrac{i}{2})
		&=
		g(t+\tfrac{i}{2},t+\tfrac{i}{2})
		\\
		&=
		\langle\psi_1\ot J_H\Delta_H^{-it}\varphi_2,T(\Delta_H^{-it}J_H\varphi_1\ot\psi_2)\rangle
		\\
		&=
		\langle\psi_1\ot J_H\varphi_2,(1\ot\Delta_H^{it})T(\Delta_H^{-it}\ot1)(J_H\varphi_1\ot\psi_2)\rangle,
	\end{align*}
	in agreement with the claimed boundary value \eqref{eq:Crossingboundary}.

	By straightforward estimates, one also sees that $h$ is bounded on $\overline{\Strip_{1/2}}$. Finally, Lemma~\ref{lemma:extension} can be used to extend $h$ for general vectors $\varphi_2, \varphi_4\in \Hil$.

	\ref{item:proof-ybe} The proof that $T$ is braided relies on Lemma~\ref{lemma:6pt}. Let $\xi_1,\ldots,\xi_6\in\D_H$, and consider the function
	\begin{align*}
		f(t) := \langle S_H\xi_3\ot S_H\xi_2\ot S_H\xi_1,T_2T_1T_2(\xi_4\ot\xi_5\ot\Delta_H^{it}\xi_6)\rangle.
	\end{align*}
	By Lemma~\ref{lemma:6pt}~\ref{item:triple}, $f$ analytically continues to $\Strip_{-1}$, and
	\begin{align}
		f(-i)
		=
		\langle S_H\xi_2\ot S_H\xi_1\ot S_H\xi_6,T_2T_1T_2(\xi_3\ot\xi_4\ot\xi_5)\rangle.
		\label{triple-cont-a}
	\end{align}
	On the other hand, we may rewrite $f$ as
	\begin{align*}
		f(t)
		=
		\langle S_H\xi_3\ot \Psi,T_1T_2(\xi_4\ot\xi_5\ot\Delta_H^{it}\xi_6)\rangle,
	\end{align*}
	with $\Psi=T(S_H\xi_2\ot S_H\xi_1)$. According to Lemma~\ref{lemma:6pt}~\ref{item:crossing6pt}, this gives
	\begin{align}
		f(-i)
		&=
		\langle \Psi\ot S_H\xi_6,T_2T_1(\xi_3\ot\xi_4\ot\xi_5)\rangle
		\nonumber
		\\
		&=
		\langle S_H\xi_2\ot S_H\xi_1\ot S_H\xi_6,T_1T_2T_1(\xi_3\ot\xi_4\ot\xi_5)\rangle
		\label{triple-cont-b}
	\end{align}
	Comparing \eqref{triple-cont-a} and \eqref{triple-cont-b} now shows that matrix elements of $T_2T_1T_2$ and $T_1T_2T_1$ between total sets of vectors in $\Hil^{\ot 3}$ coincide, which implies the Yang-Baxter equation because $T$ is bounded.
\end{proof}

This result shows that $\Om$ being separating for $\CL_{T}(H)$ is a strong condition on the twist. It might also explain why other classes of twists (see part~\ref{item:Tsmallnorm} and \ref{item:Tpositive} of Thm.~\ref{theorem:T}) have not received as much attention as the braided case. The following examples illustrate this.

\begin{example}\label{ex:Tw(H)CrossingBraided}
	Let $A,B\in\B(\Hil)$ be selfadjoint operators, $F$ the tensor flip on $\Hil\ot\Hil$ as before, $H\subset \Hil$ a standard subspace, and
	\begin{align}
		T:=A\ot B, \qquad\tilde T:=F(A\ot A).
	\end{align}
	Then $T$ is a twist in various cases -- for example, if $\|A\|\|B\|\leq\frac{1}{2}$ (by Thm.~\ref{theorem:T}~\ref{item:Tsmallnorm}), if $A,B$ are positive (by Thm.~\ref{theorem:T}~\ref{item:Tpositive}), or if $A=q E,B=\tilde E$, where $E,\tilde E$ are commuting orthogonal projections and $-1\leq q\leq 1$ (then~$T$ solves the Yang-Baxter equation and has norm $|q|\leq1$, so Thm.~\ref{theorem:T}~\ref{item:Tbraided} applies). Moreover, $T$ is compatible with~$H$ in case $A$ and $B$ commute with the modular unitaries~$\Delta^{it}_H$.

	However, $T$ is braided only in the last case ($T=qE\ot\tilde E$), and satisfies crossing symmetry only if $q=0$ or $E=\tilde E$ is a one-dimensional projection onto a vector that is an eigenvector of $\Delta_H$ and $J_H$. This can be proved by writing out the function $f$ \eqref{eq:crossing-function} for vectors $\xi_1,\ldots,\xi_4$ analytic for $\Delta_H$ and comparing its crossing boundary value \eqref{eq:Crossingboundary}, which here takes the form $f(\frac{i}{2})=q\langle\xi_2,J_HE\xi_1\rangle\langle J_H\tilde E\xi_3,\xi_4\rangle$ with the form $f(\frac{i}{2})=q\langle\xi_2,E\Delta_H^{-1/2}\xi_3\rangle\langle\tilde E\Delta_H^{1/2}\xi_1,\xi_4\rangle$ that one obtains from directly continuing the modular groups $\Delta_H^{it}$ to $t=\frac{i}{2}$.

	In comparison, 	$\tilde T$ is a braided twist if $\|A\|\leq1$, because it is then a selfadjoint solution to the Yang-Baxter equation. If, in addition $[\Delta_H^{it},A]=0=[J_H,A]$, then it is also crossing symmetric because
	\begin{align*}
		T^{\psi_2,\psi_1}_{\psi_3,\psi_4}(t)=\ip{\psi_1}{A\psi_3}\ip{\psi_2}{A\psi_4}=\overline{T^{J_H \psi_2,\psi_3}_{\psi_1,J_H\psi_4}(t)}.
	\end{align*}

	Hence, $T\in \Tw(H)$  is a compatible crossing-symmetric braided twist in case $[\Delta_H^{it},A]=0=[J_H,A]$.
\end{example}

The second example ($\tilde T$) above can be generalized to braided crossing symmetric twists arising from symmetric twists coming from solutions of the Yang-Baxter equation with spectral parameter. This also provides the link of our terminology to the crossing symmetry of scattering theory.

\begin{example}\label{example:physical-crossing}
	Recall that on the Hilbert space $\Hil=L^2(\Rl\to\CK)$ (where $\CK$ is another Hilbert space), we have symmetric twists of the form \eqref{eq:TS}
	\begin{align}\label{eq:TS2}
		(T_S\psi)(\te_1,\te_2)=S(\te_2-\te_1)\psi(\te_2,\te_1),
	\end{align}
	where $S:\Rl\to\B(\CK\ot\CK)$ is a measurable bounded function with $S(-\te)=S(\te)^*$ almost everywhere, satisfying the Yang-Baxter equation with spectral parameter.

	Let $L\subset L^2(\Rl,d\te)$ denote the standard subspace with $$(\Delta_L^{it}\varphi)(\te)=\varphi(\te-2\pi t),\qquad (J_L\varphi)(\te)=\overline{\varphi(\te)}$$ (see, for example, \cite[Sect.~4]{LechnerLongo:2014}). Let $K\subset\CK$ be the closed real span of an orthonormal basis of $\CK$. Then $K$ is a standard subspace with $K=K'$ (such standard subspaces are called maximally abelian in analogy to the von Neumann algebraic situation). Then the closed real tensor product of these spaces, $$H:=L\ot K\subset L^2(\Rl)\ot\CK\cong\Hil$$ is a standard subspace in $\Hil$. Since $\Delta_K=1$ and $\Delta_L^{it}$ acts by translation, we see that $T_S$ \eqref{eq:TS2} is compatible with $H$.

	To evaluate the crossing symmetry condition, consider vectors of the form $\xi_k=\varphi_k\ot v_k$, $k=1,\ldots,4$, with $\varphi_k\in L^2(\Rl,d\te)$ and $v_k\in\CK$. Then the function \eqref{eq:crossing-function} takes the form
	\begin{align*}
		f(t)
		&=
		\int \overline{\varphi_2(\te_1)}\varphi_4(\te_1)\overline{\varphi_1(\te_2)}\varphi_3(\te_2)
		\langle v_2\ot v_1,S(\te_2-\te_1+2\pi t)\,v_4\ot v_3\rangle\,d^2\te.
	\end{align*}
	It has a bounded analytic continuation to the strip $\Strip_{1/2}$, with upper boundary value
	\begin{align*}
		f(t+\tfrac{i}{2})
		&=
		\int \overline{\varphi_2(\te_1)}\varphi_4(\te_1)\overline{\varphi_1(\te_2)}\varphi_3(\te_2)
		\langle v_1\ot J_Kv_4,S(\te_1-\te_2-2\pi t)\,v_3\ot J_K v_2\rangle\,d^2\te
	\end{align*}
	This readily implies that the matrix-valued function $S$ has bounded analytic continuation to the strip $\Strip_{\pi}$, with boundary value 
	\begin{align}\label{eq:scatter-crossing}
		\langle v_2\ot v_1,S(t+i\pi)\,v_4\ot v_3\rangle
		=
		\langle v_1\ot J_Kv_4,S(-t)\,v_3\ot J_K v_2\rangle.
	\end{align}
	Hence it is clear that there are many functions $S:\Rl\to\B(\CK\ot\CK)$ for which $T_S$ is a symmetric twist, but $\Om$ fails to be separating for $\CL_{T_S}(H)$.
\end{example}

\begin{remark}\label{remark:physical-crossing}
 In quantum field theoretic scattering theory, crossing symmetry is a property stating that the scattering amplitude of particles is related to the amplitude of the corresponding antiparticles by analytic continuation \cite[Sect. IV]{Martin:1969_2}. Whereas this property has not been proven in general quantum field theory (see \cite{BrosEpsteinGlaser:1965} for a proof of crossing for two-particle amplitudes, and \cite{Mizera:2021} for recent work towards general crossing conditions in perturbative QFT), it is well established -- and often taken as an axiom -- in integrable QFT on two-dimensional Minkowski spacetime \cite{Iagolnitzer:1978,Smirnov:1992,AbdallaAbdallaRothe:2001,Schroer:2010}.

In the setting of Example~\ref{example:physical-crossing}, our abstract form of crossing symmetry specializes to the crossing symmetry of scattering theory in integrable models, with $S$ playing the role of elastic two-body scattering matrix, $\te$ is the rapidity and the modular conjugation $J_K$ corresponds to conjugating a particle into an antiparticle \cite{AlazzawiLechner:2016}. See also \cite{BischoffTanimoto:2013,HollandsLechner:2018} for previous work relating standard subspaces and crossing symmetry, and \cite{Niedermaier:1998} for a proof of the cyclic formfactor equation, related to crossing symmetry, from modular theory.
\end{remark}

\medskip
We now proceed to show that the Yang-Baxter equation and crossing symmetry are not only necessary, but also {\em sufficient} conditions for $\Om$ being separating for~$\CL_{T}(H)$. This amounts to establishing a large commutant of $\CL_{T}(H)$.

It is instructive to first look at the case of zero twist $T=0$, with $\CF_0(\Hil)$ the full Fock space over $\Hil$. In this case, one has in addition to the ``left'' creation and annihilation operators \eqref{eq:aL-untwisted} also ``right'' creation and annihilation operators, namely ($\xi,\psi_j\in\Hil$)
\begin{align}
	a_R^*(\xi)(\psi_1\ot\ldots\ot\psi_n) &:= \psi_1\ot\ldots\ot\psi_n\ot\xi,\\
	a_R(\xi)(\psi_1\ot\ldots\ot\psi_n) &:= \langle\xi,\psi_n\rangle\cdot \psi_1\ot\ldots\ot\psi_{n-1}.
	\label{eq:aR-untwisted}
\end{align}
It is easy to see that the left field opeartors $\phi_{0,L}(h)=a_L(h)+a_L^*(h)$ and right field operators $\phi_{0,R}(h')=a_R(h')+a_R^*(h')$ commute if and only if $\Im\langle h,h'\rangle=0$, i.e. if $h\in H$ and $h'\in H'$ for some standard subspace $H$. Hence $\CL_0(H)'$ contains the von Neumann algebra $\CR_0(H')$ generated by right fields $\phi_{0,R}(h')$, $h'\in H'$. As~$\Om$ is also cyclic for $\CR_0(H')$, it is separating for $\CL_0(H)$.

Furthermore, the natural unitary involution
\begin{align}\label{eq:Y}
	Y:\CF_0(\Hil)&\to\CF_0(\Hil)\\
	Y(\psi_1\ot\ldots\ot\psi_n) &:= \psi_n\ot\ldots\ot\psi_1,
\end{align}
relates the left and right operators according to $Ya_L^\#(\xi)Y=a^\#_R(\xi)$, which leads to $Y\CL_0(H)Y=\CR_0(H)$; this involution enters the modular conjugation of $(\CL_0(H),\Om)$ in this case \cite[p. 341]{Shlyakhtenko:1997}.

\medskip

In comparison, in our general $T$-twisted setting there exist no ``right'' operators. This is due to the fact that the very definition of the Hilbert space $\CF_T(\Hil)$ is biased towards the left because of the appearance of $(1\ot P_{T,n})$ instead of $(P_{T,n}\ot 1)$ in the recursive definition of $P_{T,n+1}$ \eqref{def:Pn}.

However, in the case of a {\em braided} twist, the symmetry between left and right is restored. This observation will be a key ingredient to proving that $\Om$ separates~$\CL_T(H)$.

\pagebreak

\begin{lemma}
	\label{lemma:symm}
	Let $T\in\Tw$ be a braided (but not necessarily symmetric) twist.
	\begin{enumerate}
		\item For any $n\in\Nl$,
		\begin{align}\label{eq:Preversed}
			P_{T,n+1}&=(P_{T,n}\ot1)\tilde R_{T,n+1},\\
			\tilde{R}_{T,n+1}&:=1+T_n+T_nT_{n-1}+\ldots+T_n\cdots T_1.
		\end{align}
		\item The right creation operators
		\begin{align}\label{eq:aR*}
			a_{R,T}^\star(\xi)[\Psi_n] = [\Psi_n\ot\xi],\qquad [\Psi_n]\in\Hil^{\ot n}/\ker P_{T,n},
		\end{align}
		are well-defined on $\CF_T(\Hil)$ and their adjoints $a_{R,T}(\xi)$ are given by
		\begin{align}\label{eq:aR}
			a_{R,T}(\xi)[\Psi_n]
			&=
			[a_R(\xi)\tilde R_{T,n}\Psi_n],
		\end{align}
		where $a_R(\xi)$ is the untwisted right annihilation operator \eqref{eq:aR-untwisted}.
		\item\label{item:Upsilon} Let $ Z:\Dom(Z)\to\Hil$ be a closed (anti)linear operator such that $$[F(Z\ot Z),T]=0.$$ Then, $\hat{Z}_Y:\bigoplus_{n=0}^\infty\Dom(Z)^{\odot n}/\ker(P_{T,n}) \to \Hil_{T,n} $, given by $\hat{Z}_Y([\Psi_n])=[Y Z^{\ot n}\Psi_n]$, for all $\Psi_n \in \Dom(Z)^{\odot n}/\ker(P_{T,n})$ and for all $n \in \Nl$, where $Z^{\ot 0}:=1$ , is a well-defined closable operator on $\CF_T(\Hil)$ whose closure will be denoted $\Gamma_T^Y(Z)$. This operator is (anti)unitary if $Z$ is. If $Z$ is invertible, one has for any $\xi\in\Dom(Z)$
		\begin{align}\label{eq:LRY}
			\overline{\Gamma_T^Y(Z)a_{L,T}^\#(\xi)\Gamma_T^Y(Z^{-1})}
			&=
			a_{R,T}^\#( Z \xi),\qquad \xi\in\Dom(Z).
		\end{align}
	\end{enumerate}
\end{lemma}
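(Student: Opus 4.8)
The plan is to establish the three parts in order, using the braid relation $T_1T_2T_1 = T_2T_1T_2$ as the single structural input, and to mirror as closely as possible the "left" constructions already carried out in Section~\ref{section:TwistAlgebras}.

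First, for part (a), I would prove $P_{T,n+1} = (P_{T,n}\ot 1)\tilde R_{T,n+1}$ by induction on $n$. The base case $n=1$ reads $P_{T,2} = 1+T = \tilde R_{T,2}$, which is immediate. For the inductive step I would start from the known recursion $P_{T,n+1} = (1\ot P_{T,n})R_{T,n+1}$, substitute the inductive hypothesis $P_{T,n} = (P_{T,n-1}\ot 1)\tilde R_{T,n}$ into the factor $1\ot P_{T,n}$, and then repeatedly apply the braid relation to "commute" the single $T$-factors of $R_{T,n+1} = 1 + T_1 + T_1T_2 + \cdots + T_1\cdots T_n$ past the reversed product $\tilde R_{T,n}$ (in tensor legs $2,\ldots,n+1$). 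The combinatorial identity underlying this is the standard one for the longest element of the symmetric/braid group: $R_{T,n+1}$ and $\tilde R_{T,n+1}$ arise as two reduced-word expressions related by the braid moves, and one checks that $(1\ot (P_{T,n-1}\ot 1)\tilde R_{T,n})R_{T,n+1} = (P_{T,n}\ot 1)\tilde R_{T,n+1}$ after shuffling. I expect this to be the main obstacle: keeping track of which tensor legs each $T_k$ acts on and verifying that the braid relations suffice to reorganize the product is the only genuinely nontrivial bookkeeping in the lemma; it may be cleanest to phrase it via the Matsumoto-type statement that $P_{T,n}$ equals the image under $T\mapsto$ (Coxeter generators) of the "sum over all permutations" element, which is manifestly left-right symmetric.

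Second, for part (b): given (a), selfadjointness of $P_{T,n}$ gives $P_{T,n+1} = \tilde R_{T,n+1}^*(P_{T,n}\ot 1)$, so the map $\Psi_n \mapsto \Psi_n\ot\xi$ descends to the quotients $\Hil^{\ot n}/\ker P_{T,n} \to \Hil^{\ot(n+1)}/\ker P_{T,n+1}$ exactly as in the left case, and boundedness on each $\Hil_{T,n}$ follows from the same estimate $P_{T,n+1}\le c_{T,n}(P_{T,n}\ot 1)$ that one obtains by repeating the computation in \eqref{eq:abound1} with $(1\ot P_{T,n})R_{T,n+1}R_{T,n+1}^*(1\ot P_{T,n})$ replaced by $(P_{T,n}\ot 1)\tilde R_{T,n+1}\tilde R_{T,n+1}^*(P_{T,n}\ot 1)$ and $\|\tilde R_{T,n+1}^*\| = \|R_{T,n+1}^*\|\le c_{T,n}$. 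The formula for the adjoint $a_{R,T}(\xi)[\Psi_n] = [a_R(\xi)\tilde R_{T,n}\Psi_n]$ is then verified by a direct pairing computation: for $[\Psi_{n-1}], [\Psi_n]$ one writes $\langle \Psi_{n-1}\ot\xi, P_{T,n}\Psi_n\rangle = \langle \Psi_{n-1}\ot\xi, (P_{T,n-1}\ot 1)\tilde R_{T,n}\Psi_n\rangle = \langle (P_{T,n-1}\Psi_{n-1})\ot\xi, \tilde R_{T,n}\Psi_n\rangle = \langle P_{T,n-1}\Psi_{n-1}, a_R(\xi)\tilde R_{T,n}\Psi_n\rangle$, matching $\langle [\Psi_{n-1}], a_{R,T}(\xi)[\Psi_n]\rangle_T$.

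Third, for part (c): the construction of $\hat{Z}_Y$ and its closability are completely parallel to Lemma~\ref{lemma:T-second-quantization}\ref{item:T-second-quantization}, once one observes that the hypothesis $[F(Z\ot Z), T] = 0$ gives $F(Z\ot Z)T = TF(Z\ot Z)$, equivalently $(Z\ot Z) (FTF) = (FTF)(Z\ot Z)$, which by Lemma~\ref{lemma:Crossing-J}-type manipulations (or directly) yields that $Z^{\ot n}$ commutes with $P_{T,n}$ when conjugated by the reversal $Y$ — more precisely one checks $Y Z^{\ot n} Y$ commutes with $P_{T,n}$, using that $Y$ conjugates $T_k$ into $T_{n-k}$ and that (a)/(b) exhibit the right-biased structure. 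The covariance relation \eqref{eq:LRY} is then checked on the core of finite-particle vectors: $\Gamma_T^Y(Z) a_{L,T}^\star(\xi)\Gamma_T^Y(Z^{-1})[\Psi_n] = [Y Z^{\ot(n+1)}(\xi\ot Z^{-\ot n}Y\Psi_n)] = [Y(Z\xi\ot \Psi_n)] = [\Psi_n^{\mathrm{rev}}\ot Z\xi]$ after using $Y(\eta\ot\Phi) = \Phi^{\mathrm{rev}}\ot\eta$ and $Y Y = 1$, which is precisely $a_{R,T}^\star(Z\xi)$ applied to $Y Z^{\ot n}Z^{-\ot n}\Psi_n = \Psi_n$ — wait, more carefully one verifies both sides agree as operators by inserting $\Gamma_T^Y(Z^{-1})\Gamma_T^Y(Z) = 1$ and computing directly; the annihilation case follows by taking adjoints w.r.t. $\scpr_T$, using that $\Gamma_T^Y(Z)$ is (anti)unitary when $Z$ is. The only subtlety is domain/closure bookkeeping when $Z$ is unbounded, handled exactly as in Lemma~\ref{lemma:T-second-quantization}.
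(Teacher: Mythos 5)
Part (b) of your proposal is correct and is essentially the paper's argument (take adjoints in \eqref{eq:Preversed} to get well-definedness on the quotients, then the pairing computation for \eqref{eq:aR}). The gap is in part (a), which carries the real content of the lemma. Your primary route -- induction on $n$ plus repeated braid moves to shuffle $R_{T,n+1}$ past the reversed product -- is precisely the step you acknowledge you have not carried out. Your fallback, that $P_{T,n}$ is the image of the sum-over-permutations element and that this element is ``manifestly left-right symmetric'', is the right framework but does not by itself yield \eqref{eq:Preversed}: the quasi-multiplicative extension $t$ with $t(\sigma_k)=T_k$ (well defined only because $T$ is braided) is multiplicative only on \emph{reduced} words, so symmetry of the element in the group algebra of $S_{n+1}$ does not transport through $t$ for free. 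What the paper actually proves, and what your sketch omits, is the coset factorization $\sum_{\pi\in S_{n+1}}\pi=\sum_{\rho\in S_n}\rho\sum_{k}\gamma_k$ with $\gamma_k=\sigma_n\cdots\sigma_k$ (existence and uniqueness of the decomposition $\pi=\rho\gamma_k$), together with the verification that a reduced word for $\rho$ concatenated with $\sigma_n\cdots\sigma_k$ is again reduced; only then can $t$ be applied term by term, turning the left-hand side into $P_{T,n+1}=\sum_{\pi\in S_{n+1}}t(\pi)$ and the right-hand side into $(P_{T,n}\ot1)\tilde R_{T,n+1}$. This length/reducedness argument (or an equivalent one, e.g.\ applying the defining recursion to the reversed generator family $T_k\mapsto T_{n+1-k}$ and invoking the diagram automorphism) is the missing core of (a).

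In part (c) two of your intermediate identities are false as written. From $[F(Z\ot Z),T]=0$ one obtains the \emph{intertwining} $(Z\ot Z)\,FTF=T\,(Z\ot Z)$, not $[(Z\ot Z),FTF]=0$; and since $YZ^{\ot n}Y=Z^{\ot n}$, your claim that ``$YZ^{\ot n}Y$ commutes with $P_{T,n}$'' is literally $[Z^{\ot n},P_{T,n}]=0$, which is the hypothesis situation of Lemma~\ref{lemma:T-second-quantization}, is not implied by the present hypothesis, and is not what is needed here. The step that makes (c) work (and is the one the paper checks) is $T_k\,Y_nZ^{\ot n}=Y_nZ^{\ot n}\,T_{n-k}$ for $k=1,\dots,n-1$, so that $Y_nZ^{\ot n}$ intertwines $P_{T,n}$ with its leg-reversed counterpart; that this counterpart equals $P_{T,n}$ is again a consequence of braidedness via part (a), and only then does $[Y_nZ^{\ot n},P_{T,n}]=0$ follow. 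Once this commutation is in place, your remaining steps -- well-definedness and closability as in Lemma~\ref{lemma:T-second-quantization}, and checking \eqref{eq:LRY} on the finite-particle core with adjoints handling the annihilation case -- are sound and coincide with the paper's.
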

\begin{proof}
	a) We begin with a calculation in the group algebras of the symmetric groups $S_n\subset S_{n+1}$, where $S_n$ is identified with the subgroup of permutations of $\{1,\ldots,n+1\}$ leaving $n+1$ fixed. We claim
	\begin{align}\label{eq:CSnFormula}
		\sum_{\pi\in S_{n+1}}\pi = \sum_{\rho\in S_n}\rho\sum_{k=1}^{n+1}\gamma_k,
	\end{align}
	where the $\gamma_k$ are defined by $\gamma_{n+1}=e$ and $\gamma_k=\sigma_n\sigma_{n-1}\cdots\sigma_k$ for $1\leq k\leq n$.

	Indeed, it is easily checked that for any $\pi\in S_{n+1}$, one has $\rho:=\pi\gamma_{\pi^{-1}(n+1)}^{-1}\in S_n$, so any $\pi\in S_{n+1}$ is of the form $\pi=\rho\gamma_k$ for suitable $\rho\in S_n$ and $k\in\{1,\ldots,n+1\}$. As $\gamma_{k'}\gamma_k^{-1}$ lies in $S_n$ if and only if $k=k'$, this representation is unique. This implies the claimed formula \eqref{eq:CSnFormula}.

	In case $T$ is involutive, we have $S_n$-representations given by $\rho_{T,n}(\sigma_k)=T_k$ and \eqref{eq:Preversed} follows immediately. In case $T$ is not involutive, $P_{T,n}$ can be formulated as $P_{T,n}=\sum_{\pi\in S_n}t(\pi)$, where $t:S_n\to\CB(\Hil^{\ot n})$ is the quasi-multiplicative extension of $t(\sigma_k):=T_k$, i.e. $t(\sigma_{i_1}\cdots\sigma_{i_l})=T_{i_1}\cdots T_{i_l}$ for every reduced word $\sigma_{i_1}\cdots\sigma_{i_l}\in S_n$ \cite{BozejkoSpeicher:1991}. This map is well-defined because the $T_k$ satisfy all relations of $S_n$ except~$T_k^2=1$.

	Given a reduced word $w$ representing $\rho\in S_n$, it is easy to check that the word $w\sigma_n\sigma_{n-1}\cdots\sigma_k$, which represents $\rho\gamma_k$, is reduced as well. Thus $t$ maps the left hand side of \eqref{eq:CSnFormula} to $P_{T,n+1}$ and the right hand side of \eqref{eq:CSnFormula} to $(P_{T,n}\ot1)\tilde R_{T,n+1}$, which proves \eqref{eq:Preversed}.

	b) Considering the adjoint of equation \eqref{eq:Preversed}, one gets $P_{T,n+1}=\tilde R^*_{n+1}(P_{T,n}\ot1)$, from which it is then clear that \eqref{eq:aR*} is well-defined, namely $\Psi_n\ot\xi\in\ker P_{T,n+1}$ for $\Psi_n\in\ker P_{T,n}$. We can then calculate the adjoint by
	\begin{align*}
		\ip{[\Psi_{n+1}]}{a_{R,T}^\star(\xi)[\Phi_n]}_T&=\ip{[\Psi_{n+1}]}{[\Phi_n\ot \xi]}_T\\
		&=\ip{\Psi_{n+1}}{P_{T,n+1}\Phi_n\ot \xi}\\
		&=\ip{\Psi_{n+1}}{\tilde{R}_{T,n+1}^\ast (P_{T,n}\ot1)\Phi_n\ot \xi}\\
		&=\ip{a_R(\xi)\tilde{R}_{T,n+1}\Psi_{n+1}}{ P_{T,n}\Phi_n}\\
		&=\ip{[a_R(\xi)\tilde{R}_{T,n+1}\Psi_{n+1}]}{[\Phi_n]}_T,
	\end{align*}
	which implies $a_{R,T}(\xi)[\Psi_{n+1}]=[a_R(\xi)\tilde{R}_{T,n+1}\Psi_{n+1}]$, as claimed.

	c) The argument is similar to that in the proof of Lemma~\ref{lemma:T-second-quantization}. The new element is the appearance of $F$ in the commutation relation $[F(Z\ot Z),T]=0$. With $Y_n:=Y|_{\Hil^{\ot n}}$, one checks $T_k Y_n Z^{\ot n}=Y_nZ^{\ot n}T_{n-k}$, $k=1,\ldots,n-1$. This implies $[Y_nZ^{\ot n},P_{T,n}]=0$. The remaining steps are now the same as in Lemma~\ref{lemma:T-second-quantization}, observing that the appearance of $Y$ transforms $a_L^\#$ into $a_R^\#$.
\end{proof}

For braided twists $T$, we therefore also have right field operators
\begin{align}\label{eq:phiR}
	\phi_{R,T}(\xi)=a_{R,T}^\star(\xi)+a_{R,T}(\xi),\qquad \xi\in\Hil,
\end{align}
and	may use them to generate right versions $\CR_{T}(H)$ of the von Neumann algebras~$\CL_{T}(H)$ considered so far.

\begin{definition}\label{def:MRTH}
	Given a closed real subspace $H\subset\Hil$ and a braided twist $T\in\Tw$, we define the right $T$-twisted Araki-Woods algebra
	\begin{align}
		\CR_{T}(H) := \{\exp(i\phi_{R,T}(h))\,:h\in H\}''
		\subset\CB(\CF_T(\Hil)).
	\end{align}
\end{definition}

\begin{remark}
	\leavevmode
	\begin{enumerate}
		\item In complete analogy to Lemma~\ref{lemma:csT}, one shows that $\Om$ is cyclic for $\CR_{T}(H)$.
		\item The right and left algebras $\CL_T(H)$ and $\CR_T(H)$ seem not to be related by a natural involution in general because the unitary $Y$ \eqref{eq:Y} does typically not define an operator on $\CF_T(\Hil)$. Only in the more specific situation of Lemma~\ref{lemma:symm}~\ref{item:Upsilon} such a symmetry exists.
		\item In principle, the definition of $\CR_T(H)$ does not need $T\in \Tw$ to be a braided twist. Indeed, whenever $\Psi_n \in \ker P_{T,n}$ implies $\Psi_n\ot\xi\in \ker P_{T,n+1}$, the operator $a_{R,T}^\star(\xi)[\Psi_n] = [\Psi_n\ot\xi]$, $[\Psi_n]\in\Hil^{\ot n}/\ker P_{T,n}$ is well defined. This is for example the case if $T\in \Tws$. However, our later proof that $\CL_T(H)$ and~$\CR_T(H')$ commute will crucially depend on $T$ being braided.
	\end{enumerate}
\end{remark}

These remarks suggest that $\Om$ being separating for $\CL_{T}(H)$ can be proved by showing that $\CR_{T}(H^\prime)$ and $\CL_{T}(H)$ commute. Hence, we start analyzing the commutations relations between left and right creation and annihilation operators.

\begin{definition}
	A twist $T\in\Tw$ is called {\em local} w.r.t. a standard subspace $H\subset\Hil$ if $\Psi_n\ot\xi\in \ker P_{T,n+1}$ whenever $\Psi_n \in \ker P_{T,n}$ for every $n\in \Nl$ and
	\begin{align}
		[\phi_{L,T}(h),\phi_{R,T}(h')]=0,\qquad h\in H,h'\in H'.
	\end{align}
\end{definition}

\begin{lemma}\label{lemma:relative-commutation}
	Let $T\in\Tw$ and suppose $\Psi_n\ot\xi\in \ker P_{T,n+1}$ whenever $\Psi_n \in \ker P_{T,n}$ for all $n\in \Nl$. Then, for $\xi,\eta\in\Hil$, and $\Psi_n\in\Hil^{\ot n}$
	\begin{align}
		[a_{L,T}^\star(\xi),a_{R,T}^\star(\eta)] &= 0,
		\qquad
		[a_{L,T}(\xi),a_{R,T}(\eta)] = 0,\\
		[a_{L,T}(\xi),a_{R,T}^\star(\eta)]\Omega&=\ip{\xi}{\eta}\Omega,\\
		[a_{L,T}(\xi),a_{R,T}^\star(\eta)][\Psi_n] &= [a_L(\xi)(T_1\cdots T_n)(\Psi_n\ot\eta)].
	\end{align}
	Furthermore if $T\in\Tw$ is a braided twist, we have in addition
	\begin{align}\label{eq:mixedrelations}
		[a_{R,T}(\xi),a_{L,T}^\star(\eta)] [\Psi_n]&= [a_R(\xi)T_{n}\cdots T_{1}(\eta\ot\Psi_n)],
		\nonumber\\
		[\phi_{L,T}(\xi),\phi_{R,T}(\eta)]\Omega&=2i \Im \ip{\xi}{\eta}\Om,
		\\
		[\phi_{L,T}(\xi),\phi_{R,T}(\eta)][\Psi_n]
		=
		[a_L(\xi)T_1&\cdots T_n(\Psi_n\ot \eta)-a_R(\eta)T_n\cdots T_1(\xi\ot\Psi_n)].
		\nonumber
	\end{align}
\end{lemma}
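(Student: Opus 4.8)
The plan is to establish all of these identities by direct computation on the finite-particle subspace of $\CF_T(\Hil)$, where all operators involved are defined, using the explicit quotient-map formulas for the creation and annihilation operators. The structural inputs are: well-definedness of $a_{R,T}^\star(\xi)[\Psi_n]=[\Psi_n\ot\xi]$, which is precisely the standing hypothesis $\Psi_n\ot\xi\in\ker P_{T,n+1}$ for $\Psi_n\in\ker P_{T,n}$ (and, for braided $T$, a consequence of \eqref{eq:Preversed}); the formula $a_{L,T}(\xi)[\Psi_n]=[a_L(\xi)R_{T,n}\Psi_n]$ from \eqref{eq:aL}; and, in the braided case, the analogous $a_{R,T}(\xi)[\Psi_n]=[a_R(\xi)\tilde R_{T,n}\Psi_n]$ from Lemma~\ref{lemma:symm}~b).

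First I would record the two elementary recursions, read off directly from the defining sums: $R_{T,n+1}=R_{T,n}+T_1\cdots T_n$ (with $R_{T,n}$ acting on the first $n$ tensor legs) and $\tilde R_{T,n+1}=(1\ot\tilde R_{T,n})+T_n\cdots T_1$ (with $\tilde R_{T,n}$ acting on the last $n$ legs), together with the trivial observations that $a_L(\xi)$ commutes with inserting a vector on the right and passing it through $R_{T,n}$, and $a_R(\xi)$ commutes with inserting a vector on the left and passing it through $1\ot\tilde R_{T,n}$. Given these, $[a_{L,T}^\star(\xi),a_{R,T}^\star(\eta)]=0$ is immediate since both orders send $[\Psi_n]$ to $[\xi\ot\Psi_n\ot\eta]$, and $[a_{L,T}(\xi),a_{R,T}(\eta)]=0$ follows by taking $\scpr_T$-adjoints, i.e.\ pairing with finite-particle vectors on both sides and moving the two annihilation operators onto their adjoints $a_{L,T}^\star,a_{R,T}^\star$, whose commutator we have just seen to vanish. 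For the mixed commutator, apply $a_{L,T}(\xi)$ to $[\Psi_n\ot\eta]$ via \eqref{eq:aL} and split $R_{T,n+1}=R_{T,n}+T_1\cdots T_n$: the $R_{T,n}$-part reproduces $a_{R,T}^\star(\eta)a_{L,T}(\xi)[\Psi_n]$ and the remainder is $[a_L(\xi)T_1\cdots T_n(\Psi_n\ot\eta)]$; the claimed value on $\Om$ is the case $n=0$, where $R_{T,1}=1$.

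For the braided part, the identity $[a_{R,T}(\xi),a_{L,T}^\star(\eta)][\Psi_n]=[a_R(\xi)T_n\cdots T_1(\eta\ot\Psi_n)]$ is the mirror computation: apply $a_{R,T}(\xi)$ to $[\eta\ot\Psi_n]$ via Lemma~\ref{lemma:symm}~b) and split $\tilde R_{T,n+1}=(1\ot\tilde R_{T,n})+T_n\cdots T_1$, the first part giving back $a_{L,T}^\star(\eta)a_{R,T}(\xi)[\Psi_n]$. The field-operator identities then follow by expanding $\phi_{L,T}(\xi)=a_{L,T}^\star(\xi)+a_{L,T}(\xi)$ and $\phi_{R,T}(\eta)=a_{R,T}^\star(\eta)+a_{R,T}(\eta)$: the creation--creation and annihilation--annihilation commutators vanish, leaving $[a_{L,T}^\star(\xi),a_{R,T}(\eta)]+[a_{L,T}(\xi),a_{R,T}^\star(\eta)]$, which is evaluated by the two mixed formulas just obtained (the first one with the roles of $\xi$ and $\eta$ interchanged and an overall sign); on the vacuum this collapses to $(\ip{\xi}{\eta}-\ip{\eta}{\xi})\Om=2i\Im\ip{\xi}{\eta}\Om$.

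The computations are routine, and there is no single hard step; the points that require care are, first, checking at each step that the equivalence class written down genuinely lives on the relevant quotient $\Hil^{\ot m}/\ker P_{T,m}$ — i.e.\ that the kernel hypothesis (or \eqref{eq:Preversed}) is invoked whenever a vector is inserted on the right — and, second, the bookkeeping of which tensor legs $R_{T,n}$, $\tilde R_{T,n}$ and the individual $T_k$ act on. In the braided part one should also keep in mind that both the formula $a_{R,T}(\xi)[\Psi_n]=[a_R(\xi)\tilde R_{T,n}\Psi_n]$ and the very existence of $a_{R,T}^\star$ with the stated properties rest on Lemma~\ref{lemma:symm}, whose proof uses the Yang-Baxter equation; this is exactly why the last three identities are stated only for braided twists.
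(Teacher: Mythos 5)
Your proposal is correct and follows essentially the same route as the paper: well-definedness of $a_{R,T}^\star$ from the kernel hypothesis, the recursions $R_{T,n+1}=R_{T,n}\ot1+T_1\cdots T_n$ and $\tilde R_{T,n+1}=1\ot\tilde R_{T,n}+T_n\cdots T_1$ to evaluate the two mixed commutators, adjoints for the annihilation--annihilation case, and expansion of the fields at the end. The computations check out, including the vacuum case $2i\Im\ip{\xi}{\eta}\Om$.
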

\begin{proof}
	As discussed above, the condition involving kernels guarantees $a_{R,T}^\star(\xi)$ to exist as an operator on $\CF_T(\Hil)$. It is easy to see that the left and right creation operators commute and, by taking adjoints, the left and right annihilation also commute. For the mixed relation we get
	\begin{align*}
		[a_{L,T}(\xi),a_{R,T}^\star(\eta)][\Psi_n]&=
		[a_L(\xi)R_{T,n+1}(\Psi_n\ot \eta)]-[(a_L(\xi)R_{T,n}\Psi_n)\ot \eta]
		\\
		&=[a_L(\xi)T_1\cdots T_{n}(\Psi_n\ot \eta)],
	\end{align*}
	where we have used \eqref{eq:aL} and $R_{T,n+1} = R_{T,n}\ot 1+T_1\cdots T_n$.

	In case $T\in \Tw$ is a braided twist, Lemma~\ref{lemma:symm} ensures the existence of the right creation operators and fields. One can easily calculate the commutator acting on the vacuum directly. In addition,  we have \eqref{eq:aR} and $\tilde R_{T,n+1}=1\ot \tilde R_{T,n}+ T_n\cdots T_1$. Hence, it follows that
	\begin{align*}
		[a_{R,T}(\eta),a_{L,T}^\star(\xi)][\Psi_n]&=
		[a_R(\eta)\tilde{R}_{T,n+1}(\xi\ot\Psi_n)]-[\xi\ot(a_R(\eta)\tilde{R}_{T,n}\Psi_n)]
		\\
		&=[a_R(\eta)T_{n}\cdots T_{1}(\xi\ot\Psi_n)].
	\end{align*}
	Combining these two mixed commutators immediately yields the claimed formula for $[\phi_{L,T}(\xi),\phi_{R,T}(\eta)][\Psi_n]$.
\end{proof}

One can easily see that whenever the right fields are well defined on $\CF_T(\Hil)$---{\emph i.e.}, $\Psi_n \in \ker P_{T,n}$ implies $\Psi_n\ot\xi\in \ker P_{T,n+1}$ for all $\xi,\eta\in\Hil$, $\Psi_n\in\Hil^{\ot n}$, and $n\in \Nl$---it follows from the fields commutator in Lemma~\ref{lemma:relative-commutation} that $T\in \Tw$ is local if, and only if, for all $\Psi_n \in  \Hil^{\ot n}$, $h\in H$ and $h^\prime \in H^\prime$,
\begin{align}\label{eq:Tlocalfield}
\Hil^{\ot n} \ni	a_L(h)T_1\cdots T_n(\Psi_n\ot h')-a_R(h')T_n\cdots T_1(h\ot\Psi_n)\in \ker P_{T,n}.
\end{align}

Comparing to the zero twist case, this imposes an additional constraint on an operator $T\in \Tw$ to be local. An example worth mentioning is $T=-\id_{\Hil\ot\Hil}$, for which $\ker P_{T,n}=\Hil^{\ot n}$ for all $n>1$, so the condition above is automatically fulfilled. More interesting is however the situation when  kernels are not the whole space. In the next result we give a sufficient condition for a braided twist to be local, in particular, we characterize the local braided strict twists.

\begin{proposition}\label{proposition:nCrossing}
	For a braided twist $T\in\Tw$ to be local w.r.t. a standard subspace $H\subset\Hil$ it is sufficient that
	\begin{align}\label{eq:nCrossing}
		\langle h\ot\Psi_n,T_1\cdots T_n (\Phi_n\ot h')\rangle
		=
		\langle \Psi_n\ot h',T_n\cdots T_1(h\ot\Phi_n)\rangle
	\end{align}
	for all $\Phi_n,\Psi_n\in\Hil^{\ot n}$, $h\in H$, $h'\in H'$, and all $n\in\Nl$.
	Furthermore, in case $T\in \Tws$, \eqref{eq:nCrossing} is also necessary.
\end{proposition}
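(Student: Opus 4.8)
The plan is to derive the equivalence directly from the field-operator commutator formula of Lemma~\ref{lemma:relative-commutation}, together with the characterization \eqref{eq:Tlocalfield} of locality. Since $T$ is a braided twist, Lemma~\ref{lemma:symm} guarantees that the right creation operators, right field operators, and the algebra $\CR_T(H')$ all exist on $\CF_T(\Hil)$, so in particular the hypothesis ``$\Psi_n\ot\xi\in\ker P_{T,n+1}$ whenever $\Psi_n\in\ker P_{T,n}$'' is automatic and the third line of \eqref{eq:mixedrelations} applies. Thus $T$ is local w.r.t.\ $H$ precisely when, for every $n$, every $h\in H$, $h'\in H'$, and every $\Psi_n\in\Hil^{\ot n}$, the vector
\begin{align*}
    v_n(h,h',\Psi_n):=a_L(h)T_1\cdots T_n(\Psi_n\ot h')-a_R(h')T_n\cdots T_1(h\ot\Psi_n)
\end{align*}
lies in $\ker P_{T,n}$.

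For the sufficiency direction, I would show that \eqref{eq:nCrossing} forces $v_n(h,h',\Psi_n)\perp\Ran P_{T,n}$ in the untwisted inner product, which (since $\ker P_{T,n}=(\overline{\Ran P_{T,n}})^\perp$ by selfadjointness of $P_{T,n}$) is exactly the statement $v_n\in\ker P_{T,n}$. Concretely, for an arbitrary $\Phi_n\in\Hil^{\ot n}$ one computes
\begin{align*}
    \langle \Phi_n,v_n(h,h',\Psi_n)\rangle
    &=\langle \Phi_n,a_L(h)T_1\cdots T_n(\Psi_n\ot h')\rangle
    -\langle \Phi_n,a_R(h')T_n\cdots T_1(h\ot\Psi_n)\rangle\\
    &=\langle h\ot\Phi_n,T_1\cdots T_n(\Psi_n\ot h')\rangle
    -\langle \Phi_n\ot h',T_n\cdots T_1(h\ot\Psi_n)\rangle,
\end{align*}
using only the defining adjoint relations $a_L(h)^*=a_L^*(h)$, $a_R(h')^*=a_R^*(h')$ of the untwisted Boltzmann-space operators \eqref{eq:aL-untwisted}, \eqref{eq:aR-untwisted}. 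By \eqref{eq:nCrossing} (applied with $\Phi_n$ in place of the first argument there) this difference vanishes for all $\Phi_n$, hence $v_n\in(\Ran P_{T,n})^\perp=\ker P_{T,n}$, i.e.\ \eqref{eq:Tlocalfield} holds and $T$ is local.

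For the necessity direction under the extra hypothesis $T\in\Tws$, I would run the same computation in reverse: if $T$ is local then $v_n(h,h',\Psi_n)\in\ker P_{T,n}$ for all $h\in H$, $h'\in H'$, $\Psi_n\in\Hil^{\ot n}$, so by the displayed identity
\begin{align*}
    \langle h\ot\Phi_n,T_1\cdots T_n(\Psi_n\ot h')\rangle
    =\langle \Phi_n\ot h',T_n\cdots T_1(h\ot\Psi_n)\rangle
\end{align*}
for all $\Phi_n\in\Hil^{\ot n}$ (because $\ker P_{T,n}\perp\Ran P_{T,n}$, and every $\Phi_n$ can be paired against $v_n$). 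This already gives \eqref{eq:nCrossing} for $h\in H$ and $h'\in H'$ — which is exactly what is asserted, since \eqref{eq:nCrossing} is only claimed for such arguments. (Here the strictness assumption $T\in\Tws$ plays only the role of ensuring that the right fields and hence $\CR_T(H')$ are actually defined — which, as noted in the remark after Definition~\ref{def:MRTH}, holds for strict twists even without the braid relation — so that ``$T$ is local'' is a meaningful hypothesis; the core computation is identical.)

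The only genuinely delicate point — and the main obstacle — is the bookkeeping around the first-particle-number components and the vacuum term: the formula in \eqref{eq:mixedrelations} for $[\phi_{L,T}(\xi),\phi_{R,T}(\eta)][\Psi_n]$ is the generic ($n\geq1$) expression, while on $\Om$ one separately has $[\phi_{L,T}(\xi),\phi_{R,T}(\eta)]\Om=2i\,\Im\langle\xi,\eta\rangle\,\Om$, which vanishes exactly when $\Im\langle h,h'\rangle=0$, i.e.\ for $h\in H$, $h'\in H'$ — so the vacuum contributes no further constraint, consistent with $\ker P_{T,0}=\{0\}$. One must also be careful that in \eqref{eq:Tlocalfield} the bracket $[\,\cdot\,]$ denotes the quotient map onto $\Hil^{\ot n}/\ker P_{T,n}$, so ``the commutator vanishes as an operator on $\CF_T(\Hil)$'' is literally the statement that the representative vector $v_n$ lies in $\ker P_{T,n}$ for each $n$; once this translation is made explicit, the argument above is just the pairing computation. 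I expect the write-up to be short, with the only care needed being to state clearly that $\ker P_{T,n}$ is the orthogonal complement of $\overline{\Ran P_{T,n}}$ in the \emph{untwisted} inner product, which is what licenses passing between ``$v_n\in\ker P_{T,n}$'' and ``$\langle\Phi_n,v_n\rangle=0$ for all $\Phi_n$''.
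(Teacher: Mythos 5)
Your sufficiency argument is correct and is essentially the paper's proof: the pairing computation shows that \eqref{eq:nCrossing} (which quantifies over \emph{all} $\Psi_n$) is equivalent to the vector $v_n(h,h',\Phi_n)=a_L(h)T_1\cdots T_n(\Phi_n\ot h')-a_R(h')T_n\cdots T_1(h\ot\Phi_n)$ being \emph{zero} in $\Hil^{\ot n}$ --- indeed your computation proves more than ``$v_n\in\ker P_{T,n}$'', since the pairing vanishes against every $\Phi_n\in\Hil^{\ot n}$ and not merely against $\Ran P_{T,n}$ --- and then Lemma~\ref{lemma:relative-commutation} together with \eqref{eq:Tlocalfield} gives locality, with the vacuum term handled exactly as you say.

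The necessity direction, however, contains a genuine error about where strictness enters. From locality you only obtain $v_n(h,h',\Psi_n)\in\ker P_{T,n}$, and pairing a general $\Phi_n\in\Hil^{\ot n}$ against a vector of $\ker P_{T,n}$ does \emph{not} give zero: the orthogonality $\ker P_{T,n}\perp\Ran P_{T,n}$ only yields $\langle\Phi_n,v_n\rangle=0$ for $\Phi_n\in\overline{\Ran P_{T,n}}$, whereas \eqref{eq:nCrossing} demands it for all $\Phi_n\in\Hil^{\ot n}$. The gap is closed precisely by the hypothesis $T\in\Tws$: then $\ker P_{T,n}=\{0\}$ (equivalently $\Ran P_{T,n}=\Hil^{\ot n}$), so $v_n=0$ and \eqref{eq:nCrossing} follows --- this is exactly how the paper argues (``the kernels are trivial''). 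Your closing parenthetical, asserting that strictness serves only to make the right fields well defined and that the core computation is identical, is therefore wrong on both counts: for a braided twist the right fields exist without strictness (Lemma~\ref{lemma:symm}), and without strictness the implication from locality to \eqref{eq:nCrossing} fails in the stated generality, because locality only controls $v_n$ modulo $\ker P_{T,n}$. The repair is one line, but as written the necessity step is not justified.
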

\begin{proof}
	First notice that \eqref{eq:nCrossing} is equivalent to, for all $\Phi_n\in\Hil^{\ot n}$, $h\in H$, and $h'\in H'$,
	\begin{align*}
		\Hil^{\ot n} \ni a_L(h)T_1\cdots T_n(\Phi_n\ot h')-a_R(h')T_n\cdots T_1(h\ot\Phi_n)=0.
	\end{align*}

	Now, Lemma~\ref{lemma:symm} ensures the existence of the right fields and Lemma~\ref{lemma:relative-commutation} yields
	\begin{align*}
		[\phi_{L,T}(h),\phi_{R,T}(h')][\Phi_n]
		=
		[a_L(h)T_1\cdots T_n(\Phi_n\ot h')-a_R(h')T_n\cdots T_1(h\ot\Phi_n)]=0.
	\end{align*}

	In case $T\in \Tws$ is a strict twist, the kernels are trivial. Hence one obtains also the reverse implication in this case.
\end{proof}

We are now in position for proving the converse of Theorem~\ref{thm:separating-necessary}.

\begin{theorem}\label{thm:YBCrossingLocalSeparating}
	Let $H\subset\Hil$ be a standard subspace and $T\in\Tw(H)$ be a compatible braided twist. Assume that $T$ is crossing symmetric w.r.t.~$H$. Then
	\begin{enumerate}
		\item\label{item:locality} $T$ is local w.r.t. $H$, \emph{i.e.}, $\CR_{T}(H^\prime)\subset \CL_{T}(H)^\prime$;
		\item\label{item:standardness} $\Om$ is cyclic and separating for $\CL_{T}(H)$ and $\CR_{T}(H)$.
	\end{enumerate}
\end{theorem}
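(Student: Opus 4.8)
The plan is to establish part \ref{item:locality} first and then deduce part \ref{item:standardness} from it together with the cyclicity of the vacuum already known for left and right algebras (Lemma~\ref{lemma:csT} and the Remark following Def.~\ref{def:MRTH}). For \ref{item:locality}, by Proposition~\ref{proposition:nCrossing} it suffices to verify the identity
\begin{align*}
	\langle h\ot\Psi_n,T_1\cdots T_n(\Phi_n\ot h')\rangle
	=
	\langle \Psi_n\ot h',T_n\cdots T_1(h\ot\Phi_n)\rangle
\end{align*}
for all $h\in H$, $h'\in H'$, $\Psi_n,\Phi_n\in\Hil^{\ot n}$, $n\in\Nl$. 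The idea is to use the analytic continuation provided by crossing symmetry in the form of Proposition~\ref{prop:ncrossing}. First I would take $h$ and $h'$ to be analytic vectors for $\Delta_H$ (a dense set in $H$, resp.\ $H'$) and consider $f(t)=\langle h\ot\Psi_n,T(t)_1\cdots T(t)_n(\Phi_n\ot h')\rangle$. Since $T(t)_k=(\Delta_H^{it})_k\,T_k\,(\Delta_H^{-it})_{k+1}$ telescopes across the product, one has $T(t)_1\cdots T(t)_n=(\Delta_H^{it}\ot 1^{\ot n})\,T_1\cdots T_n\,(1^{\ot n}\ot\Delta_H^{-it})$, so $f(t)=\langle \Delta_H^{-it}h\ot\Psi_n,T_1\cdots T_n(\Phi_n\ot\Delta_H^{it}h')\rangle$; in particular $f(0)$ is the left-hand side above. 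By Proposition~\ref{prop:ncrossing}, $f\in\HB(\Strip_{1/2})$ with boundary value
\begin{align*}
	f(t+\tfrac i2)=\langle \Psi_n\ot J_H h',\,T(t)_n^\ast\cdots T(t)_1^\ast\,(J_H h\ot\Phi_n)\rangle .
\end{align*}
Now the key point: for $h\in H$ we have $S_H h=h$, hence $J_H h=\Delta_H^{1/2}h$ (on analytic vectors), and similarly $J_H h'=\Delta_H^{-1/2}h'$ because $h'\in H'$ and $\Delta_{H'}=\Delta_H^{-1}$. Feeding these into the boundary value and using $T(t)^\ast=T(-t)$ together with the telescoping identity for the reversed product $T_n\cdots T_1$, one should be able to re-express $f(t+\tfrac i2)$ as the analytic continuation to $t+\tfrac i2$ of $g(t):=\langle \Psi_n\ot h',T_n\cdots T_1(h\ot\Phi_n)\rangle$-type data; matching the two continuations of the \emph{same} $\HB(\Strip_{1/2})$ function at a common interior point (or at $t=0$ after the substitutions) forces $f(0)=g(0)$, which is precisely \eqref{eq:nCrossing}. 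The extension from analytic $h,h'$ and pure-tensor $\Psi_n,\Phi_n$ to general vectors is routine by density and the boundedness of all operators involved (Lemma~\ref{lemma:extension} if needed). This gives locality, i.e.\ $\CR_T(H')\subset\CL_T(H)'$; by the symmetric argument with $H$ and $H'$ interchanged (note $T$ is still compatible and crossing symmetric w.r.t.\ $H'$, using $\Delta_{H'}=\Delta_H^{-1}$, $J_{H'}=J_H$, and Lemma~\ref{lemma:Crossing-J}), one also gets $\CL_T(H')\subset\CR_T(H)'$, hence $\CR_T(H)$ and $\CL_T(H')$ commute as well.

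For part \ref{item:standardness}: $\Om$ is cyclic for $\CL_T(H)$ by Lemma~\ref{lemma:csT}\ref{item:T-cyclicity} and cyclic for $\CR_T(H')$ by the analogous statement. Since $\CR_T(H')\subset\CL_T(H)'$ by \ref{item:locality}, the cyclicity of $\Om$ for $\CR_T(H')$ implies it is cyclic for $\CL_T(H)'$, hence separating for $\CL_T(H)$. The same reasoning with the roles of left/right (and $H$/$H'$) swapped — using that $\CL_T(H')\subset\CR_T(H)'$ and that $\Om$ is cyclic for $\CL_T(H')$ — shows $\Om$ is separating for $\CR_T(H)$ as well; and it is cyclic for $\CR_T(H)$ by the remark after Def.~\ref{def:MRTH}. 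So $\Om$ is cyclic and separating for both algebras.

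The main obstacle I anticipate is the bookkeeping in the boundary-value computation: correctly tracking how the reversed product $T_n\cdots T_1$, the adjoints $T(t)_k^\ast$, the flip implicit in moving from the "$1,\dots,n$" side to the "$n,\dots,1$" side, and the factors $\Delta_H^{\pm 1/2}$ coming from $J_H$ on $H$ versus $H'$ all combine so that the continuation of $f$ at the upper boundary is genuinely the continuation of the right-hand side of \eqref{eq:nCrossing}. This is exactly the kind of place where Lemma~\ref{lemma:Crossing-J} (the compatibility $FTF=(J_H\ot J_H)T(J_H\ot J_H)$ forced by crossing symmetry) and the braid relation $T_1T_2T_1=T_2T_1T_2$ (needed to relate $T_1\cdots T_n$ on $\Phi_n\ot h'$ to $T_n\cdots T_1$ on $h\ot\Phi_n$ after conjugating by the appropriate symmetry) will have to be invoked carefully; getting the indices and the order of operations right, rather than any deep analytic difficulty, is where the real work lies. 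The diagrammatic calculus of the appendix may help organize this, but I would try to push it through with the telescoping identity for $T(t)$ and a single application of Proposition~\ref{prop:ncrossing} plus Lemma~\ref{lemma:2pt}-style two-point continuations for any leftover contractions.
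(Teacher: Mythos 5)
Your proposal is correct and follows essentially the same route as the paper: reduce locality to \eqref{eq:nCrossing} via Proposition~\ref{proposition:nCrossing}, get the analytic continuation from Proposition~\ref{prop:ncrossing}, telescope the modular factors by compatibility, compare with the direct continuation for modular-analytic $h,h'$ using $S_Hh=h$, $S_{H'}h'=h'$, extend by density, and deduce part b) exactly as the paper does. The only difference is bookkeeping: the paper inserts $J_Hh$ and $J_Hh'$ into Proposition~\ref{prop:ncrossing} so that the conjugations cancel on the upper boundary, which eliminates the $\Delta_H^{\pm1/2}$ shifts you track by hand (and the braid relation plays no role in this comparison -- it is only needed to have the right fields and Proposition~\ref{proposition:nCrossing} at all), so the obstacle you anticipate does not arise.
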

\begin{proof}
	a) We will verify \eqref{eq:nCrossing}. Let $h\in H$, $h'\in H'$, $\Psi_n,\Phi_n\in\Hil^{\ot n}$ be arbitrary. Thanks to Proposition~\ref{prop:ncrossing}, the function
	\begin{align*}
		f(t)
		&:=
		\langle J_Hh\ot\Psi_n,T(t)_1\cdots T(t)_n (\Phi_n\ot J_Hh')\rangle
	\end{align*}
	has an analytic extension to the strip $\Strip_{1/2}$ which satisfies
	\begin{align*}
		f(t+\tfrac{i}{2})&=\langle \Psi_n\ot h^\prime,T(t)^\ast_n\cdots T(t)^\ast_1 (h\ot \Phi_n)\rangle,\qquad t\in\Rl.
	\end{align*}

	On the other hand, $T$ being compatible with $H$ (i.e. $[\Delta_H^{it}\ot\Delta_H^{it},T]=0$) implies that we may rewrite $f$ as
	\begin{align*}
		f(t)
		&=
		\langle \Delta_H^{-it}J_Hh\ot\Psi_n,T_1\cdots T_n (\Phi_n\ot \Delta_H^{-it}J_Hh')\rangle.
	\end{align*}
	Choosing $J_Hh$ and $J_Hh^\prime$ to be analytic vectors for the modular automorphism group (this is possible because $H$ and $H'$ are invariant under the spectral projections $E_{(\la,\la^{-1})}$ of $\Delta_H$ corresponding to spectrum in $(\la,\la^{-1})\subset\Rl_+$, $0<\la<1$), we have
	\begin{align*}
		f(t+\tfrac{i}{2})
		&=
		\langle \Delta_H^{-it-\frac{1}{2}}J_Hh\ot\Psi_n,T_1\cdots T_n (\Phi_n\ot \Delta_H^{-it+\frac{1}{2}}J_Hh')\rangle
		\\
		&=
		\langle \Delta_H^{-it}h\ot\Psi_n,T_1\cdots T_n (\Phi_n\ot \Delta_H^{-it}h')\rangle.
	\end{align*}
	For analytic $h,h'$, the claim now follows by comparing the two different expressions for $f(t+\frac{i}{2})$ at $t=0$. For general $h\in H$, $h'\in H'$, the claim follows by approximation (Lemma~\ref{lemma:extension}).

	b) It follows from Lemma~\ref{lemma:csT} and its right version that $\Om$ is cyclic for both~$\CL_{T}(H^\prime)$ and $\CR_{T}(H')$. The inclusion $\CR_{T}(H^\prime)\subset \CL_{T}(H)^\prime$ proven in part~\ref{item:locality} then shows that $\Om$ is separating for $\CL_{T}(H)$.

	Since $\Tw(H)=\Tw(H^\prime)$, part~\ref{item:locality} implies also $\CR_{T}(H)\subset \CL_{T}(H^\prime)^\prime$. By taking commutants, $\CL_{T}(H^\prime)\subset \CR_{T}(H)^\prime$ and it follows that $\Om$ is separating for $\CR_{T}(H)$.
\end{proof}

Combining Theorems~\ref{thm:separating-necessary} and \ref{thm:YBCrossingLocalSeparating}, we obtain the following characterization.

\begin{corollary}
	Let $H\subset \Hil$ be a standard subspace and $T\in \Tw(H)$ a compatible twist. The following are equivalent:
	\begin{enumerate}
		\item $\Om$ is separating for $\CL_T(H)$.
		\item $T$ is braided and crossing symmetric w.r.t. $H$.
		\item $T$ is local w.r.t. $H$.
	\end{enumerate}
\end{corollary}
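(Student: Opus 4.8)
The plan is to close the cycle of implications $(1)\Rightarrow(2)\Rightarrow(3)\Rightarrow(1)$, using the two main theorems of this section for the first two steps and supplying only an elementary argument for the last one; the standing hypothesis $T\in\Tw(H)$ is used throughout. For $(1)\Rightarrow(2)$: if $\Om$ separates $\CL_T(H)$, then Theorem~\ref{thm:separating-necessary} directly yields that the compatible twist $T$ is crossing symmetric w.r.t.\ $H$ and braided, which is exactly (2). (Conversely $(2)\Rightarrow(1)$ is the content of Theorem~\ref{thm:YBCrossingLocalSeparating}~\ref{item:standardness}, so $(1)\Leftrightarrow(2)$ is already established by those two theorems.)

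For $(2)\Rightarrow(3)$: this is part~\ref{item:locality} of Theorem~\ref{thm:YBCrossingLocalSeparating}. The one point worth recording is that, for a braided twist, the kernel condition built into the notion of locality is automatic: by Lemma~\ref{lemma:symm}~a) one has $P_{T,n+1}=(P_{T,n}\ot1)\tilde R_{T,n+1}$, so $\Psi_n\in\ker P_{T,n}$ forces $\Psi_n\ot\xi\in\ker P_{T,n+1}$ for every $\xi\in\Hil$. Hence the right creation operators exist and Theorem~\ref{thm:YBCrossingLocalSeparating}~a) applies to give that $T$ is local w.r.t.\ $H$.

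For $(3)\Rightarrow(1)$: locality contains the kernel condition, so the right creation operators $a_{R,T}^\star(h')$, their adjoints $a_{R,T}(h')$, and the right fields $\phi_{R,T}(h')=a_{R,T}^\star(h')+a_{R,T}(h')$ are all well defined on the finite-particle subspace for $h'\in H'$, and by the definition of locality they commute with the left fields $\phi_{L,T}(h)$, $h\in H$. On the finite-particle domain these operators are essentially self-adjoint (Nelson's theorem, exactly as for $\phi_{L,T}$ discussed after \eqref{eq:abound1}), so the von Neumann algebra $\N$ generated by $\{\phi_{R,T}(h')\colon h'\in H'\}$ satisfies $\N\subset\CL_T(H)'$. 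Since $H$ is standard, $H'$ is standard and hence cyclic, and the right-handed analogue of Lemma~\ref{lemma:csT}~\ref{item:T-cyclicity} (the $(n{+}1)$-particle component of $\phi_{R,T}(h_1')\cdots\phi_{R,T}(h_{n+1}')\Om$ is $[h_{n+1}'\ot\cdots\ot h_1']$, and such vectors span a dense subspace) shows that $\Om$ is cyclic for $\N$. A vector that is cyclic for $\N\subset\CL_T(H)'$ is separating for $\CL_T(H)$, which is (1), completing the cycle.

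The only genuinely delicate point is, as usual, the passage in $(3)\Rightarrow(1)$ from commutation of the field operators — which are unbounded when $\|T\|=1$ — to the inclusion $\N\subset\CL_T(H)'$ of the generated von Neumann algebras. This is handled exactly as for $\CL_T(H)$ itself: the finite-particle subspace is an invariant core consisting of analytic vectors for all $\phi_{L,T}(h)$, $\phi_{R,T}(h')$, so commutation on this core upgrades to commutation of the spectral projections (equivalently of the unitaries $\exp(i\phi_{L,T}(h))$ and $\exp(i\phi_{R,T}(h'))$); when $\|T\|<1$ all fields are bounded and the point is trivial.
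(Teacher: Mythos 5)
Your proposal is correct and follows essentially the same route as the paper, which obtains the corollary by combining Theorem~\ref{thm:separating-necessary} ((1)$\Rightarrow$(2)) with Theorem~\ref{thm:YBCrossingLocalSeparating} ((2)$\Rightarrow$(3) and (2)$\Rightarrow$(1)). The only step the paper leaves implicit, (3)$\Rightarrow$(1), you close exactly as in the proof of Theorem~\ref{thm:YBCrossingLocalSeparating}~b): the right fields over $H'$ generate a subalgebra of $\CL_T(H)'$ for which $\Om$ is cyclic (right-handed analogue of Lemma~\ref{lemma:csT}), hence $\Om$ separates $\CL_T(H)$, with the unbounded-operator bookkeeping handled at the same level as in the paper.
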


We now proceed to characterizing the modular data $J$, $\Delta$ of $(\CL_{T}(H),\Om)$ \eqref{eq:modulardata} in terms of the modular data $J_H,\Delta_H$ of $H$. The main difficulty in identifying $J,\Delta$ is the fact that a priori it is not clear that they preserve the particle number grading of $\CF_T(\Hil)$.

We will be working with tensor powers of standard subspaces, and recall that for $H\subset\Hil$ a standard subspace, $H^{\ot n}$ is defined as the closed real linear span of $\{h_1\ot\ldots\ot h_n\,:\,h_1,\ldots,h_n\in H\}$. Then $H^{\ot n}\subset\Hil^{\ot n}$ is a standard subspace, and its modular data are \cite[Prop.~2.6]{LongoMorinelliRehren:2015}
\begin{align}
	S_{H^{\ot n}} = S_H^{\ot n},\qquad J_{H^{\ot n}} = J_H^{\ot n},\qquad \Delta_{H^{\ot n}} = \Delta_H^{\ot n}.
\end{align}
In particular, $\D((\Delta_H^{1/2})^{\ot n})=H^{\ot n}+iH^{\ot n}$.

\begin{lemma}\label{lemma:D-invariant}
	\label{Commutation-InvarianteSub}
	Let $H$ be a standard subspace, $T\in \Tw(H)$ a compatible twist, and $n\in\Nl_{\geq2}$, $k\in\{1,\ldots,n-1\}$. Then $\D((\Delta_H^{1/2})^{\ot n})$ is an invariant subspace for $T_k$.
\end{lemma}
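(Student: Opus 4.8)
The plan is to reduce the statement to the elementary fact that a bounded operator which commutes with all modular unitaries of a standard subspace leaves the domain of the square root of the modular operator invariant. Throughout write $\Delta:=\Delta_H$. Recall from the text that $(\Delta_H^{1/2})^{\ot n}=\Delta_{H^{\ot n}}^{1/2}$ and hence $\D((\Delta_H^{1/2})^{\ot n})=\D(\Delta_{H^{\ot n}}^{1/2})=H^{\ot n}+iH^{\ot n}$, using $\Delta_{H^{\ot n}}=\Delta_H^{\ot n}$ from \cite[Prop.~2.6]{LongoMorinelliRehren:2015}. Note also that $T_k$ is bounded, with $\|T_k\|=\|T\|\le1$.

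First I would observe that compatibility propagates to $\Hil^{\ot n}$. Conjugating $T_k=1^{\ot(k-1)}\ot T\ot 1^{\ot(n-k-1)}$ by the unitary $(\Delta^{it})^{\ot n}$ affects only the two tensor legs $k,k+1$, where it produces $(\Delta^{it}\ot\Delta^{it})\,T\,(\Delta^{-it}\ot\Delta^{-it})=T$ by Definition~\ref{def:compatible}. Hence $(\Delta^{it})^{\ot n}\,T_k\,(\Delta^{-it})^{\ot n}=T_k$ for all $t\in\Rl$. Since $(\Delta^{it})^{\ot n}=(\Delta^{\ot n})^{it}=\Delta_{H^{\ot n}}^{it}$, this says that $T_k$ commutes with the modular unitaries of the standard subspace $H^{\ot n}$. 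As the von Neumann algebra generated by $\{\Delta_{H^{\ot n}}^{it}:t\in\Rl\}$ coincides with the abelian von Neumann algebra $\{\Delta_{H^{\ot n}}\}''$ generated by the spectral projections of $\Delta_{H^{\ot n}}$, it follows that $T_k$ commutes with every bounded Borel function of $\Delta_{H^{\ot n}}$.

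Next I would upgrade this to invariance of the domain. In particular $T_k$ commutes with the spectral projections $E_m:=E_{[0,m]}(\Delta_{H^{\ot n}})$ and with each bounded operator $\Delta_{H^{\ot n}}^{1/2}E_m$. Fix $\psi\in\D(\Delta_{H^{\ot n}}^{1/2})$ and set $\psi_m:=E_m\psi$. Then $\psi_m\to\psi$ and $\Delta_{H^{\ot n}}^{1/2}\psi_m\to\Delta_{H^{\ot n}}^{1/2}\psi$, while
\[
\Delta_{H^{\ot n}}^{1/2}\,T_k\psi_m=\Delta_{H^{\ot n}}^{1/2}E_m\,T_k\psi=T_k\,\Delta_{H^{\ot n}}^{1/2}E_m\psi=T_k\,\Delta_{H^{\ot n}}^{1/2}\psi_m\longrightarrow T_k\,\Delta_{H^{\ot n}}^{1/2}\psi .
\]
Since also $T_k\psi_m\to T_k\psi$ and $\Delta_{H^{\ot n}}^{1/2}$ is closed, we get $T_k\psi\in\D(\Delta_{H^{\ot n}}^{1/2})$ (with $\Delta_{H^{\ot n}}^{1/2}T_k\psi=T_k\Delta_{H^{\ot n}}^{1/2}\psi$). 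In view of $\D(\Delta_{H^{\ot n}}^{1/2})=\D((\Delta_H^{1/2})^{\ot n})$, this is exactly the assertion.

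I do not expect a genuine obstacle here: the argument rests on the innocuous algebraic identity $(\Delta^{it})^{\ot n}T_k(\Delta^{-it})^{\ot n}=T_k$ and on the standard spectral-theoretic lemma that commutation with $\{\Delta^{it}\}_{t\in\Rl}$ forces invariance of $\D(\Delta^{1/2})$. The only point requiring a little care is the identification of operator tensor products, namely $(\Delta_H^{1/2})^{\ot n}=\Delta_{H^{\ot n}}^{1/2}$ and $(\Delta_H^{it})^{\ot n}=\Delta_{H^{\ot n}}^{it}$, which follows from the cited proposition together with multiplicativity of the Borel functional calculus under tensor products.
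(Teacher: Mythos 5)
Your argument is correct and follows essentially the same route as the paper's proof: compatibility gives $[(\Delta_H^{it})^{\ot n},T_k]=0$, and commutation with the modular unitaries (equivalently with the spectral projections of the generator) forces invariance of $\D((\Delta_H^{1/2})^{\ot n})$. You merely spell out the standard spectral cutoff/closedness argument that the paper leaves implicit.
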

\begin{proof}
	As $T$ is compatible with $H$, we have $[(\Delta_{H}^{it})^{\ot n},T_k]=0$ for all $t\in\Rl$. Hence~$T_k$ commutes with all spectral projections of the generator $\sum_{k=1}^n(\log\Delta_H)_k$ of this unitary one-parameter group, and thus leaves the domain of $(\Delta_H^{1/2})^{\ot n}$ invariant.
\end{proof}

\begin{proposition}\label{prop:modulardata}
	\label{ModularOperator}
	Let $H\subset \Hil$ be a standard subspace.
	\begin{enumerate}
		\item If $T\in\Tw(H)$ is a compatible twist and $\Omega$ is separating for $\CL_{T}(H)$, then the Tomita operator $S$ of $(\CL_{T}(H),\Om)$ is given by
		\begin{align}\label{eq:S-T-second-quantized}
			S = \Gamma_T^Y(S_H),
		\end{align}
		i.e. $S[\xi_1\ot\ldots\ot\xi_n]=[S_H\xi_n\ot\ldots\ot S_H\xi_1]$, $\xi_k\in\D_H$ (see Lemma~\ref{lemma:symm}~c)). The modular conjugation and modular unitaries are given by
		\begin{align}\label{eq:modulardata-second-quantized}
			J = \Gamma_T^Y(J_H),\qquad \Delta^{it} &= \Gamma_T(\Delta_H^{it}),\quad t\in\Rl.
		\end{align}

		\item Conversely, suppose $T\in\Tw$ is a twist, $\Omega$ is cyclic and separating for $\CL_{T}$, and the Tomita operator $S$ of $(\CL_T(H),\Om)$ satisfies $S\restr{\Hil_{T,2}}=\Gamma_T^Y(S_H)\restr{\Hil_{T,2}}$ and $S^\ast\restr{\Hil_{T,2}}=\Gamma_T^Y(S_H^\ast)\restr{\Hil_{T,2}}$. Then $[T,\Delta^{it}_H\ot \Delta^{it}_H ]=0$ and $[T,F(J_H\ot J_H)]=0$; in particular, $T$ is compatible with $H$.
	\end{enumerate}
\end{proposition}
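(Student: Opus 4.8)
The plan is to treat part (a) by computing the Tomita operator $S$ of $(\CL_T(H),\Om)$ explicitly, and part (b) by restricting $S$ and $S^\star$ to the two-particle subspace.

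\smallskip

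\emph{Part (a).} First I would collect the structural input. By Theorem~\ref{thm:separating-necessary} the twist $T$ is crossing symmetric and braided, hence by Theorem~\ref{thm:YBCrossingLocalSeparating} local, so that $\CR_T(H')\subseteq\CL_T(H)'$ and $\Om$ is cyclic and separating for $\CL_T(H)$ and for $\CR_T(H)$; moreover Lemma~\ref{lemma:Crossing-J} gives $[F(J_H\ot J_H),T]=0$, and compatibility (continued analytically) gives $[\Delta_H^{1/2}\ot\Delta_H^{1/2},T]=0$. These commutation relations make the operators $j:=\Gamma_T^Y(J_H)$ (an antiunitary involution, by Lemma~\ref{lemma:symm}~\ref{item:Upsilon}), $\Delta_0:=\Gamma_T(\Delta_H)$ (positive selfadjoint, with $\Delta_0^{it}=\Gamma_T(\Delta_H^{it})$ and $\Delta_0^{1/2}=\Gamma_T(\Delta_H^{1/2})$), and $S_0:=\Gamma_T^Y(S_H)$ well defined, and $S_0=j\,\Delta_0^{1/2}$ is the polar decomposition of $S_0$ (since $S_0^\star S_0$ acts on the $n$-particle domain by $(S_H^\star S_H)^{\ot n}=\Delta_H^{\ot n}$). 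It then remains to prove $S=S_0$, after which uniqueness of the polar decomposition yields $J=\Gamma_T^Y(J_H)$ and $\Delta^{it}=\Gamma_T(\Delta_H^{it})$ (and, as a byproduct, $\CL_T(H)'=J\CL_T(H)J=\CR_T(H')$).

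\smallskip

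To obtain $S=S_0$ I would first establish, on the finite-particle domain, the covariance relations $S_0\,a_{L,T}^\star(\xi)=a_{R,T}^\star(S_H\xi)\,S_0$ for $\xi\in\D_H$ — which is immediate from the definition of $\Gamma_T^Y$ — and $S_0\,a_{L,T}(h)=a_{R,T}(S_{H'}h)\,S_0$ for $h\in H$ entire analytic for $\Delta_H$, obtained by writing $S_0=j\Delta_0^{1/2}$ and analytically continuing the covariance $\Gamma_T(\Delta_H^{it})\,a_{L,T}(h)\,\Gamma_T(\Delta_H^{-it})=a_{L,T}(\Delta_H^{it}h)$ of Lemma~\ref{lemma:T-second-quantization}~\ref{item:T-second-quantization-covariance} to $t=-\tfrac i2$; here one must be careful that the antilinear dependence of $a_{L,T}(\cdot)$ on its argument flips a sign in the continuation, which is why $S_{H'}=J_H\Delta_H^{-1/2}$ rather than $S_H$ appears. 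Together these give $S_0\,\phi_{L,T}(h)=\bigl(a_{R,T}^\star(h)+a_{R,T}(S_{H'}h)\bigr)S_0$, a right-handed field affiliated with $\CR_T(H')$ that agrees with $\phi_{L,T}(h)$ on $\Om$. An induction on the degree $n$, combining these identities with the inductive hypothesis and with locality ($[\CR_T(H'),\CL_T(H)]=0$), then yields $S_0\,\phi_{L,T}(h_1)\cdots\phi_{L,T}(h_n)\Om=\phi_{L,T}(h_n)\cdots\phi_{L,T}(h_1)\Om$; passing to the closure of $S_0$ and using density of analytic vectors in $H$ gives $S_0 A\Om=A^\star\Om$ for all $A\in\P_{L,T}(H)$. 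Finally I would check that $\P_{L,T}(H)\Om$ is a core for $S_0$: by induction on particle number, each $[\xi_1\ot\cdots\ot\xi_n]$ with $\xi_i\in\D_H$ coincides, modulo vectors of lower particle number, with an element of $\P_{L,T}(H)\Om$ — write $\xi_i=h_i^{(0)}+i h_i^{(1)}$ with $h_i^{(\bullet)}\in H$ and use $[\phi_{L,T}(h_1^{(\eps_1)})\cdots\phi_{L,T}(h_n^{(\eps_n)})\Om]_n=[h_1^{(\eps_1)}\ot\cdots\ot h_n^{(\eps_n)}]$ — and the lower-order remainder is controlled by the inductive hypothesis together with density of $\D_H^{\odot k}$ in $\Dom((\Delta_H^{1/2})^{\ot k})$ (Lemma~\ref{lemma:D-invariant} ensures that the relevant vectors stay in this domain). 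Since $\P_{L,T}(H)\Om$ is also a core for $S$, the identity $S=S_0$ follows.

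\smallskip

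\emph{Part (b).} Here I would argue directly on $\Hil_{T,2}=\Hil^{\ot2}/\ker P_{T,2}$. The hypotheses say that $S$ and $S^\star$ leave $\Hil_{T,2}$ invariant and act there by $[\Psi]\mapsto[F(S_H\ot S_H)\Psi]$ and $[\Psi]\mapsto[F(S_H^\star\ot S_H^\star)\Psi]$ (using $Y|_{\Hil^{\ot2}}=F$), so $\Hil_{T,2}$ is reducing for $\Delta=S^\star S$ and for $J=S\Delta^{-1/2}$. Since $F$ commutes with diagonal tensor squares and $S_H^\star S_H=\Delta_H$, $S_H\Delta_H^{-1/2}=J_H$, one finds that $\Delta^{it}$ acts on $\Hil_{T,2}$ by $[\Psi]\mapsto[(\Delta_H^{it}\ot\Delta_H^{it})\Psi]$ and $J$ by $[\Psi]\mapsto[F(J_H\ot J_H)\Psi]$. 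Now $\Delta^{it}$ is unitary and $J$ an antiunitary involution for the inner product $\scpr_T=\langle\,\cdot\,,P_{T,2}\,\cdot\,\rangle$ on $\Hil_{T,2}$; unitarity of $[\Psi]\mapsto[(\Delta_H^{it}\ot\Delta_H^{it})\Psi]$ forces $(\Delta_H^{it}\ot\Delta_H^{it})$ to commute with $P_{T,2}=1+T$, i.e. $[\Delta_H^{it}\ot\Delta_H^{it},T]=0$, while antiunitarity of $[\Psi]\mapsto[U\Psi]$ with $U:=F(J_H\ot J_H)$ (an antiunitary involution on $\Hil^{\ot2}$) forces $U P_{T,2}U=P_{T,2}$, whence $U(1+T)U=1+T$ and therefore $[F(J_H\ot J_H),T]=0$. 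In particular $T\in\Tw(H)$.

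\smallskip

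The main obstacle is the computation in part (a). Establishing $S=\Gamma_T^Y(S_H)$ requires both the covariance bookkeeping — with the easily overlooked sign flip from the antilinearity of the annihilation operators in their argument, which is precisely why the complementary subspace $H'$ enters the commutation relation for $S_0\,a_{L,T}(h)$ — and the verification that the field polynomials form a core for $\Gamma_T^Y(S_H)$; the domain subtleties around the unbounded operator $\Gamma_T(\Delta_H^{1/2})$ are handled by restricting first to vectors analytic for $\Delta_H$ and then passing to the closure. Part (b), by contrast, only uses the two-particle content of (a) and reduces to elementary manipulations with $P_{T,2}=1+T$.
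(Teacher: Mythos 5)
Your overall strategy for part (a) — compute the conjugation of the creation/annihilation operators by $S_0:=\Gamma_T^Y(S_H)$, observe that the resulting right fields $a_{R,T}^\star(h)+a_{R,T}(S_{H'}h)$ are affiliated with $\CR_T(H')\subset\CL_T(H)'$, and run an induction on field polynomials — is sound and close to the paper's, and your sign bookkeeping (the appearance of $S_{H'}=J_H\Delta_H^{-1/2}$ from the antilinearity of $a_{L,T}(\cdot)$ in its argument) is correct. Part (b) is also fine and is essentially the paper's matrix-element computation rephrased as ``unitarity with respect to $\scpr_T$ forces commutation with $P_{T,2}=1+T$''.

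However, the concluding step of part (a) has a genuine gap: you assert without proof that $\P_{L,T}(H)\Om$ is a core for $S$. This is not automatic. For a $\sigma$-weakly dense $*$-subalgebra $\CA\subset\M$ with $\Om$ cyclic and separating, $\CA\Om$ need not be a core for the Tomita operator of $(\M,\Om)$; the standard sufficient condition is invariance of $\CA$ under the modular automorphism group, which is precisely the information you are trying to establish, so you cannot invoke it here. From your induction you only obtain $\overline{S|_{\P\Om}}=\overline{S_0|_{\P\Om}}=S_0$, i.e.\ the inclusion $S_0\subset S$, and nothing forces the reverse inclusion. The paper closes this loop differently, and you already have all the ingredients to do the same: run the identical induction for the adjoint, using that the operators $\phi_{R,T}^{H'}(\xi')$, $\xi'\in\D_{H'}$, are affiliated with $\CR_T(H')\subset\CL_T(H)'$, to obtain $\Gamma_T^Y(S_{H'})=\Gamma_T^Y(S_H)^\star\subset S^\star$ as well; then $S=S^{\star\star}\subset\Gamma_T^Y(S_{H'})^\star=\Gamma_T^Y(S_H)$, which together with $\Gamma_T^Y(S_H)\subset S$ gives equality without any core statement for $S$. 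As written, your proof of \eqref{eq:S-T-second-quantized} is incomplete at exactly this point.
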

\begin{proof}
	a) Compatibility of $T$ with $H$ and $\Om$ separating $\CL_{T}(H)$ imply that the operators on the right hand sides of \eqref{eq:modulardata-second-quantized} exist as (anti)unitaries on $\CF_T(\Hil)$: The unitaries $\Gamma_T(\Delta_H^{it})$ exist by Lemma~\ref{lemma:T-second-quantization}. Since $T$ is crossing symmetric w.r.t. $H$ by Thm.~\ref{thm:separating-necessary}~a), it follows that $T$ commutes with $F(J_H\ot J_H)$ by Lemma~\ref{lemma:Crossing-J}. Hence $\Gamma_T^Y(J_H)$ exists as an antiunitary involution on $\CF_T(\Hil)$ by Lemma~\ref{lemma:symm}~c). Consequently, $\Gamma_T^Y(S_H)=\Gamma_T^Y(J_H)\Gamma_T(\Delta_H^{1/2})$ \eqref{eq:S-T-second-quantized} exists as an operator on $\CF_T(\Hil)$ as well.

	Let us write $\D_{H,n}:=[H^{\ot n}]+i[H^{\ot n}]\subset\Hil_{T,n}$, where the brackets $[\,\cdot\,]$ indicate the $\|\cdot\|_T$-closure of the quotient by $\ker P_{T,n}$. This subspace is invariant under $R_{T,n}$ by Lemma~\ref{lemma:D-invariant}. Thus we have $a_{L,T}(\xi)\D_{H,n}\subset\D_{H,n-1}$ for any $\xi\in\D_H$. As $a_{L,T}^\star(\xi)\D_{H,n}\subset\D_{H,n+1}$ is clear for any $\xi\in\D_H$, it follows inductively that
	\begin{align}\label{eq:polHn}
		\phi_{L,T}^H(\xi_1)\cdots \phi_{L,T}^H(\xi_n)\Omega \in \hat\D_{H,n}:=\bigoplus_{k=0}^n \D_{H,k},\qquad \xi_1,\ldots,\xi_n\in\D_H,
	\end{align}
	where $\phi_{L,T}^H(\xi)=a_{L,T}^\star(\xi)+a_{L,T}(S_H\xi)$. Now we consider the Tomita operator $S$ of $(\CL_{T}(H),\Om)$ and prove inductively
	 \begin{alignat*}{5}
	  	\hat\D_{H,n}&\subset \D(S),
		\quad
		&S\hat\D_{H,n}&\subset
		\hat\D_{H,n},
		\quad
		&
		S[\xi_1\otimes\ldots \otimes \xi_{n}]&=[S_H\xi_n\otimes\ldots \otimes S_H\xi_{1}],
		\\
		\hat\D_{H',n}&\subset \D(S^\star),
		\quad
		&S^\star\hat\D_{H',n}&\subset
		\hat\D_{H',n},
		\quad
		&
		S^\star[\xi_1'\otimes\ldots \otimes \xi_{n}']&=[S_{H'}\xi_n'\otimes\ldots \otimes S_{H'}\xi_{1}'],
	 \end{alignat*}
	where $\xi_1,\ldots,\xi_n\in\D_H$ and $\xi_1',\ldots,\xi_n'\in\D_{H'}$ are arbitrary. This claim clearly holds for $n=1$, as $S\xi_1=S\phi^H_{L,T}(\xi_1)\Om=\phi^H_{L,T}(\xi_1)^\star\Om=S_H\xi_1$.

	For the induction step, we split off terms with highest particle number,
	\begin{align*}
		 \phi_{L,T}^H(\xi_1)\cdots\phi_{L,T}^H(\xi_n)\Om
		 &=
		 [\xi_1\ot\ldots\ot\xi_n]+
		 E_n^\perp\phi_{L,T}^H(\xi_1)\cdots\phi_{L,T}^H(\xi_n)\Om,
		 \\
		 \phi_{L,T}^H(S_H\xi_n)\cdots\phi_{L,T}^H(S_H\xi_1)\Om
		 &=
		 [S_H\xi_n\ot..\ot S_H\xi_1]+E_n^\perp\phi_{L,T}^H(S_H\xi_n)\cdots\phi_{L,T}^H(S_H\xi_1)\Om.
	\end{align*}
	By induction hypothesis, $E_n^\perp\phi_{L,T}^H(\xi_1)\cdots\phi_{L,T}(\xi_n)\Om$ lies in $\hat\D_{H,n-1}\subset\D(S)$ and is mapped by $S$ to a vector in $\hat\D_{H,n-1}$ with vanishing $n$-particle component. As $\phi_{L,T}^H(\xi_1)\cdots\phi_{L,T}^H(\xi_n)\Om\in\D(S)$ as well and is mapped to $\phi_{L,T}^H(S_H\xi_n)\cdots\phi_{L,T}^H(S_H\xi_1)\Om
	$, we obtain $[\xi_1\ot\ldots\ot\xi_n]\in\D(S)$ and $S[\xi_1\ot\ldots\ot\xi_n]=[S_H\xi_n\ot\ldots\ot S_H\xi_1]$ as claimed. This one sees by observing that for any polynomial $Q$ in the fields $\phi_{L,T}^{H'}(h_k')$, $h_k'\in H'$, of degree up to $n-1$, satisfies
	\begin{align*}
		\langle Q\Om,S[h_1\ot\ldots\ot h_{n-1}]\rangle_T
		=
		\langle [h_1\ot\ldots\ot h_{n-1}],S^\star Q\Om\rangle_T
		=0
	\end{align*}
	We now obtain $\hat\D_{H,n}\subset\D(S)$ by closedness of $S$. But the above argument can be repeated in complete analogy for $S^\star$ and $S_{H'}$ instead of $S$ and $S_H$ (and left exchanged with right), noting that the operators $\phi_{L,T}^{H'}(\xi')$, $\xi'\in\D_{H'}$, are affiliated with the commutant $\CL_{T}(H)'\supset\CR_{T}(H')$ (Theorem~\ref{thm:YBCrossingLocalSeparating}~a)), which completes the induction.

	We have shown $\Gamma_T^Y(S_H)\subset S$ and $\Gamma_T^Y(S_H^\ast)\subset S^\ast$. Thus we also obtain $\Gamma_T^Y(S_H)^\star=\Gamma_T^Y(S_{H'})\subset S^\star$ and hence $\Gamma_T^Y(S_H)=S$.

	The claims about $J$ and $\Delta^{it}$ now follow by polar decomposition (Lemma~\ref{lemma:symm}~c)).

	b) For any $\eta_1, \eta_2 \in H+iH$ and for any $\eta_1^\prime, \eta_2^\prime\in H^\prime+iH^\prime$, we have
	\begin{align*}
		\ip{ \eta_1^\prime\ot \eta_2^\prime}{(1+T) F(S_H\ot S_H)\eta_1 \ot \eta_2}
		&=
		\ip{[\eta_1 \ot \eta_2]}{S^\ast [\eta_1^\prime\ot \eta_2^\prime]}_T\\
		&=\ip{[\eta_1 \ot \eta_2]}{\Gamma_T^Y(S_H^\ast)[\eta_1^\prime\ot \eta_2^\prime]}_T\\
		&=\ip{\eta_1 \ot \eta_2}{(1+T)F(S_H^\ast\ot S_H^\ast)\eta_1^\prime\ot \eta_2^\prime}
		\\
		&=
		\ip{\eta_1^\prime\ot \eta_2^\prime}{Y(S_H\ot S_H)(1+T)\eta_1 \ot \eta_2}.
	\end{align*}
	Hence, $[F(S_H\ot S_H),T]=0$ and, by the polar decomposition, we must have that $[T,\Delta^{it}_H\ot \Delta^{it}_H ]=0=[T,F(J_H\ot J_H)]$.
\end{proof}

This result generalizes several theorems known in special cases \cite{EckmannOsterwalder:1973,LeylandsRobertsTestard:1978,BaumgartelJurkeLledo:2002,Shlyakhtenko:1997,BuchholzLechnerSummers:2011,Lechner:2012}.

\begin{remark}\label{remark:incompatibleK}
	Part b) of this theorem states that a Tomita operator of the ``second quantized form''\eqref{eq:S-T-second-quantized} requires $T$ to be compatible with $H$. Such situations ($\Om$ separating for $\CL_T(H)$ despite $T$ not being compatible with $H$) actually occur, as we explain now.

	Notice that, if $H\subset \Hil$ is a standard subspace, we discussed in Example \ref{ex:Tw(H)CrossingBraided} that for each $n\in \Nl$, $$T:=F\left(E^{\Delta_H}_{(\frac{1}{n},n)}\ot E^{\Delta_H}_{(\frac{1}{n},n)}\right) \in \Tw(H)$$ is a braided crossing symmetric twist, where $E^{\Delta_H}_{(\frac{1}{n},n)}$ are the spectral projections of $\Delta_H$ on the intervals $(\frac{1}{n},n)$. Therefore, $\Omega$ is cyclic and separating for $\CL_T(H)$ thanks to Theorem \ref{thm:YBCrossingLocalSeparating}.
	Let us now take $K\subset H$ another standard subspace. Then, $\Omega$ is also cyclic and separating for $\CL_T(K)$. Suppose that $T\in \Tw(K)$ for every $n \in \Nl$. Then,
	\begin{align*}
		&\Delta_K^{is}E^{\Delta_H}_{(\frac{1}{n},n)}\Delta_K^{-is}\ot \Delta_K^{is}E^{\Delta_H}_{(\frac{1}{n},n)}\Delta_K^{-is}=E^{\Delta_H}_{(\frac{1}{n},n)} \ot E^{\Delta_H}_{(\frac{1}{n},n)}.
	\end{align*}
	This forces $[E^{\Delta_K}_{(\frac{1}{n},n)},E^{\Delta_H}_{(\frac{1}{n},n)}]=0$. Hence, the dense subspace $$D:=\bigcup_{n\in\Nl} E^{\Delta_K}_{(\frac{1}{n},n)}E^{\Delta_H}_{(\frac{1}{n},n)}\Hil\subset \D(\Delta_H^{\frac{1}{2}})\cap \D(\Delta_K^{\frac{1}{2}})$$ is invariant under the action  of $\Delta_K^{is}$ and $\Delta_H^{is}$ for all $s\in \Rl$. Therefore $D$ is a common core for $\log(\Delta_K)$ and $\log(\Delta_H)$. On the other hand, we know that the closed operators $S_K,S_H$ form an extension $S_K\subset S_H$ because $K\subset H$. The existence of a common core then yields $S_K=S_H$, \emph{i.e.}, $K=H$.

	From this we see that, if $\dim \Hil=\infty$, there are examples of a standard subspace~$K$ and of a braided twist $T$ such that $\Omega$ is cyclic and separating for~$\CL_T(K)$, but $T$ is not compatible with $K$ and consequently the Tomita operator of $(\CL_T(K),\Om)$ is different from $\Gamma_T^Y(S_K)$.

	We also note that the properties of a twist $T$ being compatible with a standard subspace, or being crossing symmetric w.r.t. $H$, do in general not pass to sub standard subspaces $K\subset H$.
\end{remark}

As a simple corollary to Proposition \ref{prop:modulardata}, we now obtain a duality between left and right twisted Araki-Woods algebras.

\begin{corollary}\label{commutant}
	Let $H\subset\Hil$ be a standard subspace and $T\in \Tw(H)$ a compatible twist that is braided and crossing-symmetric. Then
	\begin{align}
		\CL_{T}(H)^\prime
		=
		\CR_{T}(H^\prime).
	\end{align}
\end{corollary}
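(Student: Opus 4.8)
The plan is to read off the duality from the explicit modular conjugation computed in Proposition~\ref{prop:modulardata}. Two ingredients are already in hand from Theorem~\ref{thm:YBCrossingLocalSeparating}: first, $\Om$ is cyclic and separating for $\CL_{T}(H)$, so Tomita--Takesaki theory produces a modular conjugation $J$ of $(\CL_{T}(H),\Om)$ with $J\,\CL_{T}(H)\,J=\CL_{T}(H)'$; second, $T$ is local w.r.t.\ $H$, which already yields the inclusion $\CR_{T}(H')\subseteq\CL_{T}(H)'$. It thus suffices to establish the reverse inclusion, and I would do so by proving the sharper identity $J\,\CL_{T}(H)\,J=\CR_{T}(H')$.

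The key step is to conjugate the generators of $\CL_{T}(H)$ by $J$. By Proposition~\ref{prop:modulardata}~a) we have $J=\Gamma_T^Y(J_H)$, which is a well-defined antiunitary involution on $\CF_T(\Hil)$ because $T$ commutes with $F(J_H\ot J_H)$ (Lemma~\ref{lemma:Crossing-J}). Applying the covariance relation of Lemma~\ref{lemma:symm}~c) with the invertible antilinear operator $Z=J_H=J_H^{-1}$ gives, on the dense finite-particle subspace,
\begin{align*}
	\Gamma_T^Y(J_H)\,\phi_{L,T}(h)\,\Gamma_T^Y(J_H)
	=a_{R,T}^\star(J_Hh)+a_{R,T}(J_Hh)
	=\phi_{R,T}(J_Hh),\qquad h\in H.
\end{align*}
Both sides are essentially selfadjoint there, so their selfadjoint closures coincide, i.e.\ $J\,\phi_{L,T}(h)\,J=\phi_{R,T}(J_Hh)$; using the antilinearity of $J$ and functional calculus this gives $J\exp(i\phi_{L,T}(h))J=\exp(-i\phi_{R,T}(J_Hh))$ for all $h\in H$. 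Since $J_HH=H'$ and $H'$ is a real subspace, the vectors $-J_Hh$ exhaust $H'$ as $h$ ranges over $H$, so these unitaries form exactly the generating family $\{\exp(i\phi_{R,T}(h'))\col h'\in H'\}$ of $\CR_{T}(H')$. As $X\mapsto JXJ$ is a $\sigma$-weakly bicontinuous conjugate-linear $*$-anti-automorphism of $\CB(\CF_T(\Hil))$, it maps $\CL_{T}(H)=\{\exp(i\phi_{L,T}(h))\col h\in H\}''$ onto $\{\exp(i\phi_{R,T}(h'))\col h'\in H'\}''=\CR_{T}(H')$, so $\CL_{T}(H)'=J\,\CL_{T}(H)\,J=\CR_{T}(H')$.

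Because all the substantive analysis (the second-quantized form of the modular data) has already been carried out in Proposition~\ref{prop:modulardata}, I do not anticipate a real obstacle; the only delicate point is the bookkeeping in the case $\|T\|=1$, where the field operators are unbounded and one must perform the conjugation identity on the common core of finite-particle vectors, invoke essential selfadjointness, and only then pass to the bounded unitaries $\exp(i\phi_{L,T}(h))$ that actually generate $\CL_{T}(H)$. For $\|T\|<1$ everything is bounded and the argument reduces to a direct computation.
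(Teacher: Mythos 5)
Your proposal is correct and follows essentially the same route as the paper: identify $J=\Gamma_T^Y(J_H)$ via Proposition~\ref{prop:modulardata}, use Lemma~\ref{lemma:Crossing-J} and the covariance of Lemma~\ref{lemma:symm}~c) with $Z=J_H$ to obtain $J\,\CL_T(H)\,J=\CR_T(J_HH)=\CR_T(H')$, and conclude by Tomita's theorem. Your extra care with the unbounded case ($\|T\|=1$), the sign coming from antilinearity, and the preliminary inclusion $\CR_T(H')\subseteq\CL_T(H)'$ are fine but not needed beyond what the paper's shorter argument already contains.
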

\begin{proof}
	We apply Lemma~\ref{lemma:symm}~\ref{item:Upsilon}, noting that $Z=J_H$ satisfies $[J_H^{\ot 2}F,T]=0$ by Lemma~\ref{lemma:Crossing-J}. In view of the form of the modular conjugation $J$ established in Proposition~\ref{ModularOperator}, we therefore have $J\CL_{T}(H)J=\CR_{T}(J_HH)=\CR_{T}(H')$. As $J\CL_{T}(H)J=\CL_{T}(H)'$ by Tomita's Theorem, the claim follows.
\end{proof}

\section{Inclusions of twisted Araki-Woods algebras}\label{sect:inclusions}

Let $\Hil$ be a Hilbert space and $T$ a braided twist on $\Hil\ot\Hil$. In the previous sections we have constructed two maps
\begin{align}\label{eq:abstract-nets}
	\CL_{T},\CR_T : \text{Std}_T(\Hil) \to \text{vN}(\CF_T(\Hil))
\end{align}
from the set $\text{Std}_T(\Hil)$ of all standard subspaces $H\subset\Hil$ that are compatible with~$T$ to the set $\text{vN}(\CF_T(\Hil))$ of von Neumann subalgebras of $\CB(\CF_T(\Hil))$.

\begin{proposition}\label{prop:abstract-nets}
	The maps~\eqref{eq:abstract-nets} have the following properties.
	\begin{enumerate}
		\item $\CL_{T}(K)\subset \CL_{T}(H)$ and $\CR_{T}(K)\subset \CR_{T}(H)$ for standard subspaces $K\subset H$.
		\item If $T$ is crossing-symmetric and $H\in\text{\rm Std}_T(\Hil)$, then~$\Om$ is cyclic and separating for $\CL_{T}(H)$ and $\CR_T(H)$, and duality holds, i.e. $\CL_{T}(H)'=\CR_{T}(H')$.
		\item Let $H\in\text{\rm Std}_T(\Hil)$ and $U$ a unitary on $\Hil$. If $[U\ot U,T]=0$, then also $UH\in\text{\rm Std}_T(\Hil)$, and
		\begin{align}
			\Gamma_T(U)\CL_{T}(H)\Gamma_T(U)^\star
			&=
			\CL_{T}(UH),\\
			\Gamma_T(U)\CR_{T}(H)\Gamma_T(U)^\star
			&=
			\CR_{T}(UH).
		\end{align}
		If $T$ is crossing symmetric w.r.t. $H$, it is crossing symmetric w.r.t. $UH$.
		\item Let $H\in\text{\rm Std}_T(\Hil)$ and $U$ an antiunitary on $\Hil$. If $[F(U\ot U),T]=0$, then also $UH\in\text{\rm Std}_T(\Hil)$, and
		\begin{align}
			\Gamma_T^Y(U)\CL_{T}(H)\Gamma_T^Y(U)^\star
			&=
			\CR_{T}(UH),\\
			\Gamma_T^Y(U)\CR_{T}(H)\Gamma_T^Y(U)^\star
			&=
			\CL_{T}(UH).
		\end{align}
		If $T$ is crossing symmetric w.r.t. $H$, it is crossing symmetric w.r.t. $UH$.
	\end{enumerate}
\end{proposition}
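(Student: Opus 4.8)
The plan is to dispatch (a) and (b) at once and then handle the unitary case (c) and the antiunitary case (d) by the same template, the only new feature in (d) being the tensor flip $F$. For (a): if $K\subset H$ then every generator $\exp(i\phi_{L,T}(h))$ of $\CL_T(K)$ (with $h\in K$) is also a generator of $\CL_T(H)$, so $\CL_T(K)\subset\CL_T(H)$, and likewise $\CR_T(K)\subset\CR_T(H)$. For (b): the hypotheses ``$H\in\text{Std}_T(\Hil)$ and $T$ crossing symmetric'' together with the standing assumption that $T$ is braided are precisely those of Theorem~\ref{thm:YBCrossingLocalSeparating}, which yields that $\Om$ is cyclic and separating for $\CL_T(H)$ and $\CR_T(H)$ (cyclicity for $\CR_T(H)$ via the right analogue of Lemma~\ref{lemma:csT}); duality $\CL_T(H)'=\CR_T(H')$ is then Corollary~\ref{commutant}.

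For (c), I would first record that under a unitary $U$ one has $S_{UH}=US_HU^{*}$, so $UH$ is again standard with $J_{UH}=UJ_HU^{*}$ and $\Delta_{UH}^{it}=U\Delta_H^{it}U^{*}$. Since $T=T^{*}$, the hypothesis $[U\ot U,T]=0$ also gives $[(U\ot U)^{*},T]=0$, and combining these with $[\Delta_H^{it}\ot\Delta_H^{it},T]=0$ yields $[\Delta_{UH}^{it}\ot\Delta_{UH}^{it},T]=0$, i.e.\ $UH\in\text{Std}_T(\Hil)$. The unitary $\Gamma_T(U)$ exists by Lemma~\ref{lemma:T-second-quantization}~\ref{item:T-second-quantization}, and the algebra identities follow from covariance of the field operators: Lemma~\ref{lemma:T-second-quantization}~\ref{item:T-second-quantization-covariance} gives $\Gamma_T(U)a_{L,T}^{\#}(\xi)\Gamma_T(U)^{*}=a_{L,T}^{\#}(U\xi)$, and the same statement for $a_{R,T}^{\#}$ follows by a one-line computation from $a_{R,T}^{\star}(\xi)[\Psi_n]=[\Psi_n\ot\xi]$ and $\Gamma_T(U)[\Psi_n]=[U^{\ot n}\Psi_n]$; conjugating the generators $\exp(i\phi_{L,T}(h))$, $\exp(i\phi_{R,T}(h))$ and taking double commutants gives $\Gamma_T(U)\CL_T(H)\Gamma_T(U)^{*}=\CL_T(UH)$ and $\Gamma_T(U)\CR_T(H)\Gamma_T(U)^{*}=\CR_T(UH)$. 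For the crossing assertion, using $J_{UH}=UJ_HU^{*}$, $\Delta_{UH}^{it}=U\Delta_H^{it}U^{*}$ and $[U\ot U,T]=0$ one checks that the crossing function \eqref{eq:crossing-function} for $UH$ at vectors $\psi_1,\dots,\psi_4$ coincides, both on $\Rl$ and (via \eqref{eq:Crossingboundary}) on $\Rl+\tfrac i2$, with the crossing function for $H$ at $U^{*}\psi_1,\dots,U^{*}\psi_4$; since crossing symmetry w.r.t.\ $H$ holds for all vectors, it transfers to $UH$.

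Part (d) runs in parallel, with $U$ now antilinear so the flip enters. One still has $S_{UH}=US_HU^{-1}$, but now $J_{UH}=UJ_HU^{-1}$ and $\Delta_{UH}^{it}=U\Delta_H^{-it}U^{-1}$, so $UH$ is standard. For compatibility the clean point is the identity
\[
\Delta_{UH}^{it}\ot\Delta_{UH}^{it}
= (U\ot U)(\Delta_H^{-it}\ot\Delta_H^{-it})(U\ot U)^{-1}
= V(\Delta_H^{-it}\ot\Delta_H^{-it})V^{-1},\qquad V:=F(U\ot U),
\]
where the second equality uses that $F$ commutes with the symmetric tensor $\Delta_H^{-it}\ot\Delta_H^{-it}$ and with $U\ot U$; since $[V,T]=0$ by hypothesis and $[\Delta_H^{-it}\ot\Delta_H^{-it},T]=0$ by $T\in\Tw(H)$, we get $[\Delta_{UH}^{it}\ot\Delta_{UH}^{it},T]=0$, i.e.\ $UH\in\text{Std}_T(\Hil)$. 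The antiunitary $\Gamma_T^Y(U)$ exists by Lemma~\ref{lemma:symm}~\ref{item:Upsilon} (again because $[F(U\ot U),T]=0$), which also supplies $\Gamma_T^Y(U)a_{L,T}^{\#}(\xi)\Gamma_T^Y(U)^{*}=a_{R,T}^{\#}(U\xi)$; a direct computation gives the reverse, $\Gamma_T^Y(U)a_{R,T}^{\#}(\xi)\Gamma_T^Y(U)^{*}=a_{L,T}^{\#}(U\xi)$. Conjugating the generators — noting that conjugation by an antiunitary sends $\exp(iX)$ to $\exp(-iX)$ and that $\{-Uh:h\in H\}=UH$ — and passing to double commutants then yields $\Gamma_T^Y(U)\CL_T(H)\Gamma_T^Y(U)^{*}=\CR_T(UH)$ and $\Gamma_T^Y(U)\CR_T(H)\Gamma_T^Y(U)^{*}=\CL_T(UH)$. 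Crossing symmetry w.r.t.\ $UH$ is obtained as in (c), rewriting the crossing function for $UH$ in terms of that for $H$ using $\Delta_{UH}^{it}=U\Delta_H^{-it}U^{-1}$, $J_{UH}=UJ_HU^{-1}$ and $[F(U\ot U),T]=0$.

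The one delicate point is \emph{bookkeeping}, and it is concentrated in (d): keeping the complex conjugations, the inverted modular unitaries, and the placement of $F$ straight in the crossing-symmetry computation. Everything else reduces to the covariance and second-quantization lemmas (Lemma~\ref{lemma:T-second-quantization} and Lemma~\ref{lemma:symm}) and to the elementary modular-covariance facts $S_{UH}=US_HU^{-1}$, so I do not expect a conceptual obstacle anywhere.
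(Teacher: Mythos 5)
Your proposal is correct and follows essentially the same route as the paper: (a) directly from the generators, (b) by citing Theorem~\ref{thm:YBCrossingLocalSeparating} and Corollary~\ref{commutant}, and (c), (d) via the second-quantization covariance lemmas (Lemma~\ref{lemma:T-second-quantization} and Lemma~\ref{lemma:symm}~\ref{item:Upsilon}) together with $S_{UH}=US_HU^{-1}$ and a transfer of the crossing function from $H$ to $UH$. If anything, you are slightly more explicit than the paper on the compatibility $UH\in\text{Std}_T(\Hil)$ in the antiunitary case (the $V=F(U\ot U)$ identity), which the paper leaves implicit.
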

\begin{proof}
	Part a) is obvious from the definition of these algebras and b) is the content of Thm.~\ref{thm:YBCrossingLocalSeparating} and Cor.~\ref{commutant}. The first part of c) follows by application of Lemma~\ref{lemma:T-second-quantization}, and the second part by application of Lemma~\ref{lemma:symm}~c). The last statement follows easily by observing $\Delta_{UH}=U\Delta_H U^*$ and $J_{UH}=UJ_H U^*$. For d), the action of $\Gamma_T(U)$ on $\CL_T(H)$ and $\CR_T(H)$ follows from Lemma~\ref{lemma:symm}~c), so we only need to explain the crossing symmetry w.r.t. $UH$. For arbitrary $\psi_1,\ldots,\psi_4\in\Hil$, we see that the function
	\begin{align*}
		f(t)&:=\langle\psi_2\ot\psi_1,(\Delta_{UH}^{it}\ot1)T(1\ot\Delta_{UH}^{-it})(\psi_3\ot\psi_4)\rangle
		\\
		&=
		\langle U^*\psi_4\ot U^*\psi_3,(\Delta_{H}^{it}\ot1)T(1\ot\Delta_{H}^{-it})(U^*\psi_1\ot U^*\psi_2)\rangle
	\end{align*}
	lies in $\HB(\Strip_{1/2})$, with
	\begin{align*}
		f(t+\tfrac{i}{2})
		&=
		\langle U^*\psi_3\ot J_HU^*\psi_2,(1\ot\Delta_{H}^{it})T(\Delta_{H}^{-it}\ot1)(J_HU^*\psi_4\ot U^*\psi_1)\rangle
		\\
		&=
		\langle \psi_1\ot J_{UH}\psi_4,(1\ot\Delta_{UH}^{it})T(\Delta_{UH}^{-it}\ot1)(J_{UH}\psi_2\ot \psi_3)\rangle.
	\end{align*}
	Hence $T$ is crossing symmetric w.r.t. $UH$.

	We note that for the crossing symmetry statements in c) and d), the assumption of $T$ being compatible with $H$ is not unnecessary.
\end{proof}

Observe that in case $T=F$ is the tensor flip, both nets coincide, i.e.
\begin{align}
	\CL_F(H)=\CR_F(H).
\end{align}
This easily follows from the fact that in this case, $n!^{-1}P_{F,n}$ is the projection of~$\Hil^{\ot n}$ onto its totally symmetric subspace so that tensor multiplication from the left and right become identical. Let us point out that in general, the two nets are different and do not even form inclusions.

\begin{lemma}\label{X}
	Let $T$ be a braided twist, $H\subset K\in\text{\rm Std}_T(\Hil)$, and $T$ crossing symmetric w.r.t. $H$ and $K$. If $\CL_T(H)\subset\CR_T(K)$, then 
	\begin{enumerate}   
		\item $\phi_{L,T}(\psi)=\phi_{R,T}(\psi)$ for all $\psi\in\Hil$,
		\item $(1+T)(1-F)=0$.
	\end{enumerate}
	If $T$ is a strict twist, $\CL_T(H)\subset\CR_T(K)$ is impossible.
\end{lemma}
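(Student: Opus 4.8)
The plan is to prove a) first, and then derive b) and the impossibility statement from it.

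For a), the idea is to place both $\phi_{L,T}(h)$ and $\phi_{R,T}(h)$ (for $h\in H$) inside the algebra $\CR_T(K)$ and use that they act identically on the vacuum. First I would apply Corollary~\ref{commutant} to the standard subspace $K'$ — which lies in $\mathrm{Std}_T(\Hil)$ because $K$ does, and w.r.t.\ which $T$ is crossing symmetric by Proposition~\ref{prop:abstract-nets}~d) (take $U=J_K$ and use Lemma~\ref{lemma:Crossing-J}) — to get $\CL_T(K')'=\CR_T(K)$, hence $\CR_T(K)'=\CL_T(K')$. By hypothesis $\phi_{L,T}(h)$ is affiliated with $\CL_T(H)\subset\CR_T(K)$, and $\phi_{R,T}(h)$ is affiliated with $\CR_T(H)\subset\CR_T(K)$ for free; hence both strongly commute with every field $\phi_{L,T}(k')$, $k'\in K'$, since the latter is affiliated with $\CR_T(K)'=\CL_T(K')$. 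As the subspace of finite particle number is a common core of entire analytic vectors for all these self-adjoint fields (by \eqref{eq:abound1} and Nelson's theorem), I can commute $\phi_{L,T}(h)$, resp.\ $\phi_{R,T}(h)$, through the factors of $\phi_{L,T}(k_1')\cdots\phi_{L,T}(k_m')\Om$ ($k_j'\in K'$) and use $\phi_{L,T}(h)\Om=h=\phi_{R,T}(h)\Om$ to see that both send it to $\phi_{L,T}(k_1')\cdots\phi_{L,T}(k_m')h$. Thus $\phi_{L,T}(h)$ and $\phi_{R,T}(h)$ agree on $\D_0:=\P_{L,T}(K')\Om$.

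Next I would argue that $\D_0$ is a core for $\overline{\phi_{L,T}(h)}$ and for $\overline{\phi_{R,T}(h)}$, so the two operators actually coincide: the proof of Lemma~\ref{lemma:csT}~a), applied to the standard subspace $K'$, shows that the subspace of vectors supported on $\le N$ particles lies in the $\|\cdot\|_T$-closure of the degree-$\le N$ field polynomials applied to $\Om$, and on that subspace the graph norm of $\phi_{L,T}(h)$ (and of $\phi_{R,T}(h)$) is equivalent to $\|\cdot\|_T$ by \eqref{eq:abound1}; hence $\D_0$ is graph-norm dense in the finite-particle subspace, which is a core. Having $\phi_{L,T}(h)=\phi_{R,T}(h)$ for $h\in H$, I would separate the $(n\pm1)$-particle components of this identity on each $\Hil_{T,n}$ to get $a_{L,T}^\star(h)=a_{R,T}^\star(h)$ and $a_{L,T}(h)=a_{R,T}(h)$; since $h\mapsto a_{L,T}^\star(h)$ and $h\mapsto a_{R,T}^\star(h)$ are complex-linear and bounded and $H+iH$ is dense in $\Hil$, these extend to all $\psi\in\Hil$ (the second by taking $\star$-adjoints), giving $\phi_{L,T}(\psi)=\phi_{R,T}(\psi)$ for all $\psi\in\Hil$, which is a).

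For b), using a) I would write $[\phi_{L,T}(\xi),\phi_{L,T}(\eta)]\Om=[\phi_{L,T}(\xi),\phi_{R,T}(\eta)]\Om=2i\,\Im\langle\xi,\eta\rangle\,\Om$ by Lemma~\ref{lemma:relative-commutation}, while a direct computation from $a_{L,T}^\star(\xi)[\eta]=[\xi\ot\eta]=a_{R,T}^\star(\eta)[\xi]$ and $a_{L,T}(\xi)[\eta]=\langle\xi,\eta\rangle\Om$ gives the same commutator as $[\xi\ot\eta]-[\eta\ot\xi]+2i\,\Im\langle\xi,\eta\rangle\,\Om$. Comparing forces $[\xi\ot\eta]=[\eta\ot\xi]$ in $\Hil_{T,2}$ for all $\xi,\eta\in\Hil$, i.e.\ $(1+T)(\xi\ot\eta-\eta\ot\xi)=0$, which is exactly $(1+T)(1-F)=0$. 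Finally, if $T\in\Tws$ then $1+T=P_{T,2}$ is bounded below, hence injective, so $(1+T)(1-F)=0$ would force $1-F=0$, impossible since the tensor flip $F$ is not the identity; thus $\CL_T(H)\subset\CR_T(K)$ cannot hold. I expect the only genuine difficulty to be the core verification in the case $\|T\|=1$ (where the fields are unbounded); everything else is formal given Corollary~\ref{commutant}, Lemma~\ref{lemma:csT} and Lemma~\ref{lemma:relative-commutation}.
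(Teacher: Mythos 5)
Your proposal is correct and follows essentially the same route as the paper: both fields are affiliated with $\CR_T(K)$, which has $\Om$ as a separating vector, and agree on $\Om$, hence coincide; comparing two-particle components then yields $(1+T)(1-F)=0$, and invertibility of $P_{T,2}=1+T$ for strict twists gives the contradiction. The only differences are presentational — you spell out the standard ``affiliated operators agreeing on a separating vector'' step via the commutant $\CL_T(K')$ and a core argument, and obtain b) from the vacuum commutator rather than from $a_{L,T}^\star(h)a_{L,T}^\star(k)\Om=a_{R,T}^\star(h)a_{R,T}^\star(k)\Om$ directly.
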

\begin{proof}
	We have $\phi_{R,T}(h)\Om=h=\phi_{L,T}(h)\Om$ for any $h\in H$. Since the vacuum separates $\CR_T(K)$ and the field operators are affiliated, we conclude $\phi_{L,T}(h)=\phi_{R,T}(h)$, $h\in H$. The annihilation/creation structure then implies $a_{L,T}^\star(h)=a_{R,T}^\star(h)$, which extends to arbitrary $h\in\Hil$.
	
	This implies in particular 
	\begin{align*}
		[h\ot k]=a^\star_{L,T}(h)a^\star_{L,T}(k)\Om=a^\star_{R,T}(h)a^\star_{R,T}(k)\Om=[k\ot h]
	\end{align*}
	for any $h,k\in\Hil$, and hence $P_{T,2}=1+T$ vanishes on antisymmetric vectors, i.e. $(1+T)(1-F)=0$. For strict twist, $(1+T)$ is invertible, which yields a contradiction.
\end{proof}

As $\CL_T(H)\neq\CR_T(H)$ in general, it is interesting to consider the {\em relative} positions of the $\CL_T$- and $\CR_T$-algebras. This relates in particular to inclusions of twisted Araki-Woods algebras: Given an inclusion of standard subspaces $K\subset H\subset \Hil$, we will consider the corresponding inclusion
\begin{align}\label{eq:twisted-subfactor}
	\CL_{T}(K)\subset\CL_{T}(H)
\end{align}
of von Neumann algebras.\footnote{By taking commutants, one also obtains a ``right'' version $\CR_{T}(H')\subset\CR_{T}(K')$, and in case $[T,F]=0$ (e.g., for $T=qF$), the unitary $Y$ \eqref{eq:Y} satisfies $Y\CL_T(H)Y^\star=\CR_T(H)$ for every standard subspace $H$ (Lemma~\ref{lemma:symm}~\ref{item:Upsilon}), so that the inclusions $\CL_{T}(K)\subset\CL_{T}(H)$ and $\CR_{T}(K)\subset\CR_{T}(H)$ are isomorphic. The investigation of the relation between the $\CL_T$- and $\CR_T$-inclusions for general $T$ is left to future work.
}

The structure of this inclusion depends strongly on the standard subspace inclusion $K\subset H$ as well as the twist~$T$. We here focus on the relative commutants
\begin{align}\label{eq:relcomm}
	\CC_{T}(K,H) := \CL_{T}(K)'\cap\CL_{T}(H) = \CR_T(K')\cap\CL_T(H)
\end{align}
which contains information on the relative position of the $\CL_T$- and $\CR_T$-systems.

For general twist $T$ and standard subspaces $K\subset H$, a detailed analysis of \eqref{eq:relcomm} is quite complicated. For instance, setting $K=H$ this question contains the question whether $\CL_T(H)$ is a factor. For the special twists $T=qF$, this question has been completely settled only very recently -- see \cite{KumarSkalskiWasilewski:2023} and the references therein.

In a few special cases, more is known: If $T=F$ is the tensor flip (Bose case), then $\CC_{F}(K,H)=\CL_{F}(K'\cap H)$, and $\Om$ is cyclic for $\CF_{F}(\Hil)$ if and only if $K'\cap H$ is a cyclic space. This follows from  \cite[Thm. I.3.2]{LeylandsRobertsTestard:1978} or \cite[Lemma 1]{EckmannOsterwalder:1973} by mapping our setting to the Bose Fock space via the unitary $I=\bigoplus_n I_n$. Results about the Fermionic case $T=-F$ can be found in \cite[Prop.~2.5]{Foit:1983} and \cite[Prop.~3.4]{BaumgartelJurkeLledo:2002}.

If $T=T_S\in\CTSym$ is a symmetric (in particular braided) twist coming from a solution $S$ of the Yang-Baxter equation with spectral parameter as in \eqref{eq:TS}, and~$S$ satisfies a number of conditions (including crossing symmetry), then the relative commutant $\CC_{T}(K,H)$ is known to be a type III$_{1}$ factor having $\Om$ as a cyclic vector for certain inclusions $K\subset H$ that arise in quantum field theory (see Section~\ref{section:QFT}) \cite{AlazzawiLechner:2016}.

\medskip

In particular, there seem to exist few results on the inclusions $\CL_{T}(K)\subset\CL_{T}(H)$ in case the twist is not unitary, for instance in case $\|T\|<1$. In this section, we investigate two types of inclusions in which the structure of $\CC_{T}(K,H)$ can be decided for $\|T\|<1$.

\subsection{Half-sided modular inclusions}\label{section:inclusions-twisted}

Half-sided modular inclusions were introduced by Wiesbrock \cite{Wiesbrock:1993-1,Wiesbrock:1993} and are of prominent importance in conformal quantum field theory. They are defined as follows.

\begin{definition}
	An inclusion of two von Neumann algebras $\CN\subset\M$ with a joint cyclic separating vector $\Om$ is called {\em half-sided modular} if the modular unitaries $\Delta^{it}$ of $(\M,\Om)$ satisfy
	\begin{align}
		\Delta^{it}\CN\Delta^{-it} \subset \CN,\qquad t\leq0.
	\end{align}
\end{definition}

For a non-trivial half-sided inclusion $(\CN\subset\M,\Om)$, there exists no conditional expectation $\M\to\CN$ (hence no meaningful notion of index), and the inclusion is not split. Hence no standard methods for investigating the relative commutant of a half-sided inclusion are available.

The structure of a half-sided modular inclusion is closely related to that of a one-dimensional Borchers triple, that we also recall here \cite{BuchholzLechnerSummers:2011}.

\begin{definition}\label{def:BT}
    A one-dimensional Borchers triple $(\M,U,\Om)$ consists of a von Neumann algebra $\M$ on a Hilbert space $\Hil$, a strongly continuous unitary one-parameter group $U$ with positive generator $\Pst$, and a unit vector $\Om\in\Hil$ such that
    \begin{enumerate}
        \item $\Om$ is cyclic and separating for $\M$, and $U(x)\Om=\Om$ for all $x\in\Rl$,
        \item $U(x)\M U(x)^{-1}\subset\M$ for $x\geq0$.
    \end{enumerate}
\end{definition}

Let us recall that for a one-dimensional Borchers triple $(\M,U,\Om)$, Borchers' Theorem \cite{Borchers:1992} asserts that
\begin{align}\label{HJB}
    \Delta^{it}U(x)\Delta^{-it}=U(e^{-2\pi t}x),\quad JU(x)J=U(-x),\qquad t,x\in\Rl,
\end{align}
where $J,\Delta$ are the modular data of $(\M,\Om)$. Thus $U$ extends to a (anti)unitary representation of the affine group of $\Rl$ (the ``$ax+b$ group'').

As a consequence of \eqref{HJB}, the inclusion $\N:=U(1)\M U(-1)\subset\M$ coming from a Borchers triple is half-sided modular. Conversely, given a half-sided modular inclusion $(\N\subset\M,\Om)$, there exists a strongly continuous unitary one-parameter group $U$ such that $(\M,U,\Om)$ is a one-dimensional Borchers triple, and $\N=U(1)\M U(-1)$ \cite{Wiesbrock:1993-1,ArakiZsido:2005}. This one-parameter group is related to the modular unitaries of $(\N,\Om)$ and $(\M,\Om)$ by
\begin{align}
	U(e^{2\pi t}-1) = \Delta_\M^{-it}\Delta_\N^{it}.
\end{align}

We may therefore use the structure of one-dimensional Borchers triples to define half-sided modular inclusions.

We will say that a half-sided inclusion has {\em unique vacuum} if the subspace of $U$-invariant vectors is $\Cl\Om$. In case this condition holds and $\dim\Hil>1$, the von Neumann algebras $\M$ and $\CN$ are (not necessarily hyperfinite) type III${}_1$ factors \cite{Longo:1979,Wiesbrock:1993-1}.

To give examples that are of the form \eqref{eq:twisted-subfactor}, we now consider a standard subspace~$H$ and a twist $T$ with a compatible translation representation.

\begin{definition}\label{def:compatible-translation}
	A {\em translation group compatible with a standard subspace $H\subset\Hil$ and a twist $T\in\Tw$} is a strongly continuous one-parameter group $U(x)$, $x\in\Rl$, such that
	\begin{enumerate}
		\item the generator $\Pst$ of $U(x)$ is positive,
		\item $\ker\Pst=\{0\}$,
		\item $U(x)H\subset H$, $x\geq0$,
		\item \label{item:compatibility-with-T} $[U(x)\ot U(x),T]=0$ for all $x\in\Rl$.
	\end{enumerate}
\end{definition}

\begin{proposition}\label{prop:BT-examples}
	Let $H\subset\Hil$ be a standard subspace, $T\in\Tw(H)$ a compatible braided crossing symmetric twist and $U$ a compatible translation group. Then $(\CL_{T}(H),\Gamma_T\circ U,\Om)$ is a one-dimensional Borchers triple with unique vacuum.
\end{proposition}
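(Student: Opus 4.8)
The plan is to verify, one at a time, the ingredients of Definition~\ref{def:BT} for the data $(\CL_{T}(H),\Gamma_T\circ U,\Om)$ and then the unique-vacuum condition; essentially everything is a bookkeeping exercise that assembles Lemma~\ref{lemma:T-second-quantization}, Theorem~\ref{thm:YBCrossingLocalSeparating} and Proposition~\ref{prop:abstract-nets}. I would begin with the one-parameter group $\Gamma_T\circ U$. Condition~(d) of Definition~\ref{def:compatible-translation} is exactly the hypothesis $[U(x)\ot U(x),T]=0$ needed in Lemma~\ref{lemma:T-second-quantization}(\ref{item:T-second-quantization}), so each $\Gamma_T(U(x))$ is a well-defined unitary on $\CF_T(\Hil)$. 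Since $\Gamma_T$ is multiplicative ($\widehat{U(x)U(y)}=\widehat{U(x)}\,\widehat{U(y)}$ on finite-particle vectors) and $\Gamma_T(U(0))=1$, the assignment $x\mapsto\Gamma_T(U(x))$ is a unitary one-parameter group with inverse $\Gamma_T(U(-x))$; strong continuity follows from strong continuity of $U(x)^{\ot n}$ on each $\Hil^{\ot n}$ together with uniform boundedness, and $\Gamma_T(U(x))\Om=\Om$ is immediate from the construction. On the $n$-particle space the generator of $\Gamma_T(U(x))$ acts, through the quotient by $\ker P_{T,n}$ (with which $U(x)^{\ot n}$ commutes, by the argument in the proof of Lemma~\ref{lemma:T-second-quantization}), as $\Pst_1+\cdots+\Pst_n$, where $\Pst\ge0$ is the generator of $U$; being a finite sum of positive operators this is positive on the finite-particle core, so the generator of $\Gamma_T\circ U$ is positive.

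Next I would dispose of cyclicity/separability and covariance. Since $T\in\Tw(H)$ is a compatible, braided, crossing symmetric twist, Theorem~\ref{thm:YBCrossingLocalSeparating} gives that $\Om$ is cyclic and separating for $\CL_T(H)$. For the covariance condition fix $x\ge0$; then $U(x)H\subset H$ by condition~(c), and by condition~(d) together with Proposition~\ref{prop:abstract-nets}(c) the subspace $U(x)H$ is again a standard subspace compatible with $T$ and
\[
\Gamma_T(U(x))\,\CL_T(H)\,\Gamma_T(U(x))^\star=\CL_T(U(x)H)\subset\CL_T(H),
\]
the inclusion being monotonicity of $\CL_T$ (Proposition~\ref{prop:abstract-nets}(a)). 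Together with the previous paragraph this shows that $(\CL_T(H),\Gamma_T\circ U,\Om)$ is a one-dimensional Borchers triple.

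For the unique-vacuum statement I would use condition~(b): $\ker\Pst=\{0\}$ means the spectral projection of $\Pst$ at $0$ vanishes, whence each $\Pst_k$ on $\Hil^{\ot n}$ has trivial kernel; since the $\Pst_k$ are positive, a vector fixed by the one-parameter unitary group generated by $\sum_{k=1}^n\Pst_k$ lies in $\bigcap_k\ker\Pst_k=\{0\}$, so $\sum_{k=1}^n\Pst_k$ has trivial fixed-point space on $\Hil^{\ot n}$ for every $n\ge1$, and this passes to the $\|\cdot\|_T$-completion $\Hil_{T,n}$ on which $U(x)^{\ot n}$ still acts unitarily. Because $\Gamma_T\circ U$ preserves the particle-number grading, its only invariant vectors sit in the $n=0$ component, i.e. the invariant subspace is $\Cl\Om$, which is the unique-vacuum property (and, by the discussion following Definition~\ref{def:BT}, yields factoriality and type ${\rm III}_1$ of $\CL_T(H)$ when $\dim\Hil>1$).

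I do not expect a genuine obstacle here. The only points requiring care are the covariance step — it is essential there that $U(x)H$ remains a standard subspace compatible with $T$ so that Proposition~\ref{prop:abstract-nets}(c) applies, which is precisely the reason for imposing hypothesis~(d) — and the kernel computation in the last step, where hypothesis~(b) (injectivity of $\Pst$) enters essentially and where one must check that triviality of the fixed-point space survives passage to the completion $\Hil_{T,n}$. Neither is deep, and both are standard second-quantization arguments.
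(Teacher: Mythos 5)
Your proof is correct and follows essentially the same route as the paper's: Lemma~\ref{lemma:T-second-quantization} (via condition d) of Def.~\ref{def:compatible-translation}) gives the second-quantized unitary group, Theorem~\ref{thm:YBCrossingLocalSeparating} gives cyclicity and separation of $\Om$, the covariance relation $\Gamma_T(U(x))\CL_T(H)\Gamma_T(U(x))^{-1}=\CL_T(U(x)H)\subset\CL_T(H)$ gives the half-sided inclusion, and $\ker\Pst=\{0\}$ gives the unique vacuum. You merely fill in details (positivity of the generator on $n$-particle spaces, triviality of fixed points after passing to $\Hil_{T,n}$) that the paper declares clear.
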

\begin{proof}
	Thanks to Def.~\ref{def:compatible-translation}~\ref{item:compatibility-with-T} and Lemma~\ref{lemma:T-second-quantization}, $x\mapsto\Gamma_T(U(x))$ is a unitary one-parameter group on $\CF_T(\Hil)$. It is clear that it has positive generator and fixes the Fock vacuum~$\Om$, and the latter is the only (up to multiples) translation invariant vector because $U(x)$ has no non-zero fixed points.

	Furthermore, we have the covariance relation (cf. Lemma~\ref{lemma:T-second-quantization}~\ref{item:T-second-quantization-covariance})
	\begin{align*}
		\Gamma_T(U(x))\CL_{T}(H)\Gamma_T(U(x))^{-1}=\CL_{T}(U(x)H)\subset\CL_{T}(H)\quad\text{  for }\,x\geq0.
	\end{align*}
	As $\Om$ is cyclic and separating for $\CL_{T}(H)$ by Thm.~\ref{thm:YBCrossingLocalSeparating}, the proof is finished.
\end{proof}

Examples of standard subspaces with compatible translation groups can be easily constructed with the help of positive energy representations of the Poincaré group as we shall discuss in Section~\ref{section:QFT}.

To analyze the relative commutants $\CC_{T}(K,H)$, where $K=U(1)H\subset H$ is given by a compatible translation group $U$, we will make use of a criterion that was recently developed in \cite{LechnerScotford:2022}. Given a one-dimensional Borchers triple $(\M,U,\Om)$, one considers bounded open intervals $I\subset\Rl$ and a family of von Neumann algebras $I\mapsto\CA(I)$ indexed by these intervals, namely
\begin{align}
	\CA((a,b))
	:=
	U(a)\M U(a)^* \cap U(b)\M'U(b)^*,
\end{align}
as well as the {\em algebra at infinity},
\begin{align}\label{def:algebra-at-infinity}
	\CA_\infty := \bigcap_{I} \CA(I)'.
\end{align}
Then the following holds.

\begin{proposition}{\em\cite{LechnerScotford:2022}}\label{prop:LS22}
	Let $(\M,U,\Om)$ be a one-dimensional Borchers triple with unique vacuum. Let $J,\Delta$ denote the modular data of $(\M,\Om)$, and $\CN:=U(1)\M U(1)^{-1}\subset\M$.
	\begin{enumerate}
		\item \label{item:POm-triviality} The relative commutants $\CA((0,x))=\M\cap U(x)\M'U(x)^*$, $x>0$, are trivial if and only if the one-dimensional projection $\POm$ onto $\Cl\Om$ lies in~$\CA_\infty$.
		\item \label{item:weak-limit} If $X\in\N\vee J\N J$ is such that $L:=\wlim\limits_{t\to-\infty}\Delta^{it}X\Delta^{-it}$ exists, then $L\in\CA_\infty$.
	\end{enumerate}
\end{proposition}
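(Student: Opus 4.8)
The statement to prove is Proposition~\ref{prop:LS22}, which is attributed to \cite{LechnerScotford:2022}. Since this is a citation to prior work, I would not reprove it from scratch but rather explain the structure of the argument and how it specializes from the reference.

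\textbf{Plan of proof.} The plan is to recall the key constructions from \cite{LechnerScotford:2022} and indicate how parts \ref{item:POm-triviality} and \ref{item:weak-limit} follow. First I would note that the net $I\mapsto\CA(I)$ built from the Borchers triple $(\M,U,\Om)$ is an isotonous, translation-covariant, local net on $\Rl$: isotony because enlarging the interval enlarges $U(a)\M U(a)^*\cap U(b)\M'U(b)^*$ in the obvious way, covariance by construction, and locality because for disjoint intervals the generating algebras sit in $\M$ and $\M'$ respectively after suitable translation. The algebra at infinity $\CA_\infty=\bigcap_I\CA(I)'$ is then the ``weakly additive complement'', and it is a translation-invariant von Neumann algebra commuting with all local algebras.

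\textbf{Part a).} For the triviality criterion, I would argue both directions. If $\POm\in\CA_\infty$, then since $\Om$ is separating for every $\CA(I)$ (it is separating for $\M$, hence for all subalgebras) and $\POm$ commutes with $\CA((0,x))\subset\M$, any $X\in\CA((0,x))$ satisfies $\POm X\POm = X\POm\POm$, giving $\langle\Om,X\Om\rangle\POm\,\Psi = X\POm\Psi$ for all $\Psi$ by a standard Reeh--Schlieder type/separating-vector argument; cyclicity of $\Om$ for $\M'$ (equivalently separatingness for $\M$) then forces $X=\langle\Om,X\Om\rangle 1$, so $\CA((0,x))=\Cl 1$. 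Conversely, if all relative commutants $\CA((0,x))$ are trivial, then by translation covariance all $\CA(I)$ for bounded $I$ are trivial as well; but $\CA_\infty$ must then be large enough that its commutant $\bigvee_I\CA(I)$ is all of $\CB(\Hil)$ only if we have additivity --- the sharper point in \cite{LechnerScotford:2022} is that triviality of the two-interval-type relative commutants is equivalent to $\POm$ lying in $\CA_\infty$, which is extracted from the modular structure. I would cite the precise lemma from \cite{LechnerScotford:2022} here rather than reconstruct the modular computation.

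\textbf{Part b).} For the weak-limit statement, the key observation is that $\N\vee J\N J$ is generated by algebras localized in the half-line $(1,\infty)$ and its reflection, and conjugation by $\Delta^{it}$ implements, via Borchers' relations \eqref{HJB}, the dilation $U(x)\mapsto U(e^{-2\pi t}x)$. Hence for $X\in\N\vee J\N J$ localized ``in a half-line starting at $1$'', the dilated operators $\Delta^{it}X\Delta^{-it}$ are localized in half-lines whose endpoint $e^{2\pi t}$ tends to $0^+$ as $t\to-\infty$; in the limit the localization region shrinks so that the limit point $L$, if it exists weakly, commutes with every $\CA(I)$ for bounded $I$ (since eventually the support of $\Delta^{it}X\Delta^{-it}$ is disjoint from $\overline{I}$ after translation, and locality applies, passing to the weak limit using normality of the commutation relations). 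Therefore $L\in\bigcap_I\CA(I)'=\CA_\infty$. The main obstacle --- and the reason to lean on \cite{LechnerScotford:2022} --- is making the ``localization shrinks to a point'' argument precise with the correct algebra-at-infinity bookkeeping and verifying that the weak limit indeed lands in each $\CA(I)'$ rather than merely in a weak closure; this is exactly the technical content proved in that reference, so I would simply invoke it.
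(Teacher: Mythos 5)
The paper itself gives no proof of this proposition --- it is quoted verbatim from \cite{LechnerScotford:2022} --- so your decision to defer to the reference matches what the paper does, and your argument for the nontrivial direction of a) is the right one: $\POm\in\CA_\infty$ means $\POm$ commutes with $\CA((0,x))$, so $X\Om=X\POm\Om=\POm X\Om=\langle\Om,X\Om\rangle\Om$, and since $\Om$ is separating for $\M\supset\CA((0,x))$ this forces $X\in\Cl1$. Two corrections are needed, however. First, the converse in a) requires no discussion of additivity and is not "the sharper point": if every $\CA((0,x))$ is trivial, then by translation covariance every $\CA(I)$ with $I$ bounded is trivial, hence $\CA(I)'=\CB(\Hil)$ for all $I$ and $\CA_\infty=\CB(\Hil)\ni\POm$ trivially. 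Your sketch of this direction is muddled and points in the wrong direction (you seem to be trying to deduce largeness of $\bigvee_I\CA(I)$, which is irrelevant).

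Second, and more seriously, in b) you have the modular dilation flow backwards. Borchers' relation \eqref{HJB} gives $\Delta^{it}U(x)\Delta^{-it}=U(e^{-2\pi t}x)$, so $\Delta^{it}\N\Delta^{-it}=U(e^{-2\pi t})\M U(e^{-2\pi t})^{*}$ and $\Delta^{it}J\N J\Delta^{-it}=U(-e^{-2\pi t})\M'U(-e^{-2\pi t})^{*}$; as $t\to-\infty$ the localization regions are the half-lines $(e^{-2\pi t},\infty)$ and $(-\infty,-e^{-2\pi t})$, which \emph{escape to $\pm\infty$}, not half-lines with endpoint tending to $0^{+}$. As written, your picture would have the region grow to $(0,\infty)$, which never becomes disjoint from a bounded interval $I\subset\Rl_{+}$, so the locality step you invoke would fail. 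With the correct direction the argument closes cleanly: for fixed $I=(a,b)$ and $t$ sufficiently negative one has $e^{-2\pi t}\geq\max\{b,-a\}$, whence $U(e^{-2\pi t})\M U(e^{-2\pi t})^{*}\subset U(b)\M U(b)^{*}\subset\CA(I)'$ and $U(-e^{-2\pi t})\M'U(-e^{-2\pi t})^{*}\subset U(a)\M'U(a)^{*}\subset\CA(I)'$, so $\Delta^{it}(\N\vee J\N J)\Delta^{-it}\subset\CA(I)'$ for such $t$, and the weak limit $L$ lies in the weakly closed algebra $\CA(I)'$ for every bounded $I$, i.e.\ in $\CA_\infty$.
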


We now apply this technique to the half-sided inclusions $\CL_{T}(K)\subset\CL_{T}(H)$, $K:=U(1)H\subset H$, defined by Prop.~\ref{prop:BT-examples}. For $T=F$, the relative commutant $\CC_F(K,H)$ is known to be non-trivial for $K'\cap H\neq\{0\}$, and for various symmetric twists it is expected to be non-trivial.

We begin with a preparatory lemma which does not rely on the half-sided structure. The lemma says that an operator $A$ commuting with left {\em and} right fields is necessarily quite complicated in the sense that $A\Om$ cannot be a finite particle number vector. Here we only need to assume that $T$ is strict (Def.~\ref{def:twist}).

\begin{lemma}\label{lemma:infiniteparticlenumber}
	Let $H, K\subset \Hil$ be standard subspaces with $\dim K\geq 2$ and let $T\in \Tws(H)$ be a strict twist. Let $A\in \CL_{T}(H)'\cap \CR_{T}(K)'$, $A\neq0$. If
	\begin{align}
		n(A):=\sup\{n\in\Nl_0\col [A\Omega]_n\neq0\}<\infty,
	\end{align}
	then $A\in\Cl1$.
\end{lemma}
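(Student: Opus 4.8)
The plan is to show that if $A\in\CL_T(H)'\cap\CR_T(K)'$ has finite particle number $n(A)=N$, then in fact $N=0$, which combined with the facts that $\Om$ separates $\CL_T(H)$ (Theorem~\ref{thm:YBCrossingLocalSeparating}, since $T\in\Tws(H)$ is automatically braided and crossing symmetric once $\Om$ is separating — but here we only know $T\in\Tws(H)$, so more carefully: $\CR_T(K)$ exists and the commutation relations of Lemma~\ref{lemma:relative-commutation} hold) forces $A\Om\in\Cl\Om$ and hence $A\in\Cl1$. First I would write $\Psi:=A\Om=\bigoplus_{n=0}^N\Psi_n$ with $\Psi_N\neq0$. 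The idea is to ``peel off'' the top component by applying annihilation operators: since $A$ commutes with all $\phi_{R,T}(h')$ for $h'\in K'$, and commutes with all $\phi_{L,T}(h)$ for $h\in H$, I would test $A$ against vectors of the form $\phi^\star\cdots\phi^\star\Om$ and use the commutation relations from Lemma~\ref{lemma:relative-commutation} to extract constraints on $\Psi_N$.

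The key computation: for $h'\in K'$ one has $a_{R,T}(h')A\Om = A\,a_{R,T}(h')\Om + [a_{R,T}(h'),A]\Om$, and $a_{R,T}(h')\Om=0$, while $[a_{R,T}(h'),A]=-[A,a_{R,T}(h')]$; but $A$ commutes with $\phi_{R,T}(h')=a_{R,T}^\star(h')+a_{R,T}(h')$, so $[A,a_{R,T}(h')]=-[A,a_{R,T}^\star(h')]$, and $[A,a_{R,T}^\star(h')]$ acts on $\Psi$ by right creation, raising particle number. Comparing top components, the $N$-particle component of $a_{R,T}(h')\Psi$ equals (a twisted version of) $a_R(h')$ applied to the $(N+1)$-particle component of $[A,a_{R,T}^\star(h')]\Psi$, which vanishes above degree... this needs care. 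The cleaner route: apply the left annihilation operator $a_{L,T}(h)$, $h\in H$, to $\Psi=A\Om$. Since $A$ commutes with $\phi_{L,T}(h)$ and $a_{L,T}(h)\Om=0$, we get $a_{L,T}(h)\Psi = -[A,a_{L,T}^\star(h)]\Om = 0$ because $a_{L,T}^\star(h)\Om=h\neq$ ... no: $[A,a_{L,T}^\star(h)]\Om = A h - a_{L,T}^\star(h)A\Om = Ah - [h\ot\Psi]$, which need not vanish. So instead I would use that $A'\in\CL_T(H)'$ means $A$ commutes with $\CL_T(H)$, equivalently (since $\Om$ is cyclic and separating for $\CL_T(H)$ — using $\CR_T(H')\subset\CL_T(H)'$, so $\CL_T(H)'$ is big enough that $\Om$ separates $\CL_T(H)$) $A$ is determined by $A\Om$, and the condition $A\in\CL_T(H)'$ translates via the right fields: more precisely I expect one shows $[\Psi]_N\in H^{\otimes N}$ from commutation with $\CL_T(H)$ restricted to degree-raising terms, and simultaneously $[\Psi]_N\in (K')^{\otimes N}{}^{\text{(reversed)}}$-type condition from $\CR_T(K)$, and then a standard-subspace argument ($H\cap$ something $=\{0\}$, using $\dim K\geq 2$ so $K'$ and the analogous reversed space intersect $H$-tensor-powers trivially) forces $\Psi_N=0$, contradiction unless $N=0$.

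Concretely, the main technical step is: \emph{commutation of $A$ with $\CL_T(H)$ and with $\CR_T(K)$ pins down the top component $\Psi_N$ to lie in an intersection of two standard subspaces of $\Hil^{\otimes N}$ whose relative orientations are ``opposite'' (one built from $H$ on the left, one from $K'$ on the right, with the order of tensor factors reversed), and strictness of $T$ (trivial kernels, $P_{T,n}$ invertible) ensures no collapse that would make this intersection spuriously large.} I would obtain the left condition by pairing $\langle \phi_{L,T}(h_1)\cdots\phi_{L,T}(h_N)\Om,\; A\,\phi_{L,T}(h_{N+1})\Om\rangle_T$ two ways (moving $A$ past the left fields using $[A,\CL_T(H)]=0$, and using that the annihilation parts hit $\Om$), extracting that the map $h\mapsto a_{L,T}(\overline{h})\Psi$ has a specific form; iterating $N$ times and using that $S_H$ restricted to the one-particle space is well-behaved, one gets $\Psi_N$ determined by $\langle h_1\otimes\cdots\otimes h_N,\Psi_N\rangle$ being real for $h_i\in H$ — i.e. $\Psi_N\perp i H^{\otimes N}$ in the real sense, meaning $[\Psi_N]$ lies in $(H^{\otimes N})'$ ... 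I would need to get the signs/Tomita-operator placement right. The analogous argument with $\CR_T(K)$ and the reversed tensor structure (Lemma~\ref{lemma:symm}) gives $[\Psi_N]\in\big((K')^{\otimes N}\big)'$ read in reverse order, i.e. in $(K^{\otimes N})$ reversed. Since $\dim K\geq 2$ and $K\subset H$... actually the needed input is that $(H^{\otimes N})'\cap (\text{reverse of } (K')^{\otimes N})' = \{0\}$ for $N\geq1$; this holds because $Y$ (tensor reversal) does not preserve these subspaces in general, and for $N=1$ it reduces to $H'\cap K = \{0\}$ which fails in general — so the argument must instead produce a \emph{different} pair of conditions.

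Given this subtlety, the honest plan is: induct on $N=n(A)$. For $N\geq1$, pick $0\neq h\in H\cap K'$... no, that can be zero. Better: use that for \emph{any} $\xi\in\Hil$, the operator $a_{L,T}(\xi)$ lowers particle number, and show $a_{L,T}(\xi)A - A a_{L,T}(\xi)$, evaluated on $\Om$, combined with the right-commutation, yields $a_L(\xi)$-type contractions of $\Psi_N$ that must all vanish when tested against $\Psi_{N-1}$-level data, forcing — via strictness of $T$ and $\dim K\geq2$ (which guarantees enough vectors in $K$ to separate tensor legs) — that $\Psi_N\in\ker P_{T,N}=\{0\}$. \textbf{The main obstacle I anticipate} is correctly handling the twisted inner product $\scpr_T$ and the interleaved twist insertions when commuting $A$ past products of field operators: the commutators $[\phi_{L,T}(\xi),\phi_{R,T}(\eta)]$ on an $n$-particle vector involve the full strings $T_1\cdots T_n$ and $T_n\cdots T_1$ (Lemma~\ref{lemma:relative-commutation}), so the ``peeling'' does not cleanly isolate $\Psi_N$ but mixes in lower components through these twist strings; controlling this mixing — showing the leading term survives and the corrections are genuinely lower-order — is where the real work lies, and is presumably why the hypothesis $T\in\Tws$ (invertible $P_{T,n}$, trivial kernels) rather than merely $T\in\Tw$ is needed.
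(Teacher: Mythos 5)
Your proposal does not close; it is a set of candidate strategies, each of which you yourself flag as problematic, and none is carried through. The actual argument is much shorter and you brush past it in your first paragraph before abandoning it: from $A\phi_{L,T}^H(\xi)\Om=\phi_{L,T}^H(\xi)A\Om$ one gets $A\xi=a_{L,T}^\star(\xi)A\Om+a_{L,T}(S_H\xi)A\Om$, and the whole point is to \emph{project this identity onto the $(n(A)+1)$-particle level}. There the annihilation term drops out entirely (it only populates levels $\leq n(A)-1$... more precisely, since $[A\Om]_{n(A)+2}=0$ it contributes nothing at level $n(A)+1$), and the creation term is literally tensoring because $T\in\Tws$ makes all kernels trivial, so $(A\xi)_{n(A)+1}=\xi\ot(A\Om)_{n(A)}$ with no twist strings and no mixing of lower components. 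The same computation with $\phi_{R,T}^K(\eta)$, $\eta\in K+iK$, gives $(A\eta)_{n(A)+1}=(A\Om)_{n(A)}\ot\eta$. Approximating $\eta$ by vectors in the dense subspace $H+iH$ and using continuity of $A$ then yields $\eta\ot(A\Om)_{n(A)}=(A\Om)_{n(A)}\ot\eta$ for all $\eta\in K+iK$, which is impossible for $(A\Om)_{n(A)}\neq0$ and $n(A)\geq1$ precisely because $\dim K\geq2$. This is the only place the dimension hypothesis enters — not, as you suggest, to "separate tensor legs" in some iterated peeling.

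Concretely, the gaps in what you wrote are: (i) you never perform the particle-number projection that kills the annihilation contributions and the twist-string corrections you worry about at length — that projection is the idea that makes the "obstacle" you anticipate simply not arise; (ii) your standard-subspace-intersection route is, as you concede, based on a condition that fails already at $N=1$, so it is not a repairable variant; (iii) the induction on $N$ you sketch at the end is never needed — one contradiction at the top level suffices — and you give no mechanism for the inductive step. The role of $T\in\Tws$ is also more modest than you guess: it is only used so that $\Hil_{T,n}=\Hil^{\ot n}$ and creation is genuinely $\Psi_n\mapsto\xi\ot\Psi_n$ without passing to a quotient, not to control any cancellation in twist strings.
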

\begin{proof}
	The supremum $n(A)$ exists because $A\neq0$ and $\Om$ separates the intersection $\CL_{T}(H)'\cap \CR_{T}(K)'$ (it even separates $\CL_{T}(H)'$ because $H$ is standard, see Lemma~\ref{lemma:csT}), hence $A\Om\neq0$. We have to show $n(A)=0$ and proceed by contradiction, assuming $n(A)>0$.

	By assumption, $A$ commutes with $\phi^H_{L,T}(\xi)$ and $\phi^K_{R,T}(\eta)$ for all $\xi\in H+iH$ and all $\eta\in K+iK$, i.e.
	\begin{align*}
		A\xi &= A\phi^H_{L,T}(\xi)\Om = \phi^H_{L,T}(\xi)A\Om = a_{L,T}^\star(\xi)A\Om+a_{L,T}(S_H\xi)A\Om,
		\\
		A\eta &= A\phi^K_{R,T}(\eta)\Om = \phi^K_{R,T}(\eta)A\Om = a_{R,T}^\star(\eta)A\Om+a_{R,T}(S_K\eta)A\Om.
	\end{align*}
	Since $[A\Om]_{n(A)+2}=0$, projecting these equations onto $\Hil_{T,n(A)+1}$ gives (note that $\ker P_{T,m}=0$ for all $m$ because $T\in\Tws$, hence we don't have to project onto a quotient)
	\begin{align*}
		(A\xi)_{n(A)+1}
		&=
		a_{L,T}^\star(\xi)(A\Om)_{n(A)}
		+
		a_{L,T}(S_H\xi)(A\Om)_{n(A)+2}
		=
		\xi\ot(A\Om)_{n(A)},
		\\
		(A\eta)_{n(A)+1}
		&=
		a_{R,T}^\star(\eta)(A\Om)_{n(A)}
		+
		a_{R,T}(S_K\eta)(A\Om)_{n(A)+2}
		=
		(A\Om)_{n(A)}\ot\eta.
	\end{align*}
	As $H$ is standard, for each $\eta\in K+iK$ we can find a sequence $(\xi^\eta_l)_{l\in\Nl}\subset H+iH$ such that $\|\xi^\eta_l-\eta\|_\Hil\to0$ as $l\to\infty$. By continuity and the first of the two equations above, we then have
	\begin{align*}
		0
		&=
		(A\xi^\eta_l)_{n(A)+1}-\xi^\eta_l\ot(A\Om)_{n(A)}
		\longrightarrow
		(A\eta)_{n(A)+1}-\eta\ot(A\Om)_{n(A)}
	\end{align*}
	for $l\to\infty$. Comparing this equation with the second equation above yields
	\begin{align*}
		\eta\ot(A\Om)_{n(A)}=(A\Om)_{n(A)}\ot\eta\quad \text{ for all }\,\eta\in K+iK,
	\end{align*}
	which is impossible because $(A\Om)_{n(A)}\neq0$ and $\dim K>1$.
\end{proof}

\begin{theorem}\label{thm:twisted-hsm}
	Let $H\subset\Hil$ be a standard subspace and $T\in\Tw(H)$ a braided crossing-symmetric twist with $\|T\|<1$. Let $U$ be a translation group compatible with $H$ and $T$, and set $K:=U(1)H\subset H$.

	Then the half-sided modular inclusion $\CL_{T}(K)\subset\CL_{T}(H)$ has trivial relative commutant.
\end{theorem}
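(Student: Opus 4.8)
The plan is to realise the inclusion $\CL_T(K)\subset\CL_T(H)$ as the one‑step inclusion of a one‑dimensional Borchers triple, and then to verify the criterion of \cite{LechnerScotford:2022} by exhibiting one explicitly computable element of the algebra at infinity.

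\textbf{Step 1 (reduction).} By Proposition~\ref{prop:BT-examples}, $(\M,V,\Om):=(\CL_T(H),\Gamma_T\circ U,\Om)$ is a one‑dimensional Borchers triple with unique vacuum; its modular unitaries are $\Delta^{it}=\Gamma_T(\Delta_H^{it})$ and its modular conjugation is $J=\Gamma_T^Y(J_H)$ (Proposition~\ref{prop:modulardata}), and $\N:=V(1)\M V(1)^{-1}=\CL_T(U(1)H)=\CL_T(K)$. In the notation of Proposition~\ref{prop:LS22} one has
\[
  \CA((0,1))=\M\cap V(1)\M'V(1)^{*}=\CL_T(H)\cap\CR_T(U(1)H')=\CL_T(K)'\cap\CL_T(H),
\]
so $\CA((0,1))$ \emph{is} the relative commutant of the inclusion. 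By Proposition~\ref{prop:LS22}\ref{item:POm-triviality} it therefore suffices to show that the projection $\POm$ onto $\Cl\Om$ lies in $\CA_\infty$.

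\textbf{Step 2 (a weak limit in $\CA_\infty$).} Since $T$ is braided with $\|T\|<1$ we have $T\in\Tws$ (Theorem~\ref{theorem:T}\ref{item:Tbraided}), hence $\Hil_{T,n}=\Hil^{\ot n}$ and the fields are bounded. I would choose unit vectors $k_1\in K=U(1)H$ and $k_2\in J_HK=U(-1)H'$ with $|\langle k_1,k_2\rangle|>\|T\|$; this is possible because $\|T\|<1$ while the supremum of $|\langle h_1,h_2\rangle|$ over unit vectors $h_1\in K$, $h_2\in J_HK$ is $1$ (a consequence of the density of $K+iK$ in $\Hil$ and of the fact that $\log\Delta_H$ has purely absolutely continuous spectrum, which in turn follows from the Borchers relations $\Delta_H^{it}U(x)\Delta_H^{-it}=U(e^{-2\pi t}x)$, positivity of $\Pst$ and $\ker\Pst=\{0\}$). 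Put $X:=\phi_{L,T}(k_1)\phi_{R,T}(k_2)$. Then $\|X\|<\infty$, and since $\phi_{L,T}(k_1)\in\CL_T(K)=\N$ and $\phi_{R,T}(k_2)\in\CR_T(J_HK)=J\N J$ (Proposition~\ref{prop:abstract-nets}\,d) for the antiunitary $J_H$, using $[F(J_H\ot J_H),T]=0$ from Lemma~\ref{lemma:Crossing-J}), one has $X\in\N\vee J\N J$. By Lemma~\ref{lemma:T-second-quantization}\ref{item:T-second-quantization-covariance} and its right‑handed analogue, $\Delta^{it}X\Delta^{-it}=\phi_{L,T}(\Delta_H^{it}k_1)\phi_{R,T}(\Delta_H^{it}k_2)$. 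I would then compute its matrix elements on the finite‑particle subspace using the commutation relations of Lemma~\ref{lemma:relative-commutation}: because $\Delta_H^{it}k_j\to0$ weakly as $t\to-\infty$, every contribution containing a pairing of $\Delta_H^{it}k_j$ against a fixed vector vanishes, as do the particle‑number‑changing contributions; what remains is the number‑preserving part, showing that $L:=\wlim_{t\to-\infty}\Delta^{it}X\Delta^{-it}$ exists, commutes with the number grading, satisfies $L\restr{\Hil_{T,0}}=\langle k_1,k_2\rangle\mathbbm{1}$, and on $\Hil_{T,n}$ is the weak limit of $\Psi_n\mapsto[a_L(\Delta_H^{it}k_1)\,T_1\cdots T_n(\Psi_n\ot\Delta_H^{it}k_2)]=[a_{L,T}(\Delta_H^{it}k_1),a_{R,T}^{\star}(\Delta_H^{it}k_2)]\Psi_n$ (Lemma~\ref{lemma:relative-commutation}). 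By Proposition~\ref{prop:LS22}\ref{item:weak-limit}, $L\in\CA_\infty$.

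\textbf{Step 3 (recovering $\POm$, and the main obstacle).} Here the hypothesis $\|T\|<1$ does the decisive work. Using the factor $T_1\cdots T_n$ together with the boundedness of the $\|\cdot\|_T$‑normalised creation and annihilation operators, one has to show $\|L\restr{\Hil_{T,n}}\|_T<|\langle k_1,k_2\rangle|$ for every $n\ge1$ — on $\Hil_{T,1}$ this is immediate since $\|\cdot\|_T=\|\cdot\|$ there and $\|L\restr{\Hil_{T,1}}\|\le\|T\|<|\langle k_1,k_2\rangle|$, while the higher blocks require the braiding and, e.g., a uniform lower bound on $\inf\sigma(P_{T,n})$. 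Granting this, $L$ is block‑diagonal with $L^{\star}\Om=\overline{\langle k_1,k_2\rangle}\Om$, so $|\langle k_1,k_2\rangle|^{2}$ is an eigenvalue of $L^{\star}L$ whose eigenspace is exactly $\Cl\Om$, whence $\POm=\mathbbm{1}_{\{|\langle k_1,k_2\rangle|^{2}\}}(L^{\star}L)\in W^{*}(L)\subseteq\CA_\infty$; Proposition~\ref{prop:LS22}\ref{item:POm-triviality} then gives $\CA((0,1))=\CL_T(K)'\cap\CL_T(H)=\Cl\mathbbm{1}$. The hard part is thus Step~2 plus this block estimate: one must control the analytic and combinatorial structure of the twisted $2n$‑point functions of $\phi_{L,T}(\Delta_H^{it}k_1)\phi_{R,T}(\Delta_H^{it}k_2)$ uniformly enough to conclude both the existence of $L$ and that all its positive‑particle‑number blocks are strictly smaller in $\|\cdot\|_T$‑norm than the vacuum value $|\langle k_1,k_2\rangle|$; this fails precisely when $\|T\|=1$ (Bose/Fermi), where the relative commutant is non‑trivial. (An alternative for the last step: note $\CL_T(K)'\cap\CL_T(H)\subseteq\CL_T(K)'\cap\CR_T(H')'$ and invoke Lemma~\ref{lemma:infiniteparticlenumber} once one knows that the relative commutant maps $\Om$ into $\Cl\Om$ — which is again equivalent, via Proposition~\ref{prop:LS22}\ref{item:POm-triviality}, to $\POm\in\CA_\infty$.)
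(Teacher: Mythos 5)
Your Steps 1 and 2 follow the paper's strategy almost verbatim: reduce to a one-dimensional Borchers triple, apply Proposition~\ref{prop:LS22}, and extract the algebra-at-infinity element as a weak limit of $\Delta^{it}\phi_{L,T}(k_1)\phi_{R,T}(k_2)\Delta^{-it}$ using Lemma~\ref{lemma:relative-commutation} (the paper does pass to a subnet to guarantee existence of the limit, a point you gloss over, but that is minor). The genuine gap is in Step~3. You need the strict block estimate $\|L\restr{\Hil_{T,n}}\|_T<|\langle k_1,k_2\rangle|$ for \emph{every} $n\geq1$ in order to identify $\Cl\Om$ as the full eigenspace of $L^{\star}L$ for its top eigenvalue, and you only say ``granting this''. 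This is exactly the step the paper flags as unavailable (``the equality $\Pi^Q_1=E_0$ is not clear''): the bound $\|W_n\|\leq\|k_1\|\,\|k_2\|\,\|T\|^n$ coming from Lemma~\ref{lemma:relative-commutation} is a bound in the \emph{untwisted} operator norm of $\CB(\Hil^{\ot n})$, and passing to the $\|\cdot\|_T$-operator norm costs a factor controlled by the condition number of $P_{T,n}$, whose upper bound $\|P_{T,n}\|\lesssim(1-\|T\|)^{-n}$ grows exponentially; a uniform lower bound on $\inf\sigma(P_{T,n})$ (which you invoke hypothetically and which is itself not established for general braided $T$ with $\|T\|<1$) would not repair this. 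So the inequality you need does not follow from the estimates at hand, particularly for small $n$.

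The paper circumvents precisely this obstacle by a two-stage argument you do not reproduce: it works in the untwisted norm, where $\Psi_n\in\Ran(\Pi^Q_1)\cap\Hil^{\ot n}$ forces $\|\Psi_n\|\leq\|k\|\,\|k'\|\,\|T\|^n\|\Psi_n\|$ and hence $\Psi_n=0$ only for $n\geq n_T$ (not all $n\geq1$), so one merely concludes $\Pi^Q_1\geq E_0$ with $\Ran(\Pi^Q_1)\subset\bigoplus_{n\leq n_T}\Hil_{T,n}$; it then feeds this into Lemma~\ref{lemma:infiniteparticlenumber}, which says that a nonscalar element of $\CL_T(K)'\cap\CR_T(\cdot)'$ cannot have $A\Om$ of finite particle number. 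Your parenthetical ``alternative'' cites that lemma but makes its applicability conditional on already knowing $\POm\in\CA_\infty$, which is circular -- the whole point of Lemma~\ref{lemma:infiniteparticlenumber} in the actual proof is to avoid having to show $\POm\in\CA_\infty$. To close your argument you would either have to prove the strict twisted-norm estimate on all blocks (not known) or restructure Step~3 along the paper's lines.
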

\begin{proof}
	The idea is to use Prop.~\ref{prop:LS22}, namely to establish that the vacuum projection $\POm$ lies in the algebra at infinity. The easiest way to achieve this would be to realize it as a weak limit of $\Delta^{it}\phi_{L,T}(k)\phi_{R,T}(k')\Delta^{-it}$ as $t\to-\infty$ (with $k\in K$, $k'\in J_H K$), see Prop.~\ref{prop:LS22}~\ref{item:weak-limit}.

	Hence we take $h_1,h_2\in H$ such that $k:=U(1)h_1\in K$ and $k':=U(-1)J_Hh_2$ satisfy $\langle k,k'\rangle=1$ (which is possible because $H$ is cyclic), and consider the field operators
	\begin{align*}
		\phi_{L,T}(k) &\in\CL_{T}(K)=\Gamma_T(U(1))\CL_{T}(H)\Gamma_T(U(1))^\star,
		\\
		\phi_{R,T}(k')&\in\CR_{T}(J_HU(1)H)=
		J(\Gamma_T(U(1))\CL_{T}(H)\Gamma_T(U(1))^\star)J;
	\end{align*}
	these are bounded operators because $\|T\|<1$.

	In view of the form of the modular unitaries $\Delta^{it}$ of $(\CL_{T}(H),\Om)$ (Prop.~\ref{prop:modulardata}), we have
	\begin{align*}
		\Delta^{it}\phi_{L,T}(k)\phi_{R,T}(k')\Delta^{-it}
		&=
		\phi_{L,T}(\Delta_H^{it}k)\phi_{R,T}(\Delta_H^{it}k')
		\\
		&=
		a_{L,T}^\star(\Delta_H^{it}k)a_{R,T}^\star(\Delta_H^{it}k')
		+
		a_{L,T}^\star(\Delta_H^{it}k)a_{R,T}(\Delta_H^{it}k')
		\\
		&\qquad+
		a_{L,T}(\Delta_H^{it}k)a_{R,T}(\Delta_H^{it}k')
		+
		a_{L,T}(\Delta_H^{it}k)a_{R,T}^\star(\Delta_H^{it}k').
	\end{align*}
	To investigate the limit $t\to-\infty$, we first recall that $\Delta_H$ has purely absolutely continuous spectrum and hence $\Delta_H^{it}\to0$ weakly as $t\to-\infty$  by the  Riemann-Lebesgue--Lemma. This is a consequence of the commutation relations~\eqref{HJB} and $\ker\Pst=\{0\}$, which imply that $\log P$ and $\log\Delta_H$ satisfy canonical commutation relations and are hence a multiple of the Schrödinger representation \cite[Prop.~1.7.1]{Longo:2008_2}.

	This implies that for any $\xi\in\Hil$, we have $a_{L,T}(\Delta_H^{it}\xi)\to0$ and $a_{R,T}(\Delta_H^{it}\xi)\to0$ in the strong operator topology as $t\to-\infty$ (evident on vectors of the form $\psi_1\ot\ldots\ot\psi_n$, then apply uniform boundedness -- note that our assumptions imply that $T$ is strict, i.e. $\Hil^{\ot n}=\Hil_{T,n}$).

	Taking into account the uniform bound \eqref{eq:altnormbound}, we see that the first three summands converge weakly to zero as $t\to-\infty$. With Lemma~\ref{lemma:relative-commutation}, the last term may be rewritten on a vector $\Psi_n\in\Hil^{\ot n}$ as
	\begin{align*}
		a_{L,T}(\Delta_H^{it}k)a_{R,T}^\star(&\Delta_H^{it}k')\Psi_n
		=
		a_{R,T}^\star(\Delta_H^{it}k')a_{L,T}(\Delta_H^{it}k)\Psi_n
		\\
		&\;
		+
		\langle\Delta_H^{it}k,\Delta_H^{it}k'\rangle E_0\Psi_n
		+
		a_L(\Delta_H^{it}k)T_1\cdots T_n(E_0^\perp\Psi_n\ot\Delta_H^{it}k').
	\end{align*}
	Here the first term $a^\star a$ converges weakly to zero as $t\to-\infty$, and the second term equals $E_0\Psi_n$ for all $t$.

	Since $\phi_{L,T}(k)\phi_{R,T}(k')$ is a bounded operator and the unit ball in $\B(\CF_T(\Hil))$ is compact in the weak operator topology, $\Delta^{it}\phi_{L,T}(k)\phi_{R,T}(k')\Delta^{-it}$ has a weak limit for $t\to-\infty$ along a suitably chosen net $(t_j)_j$ going to $-\infty$. By construction, the limit lies in the algebra at infinity $\CA_\infty$ \eqref{def:algebra-at-infinity}.

	Hence we conclude that
	\begin{align*}
		W:=\wlim_{j\to\infty} a_L(\Delta_H^{it_j}k)\bigoplus_{n=1}^\infty T_1\cdots T_n\,a_R^\star(\Delta_H^{it_j}k')E_0^\perp
	\end{align*}
	exists as a bounded operator and $Q:=E_0 + W$ lies in $\CA_\infty$. Moreover, $Q$ is selfadjoint because $\Delta^{it}\phi_{L,T}(k)\phi_{R,T}(k')\Delta^{-it}$ is selfadjoint for any $t$. Clearly, $\Om$ is an eigenvector of $Q$, with eigenvalue $1$. Hence the spectral projection $\Pi^Q_1\in\B(\CF_T(\Hil))$ of $Q$ for eigenvalue $1$ also lies in $\CA_\infty$ and satisfies $\Pi^Q_1\geq E_0$.

	If we even have $\Pi^Q_1=E_0$, Prop.~\ref{prop:LS22}~\ref{item:POm-triviality} immediately concludes the proof of the theorem. But in general, the equality $\Pi^Q_1=E_0$ is not clear and we have to use Lemma~\ref{lemma:infiniteparticlenumber}. In order to be able to do so, we now show that there exists $n_T\in\Nl$ such that $\Pi^Q_1$ satisfies
	\begin{align}\label{eq:E-claim}
		\Pi^Q_1\Hil_{T,n}=\{0\},\qquad n\geq n_T.
	\end{align}
	To prove this claim, it will be advantageous to work with the $T$-independent tensor product norm $\|\cdot\|$ instead of $\|\cdot\|_T$. Recall that $\Hil^{\ot n}=\Hil_{T,n}$ as vector spaces because $\Hil^{\ot n}\subset\Hil_{T,n}$ is dense in the norm $\|\cdot\|_T$ and the two norms $\|\cdot\|$, $\|\cdot\|_T$ are equivalent because of our assumptions on $T$.

	As the operators
	\begin{align}
		W_n(t):=a_L(\Delta_H^{it}k) T_1\cdots T_n\,a_R^*(\Delta_H^{it}k')|_{\Hil^{\ot n}}
	\end{align}
	are well defined operators on $\Hil^{\ot n}$, $n\in\Nl$, with operator norm (w.r.t. the norm $\|\cdot\|$) bounded by $\|W_n(t)\|\leq\|k\|\|k'\|\|T\|^n$, the limit $W_n:=\wlim_{j\to\infty} W_n(t_j)$ also exists in the weak operator topology given by the norm $\|\cdot\|$, i.e. $W_n$ satisfies the same bound on its $\|\cdot\|$-operator norm.

	Now any $\Psi_n\in\Ran(\Pi^Q_1)\cap\Hil^{\ot n}$ satisfies
	\begin{align}
		\|\Psi_n\|=\|Q\Psi_n\|=\|W_n\Psi_n\|\leq\|k\|\|\|k'\|\|T\|^n\|\Psi_n\|.
	\end{align}
	As $\|T\|^n\to0$ for $n\to\infty$, this is only possible for $\|\Psi_n\|=0$ (and hence $\|\Psi_n\|_T=0$) for sufficiently large $n$. This proves the existence of $n_T\in\Nl$ such that \eqref{eq:E-claim} holds.

	To finish the proof of the theorem, let $A\in \CR_{T}(U(1)H')\cap\CL_{T}(H)$ be an element of the relative commutant of the half-sided modular inclusion $\CL_{T}(K)\subset\CL_{T}(H)$. Then $A$ commutes with any element of the algebra at infinity and in particular with $\Pi^Q_1\geq E_0$. So we obtain
	\begin{align*}
		\Pi^Q_1A\Om = A\Pi^Q_1\Om = A\Om.
	\end{align*}
	But the vector $\Pi^Q_1A\Om$ is contained in $\bigoplus_{n=0}^{n_T}\Hil_{T,n}$, whereas Lemma~\ref{lemma:infiniteparticlenumber} tells us that $A\Om$ is not an element of this space unless $A\in\Cl1$.
\end{proof}

The significance of this result in quantum field theory will be explained in Section~\ref{section:QFT}. Here we note that the following instability result: If we take $T=qF$, then Theorem~\ref{thm:twisted-hsm} shows that the half-sided inclusion $\CL_{qF}(K)\subset\CL_{qF}(H)$ has trivial relative commutant for any $-1<q<1$, whereas the relative commutant equals $\CL_F(K'\cap H)$ (which has $\Om$ as a standard vector) for $q=1$ because of the familiar structure of second quantization factors. Such instability phenomena of inclusions under deformations are in line with the observations in \cite{BuchholzLechnerSummers:2011,Tanimoto:2011-1,LechnerScotford:2022}.

\subsection{$L^2$-nuclearity and quasi-split inclusions}\label{section:nuclearity}

So far we have obtained results about (half-sided) inclusions $\CL_{T}(K)\subset\CL_{T}(H)$ of twisted Araki-Woods algebras having trivial relative commutant $\CR_{L,T}(H,K)=\Cl$. We now add a complementary result, showing that other inclusions  of the form $\CL_{T}(K)\subset\CL_{T}(H)$ can also have large relative commutants.

The mechanism behind this discussion is the $L^2$-nuclearity condition of Buchholz, D'Antoni, and Longo \cite{BuchholzDAntoniLongo:2007} and the theory of (quasi-)split inclusions \cite{DoplicherLongo:1984,Fidaleo:2001}.

\begin{definition}
	An inclusion $\N\subset\M$ of von Neumann algebras with joint cyclic separating vector and corresponding modular operators $\Delta_\N$, $\Delta_\M$ is said to satisfy $L^2$-{\em nuclearity} if $\Delta_\M^{1/4}\Delta_\N^{-1/4}$ is of trace class.
\end{definition}

If $\N\subset\M$ satisfies $L^2$-nuclearity, then it is quasi-split, namely there exist type~I factors $\CI$, $\tilde\CI$ such that $\N\ot\Cl\subset\CI\subset\M\ot\tilde\CI$. If $\N$ or $\M$ is a factor, we even have the split property, namely $\N\subset\CI\subset\M$ with some type I factor $\CI$.

A consequence of $\N\subset\M$ being quasi-split is that whenever $\M$ is of type~III, then also the relative commutant $\N'\cap\M$ is of type III. These results are generalizations of \cite{DoplicherLongo:1984} to the non-factor case \cite{Fidaleo:2001}, see \cite[Cor.~3.11]{BikramMukherjee:2020}.

We now state our non-triviality result for the relative commutant $\CC_{T}(K,H)$ under quite strong assumptions. This result is closely related to the work of D'Antoni, Longo, and Radulescu \cite{DAntoniLongoRadulescu:2001} who considered the case $T=0$.

\begin{proposition}\label{prop:L2}
	Let $K\subset H$ be an inclusion of standard subspaces and $T$ a braided twist such that 
	\begin{enumerate}   
		\item $L^2$-nuclearity holds on the standard subspace level, i.e. $\|\Delta_H^{1/4}\Delta_K^{-1/4}\|_1<1$, where $\|\cdot\|_1$ is the trace norm on $\Hil$,
		\item $T$ is crossing-symmetric w.r.t. $H$,
		\item $T$ is compatible with $K$ and $H$, and has norm $\|T\|<1$.
	\end{enumerate}
	Then $\CL_{T}(K)\subset\CL_{T}(H)$ satisfies $L^2$-nuclearity and is quasi-split. If $\CL_{T}(H)$ is of type III, also the relative commutant $\CC_{T}(K,H)$ is of type III.
\end{proposition}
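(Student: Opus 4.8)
The plan is to identify the modular operators of $\CL_T(H)$ and $\CL_T(K)$ as $T$-twisted second quantizations of $\Delta_H$ and $\Delta_K$, then to reduce the $L^2$-nuclearity of the inclusion $\CL_T(K)\subset\CL_T(H)$ to the standard subspace hypothesis (a) by a direct trace-norm computation on $\CF_T(\Hil)$, and finally to invoke the cited structure theory of quasi-split inclusions for the type III conclusion.

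First I would note that the hypotheses already suffice to apply Proposition~\ref{prop:modulardata}\,a) to \emph{both} standard subspaces. Since $T$ is braided with $\|T\|<1$, it is strict (Theorem~\ref{theorem:T}\,\ref{item:Tbraided}), so $\Hil_{T,n}=\Hil^{\ot n}$ as vector spaces and all field operators are bounded. Braidedness, compatibility with $H$ and crossing symmetry w.r.t.\ $H$ imply, by Theorem~\ref{thm:YBCrossingLocalSeparating}, that $\Om$ is cyclic and separating for $\CL_T(H)$; consequently $\Om$ is separating for the subalgebra $\CL_T(K)$, and it is cyclic for $\CL_T(K)$ by Lemma~\ref{lemma:csT}. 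As $T\in\Tw(K)$ as well, Theorem~\ref{thm:separating-necessary} shows that $T$ is automatically crossing symmetric w.r.t.\ $K$, so Proposition~\ref{prop:modulardata}\,a) yields
\begin{align*}
	\Delta_{\CL_T(H)}^{it}=\Gamma_T(\Delta_H^{it}),\qquad \Delta_{\CL_T(K)}^{it}=\Gamma_T(\Delta_K^{it}),\qquad t\in\Rl.
\end{align*}
In particular both modular operators are block diagonal along the particle number grading, acting on $\Hil_{T,n}=\Hil^{\ot n}$ by $\Delta_H^{\ot n}$ and $\Delta_K^{\ot n}$, respectively.

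The crux is a trace computation which works out cleanly because of the compatibility of $T$. Set $G:=\overline{\Delta_H^{1/4}\Delta_K^{-1/4}}$; hypothesis (a) says precisely that this operator extends to a trace-class operator on $\Hil$ with $\|G\|_1<1$ (in particular bounded). Analytically continuing $[\Delta_H^{it}\ot\Delta_H^{it},T]=0$ and $[\Delta_K^{it}\ot\Delta_K^{it},T]=0$ in the parameter gives $[G\ot G,T]=0$, so $\Gamma_T(G)$ is a well-defined operator on $\CF_T(\Hil)$ acting on $\Hil_{T,n}$ by $G^{\ot n}$, and the previous step identifies $\Delta_{\CL_T(H)}^{1/4}\Delta_{\CL_T(K)}^{-1/4}=\Gamma_T(G)$. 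Because $[G^{\ot n},P_{T,n}]=0$ (the argument in Lemma~\ref{lemma:T-second-quantization}), the $\langle\,\cdot\,,\,\cdot\,\rangle_T$-adjoint of $G^{\ot n}$ equals the untwisted one $(G^\ast)^{\ot n}$, the $\langle\,\cdot\,,\,\cdot\,\rangle_T$-modulus $(\,(G^{\ot n})^{\star}G^{\ot n})^{1/2}$ of $\Gamma_T(G)|_{\Hil_{T,n}}$ is $|G|^{\ot n}$, and since the positive operator $|G|^{\ot n}$ commutes with $P_{T,n}$ its $\langle\,\cdot\,,\,\cdot\,\rangle_T$-trace equals its ordinary trace $(\operatorname{tr}|G|)^n=\|G\|_1^n$. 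Summing over the grading,
\begin{align*}
	\big\|\Delta_{\CL_T(H)}^{1/4}\Delta_{\CL_T(K)}^{-1/4}\big\|_1=\sum_{n=0}^\infty\|G\|_1^n=\frac{1}{1-\|G\|_1}<\infty,
\end{align*}
so $\CL_T(K)\subset\CL_T(H)$ satisfies $L^2$-nuclearity (and is split in addition when $\CL_T(H)$ or $\CL_T(K)$ is a factor).

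It then follows from $L^2$-nuclearity that $\CL_T(K)\subset\CL_T(H)$ is quasi-split \cite{BuchholzDAntoniLongo:2007,Fidaleo:2001}, and the quasi-split property forces the relative commutant $\CC_T(K,H)=\CL_T(K)'\cap\CL_T(H)$ to be of type III whenever $\CL_T(H)$ is \cite{DoplicherLongo:1984,Fidaleo:2001} (cf.\ \cite[Cor.~3.11]{BikramMukherjee:2020}), which is the assertion. I expect the main obstacle to be not the trace estimate — which is painless precisely because $G^{\ot n}$ commutes with $P_{T,n}$, so the twisting of the inner product is invisible to the trace — but rather the careful identification $\Delta_{\CL_T(H)}^{1/4}\Delta_{\CL_T(K)}^{-1/4}=\Gamma_T(G)$ as a genuine bounded trace-class operator: one must extract boundedness of $G$ from hypothesis (a), choose a common core for $\log\Delta_H$ and $\log\Delta_K$, and verify that the complex powers of the second-quantized modular operators restrict on each $\Hil_{T,n}$ to $(\Delta_H^z)^{\ot n}$ and $(\Delta_K^z)^{\ot n}$.
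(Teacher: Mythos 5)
Your proposal is correct and follows essentially the same route as the paper: identify the modular operators of $\CL_T(K)\subset\CL_T(H)$ as second quantizations via Proposition~\ref{prop:modulardata}, observe that the twisted trace norm of $(\Delta_H^{1/4}\Delta_K^{-1/4})^{\ot n}$ reduces to the untwisted one because this operator commutes with $P_{T,n}$ (the paper phrases this as conjugation by the unitary $P_{T,n}^{\mp1/2}:\Hil^{\ot n}\to\Hil_{T,n}$, you phrase it via the modulus and the trace directly — the same observation), sum the geometric series, and invoke the quasi-split machinery of \cite{DoplicherLongo:1984,Fidaleo:2001}. No substantive difference.
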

\begin{proof}
	We first note that in view of the assumptions, $\Om$ is cyclic and separating for $\CL_{T}(H)$ and hence also separating for the subalgebra $\CL_{T}(K)$. It is also cyclic for the subalgebra because~$K$ is a standard subspace, so we have $\Om$ as a joint cyclic separating vector.
	
	Thanks to the compatibility assumption for $K$ and $H$, the modular operators $\hat\Delta_K$ and $\hat\Delta_H$ of $\CL_{T}(K)$ and $\CL_{T}(H)$ are of second quantized form (Prop.~\ref{prop:modulardata}). Hence the trace norm of $\hat\Delta_H^{1/4}\hat\Delta_K^{-1/4}$ is
	\begin{align}
		\|\hat\Delta_H^{1/4}\hat\Delta_K^{-1/4}\|_{T,1}
		=
		\sum_{n\geq0}\|(\Delta_H^{1/4}\Delta_K^{-1/4})^{\ot n}\|_{T,1};
	\end{align}
	where $\|\cdot\|_{T,1}$ denotes the trace norm of the trace class in $\B(\Hil_{T,n})$.
	
	As $\|T\|<1$ is a strict twist, the operator $P_{T,n}^{1/2}:\Hil^{\ot n}\to\Hil_{T,n}$ is unitary. Hence $\|(\Delta_H^{1/4}\Delta_K^{-1/4})^{\ot n}\|_{T,1}=\|P_{T,n}^{-1/2}(\Delta_H^{1/4}\Delta_K^{-1/4})^{\ot n}P_{T,n}^{1/2}\|_{1}$, where $\|\cdot\|_1$ denotes the trace norm of the trace class in $\B(\Hil^{\ot n})$. But as $T$ is compatible with $K$ and $H$, the operators $P_{T,n}^{\pm1/2}$ commute with $(\Delta_H^{1/4}\Delta_K^{-1/4})^{\ot n}$ and thus drop out. Now we may use the tensor structure of $\|\cdot\|_1$ to conclude 
	\begin{align}
		\|\hat\Delta_H^{1/4}\hat\Delta_K^{-1/4}\|_{T,1}
		=
		\sum_{n=0}^\infty \|\Delta_H^{1/4}\Delta_K^{-1/4}\|_1^n
		=
		\frac{1}{1-\|\Delta_H^{1/4}\Delta_K^{-1/4}\|_1}<\infty.
	\end{align}
\end{proof}

Let us comment on the assumptions entering into this proposition. The compatibility condition for $T$ with $H$ and $K$ is quite strong in general, but automatically satisfied for $T=qF$, $-1<q<1$. In this case, it is also known from Hiai's work that $\CL_{T}(H)$ is a type III${}_1$ factor if $\Delta_H$ does not have eigenvalue~$1$ \cite[Prop.~3.2]{Hiai:2003}.

The $L^2$-nuclearity assumption on the level of the standard subspaces is also a strong assumption, but it is known to hold (including the norm inequality $\|\Delta_H^{1/4}\Delta_K^{-1/4}\|_1<1$) for certain inclusions of standard subspaces arising QFT (see Section~\ref{section:QFT}).

\section{Applications to quantum field theory}\label{section:QFT}

In this section we explain some applications of our constructions in algebraic quantum field theory \cite{Haag:1996,Advances-AQFT:2015}, where the main object of interest are families of von Neumann algebras labeled by open regions in some spacetime manifold, subject to several physically motivated conditions.

Since we will consider quantum field theories on a variety of different spacetimes, we use an abstract formulation. From the structure of spacetime as a globally hyperbolic Lorentzian manifold $\Man$ we only need that $\Man$ has a family~$\COs$ of ``good'' subsets (to be thought of as the causally complete convex regions), a notion of causal complement, mapping subsets $\CO\in\COs$ to their complements $\CO'\in\COs$ such that $(\CO')'=\CO$ for all $\CO$. Furthermore, we require that there exists a Lie group~$G$ acting on $\Man$ such that $g\CO\in\COs$ for $\CO\in\COs$ and preserving complementation, $g\CO'=(g\CO)'$ for all $g\in G$, $\CO\in\COs$.

In this general setting, we require the existence of a reference region with the following properties.

\begin{description}
	\item[\bf(reference wedge)] There exists $W_0\in\COs$ such that there is a one-parameter group $\la_0(t)\in G$, $t\in\Rl$, and an involution $j_0\in G$ such that $\la_0(t)W_0=W_0$ for all $t\in\Rl$, and $j_0W_0=W_0'$ as well as $j_0\la_0(t)=\la_0(t)j_0$.
\end{description}

We will refer to the reference region $W_0$ as ``reference wedge'' although its geometrical shape can be quite different (see examples below), and to elements of the $G$-orbit $\Ws:=GW_0\subset\COs$ as ``wedges''.	

\begin{description}
	\item[\bf (wedge separation property)] For any $\CO_1,\CO_2\in\COs$ with $\CO_1\subset\CO_2'$, there exists a wedge $W\in\Ws$ such that $\CO_1\subset W\subset\CO_2'$. 
	\item[\bf (wedge intersection property)] For any $\CO\in\COs$, there holds 
	\begin{align}
		\CO = \bigcap_{W\in\Ws:W\supset\CO}W.
	\end{align}
\end{description}

Two examples of this setting are the following.

\begin{enumerate}
	\item Minkowski spacetime $\Man=\Rl^{1+s}$, $s\geq1$, with~$\COs$ the set of all convex causally complete regions, $\CO'$ the causal complement, $G$ the Poincaré group, and $W_0=\{x\in\Rl^{1+s}\,:\,x_1>|x_0|\}$ the Rindler wedge. Here $\la_0(t)$ are the Lorentz boosts in $x_1$-direction with parameter $t$, and $j_0$ is the reflection at the edge of $W_0$.
	\item The lightray $\Man=\Rl$ with $\COs$ the set of all open intervals and half-lines, $\CO'$ the set-theoretic complement, $G$ the affine group of $\Rl$, and $W_0=\Rl_+$. Here $\la_0(t)$ are the dilations $x\mapsto e^tx$ and $j_0$ is the reflection $x\mapsto-x$.
\end{enumerate}

Further examples include the circle $\Man=S^1$ with Möbius group symmetry or de Sitter spacetime with Lorentz group symmetry. These examples are well known, and recently a general group-theoretic formulation has been developed which includes these and further cases \cite{MorinelliNeeb:2021,MorinelliNeebOlafsson:2022_2}.

\medskip

The first step to a quantum field theory on $\Man$ consists in describing the one-particle localization structure, which we shall do by modular localization. As explained in the work of Brunetti, Guido and Longo \cite{BrunettiGuidoLongo:2002} and later generalizations \cite{Mund:2003,MorinelliNeeb:2021}, we consider an (anti)unitary representation $U$ of $G$ on a Hilbert space $\Hil$, namely a strongly continuous group homomorphism $U$ from $G$ to the group of (anti)unitary operators on $\Hil$ such that $U(j_0)$ is antiunitary. Denoting the connected component of the identity by $G^\uparrow\subset G$, we then have $G=G^\uparrow\rtimes{\mathbb Z}_2$, where ${\mathbb Z}_2$ acts by conjugation with $j_0$. We also write $G^\downarrow:=G\backslash G^\uparrow=G^\uparrow j_0$.

Then $t\mapsto U(\la_0(t))=e^{itB}$ is a unitary one-parameter group with some selfadjoint generator $B$, and $U(j_0)$ is an antiunitary involution. Setting
\begin{align}\label{def:H0}
	H_0 := \ker(U(j_0)e^{\frac{1}{2}B}-1),\qquad
	\Delta_{H_0}^{it} = U(\la_0(t)),\qquad J_{H_0} = U(j_0),
\end{align}
we obtain a standard subspace $H_0\subset\Hil$ with the modular data specified above. 

We shall assume that $H_0$ is {\em localized in $W_0$} in the sense that
\begin{align}\label{eq:iso}
	g\in G,\; gW_0\subset W_0 \;\Rightarrow\; U(g)H_0\subset H_0.
\end{align}
This inclusion property is known to be closely linked to positive energy properties of the representation $U$: It holds if $U$ is a positive energy representation of the Poincaré group (in the Minkowski space example a)) \cite{BrunettiGuidoLongo:2002}, or if if the generator of translations in the affine group is positive (lightray example b)). See also \cite{MorinelliNeeb:2021} for a general spectral condition implying \eqref{eq:iso} for $g\in G^\uparrow$.

Once \eqref{eq:iso} holds, we may consistently interpret the vectors in $H_0$ as being localized in $W_0$, and obtain a net of standard subspaces for wedges, namely
\begin{align}\label{www}
	gW_0 \mapsto H(gW_0):=U(g)H_0,\qquad g\in G.
\end{align}
Note that this map is well-defined thanks to the inclusion property \eqref{eq:iso} which implies that the stabilizer group of $W_0$ fixes~$H_0$ in the representation $U$. The standard subspace net has the duality property
\begin{align}\label{eq:H-dual}
	H(W')=H(W)',\qquad W\in\Ws.
\end{align}
For $W=W_0$ this follows by choosing $g=j_0$ in \eqref{www} and observing $U(j_0)H_0=J_{H_0}H_0=H_0'$, and for general wedges by covariance.

To connect to the twisted Araki-Woods algebras, we need to amplify $U$ to a representation on $\CF_T(\Hil)$ for suitable twists $T$. Since we also need $T$ to be braided and crossing symmetric for our standardness results to hold, we add these assumptions here.

\begin{definition}
	Let $U$ be an (anti)unitary representation of $G$ such that \eqref{eq:iso} holds. A twist $T\in\Tw$ is called {\em admissible} if it is braided, {\em $G$-invariant} in the sense that
	\begin{align}
		[U(g)\ot U(g),T]=0,\quad g\in G^\uparrow,\qquad
		[F(J_{H_0}\ot J_{H_0}),T]=0,
	\end{align}
	and crossing-symmetric w.r.t. $H_0$.
\end{definition}
The $G$-invariance condition amounts to $T$ respecting the symmetries of the spacetime under consideration. It is trivially satisfied if $T=qF$, $-1\leq q\leq1$. Note that an admissible twist is automatically compatible with $H_0$ (and hence any $H(W)$, $W\in\Ws$), in the sense of Def.~\ref{def:compatible}.

For admissible $T$, the operators
\begin{align}
	U_T(g):=\Gamma_T(U_1(g)),\quad g\in G^\uparrow, \qquad U_T(j_0):=\Gamma_T^Y(J_H),
\end{align}
are well-defined (anti)unitary operators on $\CF_T(\Hil)$ (Lemma~\ref{lemma:T-second-quantization} and Lemma~\ref{lemma:symm}). It is clear that $U_T$ is a (anti)unitary representation that leaves $\Om$ invariant.

Given a representation $U$ and admissible $T$, we thus obtain a representation $U_T$ of $G$ on $\CF_T(\Hil)$ and two maps
\begin{align}\label{eq:wedgenets}
	W \mapsto \CL_T(H(W)),\qquad
	W \mapsto \CR_T(H(W))
\end{align}
from wedges $W\in\Ws$ to von Neumann algebras on $\CF_T(\Hil)$.

\begin{proposition}\label{prop:wedgenets}
	Let $U$ be a (anti)unitary $G$-representation with the inclusion property \eqref{eq:iso}, and $T$ and admissible twist. Then the nets \eqref{eq:wedgenets} satisfy
	\begin{enumerate}
		\item Isotony: $\CL_T(H(W_1))\subset\CL_T(H(W_2))$ for wedges $W_1\subset W_2$, and analogously for $\CR_T$,
		\item Covariance: For any $W\in\Ws$,
		\begin{align}\label{Gcov2}
			U_T(g)\CL_T(H(W))U_T(g)^{-1}=
			\begin{cases}
				\CL_T(H(gW)) & g\in G^\uparrow\\
				\CR_T(H(gW)) & g\in G^\downarrow
			\end{cases},
		\end{align}
		and analogously with $\CL_T$ and $\CR_T$ exchanged.
		\item Reeh-Schlieder property: $\Om$ is cyclic and separating for $\CL_T(W)$ and $\CR_T(W)$, $W\in\Ws$.
		\item Relative locality: For wedges $W_1\subset W_2'$, we have
		\begin{align}
			\CL_T(H(W_1)) \subset \CR_T(H(W_2))',
		\end{align}
		and analogously with $\CL_T$ and $\CR_T$ exchanged.
	\end{enumerate}
\end{proposition}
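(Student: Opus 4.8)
The plan is to derive all four items of Proposition~\ref{prop:wedgenets} from the already-established general facts about the maps $\CL_T,\CR_T$ together with the modular-localization structure of the standard subspace net \eqref{www}. The key observation is that an admissible twist $T$ is, by definition, braided, $G$-invariant and crossing-symmetric w.r.t.~$H_0$, and (as noted after the definition of admissibility) compatible with $H_0$; by Proposition~\ref{prop:abstract-nets}~c) applied to the unitaries $U(g)$, $g\in G^\uparrow$, the twist $T$ is then also compatible with $H(gW_0)=U(g)H_0$ and remains crossing-symmetric w.r.t.~it. Thus every $H(W)$, $W\in\Ws$, lies in $\text{Std}_T(\Hil)$ and $T$ is crossing-symmetric w.r.t.~each of them, so the hypotheses of Theorem~\ref{thm:YBCrossingLocalSeparating}, Corollary~\ref{commutant} and Proposition~\ref{prop:abstract-nets} are available for each wedge algebra.

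First I would prove (a) and (c). Isotony is immediate: if $W_1\subset W_2$ then by monotonicity of modular localization $H(W_1)\subset H(W_2)$, and Proposition~\ref{prop:abstract-nets}~a) gives $\CL_T(H(W_1))\subset\CL_T(H(W_2))$, and likewise for $\CR_T$. The Reeh--Schlieder property (c) is exactly Proposition~\ref{prop:abstract-nets}~b) (equivalently Theorem~\ref{thm:YBCrossingLocalSeparating}~\ref{item:standardness}) applied to $H(W)\in\text{Std}_T(\Hil)$: since $T$ is crossing-symmetric w.r.t.~$H(W)$, $\Om$ is cyclic and separating for both $\CL_T(H(W))$ and $\CR_T(H(W))$.

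Next, (b). For $g\in G^\uparrow$ the $G$-invariance $[U(g)\ot U(g),T]=0$ lets us invoke Proposition~\ref{prop:abstract-nets}~c): $\Gamma_T(U(g))\CL_T(H(W))\Gamma_T(U(g))^\star=\CL_T(U(g)H(W))=\CL_T(H(gW))$, and the same with $\CR_T$, which is the first line of \eqref{Gcov2}. For $g\in G^\downarrow$ write $g=g^\uparrow j_0$ with $g^\uparrow\in G^\uparrow$; then $U(g)=U(g^\uparrow)U(j_0)$ is antiunitary. The admissibility condition $[F(J_{H_0}\ot J_{H_0}),T]=0$ together with the $G$-invariance gives $[F(U(g)\ot U(g)),T]=0$ (conjugating the $J_{H_0}$-relation by $U(g^\uparrow)\ot U(g^\uparrow)$ and using $G$-invariance again), so Proposition~\ref{prop:abstract-nets}~d) applies: $U_T(g)\CL_T(H(W))U_T(g)^\star=\Gamma_T^Y(U(g))\CL_T(H(W))\Gamma_T^Y(U(g))^\star=\CR_T(U(g)H(W))=\CR_T(H(gW))$, using $U(g)H(W)=H(gW)$ from \eqref{www}, and symmetrically with $\CL_T\leftrightarrow\CR_T$. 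Here one should double check that the operators $U_T(g)$ defined in the excerpt via $\Gamma_T$ on $G^\uparrow$ and $\Gamma_T^Y(J_H)$ on $j_0$ do assemble into a genuine (anti)unitary representation and that $\Gamma_T^Y(U(g))$ for $g\in G^\downarrow$ equals $U_T(g^\uparrow)U_T(j_0)$ — this is the small compatibility-of-conventions point that needs a line, but it follows from the multiplicativity properties of $\Gamma_T$ and $\Gamma_T^Y$ in Lemmas~\ref{lemma:T-second-quantization} and \ref{lemma:symm}.

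Finally, (d). For wedges $W_1\subset W_2'$ the duality of the standard subspace net \eqref{eq:H-dual} gives $H(W_1)\subset H(W_2')=H(W_2)'$, so Proposition~\ref{prop:abstract-nets}~a) yields $\CR_T(H(W_1))\subset\CR_T(H(W_2)')$... but I actually want $\CL_T(H(W_1))\subset\CR_T(H(W_2))'$, which by Corollary~\ref{commutant} (duality $\CL_T(H(W_2))'=\CR_T(H(W_2)')$, valid since $T$ is compatible, braided and crossing-symmetric w.r.t.~$H(W_2)$) is the same as $\CL_T(H(W_1))\subset\CL_T(H(W_2)')$, and that follows from $H(W_1)\subset H(W_2)'$ and isotony of $\CL_T$. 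The symmetric statement is proved the same way exchanging the roles of $W_1,W_2$ and of $\CL_T,\CR_T$. I do not expect a genuine obstacle in this proof — everything is a bookkeeping exercise on top of Proposition~\ref{prop:abstract-nets} and Corollary~\ref{commutant} — but the one place demanding care is the $g\in G^\downarrow$ case of covariance: verifying $[F(U(g)\ot U(g)),T]=0$ from the two admissibility commutation relations, and checking that the antiunitary $\Gamma_T^Y(U(g))$ implements the stated flip between $\CL_T$ and $\CR_T$ consistently with the definition of $U_T$.
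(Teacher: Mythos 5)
Your proposal is correct and follows essentially the same route as the paper: isotony and Reeh--Schlieder from Proposition~\ref{prop:abstract-nets}~a),~b), covariance from Proposition~\ref{prop:abstract-nets}~c),~d) applied to $U(g)$ for $g\in G^\uparrow$ and $G^\downarrow$, and relative locality via the chain $\CL_T(H(W_1))\subset\CL_T(H(W_2'))=\CL_T(H(W_2)')=\CR_T(H(W_2))'$ using the duality of Corollary~\ref{commutant}. You merely spell out more explicitly the points the paper leaves implicit (propagation of compatibility and crossing symmetry to each $H(gW_0)$, and the verification of $[F(U(g)\ot U(g)),T]=0$ for $g\in G^\downarrow$), all of which check out.
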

\begin{proof}
	a) follows from \eqref{eq:iso} and Prop.~\ref{prop:abstract-nets}~a). Item b) is a consequence of Prop.~\ref{prop:abstract-nets}~c), taking into account $U_T(j_0)=J_{H_0}$. Item c) is contained in Thm.~\ref{thm:YBCrossingLocalSeparating}, and d) follows from
	\begin{align*}
		\CL_T(H(W_1)) \subset \CL_T(H(W_2')) = \CL_T(H(W_2)') = \CR_T(H(W_2))'.
	\end{align*}
\end{proof}

We may interpret this structure as a quantum field theory in two different ways. In the first version, we consider $G^\uparrow$ as our symmetry group. Then both nets $W\mapsto\CL_T(H(W))$ and $W\mapsto\CR_T(H(W))$ are $G^\uparrow$-covariant and {\em relatively local} in the sense of item d) above. However, for $T\neq F$ typically neither $\CL_T$ nor $\CR_T$ is local (see Lemma~\ref{X}).

In the second version, we consider $G$ as our symmetry group. Then the transformation behavior from item b) above imposes an additional constraint in case there exists $g_0\in G^\uparrow$ with $g_0W_0=W_0'$. Namely, in that case we must have $\CL_T(H_0')=\CR_T(H_0')$ for $W\mapsto\CL_T(H(W))$ to be $G$-covariant. Apart from $T=F$, this is typically not the case (see Lemma~\ref{X}). However, this conflict does not occur in several situations of interest, such as two-dimensional Minkowski space, the lightray example, or higher-dimensional Minkowski space with a restricted symmetry group (see \cite{BuchholzSummers:2007} for similar considerations). If $W_0'\not\in G^\uparrow W_0$, we may take
\begin{align}
	gW_0 \mapsto \tilde\CL_{T}(H(gW_0)) := U_T(g)\CL_T(H_0)U_T(g^{-1}),\qquad g\in G,
\end{align}
as the definition of our net, which is then $G$-covariant and local as a consequence of items b) and d) above. Analogously, we could work with $\CR_T$ instead of $\CL_T$.

We will adopt this second point of view because it leads to local nets $\tilde\CL_T$, and refer to \cite{Lechner:2012} for a scheme (for special twists) in which the restriction to low-dimensional spacetime or smaller symmetry groups can be overcome.

The net $\tilde\CL_T$ is so far defined on the set of wedges $\Ws$, but naturally extends to the larger set $\COs$ of localization regions. Here the assumptions of wedge separation and wedge intersection enter.

\begin{proposition}
	Let $U$ be a representation with the inclusion property \eqref{eq:iso}, $T$ an admissible twist, and assume that $G^\uparrow W_0$ does not contain $W_0'$. Then the map
	\begin{align}\label{eq:inter}
	 \COs \ni \CO \longmapsto \CA_{U,T}(\CO) := \bigcap_{W\in\Ws:W\supset\CO}\tilde\CL_T(H(W))
	\end{align}
	is well-defined,
	\begin{enumerate}   
		\item $G$-covariant, namely $U_T(g)\CA_{U,T}(\CO)U_T(g^{-1})=\CA_{U,T}(g\CO)$, $g\in G$, $\CO\in\COs$,
		\item isotonous, namely $\CA_{U,T}(\CO_1)\subset\CA_{U,T}(\CO_2)$ for $\CO_1\subset\CO_2$,
		\item local, namely $\CA_{U,T}(\CO_1)\subset\CA_{U,T}(\CO_2)'$ for $\CO_1\subset\CO_2'$.
	\end{enumerate}
\end{proposition}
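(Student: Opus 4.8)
The plan is to verify each of the three properties directly from the definition \eqref{eq:inter}, using the structural results already established for the wedge nets. First I would check well-definedness: for $\CO\in\COs$ the intersection runs over all wedges $W\in\Ws$ with $W\supset\CO$, and by the wedge intersection property this family is nonempty (indeed $\CO$ is the intersection of all such wedges, so there is at least one), hence $\CA_{U,T}(\CO)$ is a well-defined von Neumann algebra as an intersection of von Neumann algebras. The hypothesis $W_0'\notin G^\uparrow W_0$ is what makes $\tilde\CL_T$ a genuine $G$-covariant net on $\Ws$ (as discussed just before the proposition), so the right-hand side of \eqref{eq:inter} makes sense.

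For covariance, fix $g\in G$ and $\CO\in\COs$. The map $W\mapsto gW$ is a bijection of $\Ws$ that preserves inclusions, so $\{W\in\Ws : W\supset g\CO\} = \{gW' : W'\in\Ws,\ W'\supset\CO\}$. Applying $\Ad U_T(g)$ to the intersection defining $\CA_{U,T}(\CO)$ and using the covariance of the wedge net $\tilde\CL_T$ (which holds for all $g\in G$ by construction, cf. Prop.~\ref{prop:wedgenets}~b) combined with the passage to $\tilde\CL_T$), one gets
\begin{align*}
	U_T(g)\CA_{U,T}(\CO)U_T(g^{-1})
	&=
	\bigcap_{W\supset\CO} U_T(g)\tilde\CL_T(H(W))U_T(g^{-1})
	=
	\bigcap_{W\supset\CO}\tilde\CL_T(H(gW))
	=
	\bigcap_{W'\supset g\CO}\tilde\CL_T(H(W'))
	=
	\CA_{U,T}(g\CO).
\end{align*}
Here I am using that $\Ad U_T(g)$, being a normal automorphism of $\CB(\CF_T(\Hil))$, commutes with arbitrary intersections of von Neumann algebras. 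Isotony is immediate: if $\CO_1\subset\CO_2$ then every wedge containing $\CO_2$ also contains $\CO_1$, so the intersection defining $\CA_{U,T}(\CO_1)$ is over a larger index set, giving $\CA_{U,T}(\CO_1)\subset\CA_{U,T}(\CO_2)$.

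Locality is the one point requiring a little more than bookkeeping, and it is where the wedge separation property is used. Suppose $\CO_1\subset\CO_2'$. By wedge separation there is a wedge $W\in\Ws$ with $\CO_1\subset W\subset\CO_2'$. Then $W\supset\CO_1$, so $\CA_{U,T}(\CO_1)\subset\tilde\CL_T(H(W))$; and $W'\supset\CO_2$, so $\CA_{U,T}(\CO_2)\subset\tilde\CL_T(H(W'))$. Since $\tilde\CL_T$ is a local net on wedges — i.e. $\tilde\CL_T(H(W'))\subset\tilde\CL_T(H(W))'$, which follows from the duality $\CL_T(H(W'))=\CR_T(H(W'))=\CL_T(H(W))'$ in the present $G$-covariant setting together with $U_T(g_0)=$ the relevant (anti)unitary for the element $g_0$ with $g_0W=W'$ — we conclude $\CA_{U,T}(\CO_2)\subset\tilde\CL_T(H(W))'\subset\CA_{U,T}(\CO_1)'$. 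The main obstacle I anticipate is precisely pinning down the locality of the wedge net $\tilde\CL_T$: one must use that $W_0'\notin G^\uparrow W_0$ to justify that the net $gW_0\mapsto\tilde\CL_T(H(gW_0))$ is consistently defined and that relative locality (Prop.~\ref{prop:wedgenets}~d) together with Corollary~\ref{commutant}) upgrades to genuine locality $\tilde\CL_T(H(W_1))\subset\tilde\CL_T(H(W_2))'$ for $W_1\subset W_2'$; once this is in hand the extension to $\COs$ via \eqref{eq:inter} is routine.
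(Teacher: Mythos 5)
Your proof is correct and follows essentially the same route as the paper: well-definedness from the wedge intersection property, covariance and isotony by direct bookkeeping, and locality via the wedge separation property combined with the duality $\CL_T(H)'=\CR_T(H')$. One small slip: the intermediate equality $\CL_T(H(W'))=\CR_T(H(W'))$ is false in general (cf.\ Lemma~\ref{X}); what you need, and what your surrounding text shows you intend, is $\tilde\CL_T(H(W'))=\CR_T(H(W'))$ because $W'$ lies in the $G^\downarrow$-orbit of $W_0$ when $W$ lies in the $G^\uparrow$-orbit (the paper treats the two cases $W\in G^\uparrow W_0$ and $W'\in G^\uparrow W_0$ separately).
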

\begin{proof}
	The map is well-defined because any $\CO\in\COs$ is contained in a wedge by the wedge intersection property. a) and b) are obvious. For c), note that for $\CO_1\subset\CO_2'$ there exists a wedge $W$ such that $\CO_1\subset W$ and $\CO_2\subset W'$ (wedge separation property). Hence either $W$ or $W'$ lies in the $G^\uparrow$-orbit of $W_0$. In case $W=gW_0$, $g\in G^\uparrow$, we have $\CA_{U,T}(\CO_1)\subset\CL_{T}(H(W))$ and $$\CA_{U,T}(\CO_2)\subset\CL_T(H(W'))=\CL_T(H(W)')=\CR_T(H(W))',$$ which implies the claim. The case $W'=gW_0$, $g\in G^\uparrow$, is analogous.
\end{proof}

Given a representation $U$ and a twist $T$ satisfying our assumptions, we therefore get a net $\CA_{U,T}$ satisfying the basic properties of a quantum field theory. In particular, this construction recovers the free field for $T=F$.

The representation $U$ entering the definition of $\CA_{U,T}$ encodes the single particle structure (such as particle masses and spins), and the twist $T$ is expected to be characteristic of the interaction in the underlying theory. This is well understood in case $T$ comes from a Yang-Baxter solution with spectral parameter (Example~\ref{example:TSym}), in which case it precisely encodes the two-particle scattering operator and leads to a scheme for the operator-algebraic construction of integrable QFTs \cite{Lechner:2008,AlazzawiLechner:2016}. In this case, our notion of crossing symmetry also coincides with the scattering theory definition of crossing symmetry (Remark~\ref{remark:physical-crossing}).

In general, it is expected that the local algebras \eqref{eq:inter} may fail to have $\Om$ as a cyclic vector, or be too small to allow for interesting observables localized in~$\CO$. In some cases, we will give proofs of these expectations below. One should therefore view $\CA_{U,T}$ as a ``germ'' of a quantum field theory (also called ``wedge-local quantum field theory''). The class of all wedge-local models contains both, ``strictly local'' models in which $\Om$ is cyclic for all $\CA_{U,T}(\CO)$, $\CO\in\COs$, as well as non-local models in which this property does not hold.

The challenge to efficiently decide which class a given tuple $U,T$ corresponds to constitutes the QFT part of the motivation for the present article. We here comment on models with $\|T\|<1$ which have not been investigated so far.

\medskip

Typically for a given region $\CO\in\COs$, there will exist two wedges $W_1=g_1W_0$, $W_2=g_2W_0$, $g_1,g_2\in G^\uparrow$, such that
\begin{align}
	W_1\subset W_2,\qquad \CO  \subset W_1' \cap W_2.
\end{align}
This is for instance the case for $\CO$ a double cone in Minkowski space, or an interval on the lightray, where $W_1$ is a wedge (half-line) open to the right, and $W_2'$ is a wedge (half-line) open to the left. In this situation,
\begin{align*}
	\CA_{U,T}(\CO) &\subset \tilde\CL_T(H(W_1'))\cap\tilde\CL_T(H(W_2))
	=
	\CR_T(H(W_1))\cap\CL_T(H(W_2))
	\\
	&=
	\CL_T(H(W_1))'\cap\CL_T(H(W_2)),
\end{align*}
namely $\CA_{U,T}(\CO)$ is a subalgebra contained in the relative commutant of the inclusion $\CL_T(H(W_1))\subset\CL_T(H(W_2))$. In the cases mentioned above (two-dimensional Minkowski space or lightray), we actually have equality between $\CA_{U,T}(\CO)$ and the relative commutant.

Our results on relative commutants (Theorem~\ref{thm:twisted-hsm} and Proposition~\ref{prop:L2}) then apply as follows.

\begin{itemize}
	\item {\em Lightlike inclusions of wedges or lightrays}.
	\\
	Consider $W_0+a\subset W_0$ a lightlike inclusion of wedges (i.e. $a=(\la,\la,0,\ldots,0)$, $\la>0$) in the Minkowski example a), or $\Rl_++\la\subset\Rl_+$, $\la>0$, an inclusion of half lines in the lightray example b).
	In a positive energy representation, we are then in the situation of Def.~\ref{def:compatible-translation} and hence the corresponding inclusions $\CL_T(H(W_0+a))\subset\CL_T(H(W_0))$ and $\CL_T(H(\Rl_++\la))\subset\CL_T(H(\Rl_+))$ are half-sided modular.

	Now consider an arbitrary admissible twist with $\|T\|<1$ (for example, $T=qF$ with $|q|<1$, or a scaled Yang-Baxter solution $T=qT_S$ as in Example~\ref{example:TSym-scaled}). Then Theorem~\ref{thm:twisted-hsm} implies that the half-sided inclusion has trivial relative commutant.
	\\
	In the context of the net $\CA_{U,T}$, the interpretation of this fact is that there are no observables strictly localized in the relative complement of the inclusion, i.e. on the lightfront $\overline{W_0'+a}\cap \overline W_0$ or the lightlike interval $(0,\la)$. Hence $\CA_{U,T}$ is non-local from this point of view.
	
	\item {\em $L^2$-modular inclusions for massless theories}.
	\\
	Consider Minkowski space $\Rl^{d+1}$, $d\geq1$, two double cones $\CO_1,\CO_2$ with $\overline{\CO_1}\subset\CO_2$, and the representation $U$ defining the free massless scalar field. Then the corresponding standard subspaces $H(\CO_1)\subset H(\CO_2)$ satisfy $L^2$-nuclearity including the trace norm bound $\|\Delta_{H(\CO_2)}^{1/4}\Delta_{H(\CO_1)}^{-1/4}\|_1<1$ if the distance between $\CO_1$ and $\partial\CO_2$ is sufficiently large \cite{BuchholzDAntoniLongo:2007}.
	
	Now consider an arbitrary admissible twist with $\|T\|<1$; for example, $T=qF$ with $|q|<1$. As the modular operators do not have eigenvalue $1$, in this case $\CL_T(H(\CO_i))$ are type III${}_1$ factors and it follows from Prop~\ref{prop:L2} implies that the relative commutant of the inclusion $\CL_T(H(\CO_1))\subset\CL_T(H(\CO_2))$ is of type III as well. 
	
	The interpretation of this fact is that there are lots of elements of $\CL_T(H(\CO_2))$ that are localized in $\CO_1'\cap\CO_2$ in the sense that they commute with the subalgebra $\CL_T(H(\CO_1))$.
\end{itemize}

The first result can be understood as a sign that the models with $\|T\|<1$ are strongly non-local. It complements results establishing large relative commutants for special twists with $\|T\|=1$, and we view such examples as counterexamples to the construction of strictly local theories. The second result does not strictly fit to the wedge-local setting because of the  different geometry (double cone inclusions), but does indicate that nets which appear strongly non-local might contain more local observables than anticipated.

We expect to learn from both scenarios and other examples how to better control the local observable content of twisted models in future work on constructive algebraic quantum field theory.

\appendix

\section{Diagram calculus for twisted $n$-point functions}\label{sect:diagrams}

In this section we explain a diagram calculus for the twisted $n$-point functions $W_{2n}^{\xi_1,\ldots,\xi_{2n}}(t)$ \eqref{def:W2n}.

Our diagrams consist of a disc with $2n$ marked points on its boundary, labelled $1,2,\ldots,2n$ in clockwise orientation. The points are paired by directed edges, which are curves $c$ lying in the interior of the disc, starting at a marked point $s(c)\in\{1,\ldots,2n\}$ and ending at a marked point $t(c)\in\{1,\ldots,2n\}$, such that
\begin{itemize}
	\item[(D1)] every $k\in\{1,\ldots,2n\}$ is connected to exactly one curve,
	\item[(D2)] for every curve $c$, we have $t(c)>s(c)$,
	\item[(D3)] the curves are drawn in such a way that the number of crossings is minimal and there are no points where three or more curves meet.
\end{itemize}

Examples of such diagrams are
\begin{center}
\begin{tikzpicture}[
  dot/.style={draw,fill,circle,inner sep=1pt,scale=0.6},scale=0.5
  ]
  \node (O1) at (0,0) {};
  \draw (O1) circle (1);

	\node[dot] (a1) at (0:1) {};
	\node[dot] (a2) at (180:1) {};
	\node (n1) at (0:1.5) {\small$1$};
	\node (n2) at (180:1.5) {\small$2$};
\draw[-latex] (a1) to [out=180,in=0,looseness=1] (a2);
  \node (O2) at (4,0) {};
\draw[] (O2) circle (1);

  \foreach \i in {1,...,4} {
	\node[dot] (a\i) at ($(O2) + (135-\i*90:1)$) {};
	\node (n\i) at ($(O2) +(135-\i*90:1.5)$) {\small$\i$} ++(3,0);
  }

\draw[-latex] (a1) to [out=225,in=135,looseness=1] (a2);
\draw[-latex] (a3) to [out=45,in=-45,looseness=1] (a4);
  \node (O3) at (8,0) {};
  \draw[] (O3) circle (1);
  \foreach \i in {1,...,6} {
	\node[dot] (a\i) at ($(O3)+(120-\i*60:1)$) {};
	\node (n\i) at ($(O3)+(120-\i*60:1.5)$) {\small$\i$};
  }

\draw[-latex] (a1) to [out=240,in=0,looseness=1] (a5);
\draw[-latex] (a2) to [out=180,in=60,looseness=1] (a4);
\draw[-latex] (a3) to [out=120,in=-60,looseness=1] (a6);

\end{tikzpicture}
\end{center}
Calling the family of all such diagrams $\Dia_{2n}$, the idea is that any $D\in\Dia_{2n}$ represents a number $\langle D\rangle^{\xi_1,\ldots,\xi_{2n}}$ defined in a manner familiar from knot theory: Every line segment (between two crossings, i.e. an internal line, or between a crossing and a marked boundary point, i.e. an external line) carries a vector in $\Hil$. In case of an external line starting (or ending) at $k$, this vector is $\xi_k$ (or $S_H\xi_k$), and in case of an internal line, it is a vector from some orthonormal basis of $\Hil$. A crossing represents a matrix element of $T$, namely
\begin{center}
 \begin{tikzpicture}[
  dot/.style={draw,fill,circle,inner sep=1pt,scale=0.6},scale=0.5
  ]
  \node (O3) at (0,0) {};
  \foreach \i in {1,...,6} {
	\node (a\i) at ($(O3)+(120-\i*60:1)$) {};
  }

  \node at (0:1) {$\eta$};
  \node at (-120:1) {$\psi$};
  \node at (-60:1) {$\zeta$};
  \node at (120:1) {$\gamma$};

\draw[-latex] (a2) to [out=180,in=60,looseness=1] (a4);
\draw[-latex] (a3) to [out=120,in=-60,looseness=1] (a6);

\node at (5.5,0) {$=\langle\zeta\ot\eta,T(\psi\ot\gamma)\rangle,$};
\end{tikzpicture}
\end{center}
and $\langle D\rangle$ is defined by taking the product over all crossings of $D$ and summing over the orthonormal bases labelling all internal lines. Then
\begin{align}\label{eq:TranslateDiagram}
	W_{2n}^{\xi_1,\ldots,\xi_{2n}} = \sum_{D\in\Dia_{2n}}\langle D\rangle^{\xi_1,\ldots,\xi_{2n}}.
\end{align}
For general twists $T$, the rules (D1)--(D3) are however ambiguous and do not uniquely determine $\langle D\rangle^{\xi_1,\ldots,\xi_{2n}}$. Only in case $T$ is braided the above definition of $\langle D\rangle^{\xi_1,\ldots,\xi_{2n}}$ is unambiguous. This can for example be seen by looking at the two diagrams
\begin{center}
	\begin{tikzpicture}[
	dot/.style={draw,fill,circle,inner sep=1pt,scale=0.6},scale=0.5
	]
		\node (O4) at (0,0) {};
		\draw[] (O4) circle (1);

		\foreach \i in {1,...,6} {
			\node[dot] (a\i) at ($(O4)+(120-\i*60:1)$) {};
			\node (n\i) at ($(O4)+(120-\i*60:1.5)$) {\small$\i$};
		}

		\draw[-latex] (a1) to [out=240,in=60,looseness=1] (a4);
		\draw[-latex] (a2) to [out=180,in=0,looseness=1] (a5);
		\draw[-latex] (a3) to [out=150,in=-90,looseness=1] (a6);
		
		\node (O5) at (6,0) {};
		\draw[] (O5) circle (1);

		\foreach \i in {1,...,6} {
			\node[dot] (a\i) at ($(O5)+(120-\i*60:1)$) {};
			\node (n\i) at ($(O5)+(120-\i*60:1.5)$) {\small$\i$};
		}

		\draw[-latex] (a1) to [out=240,in=60,looseness=1] (a4);
		\draw[-latex] (a2) to [out=180,in=0,looseness=1] (a5);
		\draw[-latex] (a3) to [out=90,in=-40,looseness=1] (a6);
	\end{tikzpicture}
\end{center}
which both agree with the diagrammatic rules (D1)--(D3), but only give the same value $\langle D\rangle$ in case $T$ is braided (invariance under Reidemeister moves of type III).

For twists that are not braided, this results in somewhat cumbersome diagram rules which we avoid to spell out in detail. Nonetheless, $\langle D\rangle$ is well defined for diagrams with at most two crossings. For the particular diagram with three crossings depicted above, the left version is the correct one for \eqref{eq:TranslateDiagram} to be true for $n=3$ (this can be seen by noting that $P_{T,3}$ contains a term $T_2T_1T_2$ and not $T_1T_2T_1$).

For following the calculations underlying Prop.~\ref{lemma:4pta} and in particular Lemma~\ref{lemma:6pt}, the diagrammatic rule is very useful. The diagrammatic form of the expansion of the $6$-point function \eqref{eq:6pt-split} looks as follows:
\begin{align*}
 w_0(t)
&=
\Big(\langle \bar 1,2\rangle\langle \bar3,4\rangle\langle \bar5,6_t\rangle
+
\langle \bar2,3\rangle\langle\bar4,5\rangle\langle\bar1,6_t\rangle\Big)
\\
&\quad+
\Big(
\langle\bar1,2\rangle\langle\bar4,5\rangle\langle\bar3,6_t\rangle
+
\langle\bar1,4\rangle\langle\bar2,3\rangle\langle\bar5,6_t\rangle
+
\langle\bar2,5\rangle\langle\bar3,4\rangle\langle\bar1,6_t\rangle\Big)
\end{align*}
\begin{center}
 \scalebox{0.8}{
\begin{tikzpicture}[
  dot/.style={draw,fill,circle,inner sep=1pt,scale=0.6},scale=0.5
  ]
  \draw[] (0,0) circle (1);

  \node (O) at (0,0) {};
  \foreach \i in {1,...,6} {
	\node[dot] (a\i) at (120-\i*60:1) {};
	\node (n\i) at (120-\i*60:1.5) {\small$\i$};
  }

\draw[-latex] (a1) to [out=240,in=180,looseness=1] (a2);
\draw[-latex] (a3) to [out=120,in=60,looseness=1] (a4);
\draw[-latex] (a5) to [out=0,in=-60,looseness=1] (a6);
\end{tikzpicture}
\begin{tikzpicture}[
  dot/.style={draw,fill,circle,inner sep=1pt,scale=0.6},scale=0.5
  ]
  \draw[] (0,0) circle (1);

  \node (O) at (0,0) {};
  \foreach \i in {1,...,6} {
	\node[dot] (a\i) at (120-\i*60:1) {};
	\node (n\i) at (120-\i*60:1.5) {\small$\i$};
  }

\draw[-latex] (a1) to [out=240,in=-60,looseness=1] (a6);
\draw[-latex] (a2) to [out=180,in=120,looseness=1] (a3);
\draw[-latex] (a4) to [out=60,in=0,looseness=1] (a5);
\end{tikzpicture}
\begin{tikzpicture}[
  dot/.style={draw,fill,circle,inner sep=1pt,scale=0.6},scale=0.5
  ]
  \draw[] (0,0) circle (1);

  \node (O) at (0,0) {};
  \foreach \i in {1,...,6} {
	\node[dot] (a\i) at (120-\i*60:1) {};
	\node (n\i) at (120-\i*60:1.5) {\small$\i$};
  }

\draw[-latex] (a1) to [out=240,in=180,looseness=1] (a2);
\draw[-latex] (a3) to [out=120,in=-60,looseness=1] (a6);
\draw[-latex] (a4) to [out=60,in=0,looseness=1] (a5);
\end{tikzpicture}
\begin{tikzpicture}[
  dot/.style={draw,fill,circle,inner sep=1pt,scale=0.6},scale=0.5
  ]
  \draw[] (0,0) circle (1);

  \node (O) at (0,0) {};
  \foreach \i in {1,...,6} {
	\node[dot] (a\i) at (120-\i*60:1) {};
	\node (n\i) at (120-\i*60:1.5) {\small$\i$};
  }

\draw[-latex] (a1) to [out=240,in=60,looseness=1] (a4);
\draw[-latex] (a2) to [out=180,in=120,looseness=1] (a3);
\draw[-latex] (a5) to [out=0,in=-60,looseness=1] (a6);
\end{tikzpicture}
\begin{tikzpicture}[
  dot/.style={draw,fill,circle,inner sep=1pt,scale=0.6},scale=0.5
  ]
  \draw[] (0,0) circle (1);

  \node (O) at (0,0) {};
  \foreach \i in {1,...,6} {
	\node[dot] (a\i) at (120-\i*60:1) {};
	\node (n\i) at (120-\i*60:1.5) {\small$\i$};
  }

\draw[-latex] (a1) to [out=240,in=-60,looseness=1] (a6);
\draw[-latex] (a2) to [out=180,in=0,looseness=1] (a5);
\draw[-latex] (a3) to [out=120,in=60,looseness=1] (a4);
\end{tikzpicture}
}
\end{center}

\begin{align*}
	w_1(t)
	&=
	\langle\bar1,2\rangle\langle\bar4\ot\bar3,T(5\ot6_t)\rangle
	+
	\langle\bar2,3\rangle\langle\bar4\ot\bar1,T(5\ot6_t)\rangle
	\\
	&\quad+
	\langle\bar3,4\rangle\langle\bar2\ot\bar1,T(5\ot6_t)\rangle
	+
	\langle\bar4,5\rangle\langle\bar2\ot\bar1,T(3\ot6_t)\rangle
	\\
	&\quad+
	\langle\bar2\ot\bar1,T(3\ot4)\rangle\langle\bar5,6_t\rangle
	+
	\langle\bar3\ot\bar2,T(4\ot5)\rangle\langle\bar1,6_t\rangle,
\end{align*}
\begin{center}
\scalebox{0.8}{
\begin{tikzpicture}[
  dot/.style={draw,fill,circle,inner sep=1pt,scale=0.6},scale=0.5
  ]
  \draw[] (0,0) circle (1);

  \node (O) at (0,0) {};
  \foreach \i in {1,...,6} {
	\node[dot] (a\i) at (120-\i*60:1) {};
	\node (n\i) at (120-\i*60:1.5) {\small$\i$};
  }

\draw[-latex] (a1) to [out=240,in=180,looseness=1] (a2);
\draw[-latex] (a3) to [out=120,in=0,looseness=1] (a5);
\draw[-latex] (a4) to [out=60,in=-60,looseness=1] (a6);
\end{tikzpicture}
\begin{tikzpicture}[
  dot/.style={draw,fill,circle,inner sep=1pt,scale=0.6},scale=0.5
  ]
  \draw[] (0,0) circle (1);

  \node (O) at (0,0) {};
  \foreach \i in {1,...,6} {
	\node[dot] (a\i) at (120-\i*60:1) {};
	\node (n\i) at (120-\i*60:1.5) {\small$\i$};
  }

\draw[-latex] (a1) to [out=240,in=0,looseness=1] (a5);
\draw[-latex] (a2) to [out=180,in=120,looseness=1] (a3);
\draw[-latex] (a4) to [out=60,in=-60,looseness=1] (a6);
\end{tikzpicture}
\begin{tikzpicture}[
  dot/.style={draw,fill,circle,inner sep=1pt,scale=0.6},scale=0.5
  ]
  \draw[] (0,0) circle (1);

  \node (O) at (0,0) {};
  \foreach \i in {1,...,6} {
	\node[dot] (a\i) at (120-\i*60:1) {};
	\node (n\i) at (120-\i*60:1.5) {\small$\i$};
  }

\draw[-latex] (a1) to [out=240,in=0,looseness=1] (a5);
\draw[-latex] (a2) to [out=180,in=-60,looseness=1] (a6);
\draw[-latex] (a3) to [out=120,in=60,looseness=1] (a4);
\end{tikzpicture}
\begin{tikzpicture}[
    dot/.style={draw,fill,circle,inner sep=1pt,scale=0.6},scale=0.5
	]
  \draw[] (0,0) circle (1);

  \node (O) at (0,0) {};
  \foreach \i in {1,...,6} {
	\node[dot] (a\i) at (120-\i*60:1) {};
	\node (n\i) at (120-\i*60:1.5) {\small$\i$};
  }
\draw[-latex] (a2) to [out=180,in=-60,looseness=1] (a6);
\draw[-latex] (a1) to [out=240,in=120,looseness=1] (a3);
\draw[-latex] (a4) to [out=60,in=0,looseness=1] (a5);
\end{tikzpicture}
\begin{tikzpicture}[
  dot/.style={draw,fill,circle,inner sep=1pt,scale=0.6},scale=0.5
  ]
  \draw[] (0,0) circle (1);

  \node (O) at (0,0) {};
  \foreach \i in {1,...,6} {
	\node[dot] (a\i) at (120-\i*60:1) {};
	\node (n\i) at (120-\i*60:1.5) {\small$\i$};
  }
\draw[-latex] (a2) to [out=180,in=60,looseness=1] (a4);
\draw[-latex] (a1) to [out=240,in=120,looseness=1] (a3);
\draw[-latex] (a5) to [out=0,in=-60,looseness=1] (a6);
\end{tikzpicture}
\begin{tikzpicture}[
  dot/.style={draw,fill,circle,inner sep=1pt,scale=0.6},scale=0.5
  ]
  \draw[] (0,0) circle (1);

  \node (O) at (0,0) {};
  \foreach \i in {1,...,6} {
	\node[dot] (a\i) at (120-\i*60:1) {};
	\node (n\i) at (120-\i*60:1.5) {\small$\i$};
  }

\draw[-latex] (a1) to [out=240,in=-60,looseness=1] (a6);
\draw[-latex] (a2) to [out=180,in=60,looseness=1] (a4);
\draw[-latex] (a3) to [out=120,in=0,looseness=1] (a5);
\end{tikzpicture}
}
\end{center}

\begin{align*}
	w_2(t)
	&=
	\langle\bar4\ot a_3T(\bar2\ot\bar1),T(5\ot6_t)\rangle
	+
	\langle a_4T(\bar3\ot\bar2)\ot\bar1, T(5\ot6_t)\rangle
	\\
	&\quad+
	\langle\bar2\ot\bar1, T(a_{\bar3}T(4\ot5)\ot6_t)\rangle,
\end{align*}

\begin{center}
 \scalebox{0.8}
 {
\begin{tikzpicture}[
  dot/.style={draw,fill,circle,inner sep=1pt,scale=0.6},scale=0.5
  ]
  \draw[] (0,0) circle (1);

  \node (O) at (0,0) {};
  \foreach \i in {1,...,6} {
	\node[dot] (a\i) at (120-\i*60:1) {};
	\node (n\i) at (120-\i*60:1.5) {\small$\i$};
  }

\draw[-latex] (a1) to [out=240,in=120,looseness=1] (a3);
\draw[-latex] (a2) to [out=180,in=0,looseness=1] (a5);
\draw[-latex] (a4) to [out=60,in=-60,looseness=1] (a6);
\end{tikzpicture}
\begin{tikzpicture}[
  dot/.style={draw,fill,circle,inner sep=1pt,scale=0.6},scale=0.5
  ]
  \draw[] (0,0) circle (1);

  \node (O) at (0,0) {};
  \foreach \i in {1,...,6} {
	\node[dot] (a\i) at (120-\i*60:1) {};
	\node (n\i) at (120-\i*60:1.5) {\small$\i$};
  }

\draw[-latex] (a1) to [out=240,in=60,looseness=1] (a4);
\draw[-latex] (a2) to [out=180,in=-60,looseness=1] (a6);
\draw[-latex] (a3) to [out=120,in=0,looseness=1] (a5);
\end{tikzpicture}
\begin{tikzpicture}[
  dot/.style={draw,fill,circle,inner sep=1pt,scale=0.6},scale=0.5
  ]
  \draw[] (0,0) circle (1);

  \node (O) at (0,0) {};
  \foreach \i in {1,...,6} {
	\node[dot] (a\i) at (120-\i*60:1) {};
	\node (n\i) at (120-\i*60:1.5) {\small$\i$};
  }

\draw[-latex] (a1) to [out=240,in=0,looseness=1] (a5);
\draw[-latex] (a2) to [out=180,in=60,looseness=1] (a4);
\draw[-latex] (a3) to [out=120,in=-60,looseness=1] (a6);
\end{tikzpicture}
}
\end{center}

\begin{align*}
	w_3(t)
	&=
	\langle\bar3\ot\bar2\ot\bar1,T_2T_1T_2(4\ot5\ot6_t)\rangle.
\end{align*}
\begin{center}
\scalebox{0.8}
{
	\begin{tikzpicture}[
  dot/.style={draw,fill,circle,inner sep=1pt,scale=0.6},scale=0.5
  ]
  \draw[] (0,0) circle (1);

  \node (O) at (0,0) {};
  \foreach \i in {1,...,6} {
	\node[dot] (a\i) at (120-\i*60:1) {};
	\node (n\i) at (120-\i*60:1.5) {\small$\i$};
  }

\draw[-latex] (a1) to [out=240,in=60,looseness=1] (a4);
\draw[-latex] (a2) to [out=180,in=0,looseness=1] (a5);
\draw[-latex] (a3) to [out=150,in=-90,looseness=1] (a6);
\end{tikzpicture}
}
\end{center}
Here the diagrams are listed in the same order as the terms in the algebraic expansions of the $w_k(t)$.

For the analytic continuation in $t$ from $\Rl$ to $\Rl-i$, another diagrammatic rule is helpful: The analytic continuation of the function $\langle D\rangle^{\xi_1,\ldots,\Delta_H^{it}\xi_{2n}}$ is
\begin{align}
	\langle D\rangle^{\xi_1,\ldots,\Delta_H^{-1}\xi_{2n}}
	=
	\langle D'\rangle^{\xi_{2n},\xi_1,\ldots,\xi_{2n-1}},
\end{align}
where the diagram $D'$ is obtained from $D$ by clockwise rotation by the angle $\frac{\pi}{n}$ and reversing the orientation of the curve ending in $2n$. For example:
\begin{center}
\begin{tikzpicture}[
  dot/.style={draw,fill,circle,inner sep=1pt,scale=0.6},scale=0.5
  ]
  \node (O2) at (4,0) {};
\draw[] (O2) circle (1);

  \foreach \i in {1,...,3} {
	\node[dot] (a\i) at ($(O2) + (135-\i*90:1)$) {};
	\node (n\i) at ($(O2) +(135-\i*90:1.5)$) {\small$\i$} ++(3,0);
  }
  \node[dot] (a4) at ($(O2) + (135-4*90:1)$) {};
  \node (cr) at ($(O2) +(135-4*90:1.5)$) {\small$4_t$} ++(3,0);

\draw[-latex] (a1) to [out=225,in=135,looseness=1] (a2);
\draw[-latex] (a3) to [out=45,in=-45,looseness=1] (a4);

\draw[-latex] (5.5,0) -- (8.5,0) node[midway,above] {\footnotesize$t\leadsto t-i$};

\node (O2b) at (10,0) {};
\draw[] (O2b) circle (1);

  \foreach \i in {1,...,3} {
	\node[dot] (a\i) at ($(O2b) + (45-\i*90:1)$) {};
	\node (n\i) at ($(O2b) +(45-\i*90:1.5)$) {\small$\i$} ++(3,0);
  }
  	\node[dot] (a4) at ($(O2b) + (45-4*90:1)$) {};
	\node (ncr4) at ($(O2b) +(45-4*90:1.5)$) {\small$4_t$} ++(3,0);

\draw[-latex] (a1) to [out=135,in=45,looseness=1] (a2);
\draw[-latex] (a4) to [out=225,in=-45,looseness=1] (a3);

  \node (O3) at (16,0) {};
  \draw[] (O3) circle (1);
  \foreach \i in {1,...,5} {
	\node[dot] (a\i) at ($(O3)+(120-\i*60:1)$) {};
	\node (n\i) at ($(O3)+(120-\i*60:1.5)$) {\small$\i$};
  }
  \node[dot] (a6) at ($(O3)+(120-6*60:1)$) {};
	\node (n6) at ($(O3)+(120-6*60:1.5)$) {\small$6_t$};

\draw[-latex] (a1) to [out=240,in=0,looseness=1] (a5);
\draw[-latex] (a2) to [out=180,in=60,looseness=1] (a4);
\draw[-latex] (a3) to [out=120,in=-60,looseness=1] (a6);

\draw[-latex] (18.5,0) -- (21.5,0) node[midway,above] {\footnotesize$t\leadsto t-i$};

\node (O4) at (23.5,0) {};
  \draw[] (O4) circle (1);
  \foreach \i in {1,...,5} {
	\node[dot] (a\i) at ($(O4)+(60-\i*60:1)$) {};
	\node (n\i) at ($(O4)+(60-\i*60:1.5)$) {\small$\i$};
  }
  \node[dot] (a6) at ($(O4)+(60-6*60:1)$) {};
	\node (n6) at ($(O4)+(60-6*60:1.5)$) {\small$6_t$};

\draw[-latex] (a1) to [out=180,in=-60,looseness=1] (a5);
\draw[-latex] (a2) to [out=120,in=00,looseness=1] (a4);
\draw[-latex] (a6) to [out=-120,in=60,looseness=1] (a3);
\end{tikzpicture}
\end{center}
With such a diagram calculus, the calculations in Lemma~\ref{lemma:6pt} are easily verified.

\bigskip

We also mention a {\bf diagrammatic representation of crossing symmetry}. In the usual graphical notation for tensor product operators \cite[Chapt.~2.6]{EvansKawahigashi:1998}, crossing symmetry takes the form (with a subscript indicating the action of modular unitaries $\Delta_H^{-it}$ and a line reversing its up/down direction indicating the action of a modular conjugation $J_H$)
\begin{center}
\begin{tikzpicture}[
  dot/.style={draw,red,circle,scale=0.6},scale=0.5
  ]

  \node (F1) at (2,0) {\footnotesize$\langle2_t\ot1,T(3\ot4_t)\rangle=$};
  \node (O2) at (7,0) {$T$};

  \draw[thick,shift={(O2)}] (-1.1,-0.7) rectangle (1.1,0.7);

  \node (tl) at ($(O2) + (-0.5,0.4)$) {};
  \node (tlx) at ($(O2) + (-0.5,2.7)$) {$2_t$};
  \node (tr) at ($(O2) + (0.5,0.4)$) {};
  \node (trx) at ($(O2) + (0.5,2.7)$) {$1$};
  \node (bl) at ($(O2) + (-0.5,-0.4)$) {};
  \node (blx) at ($(O2) + (-0.5,-2.7)$) {$3$};
  \node (br) at ($(O2) + (0.5,-0.4)$) {};
  \node (brx) at ($(O2) + (0.5,-2.7)$) {$4_t$};

  \draw[thick] (tl) -- (tlx);
  \draw[thick] (tr) -- (trx);
  \draw[thick] (bl) -- (blx);
  \draw[thick] (br) -- (brx);

  \draw[-latex] (8.5,0) -- (11.5,0) node[midway,above] {\footnotesize$t\leadsto t+\frac{i}{2}$};

  \node (O2) at (13.5,0) {$T$};

  \draw[thick,shift={(O2)}] (-1.1,-0.7) rectangle (1.1,0.7);

  \node (tl) at ($(O2) + (-0.5,0.4)$) {};
  \node (tlx) at ($(O2) + (-0.5,2.7)$) {$2_t$};
  \node (tr) at ($(O2) + (0.5,0.4)$) {};
  \node (trx) at ($(O2) + (0.5,2.7)$) {$1$};
  \node (bl) at ($(O2) + (-0.5,-0.4)$) {};
  \node (blx) at ($(O2) + (-0.5,-2.7)$) {$3$};
  \node (br) at ($(O2) + (0.5,-0.4)$) {};
  \node (brx) at ($(O2) + (0.5,-2.7)$) {$4_t$};

  \node[inner sep=0, outer sep=0] (ll) at ($(O2) + (-1.5,0)$) {};
  \node[inner sep=0, outer sep=0] (rr) at ($(O2) + (1.5,0)$) {};

  \draw[thick] (tlx) to [out=-90,in=90,looseness=1] (ll) to [out=-90,in=-90,looseness=2.3] (bl);
  \draw[thick] (trx) to [out=-90,in=90,looseness=1] (tl);
  \draw[thick] (blx) to [out=90,in=-90,looseness=1] (br);
  \draw[thick] (brx) to [out=90,in=-90,looseness=1] (rr) to [out=90,in=90,looseness=2.3] (tr);
   \node (F2) at (20,0) {\footnotesize$=\langle1\ot J_H4_t,T(J_H2_t\ot3)\rangle$};
\end{tikzpicture}
\end{center}
It is curious to note that this diagram looks similar to the string Fourier transforms of Jaffe and Liu \cite{JaffeLiu:2017}, but we shall not investigate this connection any further in the present work.

Finally, the diagrammatic view on crossing symmetry gives also a different perspective on the derivation of the Yang-Baxter equation in Thm.~\ref{thm:separating-necessary}~\ref{item:proof-ybe}. The analytic continuation can be seen as a two-fold application of the crossing symmetry above, which transforms $T_2T_1T_2$ into $T_1T_2T_1$:
\begin{center}
\begin{tikzpicture}[
  dot/.style={draw,fill,circle,scale=0.4,inner sep=0pt},scale=0.5
  ]

  \node (O2) at (7,0) {$T$};
  \node (O2b) at (8.1,3.1) {$T$};
  \node (O2c) at (8.1,-3.1) {$T$};

  \draw[thick,shift={(O2)}] (-1.1,-0.7) rectangle (1.1,0.7);
  \draw[thick,shift={(O2b)}] (-1.1,-0.7) rectangle (1.1,0.7);
  \draw[thick,shift={(O2c)}] (-1.1,-0.7) rectangle (1.1,0.7);

  \node (tlxx) at ($(O2) + (-0.5,5.5)$) {$3_t$};
  \node (blxx) at ($(O2) + (-0.5,-5.5)$) {$4$};
  \node (tl) at ($(O2) + (-0.5,0.4)$) {};
  \node (tlx) at ($(O2) + (-0.5,2.7)$) {};
  \node (tr) at ($(O2) + (0.5,0.4)$) {};
  \node (trx) at ($(O2) + (0.5,2.7)$) {};
  \node (bl) at ($(O2) + (-0.5,-0.4)$) {};
  \node (blx) at ($(O2) + (-0.5,-2.7)$) {};
  \node (br) at ($(O2) + (0.5,-0.4)$) {};
  \node (brx) at ($(O2) + (0.5,-2.7)$) {};
  \node (tmt) at ($(O2) + (1.5,2.7)$) {};
  \node (tmtt) at ($(O2) + (0.5,3.5)$) {};
  \node (tmtx) at ($(O2) + (0.5,5.5)$) {$2$};
  \node (trtt) at ($(O2) + (1.5,3.5)$) {};
  \node (trtx) at ($(O2) + (1.5,5.5)$) {$1$};
  \node (bmt) at ($(O2) + (1.5,-2.7)$) {};

  \node (bmtt) at ($(O2) + (0.5,-3.5)$) {};
  \node (bmtx) at ($(O2) + (0.5,-5.5)$) {$5$};
  \node (brtt) at ($(O2) + (1.5,-3.5)$) {};
  \node (brtx) at ($(O2) + (1.5,-5.5)$) {$6_t$};

  \draw[thick,inner sep=0, outer sep=0] (tmtt) -- (tmtx);
  \draw[thick,inner sep=0, outer sep=0] (trtt) -- (trtx);
  \draw[thick,inner sep=0, outer sep=0] (bmtt) -- (bmtx);
  \draw[thick,inner sep=0, outer sep=0] (brtt) -- (brtx);
  \draw[thick,inner sep=0, outer sep=0] (bmt) -- (tmt);
  \draw[thick,inner sep=0, outer sep=0] (tl) -- (tlxx);
  \draw[thick,inner sep=0, outer sep=0] (tr) -- (trx);
  \draw[thick,inner sep=0, outer sep=0] (bl) -- (blxx);
  \draw[thick,inner sep=0, outer sep=0] (br) -- (brx);

  \draw[-latex] (9.5,0) -- (13,0) node[midway,above] {\footnotesize$t\leadsto t+\frac{i}{2}$};

  \node (O3) at (15,0) {$T$};
  \node (O3b) at (16.1,3.1) {$T$};
  \node (O3c) at (16.1,-3.1) {$T$};

  \draw[thick,shift={(O3)}] (-1.1,-0.7) rectangle (1.1,0.7);
  \draw[thick,shift={(O3b)}] (-1.1,-0.7) rectangle (1.1,0.7);
  \draw[thick,shift={(O3c)}] (-1.1,-0.7) rectangle (1.1,0.7);

  \node (tlxx) at ($(O3) + (-0.5,5.5)$) {$3_t$};
  \node (blxx) at ($(O3) + (-0.5,-5.5)$) {$4$};
  \node (tl) at ($(O3) + (-0.5,0.4)$) {};
  \node (tlx) at ($(O3) + (-0.5,2.7)$) {};
  \node (tr) at ($(O3) + (0.5,0.4)$) {};
  \node (trx) at ($(O3) + (0.5,2.7)$) {};
  \node (bl) at ($(O3) + (-0.5,-0.4)$) {};
  \node (blx) at ($(O3) + (-0.5,-2.7)$) {};
  \node (br) at ($(O3) + (0.5,-0.4)$) {};
  \node (brx) at ($(O3) + (0.5,-2.7)$) {};
  \node (tmt) at ($(O3) + (1.5,2.7)$) {};
  \node (tmtt) at ($(O3) + (0.5,3.5)$) {};
  \node (tmtx) at ($(O3) + (0.5,5.5)$) {$2$};
  \node (trtt) at ($(O3) + (1.5,3.5)$) {};
  \node (trtx) at ($(O3) + (1.5,5.5)$) {$1$};
  \node (bmt) at ($(O3) + (1.5,-2.7)$) {};

  \node (bmtt) at ($(O3) + (0.5,-3.5)$) {};
  \node (bmtx) at ($(O3) + (0.5,-5.5)$) {$5$};
  \node (brtt) at ($(O3) + (1.5,-3.5)$) {};
  \node (brtx) at ($(O3) + (1.5,-5.5)$) {$6_t$};

  \node[minimum size=0mm,dot] (lh) at ($(O3)+(-1.5,0)$) {};
  \node[minimum size=0mm,dot] (h2) at ($(O3)+(-0.5,-3.1)$) {};
  \node[minimum size=0mm,dot] (h3) at ($(O3)+(2.7,-3)$) {};
  \node[minimum size=0mm,dot] (h4) at ($(O3)+(1.5,-1)$) {};
  \node[minimum size=0mm,dot] (h5) at ($(O3)+(1.3,0.1)$) {};
  \node[minimum size=0mm,dot] (h6) at ($(O3)+(-0.3,-3.3)$) {};

  \draw[thick,inner sep=0, outer sep=0] (tmtt) -- (tmtx);
  \draw[thick,inner sep=0, outer sep=0] (trtt) -- (trtx);
  \draw[thick,inner sep=0, outer sep=0] (bmtx) to [out=90,in=-90,looseness=1]  (brtt);
  \draw[thick,inner sep=0, outer sep=0] (brtx) to [out=90,in=-90,looseness=1.5] (h3) to [out=90,in=90,looseness=3] (bmt);
  \draw[thick,inner sep=0, outer sep=0] (brx) to [out=90,in=-90,looseness=1.5] (h4) to [out=90,in=-90,looseness=1.5] (tmt);
  \draw[thick,inner sep=0, outer sep=0] (tlxx) to [out=-90,in=90,looseness=1.5] (lh) to [out=-90,in=-90,looseness=3] (bl);
  \draw[thick,inner sep=0, outer sep=0] (trx) to [out=-90,in=90,looseness=1] (tl);
  \draw[thick,inner sep=0, outer sep=0] (blxx) to [out=90,in=-90,looseness=1.5] (h2) to [out=90,in=-90,looseness=1.5] (br);
  \draw[thick,inner sep=0, outer sep=0] (bmtt) to [out=-90,in=-90,looseness=3] (h6) to [out=90,in=-90,looseness=1.3] (h5) to [out=90,in=90,looseness=3] (tr);

  \node at (14+5,0) {$=$};

  \node (O4) at (15+8.1,0) {$T$};
  \node (O4b) at (15+7,3.1) {$T$};
  \node (O4c) at (15+7,-3.1) {$T$};

  \draw[thick,shift={(O4)}] (-1.1,-0.7) rectangle (1.1,0.7);
  \draw[thick,shift={(O4b)}] (-1.1,-0.7) rectangle (1.1,0.7);
  \draw[thick,shift={(O4c)}] (-1.1,-0.7) rectangle (1.1,0.7);

  \node (tlxx) at ($(O4) + (0.5,5.5)$) {$J6_t$};

  \node (blxx) at ($(O4) + (0.5,-5.5)$) {$5$};
  \node (tl) at ($(O4) + (0.5,0.4)$) {};
  \node (tlx) at ($(O4) + (0.5,2.7)$) {};
  \node (tr) at ($(O4) + (-0.5,0.4)$) {};
  \node (trx) at ($(O4) + (-0.5,2.7)$) {};
  \node (bl) at ($(O4) + (0.5,-0.4)$) {};
  \node (blx) at ($(O4) + (0.5,-2.7)$) {};
  \node (br) at ($(O4) + (-0.5,-0.4)$) {};
  \node (brx) at ($(O4) + (-0.5,-2.7)$) {};
  \node (tmt) at ($(O4) + (-1.5,2.7)$) {};
  \node (tmtt) at ($(O4) + (-0.5,3.5)$) {};

  \node (tmtx) at ($(O4) + (-0.5,5.5)$) {$1$};
  \node (trtt) at ($(O4) + (-1.5,3.5)$) {};
  \node (bmt) at ($(O4) + (-1.5,-2.7)$) {};

  \node (trtx) at ($(O4) + (-1.5,5.5)$) {$2$};

  \node (bmtt) at ($(O4) + (-0.5,-3.5)$) {};
  \node (bmtx) at ($(O4) + (-0.5,-5.5)$) {$4$};
  \node (brtt) at ($(O4) + (-1.5,-3.5)$) {};
  \node (brtx) at ($(O4) + (-1.5,-5.5)$) {$J3_t$};

  \draw[thick,inner sep=0, outer sep=0] (tmtt) -- (tmtx);
  \draw[thick,inner sep=0, outer sep=0] (trtt) -- (trtx);
  \draw[thick,inner sep=0, outer sep=0] (bmtt) -- (bmtx);
  \draw[thick,inner sep=0, outer sep=0] (brtt) -- (brtx);
  \draw[thick,inner sep=0, outer sep=0] (bmt) -- (tmt);
  \draw[thick,inner sep=0, outer sep=0] (tl) -- (tlxx);
  \draw[thick,inner sep=0, outer sep=0] (tr) -- (trx);
  \draw[thick,inner sep=0, outer sep=0] (bl) -- (blxx);
  \draw[thick,inner sep=0, outer sep=0] (br) -- (brx);

\end{tikzpicture}
\end{center}

Here $J=J_H$, and in combination with the cyclic permutation underlying the KMS condition, one obtains the Yang-Baxter equation.

\section*{Acknowledgements}

We thank Maximilian Duell for discussions at the beginning of this work, Roberto Longo for highlighting the possibility of split inclusions in \cite{DAntoniLongoRadulescu:2001}, and Roland Speicher for an exchange about the von Neumann algebras $\CL_{T}(H)$. Support by the German Research Foundation DFG through the Heisenberg project ``Quantum Fields and Operator Algebras'' (LE 2222/3-1) is gratefully acknowledged.

\section*{Data Availability Statement}
Data sharing not applicable to this article as no datasets were generated or analysed during the current study.

\small

\end{document}